\def\l@section{\@tocline{1}{10pt}{1pc}{}{}}
\def\l@subsection{\@tocline{2}{0pt}{1pc}{4.6em}{}}
\def\l@subsubsection{\@tocline{3}{0pt}{1pc}{7.6em}{}}
\renewcommand{\tocsection}[3]{%
  \indentlabel{\@ifnotempty{#2}{\makebox[2.3em][l]{%
    \ignorespaces#1 #2.\hfill}}}\textbf{#3}}
\renewcommand{\tocsubsection}[3]{%
  \indentlabel{\@ifnotempty{#2}{\hspace*{2.3em}\makebox[2.3em][l]{%
    \ignorespaces#1 #2.\hfill}}}#3}
\renewcommand{\tocsubsubsection}[3]{%
  \indentlabel{\@ifnotempty{#2}{\hspace*{4.6em}\makebox[3em][l]{%
    \ignorespaces#1 #2.\hfill}}}#3}
\newcommand{\MM}{\mathcal{M}}
\newcommand{\CC}{\mathcal{C}}
\newcommand{\IR}{\mathbb{R}}
\newcommand{\IH}{\mathbb{H}}
\newcommand{\IZ}{\mathbb{Z}}
\newcommand{\TT}{\mathcal{T}}
\newcommand{\ov}[1]{\overline{#1}}
\newcommand{\td}[1]{\widetilde{#1}}
\DeclareMathOperator{\stan}{stan}
\DeclareMathOperator{\scal}{scal}
\DeclareMathOperator{\Int}{Int}
\DeclareMathOperator{\Ric}{Ric}
\DeclareMathOperator{\area}{area}
\DeclareMathOperator{\tr}{tr}
\DeclareMathOperator{\dist}{dist}
\DeclareMathOperator{\diam}{diam}
\DeclareMathOperator{\vol}{vol}
\DeclareMathOperator{\Rm}{Rm}
\newcommand{\nangle}{\sphericalangle}
\newcommand{\dotcup}{\ensuremath{\mathaccent\cdot\cup}}
\newcommand{\EMPTY}[1]{}
\newtheorem{Theorem}{Theorem}[section]
\newtheorem{Lemma}[Theorem]{Lemma}
\newtheorem{Corollary}[Theorem]{Corollary}
\newtheorem{Proposition}[Theorem]{Proposition}
\newtheorem{Definition}[Theorem]{Definition}
\newtheorem{Remark}[Theorem]{Remark}
\numberwithin{equation}{section}
\title{Long-time analysis of 3 dimensional Ricci flow III}
\author{Richard H Bamler}
\address{Stanford University, Department of Mathematics, 450 Serra Mall, building 380, Stanford, California 94305}
\email{rbamler@stanford.edu}
\date{\today}
\begin{document}
\begin{abstract}
In this paper we analyze the long-time behavior of $3$ dimensional Ricci flows with surgery.
Our main result is that if the surgeries are performed correctly, then only finitely many surgeries occur and after some time the curvature is bounded by $C t^{-1}$.
This result confirms a conjecture of Perelman.
In the course of the proof, we also obtain a qualitative description of the geometry as $t \to \infty$.
 
This paper is the third part of a series.
Previously, we had to impose a certain topological condition $\mathcal{T}_2$ to establish the finiteness of the surgeries and the curvature control.
The objective of this paper is to remove this condition and to generalize the result to arbitrary closed 3-manifolds.
This goal is achieved by a new area evolution estimate for minimal simplicial complexes, which is of independent interest.
\end{abstract}

\maketitle
\tableofcontents

\section{Introduction} \label{sec:Introduction}
\subsection{Statement of the main result}
In this paper we analyze the long-time behavior of Ricci flows with surgery on $3$ dimensional manifolds without imposing any conditions on the topology of the initial manifold.

In a few words, our main result can be summarized as follows.
We refer to Theorem \ref{Thm:MainTheorem-III} on page \pageref{Thm:MainTheorem-III} for a precise statement.

\begin{quote}
\textit{Let $(M,g)$ be a closed and orientable $3$ dimensional Riemannian manifold. \\
Then there is a long-time existent Ricci flow with only  \emph{finitely} many surgeries whose initial time-slice is $(M, g)$.
Moreover, there is a constant $C$ such that the Riemannian curvature in this flow is bounded everywhere by $C t^{-1}$ for large times $t$.}
\end{quote}

The Ricci flow with surgery has been used by Perelman to solve the Poincar\'e and Geometrization Conjecture (\cite{PerelmanI}, \cite{PerelmanII}, \cite{PerelmanIII}).
Given any initial metric on a closed $3$-manifold, Perelman managed to construct a solution to the Ricci flow with surgery on a maximal time-interval and showed that its surgery times do not accumulate.
Hence every finite time-interval contains only a finite number of surgery times.
Furthermore, he could prove that if the given manifold is a homotopy sphere (or more generally a connected sum of prime, non-aspherical manifolds), then this flow goes extinct in finite time and the number of surgeries is finite.
This implies that the initial manifold is a sphere if it is simply connected and hence establishes the Poincar\'e Conjecture.
On the other hand, if the Ricci flow with surgery continues to exist for infinite time, Perelman could show that the manifold decomposes into a thick part, which approaches a hyperbolic metric, and an thin part, which becomes arbitrarily collapsed on local scales.
Based on this collapse, it is then possible to show that the thin part can be decomposed into pieces whose topology is well understood (\cite{ShioyaYamaguchi}, \cite{MorganTian}, \cite{KLcollapse}).
Eventually, this decomposition can be reorganized to a geometric decomposition, establishing the Geometrization Conjecture.

Observe that although the Ricci flow with surgery was used to solve such hard problems, some of its basic properties are still unknown, because they surprisingly turned out to be irrelevant in the end.
For example, it was only conjectured by Perelman that in the long-time existent case there are finitely many surgeries, i.e. that after some time the flow can be continued by a conventional smooth, non-singular Ricci flow defined up to time infinity.
Furthermore, it is still unknown whether and in what way the Ricci flow exhibits the the full geometric decomposition of the manifold.

In \cite{LottTypeIII}, \cite{LottDimRed} and \cite{LottSesum}, Lott and Lott-Sesum could give a description of the long-time behavior of certain non-singular Ricci flows on manifolds whose geometric decomposition consists of a single component.
However, they needed to make additional curvature and diameter or symmetry assumptions.
In \cite{Bamler-longtime-I}, the author proved that under a purely topological condition $\TT_1$, which roughly states that the manifold only consists of  hyperbolic components (see \cite[Definition 1.2]{Bamler-longtime-II}), there are only finitely many surgeries and the curvature is bounded by $C t^{-1}$ after some time.
In \cite{Bamler-longtime-II}, this condition was generalized to a far more general topological condition $\TT_2$, which requires that the non-hyperbolic pieces in the geometric decomposition of the underlying manifold contain sufficiently many incompressible surfaces.
For example, manifolds of the form $\Sigma \times S^1$ for closed, orientable surfaces $\Sigma$, in particular the $3$-torus $T^3$, satisfy property $\TT_2$, but the Heisenberg manifold does not.
We refer to \cite[sec 2]{Bamler-longtime-II} for a precise definition and discussion of the conditions $\TT_1$ and $\TT_2$.

In this paper we remove the condition $\TT_2$ and only assume that the initial manifold is closed and orientable.

We now state our main result.
The notions relating to ``Ricci flows with surgery'', which are used in the following, are explained in subsection 2.1 of \cite{Bamler-longtime-II}.
In a few words, a ``Ricci flow with surgery $\MM$, which is performed by $\delta(t)$-precise cutoff'', is a sequence of Ricci flows $(M^1, (g^1_t)_{t \in [0,T^1)}), (M^2, (g^2_t)_{t \in [T^1, T^2)})$ such that the time-$T^i$ slice $(M^{i+1}, g^{i+1}_{T^i})$ is obtained from the singular metric $g^i_{T^i}$ on $M^i$ by a so called surgery process which amounts to a geometric version of an inverse connected sum decomposition at a scale less than $\delta(T^i)$ and the removal of spherical components.
We allow the case in which there are only finitely many surgery times $T^i$ and $T^i = \infty$ for the final index $i$.
Observe that we have chosen our notion such that a  $\delta(t)$-precise cutoff is $\delta'(t)$-precise if $\delta'(t) \geq \delta(t)$.

In \cite{PerelmanII} Perelman showed the existence of a (non-explicit) function $\delta(t)$ such that every normalized Riemannian manifold $(M,g)$ can be evolved into a Ricci flow with surgery $\MM$ which is performed by $\delta(t)$-precise cutoff and that for any such Ricci flow with surgery---performed by $\delta(t)$-precise cutoff and with normalized initial conditions---the surgery times $T^i$ do not accumulate.
So if there were infinitely many surgery times (or equivalently infinitely many surgeries), then we must have $\lim_{i \to \infty} T^i = \infty$.
Our main result now states that this cannot happen under normalized initial conditions and if $\delta(t)$ has been chosen sufficiently small.
Note that these assumptions are not very restrictive since they already had been made in Perelman's work.

\begin{Theorem}
Given a surgery model $(M_{\stan}, g_{\stan}, D_{\stan})$, there is a continuous function $\delta : [0, \infty) \to (0, \infty)$ such that the following holds: \nopagebreak  \label{Thm:MainTheorem-III}

Let $\MM$ be a Ricci flow with surgery with normalized initial conditions which is performed by $\delta(t)$-precise cutoff.

Then $\MM$ has only finitely many surgeries and there are constants $T, C < \infty$ such that $|{\Rm_t}| < C t^{-1}$ on $\MM(t)$ for all $t \geq T$.
\end{Theorem}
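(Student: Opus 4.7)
The plan is to combine the long-time structure theory already established in \cite{Bamler-longtime-I, Bamler-longtime-II} with a new area evolution estimate for minimal simplicial $2$-complexes, so as to remove the condition $\TT_2$. At a high level, the curvature bound $|{\Rm_t}| < Ct^{-1}$ is already known on hyperbolic thick regions by \cite{Bamler-longtime-I}, and on thin regions whose topology contains enough incompressible surfaces by \cite{Bamler-longtime-II}. The novel step is to reproduce the area-based argument of the second paper in thin pieces whose topology does not supply any incompressible surface --- for example pieces modeled on nilpotent or solvable geometries, such as the Heisenberg manifold --- by replacing minimal surfaces with suitably-minimal $2$-complexes built from the fundamental group.

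First I would set up the reduction. By Perelman's collapse theory, if the conclusion of the theorem fails then there is a sequence of space-time points $(x_k,t_k)$ with $t_k \to \infty$ at which the rescaled curvature is unbounded, and these points lie in the thin part. The previous papers handle this on all sufficiently-incompressible pieces, so we may assume $(x_k,t_k)$ lies in a geometric piece $X_k$ whose fundamental group does not admit incompressible surfaces inside $M$. In each such $X_k$ I would construct a finite $2$-complex $K_k$ together with a $\pi_1$-essential map $f_{k,0} : K_k \to M$ whose image generates $\pi_1(X_k)$ up to the relevant level (for instance a presentation $2$-complex of $\pi_1(X_k)$). I would then minimize the total $2$-cell area $\sum_{F}\area(f|_F)$ within the homotopy class of $f_{k,0}$ at every smooth time of the flow, obtaining a family of minimizing maps $f_{k,t} : K_k \to \MM(t)$.

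The heart of the argument is then an area evolution inequality of the schematic form
\begin{equation*}
   \frac{d}{dt}\area(f_{k,t}(K_k)) \;\leq\; -\int_{f_{k,t}(K_k)} \scal \, \dd a \;+\; (\text{contribution from the $1$-skeleton}).
\end{equation*}
On a smooth minimal surface the boundary contribution vanishes by minimality; on a $2$-complex it records the angle defect where several $2$-cells meet along an edge of the $1$-skeleton. The key technical insight should be that, after summing over the $2$-cells and applying a Gauss--Bonnet-type identity on each cell, the contributions at each $1$-cell combine into a purely topological term plus lower-order pieces. Using Perelman's lower scalar bound $\scal \geq -\tfrac{3}{2t}$ and taking the surgery precision $\delta(t)$ small enough to ensure that the area lost at each surgery time is controlled, one extracts an upper bound of the form $\area(f_{k,t}(K_k)) \leq Ct$ that survives across all surgeries.

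With this estimate in hand, the argument closes as in \cite{Bamler-longtime-II}: a too-collapsed thin piece forces any essential $2$-complex inside it to spread, and the quantitative spreading would contradict the linear-in-$t$ area bound; this yields the curvature bound $|{\Rm_t}| < Ct^{-1}$ on the thin part as well, and finiteness of surgeries follows immediately because such a bound forbids singularities after a finite time. I expect the central obstacle to lie in establishing the area evolution inequality itself: one must identify the correct notion of minimal simplicial complex, prove enough regularity at the $1$-skeleton to make sense of the boundary terms, and arrange the Gauss--Bonnet bookkeeping so that the $1$-skeleton contributions have the right sign or can be absorbed into a topological constant. Once this analytic and geometric input is in place, the rest of the argument largely reuses the machinery of the two predecessor papers.
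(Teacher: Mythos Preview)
Your high-level outline is broadly correct, and you have identified the central new ingredient: an area-evolution inequality for simplicial $2$-complexes whose edge contributions have a favourable sign. You even flag the regularity issue at the $1$-skeleton. However, your closing paragraph hides two genuine gaps that the paper has to work hard to fill.

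First, ``a too-collapsed thin piece forces any essential $2$-complex inside it to spread'' is not how the argument closes in either case. The machinery inherited from \cite{Bamler-longtime-II} needs, for each incompressible solid torus $S$ in the thin part, a \emph{compressing disk} of area $\leq At$. An area bound on a map $f_t : V \to M$ of a simplicial complex does not directly give this: the preimage $f_t^{-1}(S)$ can be combinatorially arbitrarily complicated. The paper spends an entire section (Section~\ref{sec:constructanalysispolygonalcomplex}) building $V$ very carefully from the Seifert decomposition of a suitable finite cover, developing a combinatorial distance on the cell structure of $\td M$, proving a convexity estimate for it, and then intersecting a family of polyhedral balls with a lift $\td S$ to extract a ``compressing multiply connected domain'' of bounded area, which is only afterwards upgraded to a disk (Lemma~\ref{Lem:makediskfrommultiannulus}). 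None of this is a spreading-versus-area argument; it is a delicate topological construction you do not mention.

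Second, and more seriously, this compressing-disk extraction \emph{fails} when $M$ is covered by a $T^2$-bundle over a circle (e.g.\ the Heisenberg or Sol manifold). The paper handles that case by an entirely different mechanism (Proposition~\ref{Prop:maincombinatorialresult}(b) and Case~2 of the proof of Theorem~\ref{Thm:MainTheorem-III}): one uses that such $M$ admits self-covers $\pi_n : M \to M$ of arbitrarily large degree, sets $f_n = \pi_n \circ f_0$, and shows that every homotope of $f_n$ must intersect each incompressible $S^1$-fiber at least $n$ times. Since the area bound $A_0 t$ from the evolution estimate is \emph{independent of $n$}, choosing $n$ large forces the thin part to have no incompressible $S^1$-fiber region at all, hence to be entirely ``good'' with curvature bounded by $Kt^{-1}$. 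Your proposal contains no hint of this case split or of the self-covering trick, and your generic ``spreading'' heuristic would not produce it.

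Finally, on the analytic side: you are right that the boundary terms at the $1$-skeleton are the issue, but the paper's resolution is specific and not obvious --- one does \emph{not} minimize $\area f$ (existence is unclear) but rather $\area f + \lambda\,\ell(f|_{V^{(1)}})$; the Euler--Lagrange identity along edges then reads $\sum_u \nu^{(u)}_E = \lambda\kappa_E$, giving the extra term the sign $-\lambda\int|\kappa_E|^2 \leq 0$, after which one lets $\lambda \to 0$. You should name this trick rather than leave it as ``identify the correct notion''.
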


In the course of the proof of this theorem we will obtain a more detailed description of the geometry of the time-slices $\MM(t)$ for large times $t$.
An even more precise characterization of the long-time behavior would however still be desirable.

We mention three interesting direct consequences of Theorem \ref{Thm:MainTheorem-III} which can be expressed in a completely elementary way and which illustrate the power of this theorem.
None of these results have been known so far to the author's knowledge.
The first consequence is just a restatement of the main theorem in the case in which $\MM$ is non-singular.
Note that even in this particular case our proof does not simplify significantly apart from various technicalities.
In fact, the reader is advised to only consider non-singular Ricci flows upon first reading of this and the previous papers.

\begin{Corollary}
Let $(M, (g_t)_{t \in [0, \infty)})$ be a non-singular, long-time existent Ricci flow on a closed $3$-manifold $M$.
Then there is a constant $C < \infty$ such that
\[ |{\Rm_t}| < \frac{C}{t+1} \qquad \text{for all} \qquad t \geq 0. \]
\end{Corollary}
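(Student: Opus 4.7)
The plan is to reduce the corollary to Theorem \ref{Thm:MainTheorem-III} by rescaling the initial metric and using the fact that a non-singular flow is a trivial instance of a Ricci flow with surgery.

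First I would rescale: choose $\lambda>0$ large enough that the metric $\widetilde g_0 = \lambda^2 g_0$ is normalized in the sense required by Theorem \ref{Thm:MainTheorem-III} (this is possible because $M$ is closed, so $|{\Rm_{g_0}}|$ is bounded and the injectivity radius and volume ratios are controlled). The parabolically rescaled flow $\widetilde g_t = \lambda^2 g_{t/\lambda^2}$ is then a non-singular long-time existent Ricci flow on $M$ with normalized initial data. Since $\widetilde g_t$ has no singular times, it qualifies as a Ricci flow with surgery with an empty set of surgery times, and is therefore performed by $\delta(t)$-precise cutoff for \emph{any} prescribed function $\delta$, in particular for the one supplied by Theorem \ref{Thm:MainTheorem-III} (for any fixed choice of surgery model).

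Applying Theorem \ref{Thm:MainTheorem-III} to $\widetilde g_t$ yields constants $\widetilde T, \widetilde C < \infty$ with $|{\Rm_{\widetilde g_t}}| < \widetilde C\, t^{-1}$ for all $t \geq \widetilde T$. On the compact interval $[0, \widetilde T]$ the smooth flow $\widetilde g_t$ has uniformly bounded curvature, say $|{\Rm_{\widetilde g_t}}| \leq K$, because $M$ is closed and the flow is smooth up to $t = \widetilde T$. Combining the two bounds produces a single constant $\widetilde C' < \infty$ with
\[ |{\Rm_{\widetilde g_t}}| < \frac{\widetilde C'}{t+1} \qquad \text{for all } t \geq 0, \]
for instance $\widetilde C' = \max\{K(\widetilde T + 1),\ \widetilde C(1 + 1/\widetilde T)\}$.

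Finally I would undo the rescaling. Since $|{\Rm_{g_t}}| = \lambda^2 |{\Rm_{\widetilde g_{\lambda^2 t}}}|$, the displayed bound above gives
\[ |{\Rm_{g_t}}| < \frac{\lambda^2 \widetilde C'}{\lambda^2 t + 1} \leq \frac{\lambda^2 \widetilde C'}{t+1}\qquad \text{for all } t\geq 0, \]
so the corollary holds with $C = \lambda^2 \widetilde C'$. There is no substantive obstacle here: the entire content of the corollary is Theorem \ref{Thm:MainTheorem-III} together with the trivial observations that every non-singular flow can be viewed as a (surgery-free) flow with surgery and that smoothness on a compact time interval upgrades the asymptotic bound $Ct^{-1}$ to the pointwise bound $C/(t+1)$.
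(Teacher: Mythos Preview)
Your proposal is correct and matches the paper's own stance: the paper does not give a separate proof but simply declares the corollary to be ``a restatement of the main theorem in the case in which $\MM$ is non-singular.'' You have spelled out the routine details the paper leaves implicit---rescaling to normalize the initial data, observing that a surgery-free flow vacuously satisfies any $\delta(t)$-precise cutoff, and combining the large-time bound from Theorem~\ref{Thm:MainTheorem-III} with the trivial compact-time bound---and these are exactly the steps needed.
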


The next result provides a characterization of when the condition of the previous corollary can indeed be satisfied.
\begin{Corollary}
Let $M$ be a closed $3$-manifold.
Then there exists a long-time existent Ricci flow $(g_t)_{t \in [0, \infty)}$ on $M$ if and only if $\pi_2(M) = \pi_3(M) = 0$.
\end{Corollary}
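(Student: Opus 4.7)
The plan is to first translate the hypothesis $\pi_2(M)=\pi_3(M)=0$ into the geometric statement that $M$ is aspherical, and then handle the two implications separately. By the sphere theorem, $\pi_2(M)=0$ forces $M$ to be irreducible; Hurewicz applied to the universal cover $\widetilde{M}$ then gives $\pi_3(M)=H_3(\widetilde{M})$, which vanishes iff $\widetilde{M}$ is non-compact, i.e.\ iff $M$ is not a spherical space form. In that case $\widetilde{M}$ is a simply connected open $3$-manifold with trivial positive-degree homology, hence contractible by Hurewicz-Whitehead, so $M$ is aspherical; conversely every closed aspherical $3$-manifold has $\pi_2=\pi_3=0$.

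For $(\Leftarrow)$, assuming $M$ is aspherical, I would pick any normalised initial metric on $M$ and invoke Theorem~\ref{Thm:MainTheorem-III} to obtain a Ricci flow with surgery $\MM$ having only finitely many surgeries; let $T^\ast$ be the last surgery time. The key step is to track the topology through the surgeries: since $M$ is irreducible, every embedded $2$-sphere in a component diffeomorphic to $M$ bounds a ball, so cutting along $\delta$-neck spheres and capping off produces one component again diffeomorphic to $M$ together with some $3$-spheres. Those, and any other components on Perelman's discard list (spherical space forms, $S^2\times S^1$, and so on), are thrown away; crucially, $M$ itself is not on that list. An induction on the surgery index then shows that $\MM(T^\ast+1)$ still contains a component homeomorphic to $M$, and the restriction of the smooth post-$T^\ast$ flow to that component, time-translated, is the required long-time non-singular Ricci flow on $M$.

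For $(\Rightarrow)$, suppose a long-time non-singular Ricci flow $(g_t)_{t\in[0,\infty)}$ exists on $M$. Applying the previous corollary gives $|{\Rm_t}|\le C/(t+1)$, which places the flow squarely in Perelman's thick-thin regime: for large $t$ the rescaled slice $(M,t^{-1}g_t)$ decomposes as $M_{\thick}(t)\cup M_{\thin}(t)$, with $M_{\thick}(t)$ subconverging to a finite-volume hyperbolic manifold (see \cite{PerelmanII}) and $M_{\thin}(t)$ collapsed with uniformly bounded rescaled curvature. Hamilton's incompressibility theorem makes the dividing tori $\pi_1$-injective, and the collapsing results of \cite{ShioyaYamaguchi,MorganTian,KLcollapse} identify $M_{\thin}(t)$ with a graph manifold. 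This exhibits $M$ as a geometric decomposition into hyperbolic and Seifert-fibered pieces glued along incompressible tori; each piece is aspherical and asphericity is preserved under such gluings, so $M$ is aspherical and $\pi_2(M)=\pi_3(M)=0$.

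The step I expect to be the main obstacle is the surgery-topology bookkeeping in $(\Leftarrow)$: one must verify that the $\delta(t)$-precise cutoff of \cite[subsection 2.1]{Bamler-longtime-II} genuinely acts topologically as iterated connect-sum decomposition followed by the removal of the explicit spherical diffeomorphism types, and then invoke uniqueness of prime decomposition to guarantee persistence of an $M$-component through the whole sequence of surgeries. The $(\Rightarrow)$ direction is, by comparison, essentially a reading off of the thick-thin structure once the curvature bound from the previous corollary is in hand.
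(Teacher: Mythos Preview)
Your $(\Leftarrow)$ direction matches the paper's argument essentially line by line: normalise, run Perelman's flow with surgery, use irreducibility to see that every surgery is trivial (so an $M$-component survives each step), invoke Theorem~\ref{Thm:MainTheorem-III} to get a last surgery time, and time-shift the remaining smooth flow. The surgery bookkeeping you flag is exactly what the paper packages into the phrase ``all surgeries on $\MM$ are trivial''; no extra prime-decomposition subtlety is needed beyond irreducibility.

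Your $(\Rightarrow)$ direction, however, takes a different route from the paper and has a genuine gap. The paper does \emph{not} go through the thick--thin decomposition here at all; it simply cites finite-time extinction (Perelman, Colding--Minicozzi) together with \cite[Proposition~8.5]{Bamler-longtime-II}: if $\pi_2(M)\neq 0$ or $\pi_3(M)\neq 0$ then $M$ is either reducible or a spherical space form, and in either case every Ricci flow (with surgery) starting from $M$ goes extinct in finite time, so no non-singular long-time flow can exist. Your argument instead tries to read asphericity off the large-$t$ geometry, asserting that ``each piece is aspherical''. This is where it breaks: when the thick part is empty, the collapsing results only tell you $M$ is a graph manifold, and closed graph manifolds include $S^3$, lens spaces, and $S^2\times S^1$, none of which are aspherical. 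Nothing in the thick--thin picture alone excludes these; to rule them out you must know that such manifolds cannot carry a long-time non-singular flow, and that is precisely the finite-time extinction theorem. So your proposed $(\Rightarrow)$ argument, if completed, ultimately reduces to the paper's one-line citation, and as written the step ``each piece is aspherical'' is unjustified.
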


Note that this topological condition is equivalent to $M$ being aspherical which is equivalent to $M$ being irreducible and not diffeomorphic to a spherical space form.

This corollary can be deduced as follows.
Any normalized (see \cite[Definition 2.12]{Bamler-longtime-II}) Riemannian metric $g$ on an aspherical manifold $M$, can be evolved to a long-time existent Ricci flow with surgery $\MM$ on the time-interval $[0, \infty)$, which is performed by $\delta(t)$-precise cutoff, due to Perelman (\cite{PerelmanII}, see also \cite[Proposition 2.16]{Bamler-longtime-II}).
The topological condition ensures that all surgeries on $\MM$ are trivial and hence that every time-slice of $\MM$ has a component which is diffeomorphic to $M$.
By Theorem \ref{Thm:MainTheorem-III}, there is a final surgery time $T < \infty$ on $\MM$.
So the flow $\MM$ restricted to the time-interval $[T, \infty)$ and the component, which is diffeomorphic to $M$, is non-singular.
Shifting this flow in time by $-T$ yields the desired Ricci flow.
The reverse direction is well known, for example it is a direct consequence of \cite[Proposition 8.5]{Bamler-longtime-II} and finite-time extinction (see \cite{PerelmanIII}, \cite{ColdingMinicozziextinction}, \cite{MTRicciflow}).

The third application was inspired by Andrew Sanders.
Denote by $\mathcal{D}$ the space of all smooth Riemannian metrics on a closed and orientable $3$-manifold $M$ and equip $\mathcal{D}$ with the $C^0$ (i.e. Bilipschitz) topology.
Let $\mathcal{D}' \subset \mathcal{D}$ be the subspace of all metrics starting from which the Ricci flow does not develop any singularities.
Then

\begin{Corollary}
$\mathcal{D}'$ is an open subset of $\mathcal{D}$.
\end{Corollary}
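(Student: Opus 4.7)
The plan is to argue by contradiction: suppose $g_0 \in \mathcal{D}'$ but there is a sequence $g_n \to g_0$ in $\mathcal{D}$ with $g_n \notin \mathcal{D}'$. By the first corollary above, the Ricci flow $(g_t^{g_0})_{t \geq 0}$ starting from $g_0$ is long-time existent with $|\Rm_t^{g_0}| \leq C/(t+1)$ for all $t \geq 0$. I aim to show that for $n$ large the smooth Ricci flow $(g_t^{g_n})$ must also exist for all time, contradicting $g_n \notin \mathcal{D}'$.

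The first step is short-time smoothing. Since $(g_t^{g_0})$ has uniformly bounded curvature and each $g_n$ is Bilipschitz-close to $g_0$, results of M.\ Simon on Ricci flow from rough initial data yield a uniform $\tau > 0$ such that the smooth Ricci flow $(g_t^{g_n})$ exists on $[0, \tau]$ and $g_t^{g_n} \to g_t^{g_0}$ in $C^\infty_{\text{loc}}$ for each $t \in (0, \tau]$. Combined with standard finite-time stability via the DeTurck trick (and Shi's derivative estimates on $(g_t^{g_0})$), this yields: for any $T < \infty$ and any $\eps > 0$, once $n$ is sufficiently large, $(g_t^{g_n})$ is smooth on $[0, T]$ with $\|g_t^{g_n} - g_t^{g_0}\|_{C^2} < \eps$ uniformly on $[\tau, T]$. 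In particular, the first singular time $T_n$ of $(g_t^{g_n})$ satisfies $T_n \to \infty$.

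The main obstacle is converting this finite-time closeness into a uniform-in-time curvature bound. I would exploit the decay $|\Rm_t^{g_0}| \leq C/(t+1)$: pick $T_\ast$ so large that $C/(T_\ast+1)$ is smaller than any universal constant we will need, and use the previous paragraph to get $|\Rm_{T_\ast}^{g_n}| \leq 2C/(T_\ast+1)$ for large $n$. A standard doubling-time estimate then extends the smooth flow to $[T_\ast, (1+c)T_\ast]$ for a universal $c > 0$; stability on this longer interval preserves the bound at time $(1+c)T_\ast$, and one iterates along $T_k = (1+c)^k T_\ast$. The delicate point is controlling the accumulated error between $(g_t^{g_n})$ and $(g_t^{g_0})$ across infinitely many stages; this is tractable because $|\Rm_t^{g_0}|$ bounds the coefficients of the linearized equation satisfied by the difference, producing a summable error-growth rate. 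An alternative, closer in spirit to the rest of this paper, is to run the Ricci flow with $\delta(t)$-precise cutoff $\MM_n$ from $g_n$ (Perelman's existence together with Theorem \ref{Thm:MainTheorem-III}), suppose $\MM_n$ has a first surgery at $T_n < \infty$, rescale there, and use the smoothing step to identify the resulting blowup limit with a singular blowup of $(g_t^{g_0})$, contradicting the non-singularity of the latter.
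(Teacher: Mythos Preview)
Your proposal has a genuine gap at the crucial step. The iteration argument via doubling-time estimates does not close: even granting the decay $|\Rm_t^{g_0}| \leq C/(t+1)$, each stability step on $[T_k, T_{k+1}]$ loses a multiplicative factor in the closeness between $g_t^{g_n}$ and $g_t^{g_0}$, and you offer no mechanism (beyond the vague ``summable error-growth rate'') to prevent this loss from blowing up over infinitely many stages. The linearized Ricci--DeTurck equation has coefficients of size $|\Rm_t^{g_0}| \sim C/t$, which integrates to $C\log t$ on $[T_\ast,t]$ and gives only polynomial-in-$t$ control of the error, not a uniform bound. Your alternative via blowup at the first surgery time $T_n$ also fails: since $T_n\to\infty$, the rescaled limit is a $\kappa$-solution (neck or cap) with no relation to any finite-time behaviour of $(g_t^{g_0})$, so there is nothing to contradict.

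The paper's argument avoids iteration entirely by exploiting a feature of Theorem~\ref{Thm:MainTheorem-III} you did not use: the time $T$ after which no surgeries occur depends only on controlled geometric quantities of the initial metric (in effect, on the time-$1$ slice). Since the flows $g_t^{g_n}$ and $g_t^{g_0}$ are $C^k$-close at time $1$ once $n$ is large, this $T$ can be taken \emph{uniform in $n$}. Now simply choose $t_0 > T$ and use your finite-time stability to get the flow from $g_n$ nonsingular on $[0,t_0]$; the main theorem then guarantees no surgeries on $[T,\infty)\supset[t_0,\infty)$, and you are done in one step rather than infinitely many.
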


Note that by standard parabolic theory, for every metric $g \in \mathcal{D}'$ and every time $t_0 > 1$ there is an $\varepsilon > 0$ such that whenever $g' \in \mathcal{D}$ is $(1+\varepsilon)$-Bilipschitz to $g$ then the Ricci flow starting from $g'$ stays $C^k$-close (modulo a diffeomorphism) on the time-interval $[1,t_0]$ to the Ricci flow $(g_t)_{t \in [0, \infty)}$ with $g_0 = g$, for any $k$.
So the Ricci flow with surgery $\MM'$ starting from $(M, g')$ does not develop any singularities before time $t_0$.
It can be seen that the time $T$ from Theorem \ref{Thm:MainTheorem-III}, after which $\MM'$ does not develop any singularities, can be chosen independently of $g'$ if $\varepsilon$ is small enough (due to the $C^k$-closeness at time $1$ all geometric quantities which influence $T$ stay bounded).
So if we pick $t_0 > T$, then $\MM'$ must be non-singular everywhere.

\subsection{Outline of the proof}
The proof of the main theorem relies strongly on the solution of the Geometrization Conjecture and the results of the previous paper \cite{Bamler-longtime-II} where we established Theorem \ref{Thm:MainTheorem-III} under the additional topological condition $\mathcal{T}_2$.
In this paper we will mostly only refer to these results without repeating the complete statements or proofs since they can often be taken over largely unchanged.

As in the previous two papers (\cite{Bamler-longtime-I}, \cite{Bamler-longtime-II}), the key to proving Theorem \ref{Thm:MainTheorem-III} is to establish the curvature bound $|{\Rm_t}| < C t^{-t}$ for large times $t$.
This bound then implies immediately that surgeries stop to occur eventually, since they can only arise where the curvature goes to infinity.
For simplicity, we will only consider Ricci flows without surgery in this subsection, i.e. families of metrics $(g_t)_{t \in [0, \infty)}$ on a closed, orientable $3$-manifold $M$ which satisfy the evolution equation
\[ \partial_t g_t = - 2 \Ric_{g_t}. \]
In the next paragraphs, we will point out how we obtain the curvature bound $|{\Rm_t}| < C t^{-1}$ on $(M, g_t)$ for large $t$.
The case in which the given Ricci flow has surgeries follows similarly, apart from various technicalities.
Consider the rescaled and reparameterized flow $(\td{g}_t)_{t \in (-\infty, \infty)}$ with $\td{g}_t = e^{-t} g_{e^{t}}$ which satisfies the flow equation
\[ \partial_t \td{g}_t = - 2 \Ric_{\td{g}_t} -  \td{g}_t. \]
Then the curvature bound $|{\Rm_t}| < C t^{-1}$ for $g_t$ is equivalent to the bound $|{\Rm_t}| <C$ for $\td{g}_t$.
For clarity we will only consider the metrics $\td{g}_t$ instead of $g_t$ for the rest of this subsection, we will however work with the metric $g_t$ in the main part of this paper.

We first give an overview over the proof in \cite{Bamler-longtime-II} and point out why we had to impose condition $\mathcal{T}_2$ there.
A crucial result in the previous paper was Proposition 8.2 of \cite{Bamler-longtime-II} (also called ``second step'') which asserted that at large times $t$ the curvature (of $\td{g}_t$) is bounded by a uniform constant everywhere except possibly on finitely many pairwise disjoint, embedded, incompressible solid tori $S_1, \ldots, S_m \subset M$.
By ``incompressible'' we mean that each $S^1$-fiber of $S_i \approx S^1 \times D^2$ and every non-trivial multiple of this fiber is non-contractible in $M$.
Moreover, these solid tori $S_i$ had the following geometric property:
For each $S_i$ we can find a ``thin and long'' collar neighborhood of the boundary $2$-torus $\partial S_i$ inside $S_i$.
This collar neighborhood is diffeomorphic to $T^2 \times I$, in such a way that the diameter of each $T^2$-fiber is bounded by $1$ and the distance between its two boundary tori is large depending on the diameter of $S_i$.
Proposition 8.2 also implied that the curvature on each $S_i$ is bounded in terms of its diameter for large $t$.
It hence remained to exclude the possibility that the diameters of the solid tori $S_i$ become unbounded as $t \to \infty$.
Note that Proposition 8.2 is true for all topologies and does not require condition $\mathcal{T}_2$ to hold.

The topological condition $\mathcal{T}_2$ then came into play when we controlled the diameter of the solid tori $S_i$.
Here we used an idea of Hamilton (cf Lemma 7.2 in \cite{Bamler-longtime-II} or \cite{Ham}):
For any (not necessarily connected) surface $\Sigma$, every incompressible map $f_0 : \Sigma \to M$ and every time $t > 0$ let $A_t (f_0)$ be the area of the area-minimizer $f_t : \Sigma \to M$ within the homotopy class of $f_0$.
Using the minimal surface equation and Gau\ss-Bonnet, it is possible to derive an evolution inequality for $A_t(f_0)$ and hence a bound of the form $A_t (f_0) < A(\Sigma) + O(e^{-t/4})$ where $A(\Sigma)$ only depends on the topology of $\Sigma$.
Now condition $\mathcal{T}_2$ ensured that we could choose a so called filling map $f_0 : \Sigma \to M$.
By ``filling'' we roughly mean that the image of every map $f : \Sigma \to M$ which is homotopic to $f_0$ (e.g. $f_t$) has to intersect every incompressible loop $\sigma \subset M$.
For example, if the given manifold was the $3$-torus $T^3$, then $f_0$ would be the embedding of the $3$ standard, pairwise orthogonal $2$-tori.
This property then implied that the image of each map $f_t$ had to intersect each $S^1$-fiber in each solid torus $S_i$ from Proposition 8.2.
It was then not hard extract a compressing disk for $S_i$ from this map, i.e. a map $D^2 \to M$ which maps $\partial D^2$ to a non-contractible loop in the boundary $2$-torus $\partial S_i$.
Obviously, the area of this compressing disk had to be $< A(\Sigma) + O(e^{-t/4})$.

The existence of this compressing disk of bounded area had two implications:
On the one hand, we were able to show in Proposition 8.3 of \cite{Bamler-longtime-II} (aka ``third step'') that, modulo some insignificant modifications, the collar neighborhoods inside the solid tori $S_i$ from Proposition 8.2 were thin, meaning that the $T^2$-fibers were bounded in diameter by a constant which goes to $0$ as the diameter of $S_i$ goes to infinity.
This improved control was an important ingredient for the proof of Proposition 8.4 of \cite{Bamler-longtime-II} which asserted that solid tori $S_i$ of large diameter persist when going far backwards in time.
On the other hand, it helped to rule out exactly the possibility that a solid torus $S_i$ could persist for a long time while having large diameter.
The main idea here was that, again by the same area estimate of Hamilton, the area of a minimal compressing disk for $S_i$ has to go to zero after a certain time which can be computed in terms of $A(\Sigma)$.
This contradiction was used to bound the diameter of the solid tori $S_i$ and concluded the proof of the main theorem of \cite{Bamler-longtime-II}.

The difficulty in proving Theorem \ref{Thm:MainTheorem-III} in the present paper hence comes from the fact that there will in general not be such a filling map for $M$.
For example, if $M$ is the Heisenberg manifold, i.e. a twisted $S^1$-bundle over $T^2$, then the only incompressible surfaces inside $M$ are the $2$-tori which are vertical with respect to the fibration and hence we cannot guarantee that one of their homotopes intersects a given $S^1$-fiber.
The strategy that we employ to get around this issue is to allow $\Sigma$ to have junctions and vertices, i.e. we allow that $\Sigma$ is homeomorphic to a $2$ dimensional simplicial complex which we will in the following denote by $V$.
For example if $V$ is the $2$-skeleton of a triangulation of $M$ and $f_0 : V \to M$ is the inclusion map, then $f_0$ is filling in a generalized sense:
Every incompressible loop $\sigma \subset M$ has to intersect the image of $f_0$ since otherwise it would be contained in the interior of a $3$-simplex.
It can be seen that the same is true for the image of every map $f : V \to M$ which is homotopic to $f_0$.

It thus becomes important to estimate the evolution of the quantity $A_t (f_0)$, which is the infimum the area functional at time $t$ evaluated on all maps $f : V \to M$ homotopic to $f$.
An inspection of the arguments leading to the bound in the case, in which $V = \Sigma$ was a surface, shows that if the existence of an area minimizing map $f_t : V \to M$ is guaranteed, then all previous estimates can be carried out.
Here we have to make use of the Euler-Lagrange equations for $f_t$ along the edges of $V$, which state that around every edge the faces meet in directions which add up to zero.
This implies that certain boundary integrals arising in the application of Gau\ss-Bonnet cancel each other out.

Unfortunately, an existence and regularity theory for such minimizers $f_t$ does not exist to the author's knowledge and seems to be difficult to achieve.
We note that however if we allow the combinatorial structure of $V$ to vary, then a result of Choe (cf \cite{Choe})---which relies heavily on this fact---states that for every Riemannian metric $g$ on $M$, there is a smooth, minimal embedding $f_g : V_g \to M$ such that the complement of $f_g (V_g)$ is a topological ball.
Such a map would be filling, but it seems to be difficult to control the number of vertices of $V_g$ and this number influences the bound $A (V_{g_t})$ in the area evolution estimate of $A_t (f_0)$.
In fact, it is very likely that there are metrics $g_1, g_2, \ldots$ on $M$ for which the number of vertices of the corresponding minimal simplicial complex $V_{g_k}$ diverges.

In order to get around this issue, we will employ the following trick in this paper.
Instead of looking for a minimizer of the area functional, we will try to find a minimizer of the perturbed functional
\[ f \longmapsto \area f + \lambda \ell (f|_{V^{(1)}}). \]
Here $\lambda > 0$ is a small constant, $\ell (f |_{V^{(1)}})$ denotes the sum of the lengths of $f$ restricted to all edges of $V$ and $f : V \to M$ is any map which is homotopic to $f_0$.
The existence and regularity of a minimizer for the perturbed functional follows now easily (apart from some issues arising from possible self-intersections of the $1$-skeleton).
However, the extra term $\lambda \ell (f |_{V^{(1)}})$ introduces an extra term in the Euler-Lagrange equations along each edge of $V$ and hence the boundary integrals in the evolution estimate for the minimum of this perturbed functional will not cancel each other out, but add up to a new term.
Luckily, it will turn out that this term has the right sign to carry out this evolution estimate.
Now letting $\lambda$ go to $0$, we obtain the desired evolution estimate for $A_t (f_0)$ and conclude that there are homotopes $f_t : V \to M$ of $f_0$ whose time-$t$ area is bounded by $A(V) + O(e^{-t/4})$ where $A(V)$ only depends on $V$.

Next, we need to construct compressing disks of bounded area for each solid torus $S_i$ from Proposition 8.2 of \cite{Bamler-longtime-II} from the maps $f_t$.
Unlike in the case in which $V = \Sigma$ was smooth, this task is surprisingly difficult since the preimages $f_t^{-1} (S_i)$ could be arbitrarily complex.
In fact, it is not known to the author whether such a construction works in the special case in which $M$ is covered by a $2$-torus bundle over a circle.
If $M$ is not covered by such a bundle, then we can resolve this issue by choosing $V$ very carefully and by analyzing certain combinatorial distance functions on the universal cover of $V$.
However, we will only be able to construct possibly self-overlapping ``compressing multiply connected domains'' of bounded area.
In a nutshell, these domains will be constructed as follows:
Intersecting a large number of polyhedral spheres in the universal cover $\td{M}$ of $M$ with a lift $\td{S}_i \subset \td{M}$ of $S_i$ yields a large number of ``compressing multiply connected domains'' for $\td{S}_i$ and hence for $S_i$.
We will be able to show that these domains are contained in a bounded number of fundamental domains of $\td{S}_i$ and conclude that their area is bounded on average.
So one of these domains must have controlled area.
Later, this ``compressing multiply connected domains'' of bounded  will be converted into a compressing disk of bounded area.
The existence of such disks will then enable us to use arguments of \cite{Bamler-longtime-II} to establish Theorem \ref{Thm:MainTheorem-III} whenever $M$ is not covered by a $2$-torus bundle over a circle.

Finally, consider the case in which $M$ is covered by a $2$-torus bundle over a circle, i.e. when $M$ is a quotient of the $3$-torus, the Heisenberg manifold or the Solvmanifold.
Assume for simplicity that $M$ itself is a $2$-torus bundle over a circle.
We can then use the topological fact that $M$ admits arbitrarily large finite covers $\pi_n : \widehat{M}_n \to M$ for which $\widehat{M}_n$ is diffeomorphic to $M$.
So we can view each $\pi_n$ as a map from $M$ to $M$ and for any map $f_0 : V \to M$ we can construct the maps $f_{n,0} = \pi_n \circ f_0 : V \to M$.
For each $n$ we can apply the area estimate from before and find homotopes $f_{n,t} : V \to M$ whose time-$t$ area is bounded by $A(V) + O_n (e^{-t/4})$.
Here $A(V)$ is independent of $n$, and $O_n (e^{-t/4})$ is a quantity which depends on $n$ and which goes to $0$ as $t \to \infty$.
Choosing said covers carefully, we can ensure that for all $n \geq 1$ every incompressible loop $\sigma \subset M$ has to intersect the image of $f_n$ at least $n$ times.
For large $n$ and large $t$, this will then imply that the manifold $(M, \td{g}_t)$ can nowhere locally collapse to a $2$-dimensional space.
A consequence of this is that the set of solid tori $S_i$ is empty and hence the curvature is bounded everywhere on $M$.

This paper is organized as follows:
In section \ref{sec:Defpolygonalcomplex} we give a precise definition of the simplicial complexes that we will be working with.
Section \ref{sec:existenceofvminimizers} contains the existence and regularity theory for simplicial complexes which minimize the perturbed area functional.
These results will then be used in section \ref{sec:areaevolutionunderRF} to derive the infimal area evolution estimate for simplicial complexes, i.e. the bound on $A_t(f_0)$.
In section \ref{sec:constructanalysispolygonalcomplex} we construct the simplicial complex $V$ and the map $f_0 : M \to V$, prove the existence of ``compressing multiply connected domains'' if $M$ is not covered by a $2$-torus bundle, and construct the maps $f_{n, 0}$ otherwise.
Finally, in section \ref{sec:mainproof} we convert the ``compressing multiply connected domains'' into compressing disks and finish the proof of Theorem \ref{Thm:MainTheorem-III}.

Note that in the following, all manifolds are assumed to be $3$ dimensional unless stated otherwise.

\subsection{Acknowledgments}
I would like to thank Gang Tian for his constant help and encouragement, John Lott for many long conversations and Richard Schoen for pointing out Choe's work to me.
I am also indebted to Bernhard Leeb and Hans-Joachim Hein, who contributed essentially to my understanding of Perelman's work.
Thanks also go to Simon Brendle, Alessandro Carlotto, Will Cavendish, Otis Chodosh, Daniel Faessler, Robert Kremser, Tobias Marxen, Rafe Mazzeo, Hyam Rubinstein, Andrew Sanders, Stephan Stadler and Brian White.

\section{Simplicial complexes} \label{sec:Defpolygonalcomplex}
We briefly recall the notion of simplicial complexes which will be used throughout the whole paper.
Note that in the following we will only be interested in simplicial complexes which are $2$ dimensional, pure and locally finite.
For brevity we will always implicitly make these assumptions when refering to the term ``simplicial complex''

\begin{Definition}[simplicial complex] \label{Def:simplcomplex}
A \emph{$2$-dimensional simplicial complex} $V$ is a topological space which is the union of embedded, closed $2$-simplices (triangles), $1$-simplices (intervals) and $0$-simplices (points) such that any two distinct simplices are either disjoint or their intersection is equal to another simplex whose dimension is strictly smaller than the maximal dimension of both simplices.
$V$ is called \emph{finite} if the number of these simplices is finite.

In this paper, we assume $V$ moreover to be \emph{locally finite} and \emph{pure}.
The first property demands that every simplex of $V$ is contained in only finitely many other simplices and the second property states that every $0$ or $1$-dimensional simplex is contained in a $2$-simplex.
We will also assume that all $2$ and $1$-simplices are equipped with differentiable parameterizations which are compatible with respect to restriction.

We will often refer to the $2$-simplices of $V$ as \emph{faces}, the $1$-simplices as \emph{edges} and the $0$-simplices as \emph{vertices}.
The \emph{$1$-skeleton} $V^{(1)}$ is the union of all edges and the \emph{$0$-skeleton} $V^{(0)}$ is the union of all vertices of $V$.
The \emph{valency} of an edge $E \subset V^{(1)}$ denotes the number of adjacent faces, i.e. the number of $2$-simplices which contain $E$.
The \emph{boundary} $\partial V$ is the union of all edges of valency $1$.
\end{Definition}

We will also use the following notion for maps from simplicial complexes into manifolds.

\begin{Definition}[piecewise smooth map]
Let $V$ be a simplicial complex, $M$ an arbitrary differentiable manifold (not necessarily $3$-dimensional) and $f : V \to M$ a continuous map.
We call $f$ \emph{piecewise smooth} if $f$ restricted to the interior of each face of $V$ is smooth and bounded in $W^{1,2}$ and if $f$ restricted to each edge $E \subset V^{(1)}$ is smooth away from finitely many points.
\end{Definition}

Given a Riemannian metric $g$ on $M$ and a sufficiently regular map $f : V \to M$ (e.g. piecewise smooth) we define its area, $\area (f)$, to be the sum of $\area (f|_{\Int F})$ over all faces $F \subset V$ and the length of the $1$-skeleton $\ell (f |_{V^{(1)}})$ to be the sum of $\ell (f|_E)$ over all edges $E \subset V^{(1)}$.

\section{Existence of minimizers of simplicial complexes} \label{sec:existenceofvminimizers}
\subsection{Introduction and overview}
Let in this section $(M, g)$ always be a compact Riemannian manifold (not necessarily $3$ dimensional) with $\pi_2 (M) = 0$.
We will also fix the following notation: for every continuous contractible loop $\gamma : S^1 \to M$ we denote by $A(\gamma)$ the infimum over the areas of all continuous maps $f : D^2 \to M$ which are continuously differentiable on the interior of $D^2$, bounded in $W^{1,2}$ and for which $f |_{\partial D^2} = \gamma$.

Consider a finite simplicial complex $V$ as well as a continuous map $f_0 : V \to M$ such that $f_0 |_{\partial V}$ is a smooth embedding.
The goal of this section is motivated by the question of finding an area-minimizer within the same homotopy class of $f_0$, i.e. a map $f : V \to M$ which is homotopic to $f_0 : V \to M$ relative to $\partial V$ and whose area is equal to 
\[ A(f_0) : = \inf \big\{ \area f' \;\; : \;\; f' \simeq f_0 \;\; \text{relative to $\partial V$} \big\}. \]
(Here the maps $f' : V \to M$ are assumed to be continuous and continuously differentiable when restricted to $V \setminus V^{(1)}$ and $V^{(1)}$ as well as bounded in $W^{1,2}$ when restricted to each face of $V$.)
This however seems to be a difficult problem, since it is not clear how to control e.g. the length of the $1$-skeleta of a sequence of minimizers.

To get around these analytical issues, we instead seek to minimize the quantity $\area (f) + \ell (f |_{V^{(1)}})$.
Here $ \ell (f |_{V^{(1)}})$ denotes the sum of the lengths of all edges of $V$ under $f$.
It will turn out that this change has no negative effect when we apply our results to the Ricci flow in section \ref{sec:areaevolutionunderRF}.
In other words, we are looking for maps $f : V \to M$ which are homotopic to $f_0$ relative $\partial V$ and for which $\area (f) + \ell (f |_{V^{(1)}})$ is equal (or close) to
\[ A^{(1)} (f_0) := \inf \big\{ \area (f') + \ell( f' |_{V^1} )  \;\; : \;\; f' \simeq f_0 \;\; \text{relative to $\partial V$} \big\}. \]
We will be able to show that such a minimizer exists in a certain sense.
To be precise, we will find a map $f : V^{(1)} \to M$ of regularity $C^{1,1}$ on the $1$-skeleton which can be extended to $V$ to a minimizing sequence for $A^{(1)}$.
This implies that the sum of $A(f|_{\partial F})$ over all faces $F \subset V$ plus $\ell (f)$ is equal to $A^{(1)}(f_0)$.
So the existence problem for $f$ is reduced to solving the Plateau problem for each loop $f |_{\partial F}$.
The only difficulty that we may encounter then is that $f |_{\partial F}$ can a priori have (finitely or infinitely many) self-intersections.
Unfortunately, taking this possibility into account makes several arguments quite tedious and might obscure the main idea in a forest of details.

The second goal of this section (see subsection \ref{subsec:1skeletonstruc}) is to understand the geometry of a minimizer along the $1$-skeleton.
In the case in which $f : V^{(1)} \to M$ is injective our findings can be presented as follows.
In this case we can solve the Plateau problem for the loop $f |_{\partial F}$ for each face $F \subset V$ and extend $f : V^{(1)} \to M$ to a map $f : V \to M$ which is smooth on $V \setminus V^{(1)}$ and $C^{1,1}$ on $V^{(1)}$.
Consider and edge $E \subset V^{(1)} \setminus \partial V$ of valency $v_E$ and denote by $\kappa : E \to TM$ the geodesic curvature (defined almost everywhere) of $f |_E$ and let $\nu^{(1)}_E, \ldots, \nu^{(v_E)}_E : E \to TM$ be unit vector fields which are normal to $f |_E$ and outward pointing tangential to $f$ to the faces $F \subset V$ which are adjacent to $E$.
A simple variational argument will then yield the identities
\begin{equation} \label{eq:simplenuiskappa}
 \nu^{(1)}_E +  \ldots + \nu^{(v_E)}_E = \kappa_E \qquad \text{and} \qquad \big\langle \nu^{(1)}_E +  \ldots + \nu^{(v_E)}_E, \kappa_E  \big\rangle \geq 0.
\end{equation}
This equality and inequality are the second main result of this section and some time is spent on expressing these identities in the case in which the loops $f |_{\partial F}$ possibly have self-intersections.
We remark that in the case in which $f |_{V^{(1)}}$ is injective this equality and a bootstrap argument can be used to show that $f$ is actually smooth on all of $V$.

Observe however that in general it might happen that two or more edges are mapped to the same segment under $f$ (this could also happen for subsegments of these edges or for subsegments of one and the same edge).
It would then be necessary to take the sum over all faces which are adjacent to either of these edges on the left hand side of (\ref{eq:simplenuiskappa}) and a multiple of $\kappa_E$ on the right hand side of the equation in (\ref{eq:simplenuiskappa}).
These combinatorics become even more involved by the fact that, at least a priori, $f |_{\partial F}$ can for example intersect in a subset of empty interior but positive measure.

\subsection{Construction and regularity of the map on the 1-skeleton} \label{subsec:regularityon1skeleton}
Consider again the given continuous map $f_0 : V \to M$ for which $f_0 |_{\partial V}$ is a smooth embedding and let $f_1, f_2, \ldots : V \to M$ be a minimizing sequence for $A^{(1)}(f_0)$.
To be precise, we want each $f_k$ to be continuous and homotopic to $f_0$ relative $\partial V$, continuously differentiable when restricted to $V \setminus V^{(1)}$ and $V^{(1)}$ as well as bounded in $W^{1,2}$ when restricted to each face and
\[ \lim_{k \to \infty} \big( \area (f_k) + \ell (f_k |_{V^{(1)}} ) \big) = A^{(1)}(f_0). \]
By compactness of $M$ we may assume that, after passing to a subsequence, $f_k |_{V^{(0)}}$ converges pointwise.
Next, observe that every edge $E \subset V^{(1)}$ is equipped with a standard parameterization by an interval $[0,1]$ (see Definition \ref{Def:simplcomplex}).
We can then reparameterize each $f_k$ such that for every edge $E \subset V^{(1)}$ the restriction $f_k |_E$ is parameterized by constant speed.
Since $\ell ( f_k |_E )$ is uniformly bounded, we can pass to another subsequence such that $f_K |_E$ converges uniformly.
So we may assume that $f_k |_{V^{(1)}}$ converges uniformly to a map $f : V^{(1)} \to M$ such that $f |_E$ is Lipschitz and such that $\ell (f |_{V^{(1)}}) \leq \liminf_{k \to \infty} \ell (f_k |_{V^{(1)}})$.
It is our first goal to derive regularity results for $f$.
Before doing this we first characterize the map $f$, so that we can forget about the sequence $f_k$.

\begin{Lemma} \label{Lem:existenceon1skeleton}
The map $f$ is parameterized by constant speed and if $F_1, \ldots, F_n$ are the faces of $V$, then
\[ A( f |_{\partial F_1} ) + \ldots + A (f |_{\partial F_n}) + \ell(f) = A^{(1)} (f_0). \]
Moreover, for every continuous map $f' : V^{(1)} \to M$ which is homotopic to $f_0$ relative to $\partial V$ we have
\[ A( f' |_{\partial F_1} ) + \ldots + A (f' |_{\partial F_n}) + \ell(f') \geq A^{(1)} (f_0). \]
\end{Lemma}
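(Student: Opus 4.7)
My plan is to prove the equation and the inequality simultaneously via two directions. The ``$\leq$'' in the equation is the case $f'=f$ of the inequality, both of which follow from a single gluing construction; the matching ``$\geq$'' uses the minimizing property of the sequence $f_k$ together with lower semicontinuity of the Plateau area $A$. The constant-speed claim is the easy preliminary: each $f_k|_E$ has constant speed $\ell(f_k|_E)$, these speeds are uniformly bounded, and a standard argument (applying lower semicontinuity of arclength separately on $[0,t]$ and $[t,1]$) passes constant-speed parameterization to the uniform limit $f|_E$.

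For the upper bound (covering part 3 and ``$\leq$'' in the equation), fix $f' : V^{(1)} \to M$ homotopic to $f_0|_{V^{(1)}}$ rel $\partial V$ and $\eps > 0$. For each face $F_i$ pick an admissible cap $g_i : D^2 \to M$ with $g_i|_{\partial D^2} = f'|_{\partial F_i}$ (up to the boundary parameterization) and $\area(g_i) < A(f'|_{\partial F_i}) + \eps/n$, and glue them onto $f'$ to produce a continuous $\hat f : V \to M$ with $\hat f|_{V^{(1)}} = f'$ and $\hat f|_{F_i} = g_i$. I claim $\hat f \simeq f_0$ rel $\partial V$: using the homotopy extension property of the CW-pair $(V, V^{(1)})$, I extend the given homotopy from $f'$ to $f_0|_{V^{(1)}}$ on $V^{(1)}$ to a homotopy of $\hat f$ on $V$, ending in some $\bar f : V \to M$ with $\bar f|_{V^{(1)}} = f_0|_{V^{(1)}}$; then $\bar f$ and $f_0$ agree on $V^{(1)}$ and hence differ on each face by a map $S^2 \to M$, null-homotopic because $\pi_2(M) = 0$. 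Therefore
\[ A^{(1)}(f_0) \leq \area(\hat f) + \ell(\hat f|_{V^{(1)}}) < \sum_i A(f'|_{\partial F_i}) + \ell(f') + \eps, \]
and $\eps \to 0$ gives both desired inequalities.

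For the ``$\geq$'' direction, each restriction $f_k|_{F_i}$ is admissible in the Plateau problem for $f_k|_{\partial F_i}$, so $\area(f_k|_{F_i}) \geq A(f_k|_{\partial F_i})$; summing and using $\ell(f|_{V^{(1)}}) \leq \liminf_k \ell(f_k|_{V^{(1)}})$ yields
\[ A^{(1)}(f_0) = \lim_k \big(\area(f_k) + \ell(f_k|_{V^{(1)}})\big) \geq \liminf_k \sum_i A(f_k|_{\partial F_i}) + \ell(f|_{V^{(1)}}). \]
The proof is completed by the lower-semicontinuity estimate $A(f|_{\partial F_i}) \leq \liminf_k A(f_k|_{\partial F_i})$: given a near-optimal cap $h_k$ for $f_k|_{\partial F_i}$, the $C^0$-closeness together with the uniform length bound puts $f_k|_{\partial F_i}$ and $f|_{\partial F_i}$ (for large $k$) into a common normal tubular neighborhood, so they can be joined by a thin annulus built from pointwise shortest geodesics of area $\lesssim \ell(f_k|_{\partial F_i}) \cdot d_{C^0}(f_k|_{\partial F_i}, f|_{\partial F_i})$; concatenating this annulus with $h_k$ produces an admissible cap for $f|_{\partial F_i}$ of area $\leq A(f_k|_{\partial F_i}) + o(1)$. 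The main obstacle I anticipate is verifying admissibility of all glued maps in the Plateau class (continuity, $C^1$ on the interior, $W^{1,2}$ bound) given that the boundary parameterizations are only Lipschitz and may self-intersect; the use of $\pi_2(M) = 0$ to assemble face-by-face constructions into a global homotopy to $f_0$ is the other essential ingredient.
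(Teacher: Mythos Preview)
Your approach is essentially the same as the paper's: both directions are obtained by gluing thin annuli (built from short geodesics via the exponential map) either to the minimizing sequence $f_k|_{F_j}$ to get $A(f|_{\partial F_j}) \leq \liminf_k \area f_k|_{F_j}$, or to near-optimal caps to produce admissible competitors for $A^{(1)}(f_0)$.

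Two small corrections. First, the constant-speed claim is not a preliminary that follows from lower semicontinuity alone: your subinterval argument only gives $\ell(f|_{[0,t]}) \leq (\lim_k \ell_k)\, t$ and $\ell(f|_{[t,1]}) \leq (\lim_k \ell_k)(1-t)$, which sandwiches to constant speed \emph{only if} $\ell(f|_E) = \lim_k \ell(f_k|_E)$; if the limit lost length, there is slack and the parameterization could be nonconstant. That equality is precisely a consequence of the main identity (combine $\sum_j A(f|_{\partial F_j}) \leq \liminf_k \sum_j \area f_k|_{F_j}$ with part~3 applied to $f'=f$), so constant speed should be deduced \emph{after} the equality, as the paper does.

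Second, your anticipated regularity obstacle is real: competitors for $A^{(1)}(f_0)$ are required to be $C^1$ on $V^{(1)}$, so gluing caps directly to a merely continuous $f'$ does not produce an admissible $\hat f$. The paper's fix is to first take smoothings $f'_k \to f'$ with $\ell(f'_k) \to \ell(f')$, then glue the annulus between $f'|_{\partial F_j}$ and $f'_k|_{\partial F_j}$ to a near-optimal cap for $f'|_{\partial F_j}$; the resulting $f''_k$ is admissible and the homotopy to $f_0$ indeed uses $\pi_2(M)=0$ exactly as you outline.
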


\begin{proof}
For every face $F_j$ consider the boundary loop $f |_{\partial F_j} : \partial F_j \approx S^1 \to M$ which is a Lipschitz map.
Recall that the loops $f_k |_{\partial F_j}$ converge uniformly to $f |_{\partial F_j}$.
So using the exponential map and assuming that $k$ is large enough, we can find a homotopy $H_k : \partial F_j \times [0,1] \to M$ between $f_k |_{\partial F_j}$ and $f |_{\partial F_j}$ which is Lipschitz everywhere and smooth on $\partial F_j \times (0,1)$ and whose area goes to $0$ as $k \to \infty$.
Gluing $H_k$ together with $f_k |_{F_j} : F_j \to M$ and mollifying around the seam yields a continuous map $f^*_{j,k} : F_j \to M$ which is smooth on $\Int F_j$ such that $f^*_{j,k} |_{\partial F_j} = f |_{\partial F_j}$ and such that $\area f^*_{j,k} - \area f_k |_{F_j}$ goes to $0$ as $k \to \infty$.
Hence $A( f |_{\partial F_j} ) \leq \liminf_{k \to \infty} \area f_k |_{F_j}$ and we obtain
\begin{multline*}
 A( f |_{\partial F_1} ) + \ldots + A (f |_{\partial F_n}) + \ell(f) \\ \leq \liminf_{k \to \infty} \big( \area (f_k |_{\partial F_1}) + \ldots + \area (f_k |_{\partial F_n}) + \ell ( f_k |_{V^{(1)}} ) \big) = A^{(1)} (f_0).
\end{multline*}
It remains to establish the reverse inequality, i.e. the last statement of the claim.
This will then also imply that $\lim_{k \to \infty} \ell (f_k |_{\partial V^{(1)}} ) = \ell (f)$ and hence that $f$ is parameterized by constant speed.

Consider a continuous map $f' : V^{(1)} \to M$.
We can find smoothings $f'_k : V^{(1)} \to M$ of $f'$ such that $f'_k$ converges uniformly to $f'$ and $\lim_{k \to \infty} \ell (f'_k) = \ell (f')$.
Now for every face $F_j$, we can again find a homotopy $H'_k : \partial F_j \times [0,1] \to M$ of small area and by another gluing argument, we can construct continuous maps $f'_{j,k} : F_j \to M$ which are smooth on $\Int F_j$ such that $\lim_{k \to \infty} \area f'_{j,k} = A ( f' |_{\partial F_j} )$.
Hence, we can extend each $f'_k : V^{(1)} \to M$ to a map $f''_k : V \to M$ of the right regularity such that 
\[ A^{(1)} (f_0) \leq \lim_{k \to \infty} \big( \area(f''_k) + \ell (f''_k |_{V^{(1)}}) \big) = A(f' |_{\partial F_1}) + \ldots + A(f' |_{\partial F_n}) + \ell (f'). \]
This proves the desired result.
\end{proof}

We also need the following isoperimetric inequality.

\begin{Lemma} \label{Lem:isoperimetricdistancetoaxis}
Let $\gamma : S^1 \to \IR^n$ be a rectifiable loop such that $\gamma$ restricted to the lower semicircle parameterizes an interval on the $x_1$-axis $x_2 = \ldots = x_n = 0$ and $\gamma$ restricted to the upper semicircle has length $l$.
Denote by $a$ the maximum of the euclidean norm of the $(x_2, \ldots, x_n)$ component of all points on $\gamma$ (i.e. the maximal distance from the $x_1$-axis).
Then $A(\gamma) \leq la$.
\end{Lemma}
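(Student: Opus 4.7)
The plan is to exhibit an explicit Lipschitz filling of $\gamma$ by coning the upper semicircle's image down onto its orthogonal projection to the $x_1$-axis, and to bound the resulting area directly. Parameterize the upper semicircle by arc length as $\gamma_+ : [0, l] \to \IR^n$, and write $\gamma_+(t) = (p_1(t), p_2(t), \ldots, p_n(t))$, so that $|\dot\gamma_+| \leq 1$ almost everywhere. Since $\gamma_+(0)$ and $\gamma_+(l)$ coincide with the endpoints of the lower semicircle's image on the $x_1$-axis, we have $p_i(0) = p_i(l) = 0$ for all $i \geq 2$.

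Next I would define the coning map
\[
F : [0, l] \times [0, 1] \longrightarrow \IR^n, \qquad F(t, s) := \bigl(\, p_1(t),\; s\, p_2(t),\; \ldots,\; s\, p_n(t) \,\bigr).
\]
By inspection $|\partial_s F|^2 = p_2(t)^2 + \cdots + p_n(t)^2 \leq a^2$ by definition of $a$, while $|\partial_t F|^2 = \dot p_1^2 + s^2(\dot p_2^2 + \cdots + \dot p_n^2) \leq |\dot\gamma_+|^2 \leq 1$. Since the area form of $F$ is pointwise bounded by $|\partial_t F|\,|\partial_s F| \leq a$, one obtains $\area(F) \leq \int_0^l \!\int_0^1 a\, ds\, dt = la$. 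The vertical sides $\{0\} \times [0,1]$ and $\{l\} \times [0,1]$ are collapsed by $F$ to the single points $\gamma_+(0)$ and $\gamma_+(l)$ respectively, so $F$ descends to a continuous Lipschitz map from a topological disk whose boundary is the concatenation of $\gamma_+$ with the projection path $t \mapsto (p_1(t), 0, \ldots, 0)$ on the $x_1$-axis.

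To upgrade this to a filling of $\gamma$ itself, I would glue on an auxiliary disk that fills the loop formed by the above projection path together with the lower semicircle of $\gamma$. Since that auxiliary loop lies entirely in the contractible $x_1$-axis, it admits a Lipschitz filling whose image is contained in the axis and therefore has vanishing $2$-dimensional area. The resulting Lipschitz filling of $\gamma$ thus has total area at most $la$, and a standard interior-mollification argument --- adjusted near the boundary so as not to perturb the prescribed values $\gamma$ --- produces fillings of the regularity required in the definition of $A(\gamma)$ with area at most $la + \varepsilon$ for arbitrary $\varepsilon > 0$. This yields $A(\gamma) \leq la$.

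I do not expect any serious obstacle. The only mild technicality is that $\gamma_+$ is only Lipschitz, so its derivatives are interpreted almost everywhere in the sense of Rademacher; this is harmless since the area of a Lipschitz map is controlled by the pointwise almost-everywhere norms of its partial derivatives.
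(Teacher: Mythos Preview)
Your proof is correct and takes a genuinely different route from the paper's. The paper argues indirectly: it subdivides $[0,l]$ by points $s_0 < \cdots < s_m$, drops vertical segments from each $\gamma(s_i)$ to the axis, and shows that the resulting sum $A^*(s_0,\ldots,s_m)$ of spanning areas is monotone under coarsening (via a case analysis on the relative positions of the projected points) and hence dominates $A(\gamma)$. Each small piece is then estimated by a trapezoid area $a\,\ell(\gamma|_{[s_{i-1},s_i]})$ plus an isoperimetric error $C\,\ell(\gamma|_{[s_{i-1},s_i]})^2$, and the error vanishes as the mesh goes to zero.

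Your argument sidesteps all of this by writing down a single explicit Lipschitz filling: the vertical cone $F(t,s) = (p_1(t), s\,p_2(t),\ldots, s\,p_n(t))$ immediately gives area $\leq la$ from the pointwise bounds $|\partial_t F|\leq 1$, $|\partial_s F|\leq a$, and the residual loop on the $x_1$-axis is filled for free. This is shorter, avoids both the isoperimetric inequality and the subdivision/monotonicity lemma, and produces a concrete spanning map rather than a limit of estimates. The only cost is the mild technicality you flag --- passing from a Lipschitz filling to competitors that are $C^1$ on the interior with the same boundary trace --- but this is the standard density of smooth maps in $W^{1,2}$ with prescribed trace, and the paper's own argument implicitly relies on equally routine approximation when it bounds $A(\gamma'_i)$ by the area of a flat trapezoid.
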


\begin{proof}
Let $0 = s_0 < s_2 < \ldots < s_m = l$ be a subdivision of the interval $[0,l]$, let $y_i$ be the $x_1$-coordinate of $\gamma(s_i)$ and $\sigma_i$ a straight segment between $\gamma(s_i)$ and $(y_i, 0, \ldots, 0)$ for each $i = 0, \ldots, m$.
For each $i = 1, \ldots, m$ let $\gamma_i$ be the loop which consists of $\gamma |_{[s_{i-1}, s_i]}, \sigma_{i-1}, \sigma_i$ and the interval between $(y_{i-1}, 0, \ldots, 0), (y_i, 0, \ldots, 0)$.
We set $A^* (s_0, \ldots, s_m) = A(\gamma_1) +  \ldots + A( \gamma_m)$.

Let $i \in \{ 1, \ldots, m-1 \}$.
We claim that if we remove $s_i$ from the list of subdivisions, then the value of $A^* (s_0, \ldots, s_m)$ does not increase.
In fact, if $y_{i-1} \leq y_i \leq y_{i+1}$ or $y_{i-1} \geq y_i \geq y_{i+1}$, this is claim is true since any two maps $h_i, h_{i+1} : D^2 \to M$ which restrict to $\gamma_i, \gamma_{i+1}$ on $S^1$ can be glued together along $\sigma_i$.
On the other hand, if $y_{i-1} \leq y_{i+1} \leq y_i$, then $h_i, h_{i+1}$ can be glued together along the union of $\sigma_i$ with the interval between $(y_{i+1}, 0, \ldots, 0), (y_i, 0, \ldots, 0)$.
The other cases follow analogously.
Multiple application of this finding yields $A(\gamma) \leq A^* (s_0, \ldots, s_m)$.

Let now $\gamma'_i$ be the loop which consists of the straight segment between  $\gamma(s_{i-1}), \linebreak[1] \gamma(s_i)$, the segments $\sigma_{i-1}, \sigma_i$ and the interval between $(y_{i-1}, 0, \ldots, 0), (y_i, 0, \ldots, 0)$.
Moreover, let $\gamma''_i$ be the loop which consists of straight segment between $\gamma(s_{i-1}), \linebreak[1] \gamma(s_i)$ and $\gamma|_{[s_{i-1}, s_i]}$.
Then by the isoperimetric inequality and some basic geometry
\[ A(\gamma_i) \leq A(\gamma'_i) + A(\gamma''_i) \leq a \ell (\gamma|_{[s_{i-1}, s_i]}) + C (\ell (\gamma|_{[s_{i-1}, s_i]}))^2. \]
Adding up this inequality for all $i = 1, \ldots, m$ yields
\[ A(\gamma) \leq A^* (s_0, \ldots, s_m) \leq a l + \sum_{i=1}^m C (\ell (\gamma|_{[s_{i-1}, s_i]}))^2. \]
The right hand side converges to $0$ as the mesh size of the subdivisions approaches zero.
\end{proof}

The following is our main regularity result.

\begin{Lemma} \label{Lem:regularityon1skeleton}
The map $f : V^{(1)} \to M$ has regularity $C^{1,1}$ on every edge $E \subset V^{(1)}$.
\end{Lemma}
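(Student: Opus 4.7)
The plan is to deduce $C^{1,1}$ regularity from the minimality characterization of $f$ in Lemma \ref{Lem:existenceon1skeleton} via a Campanato-type excess-decay estimate for the unit tangent of $f|_E$. Boundary edges are trivial since $f|_{\partial V} = f_0|_{\partial V}$ is a smooth embedding, so I would fix $E \subset V^{(1)} \setminus \partial V$, pick an interior point $p \in \Int E$, and work in normal coordinates around $f(p)$. Since $f|_E$ is Lipschitz with constant-speed parameterization, for every sufficiently short sub-interval $[a,b] \subset \Int E$ containing $p$, the image $f([a,b])$ lies in a geodesic ball in which $g$ is arbitrarily close to Euclidean.

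Let $\gamma$ be the minimizing geodesic from $f(a)$ to $f(b)$, and define the competitor $f' : V^{(1)} \to M$ to agree with $f$ off $[a,b]$ and to equal $\gamma$ on $[a,b]$. Then $f' \simeq f_0$ relative to $\partial V$ via a homotopy supported in the ball, so Lemma \ref{Lem:existenceon1skeleton} gives
\[ \ell(f|_{[a,b]}) - \ell(\gamma) \;\leq\; \sum_{F \supset E} \bigl( A(f'|_{\partial F}) - A(f|_{\partial F}) \bigr), \]
with the sum taken over the $v_E$ faces adjacent to $E$. For each such $F$ I would construct a thin strip $S_F \subset M$ between $f|_{[a,b]}$ and $\gamma$ (joining each $f(s)$ to its nearest-point projection on $\gamma$ by a short geodesic), and glue $S_F$ onto an almost-minimizer of the Plateau problem for $f|_{\partial F}$ to obtain an admissible surface for $f'|_{\partial F}$. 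This yields $A(f'|_{\partial F}) - A(f|_{\partial F}) \leq \area(S_F)$, and Lemma \ref{Lem:isoperimetricdistancetoaxis} applied in normal coordinates around $\gamma$ bounds $\area(S_F) \leq C\,\ell(f|_{[a,b]})\,d_{\max}$, where $d_{\max} := \max_{s \in [a,b]} \dist(f(s),\gamma)$. Summing gives the key excess-length inequality
\[ \ell(f|_{[a,b]}) - \ell(\gamma) \;\leq\; C\,v_E\,\ell(f|_{[a,b]})\,d_{\max}. \]

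Now I would convert this into a Campanato-type $L^2$-oscillation bound on the unit tangent $v := (f|_E)'/c_E$ (which exists a.e.\ by Rademacher), where $c_E > 0$ is the constant speed and $w := v - \bar v$ is the mean-zero part on $[a,b]$. In normal coordinates, the identity $|\bar v|^2 = 1 - \|w\|_{L^2}^2/(b-a)$ together with $1 - \sqrt{1-x} \geq x/2$ yields $\ell(f|_{[a,b]}) - \ell(\gamma) \geq \tfrac{c_E}{2}\,\|w\|_{L^2}^2$ up to $O((b-a)^3)$ metric corrections, while Cauchy--Schwarz applied to $\alpha(s) - \gamma(s) = c_E \int_a^s w$ gives $d_{\max} \leq c_E\sqrt{b-a}\,\|w\|_{L^2}$. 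Substituting both into the excess-length inequality and cancelling one factor of $\|w\|_{L^2}$ leads to
\[ \int_a^b |v - \bar v|^2\,ds \;\leq\; C'\,(b-a)^3, \]
uniformly over all sufficiently short sub-intervals $[a,b] \subset \Int E$. The Campanato characterization of Lipschitz functions in one dimension (the Morrey embedding for $L^{2,3}$) then forces $v$ to be Lipschitz on every compact subset of $\Int E$, and one-sided limits at the endpoints exist by the same estimate; hence $f|_E \in C^{1,1}$.

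The main obstacle will be the construction of the strip $S_F$ when $f|_{\partial F}$ has self-intersections, which is explicitly anticipated at the end of Subsection \ref{subsec:regularityon1skeleton}. Since $S_F$ is built locally in a normal chart about the very short geodesic $\gamma$, the quantitative area estimate of Lemma \ref{Lem:isoperimetricdistancetoaxis} is insensitive to global combinatorics, and the Plateau comparison $A(f'|_{\partial F}) \leq A(f|_{\partial F}) + \area(S_F)$ still goes through by a mollification-and-gluing argument near the seam of the strip, of the same type already used in the proof of Lemma \ref{Lem:existenceon1skeleton}.
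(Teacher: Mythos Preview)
Your proposal is correct and shares the same opening moves as the paper: replace $f|_{[a,b]}$ by the geodesic chord $\gamma$, bound the area defect on each adjacent face by Lemma \ref{Lem:isoperimetricdistancetoaxis}, and arrive at the excess-length inequality $\ell(f|_{[a,b]}) - \ell(\gamma) \leq C v_E\,\ell(f|_{[a,b]})\,d_{\max}$ (this is exactly the paper's inequality (\ref{eq:lminusd}), with your $d_{\max}$ playing the role of the paper's $a$).

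Where you diverge is in how you extract $C^{1,1}$ from this inequality. The paper couples (\ref{eq:lminusd}) with the elementary Euclidean bound $d^2 + 4a^2 \leq (l')^2$ to obtain the pointwise estimates $l-d \leq C l^3$ and $a \leq C l^2$, then converts these into an angle bound $\alpha \leq C l$ between chords, and finally runs a dyadic telescoping argument to prove existence and Lipschitz continuity of the one-sided derivatives. You instead rewrite the excess length as $\ell - \ell(\gamma) \geq \tfrac{c_E}{2}\|v-\bar v\|_{L^2}^2$ and the height as $d_{\max} \leq c_E\sqrt{b-a}\,\|v-\bar v\|_{L^2}$, cancel one factor of $\|v-\bar v\|_{L^2}$, and land directly on the Campanato condition $\int_a^b |v-\bar v|^2 \leq C(b-a)^3$, which characterizes Lipschitz functions in one dimension. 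Your route is shorter and more conceptual, at the cost of invoking the Campanato--Morrey embedding as a black box; the paper's route is longer but entirely self-contained and also makes the existence of one-sided derivatives at the vertices of $E$ explicit rather than deducing it from uniform interior estimates. Two small points worth tightening: you should note the trivial case $\ell(f|_E)=0$ (so $c_E>0$), and when you absorb the $O((b-a)^3)$ metric corrections into the quadratic inequality for $\|v-\bar v\|_{L^2}$, the resulting bound is still $\|v-\bar v\|_{L^2} \leq C(b-a)^{3/2}$, but this deserves one line rather than a parenthetical.
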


\begin{proof}
Let $E \subset V^1$ and equip $E$ with the smooth parameterization of an interval.
We now establish the regularity of the map $f_E = f |_E : E \to M$ up to the endpoints of $E$.
Assume $\ell (f|_E) > 0$, since otherwise we are done.
After scaling the interval by which $E$ is parameterized, we may assume without loss of generality that $f_E$ is parameterized by arclength, i.e. that
\[ \ell (f_E |_{[s_1, s_2]}) = s_2 - s_1 \qquad \text{for every interval} \qquad [s_1, s_2] \subset E. \]

Let $\varepsilon > 0$ be smaller than the injectivity radius of $M$ and observe that whenever we choose exponential coordinates $(y_1, \ldots, y_n)$ around a point $p \in M$ then under these coordinates we have the following comparison with the Euclidean metric $g_{\textnormal{eucl}}$:
\begin{equation} \label{eq:gminusgeuclforregularity}
 | g - g_{\textnormal{eucl}} | < C_1 r^2
\end{equation}
for some uniform constant $C_1$ (here $r$ denotes the radial distance from $p$).
Assume moreover that $\varepsilon$ is chosen small enough such that $g$ is $2$-Bilipschitz to $g_{\textnormal{eucl}}$.

Consider three parameters $s_1, s_2, s_3 \in E$ such that $s_1 < s_2 < s_3 < s_1 + \frac1{10} \varepsilon$.
We set $x_i = f_E(s_i)$, $l = |s_3 - s_1| = \ell( f_E |_{[s_1, s_3]})$ as well as $d = \dist(x_1, x_3)$ and we denote by $\gamma$ a minimizing geodesic segment between $x_1$ and $x_3$.
Consider now the competitor map $f'$ which agrees with $f$ on $V^{(1)} \cup (E \setminus (s_1, s_2))$ and which maps the interval $[s_1, s_3]$ to the segment $\gamma$.

Let us first bound the area gain for such a competitor.
Denote by $\gamma^* : S^1 \to M$ the loop which consists of the curves $f_E |_{[s_1, s_3]}$ and $\gamma$.
Consider geodesic coordinates $(y_1, \ldots, y_n)$ around $x_1$ such that $\gamma$ can be parameterized by $(t, 0, \ldots, 0)$ and denote by $a$ the maximum of the euclidean norm of the $(y_2, \ldots, y_n)$-component of $f_E$ on $[s_1, s_3]$.
By Lemma \ref{Lem:isoperimetricdistancetoaxis} we have
\[ A(\gamma^*)  \leq 2 l a. \]
(Recall that $g$ is $2$-Bilipschitz to the euclidean metric.)
Let $F_1, \ldots, F_v$ be the faces which are adjacent to $E$.
Then for each $j = 1, \ldots, v$ we have
\[ A( f' |_{\partial F_j} ) \leq A( f |_{\partial F_j} ) + A(\gamma^*) \leq A( f|_{\partial F_j}) + 2la. \]
Moreover, $\ell (f') \leq \ell(f) - l + d$.
So by the inequality of Lemma \ref{Lem:existenceon1skeleton} we obtain
\begin{equation} \label{eq:lminusd}
 l - d \leq 2v \cdot l a .
\end{equation}

Let now $l'$ be the length of the segment parameterized by $f_E |_{[s_1, s_3]}$ with respect to the euclidean metric $g_{\textnormal{eucl}}$ in the coordinate system $(y_1, \ldots, y_n)$.
Then $\tfrac12 l' \leq l \leq 2l'$.
Moreover, we obtain the following improved bound on $l'$ using (\ref{eq:gminusgeuclforregularity}):
\begin{multline*}
 l = \int_{s_1}^{s_3} \sqrt{g(f'_E (s), f'_E(s))} ds \geq \int_{s_1}^{s_3} \sqrt{ (1 - C_1 (l')^2) g_{\textnormal{eucl}} (f'_E(s), f'_E(s))} ds \\
  \geq \sqrt{ 1- 4C_1 l^2} \; l'.
\end{multline*}
By basic trigonometric estimates with respect to the euclidean metric in the coordinate system $(y_1, \ldots, y_n)$ we obtain
\[ d^2 + 4 a^2 \leq (l')^2  . \]
So
\begin{equation} \label{eq:dandarelationprecise}
  (1-4C_1 l^2) (d^2 + 4 a^2) \leq l^2.
\end{equation}
Plugging in (\ref{eq:lminusd}) yields with $c = v^{-2}$
\[ (1-4C_1 l^2) ( l^2 d^2 + c (l-d)^2) \leq  l^4. \]
And hence for small enough $l$
\[ \tfrac{c}2 (l-d)^2 \leq l^2 (l-d)(l+d) + 4 C_1 l^4 d^2 \leq 2 l^3 (l-d) + 4 C_1 l^6. \]
This inequality implies that if $l-d \geq l^3$, then $\tfrac{c}2 (l-d) \leq 2 l^3 + 4 C_1 l^3$.
So in general there is a universal constant $C_2$ such that
\begin{equation} \label{eq:lminusdcubebound}
 l - d \leq C_2 l^3.
\end{equation}
In particular, if $l$ is smaller than some uniform constant, then
\[ \tfrac12 d \leq l \leq 2d. \]
We will in the following always assume that this bound holds whenever we compare the intrinsic and extrinsic distance between two close points on $f_E$.

Next, we plug (\ref{eq:lminusdcubebound}) back into (\ref{eq:dandarelationprecise}) and obtain a bound on $a$ for small $l$:
\[ a \leq \sqrt{\frac{(l-d) (l+d) + 4 C_1 l^2 d^2}{4 ( 1- 4 C_1 l^2)}} \leq \sqrt{ C_2 l^3 \cdot 2 l + 4 C_1 l^2 d^2} \leq C_3 l^2 \]
for some uniform constant $C_3$.
Now consider the point $x_2$ on $f_E ([s_1, s_3])$, set $l = \ell (f_E |_{[s_1,s_2]})$ and let $\alpha_1 \geq 0$ be the angle between the geodesic segment $\gamma$ from $x_1$ to $x_3$ and the geodesic segment $\gamma_1$ from $x_1$ to $x_2$.
Observe that the angle $\alpha$ between $\gamma$ and $\gamma_1$ is the same with respect to both $g$ and $g_{\textnormal{eucl}}$.
Moreover, by our previous conclusion, the length of $\gamma_1$ is bounded from below by $\frac12 l_1$.
So by a basic trigonometric we find that there are uniform constants $\varepsilon_0 > 0$ and $C_4 < \infty$ such that we have
\begin{equation} \label{eq:angleboundforregularity}
 \alpha \leq C_4 l \qquad \text{if} \qquad l_1 \geq \tfrac12 l \quad \text{and} \quad l < \varepsilon_0 .
\end{equation}

We can now establish the differentiability of $f_E$.
Let $s, s', s'' \in E$ such that $s < s' < s'' < s + \varepsilon_0$, set $x = f_E(s)$, $x' = f_E (s')$, $x'' = f_E (s'')$ and choose minimizing geodesic segments $\gamma', \gamma''$ between $x, x'$ and $x, x''$.
Let $\alpha \geq 0$ bet the angle between $\gamma', \gamma''$ at $x$.
For each $i \geq 1$ for which $s + 2^{-i} \in E$ we set $x_i = f_E ( s+ 2^{-i})$ and we choose a minimizing geodesic segment $\gamma_i$ between $x$ and $x_i$.
Choose moreover indices $i' \geq i'' \geq 1$ such that $2^{- i'} \leq s' - s< 2^{- i' + 1}$ and $2^{- i''} \leq s'' - s< 2^{- i'' + 1}$.
Then by (\ref{eq:angleboundforregularity})
\begin{multline*}
\alpha \leq \nangle_x (\gamma'', \gamma_{i''}) + \nangle_x (\gamma_{i''}, \gamma_{i'' + 1}) + \ldots + \nangle_x (\gamma_{i' - 2}, \gamma_{i' - 1}) + \nangle_x (\gamma_{i' - 1}, \gamma') \\
\leq C_4 (s''-s) + C_4 2^{-i''} + C_4 2^{-i'' - 1} + \ldots \\
 \leq C_4 (s'' - s) + 2 C_4 2^{-i''} \leq 3 C_4 (s'' - s).
\end{multline*}
Note also that by (\ref{eq:lminusdcubebound}) the quotients $\frac{\ell(\gamma')}{s'-s}$ and $\frac{\ell(\gamma'')}{s''-s}$ converge to $1$ as $s'' \to s$.
Altogether, this shows that the right-derivative of $f_E$ exists, has unit length and that
\begin{equation} \label{eq:anglebetweenfsandgamma}
 \nangle_x \big( \tfrac{d}{ds^+} f_E (s), \gamma'' \big) \leq 3 C_4  (s'' - s).
\end{equation}
The existence of the left-derivative together with the analogous inequality follows in the same way.
In order to show that the right and left-derivatives agree in the interior of $E$, it suffices to show for any $s \in \Int E$, that the angle between the geodesic segments between $f_E (s), f_E(s-s')$ and $f_E (s), f_E (s+s')$ goes to $\pi$ as $s' \to 0$.
This follows immediately from (\ref{eq:angleboundforregularity}) and the fact that the sum of the angles of small triangles in $M$ goes to $\pi$ as the circumference goes to $0$.

Finally, we establish the Lipschitz continuity of the derivative $f'_E(s)$.
Let $s_1, s_3 \in E$ such that $s_1 < s_3 < s_1 + \varepsilon_0$ and let $s_2 = \frac12 (s_1 + s_3)$ be the midpoint on $f_E$.
Let $\gamma$ and $\gamma_1$ be defined as before and let $\gamma_3$ be the geodesic segment between $x_2 = f_E(s_2)$ and $x_3 = f_E (s_3)$.
Using (\ref{eq:gminusgeuclforregularity}) it is not difficult to see that if we choose geodesic coordinates around $x_1$ or $x_3$, then we can compare angles at different points on $f_E ([s_1, s_3])$ up to an error of $O(|s_3 - s_1|^2)$.
So we can estimate using (\ref{eq:angleboundforregularity}) and (\ref{eq:anglebetweenfsandgamma})
\begin{multline*}
 \nangle ( f'_E(s_1), f'_E(s_3)) \leq \nangle (f'_E (s_1), \gamma_1) + \nangle (\gamma_1, \gamma) \\ + \nangle (\gamma, \gamma_3) + \nangle (\gamma_3, f'_E(s_3)) + O(|s_3 - s_1|^2) \\ \leq 3 C_4 |s_2 - s_1| + 2 C_4 |s_3 - s_1| + 3 C_4 |s_3 - s_2| + O(|s_3 - s_1|^2)  \leq C_5 |s_3 - s_1|
\end{multline*}
for some uniform constant $C_5$.
This finishes the proof.
\end{proof}

Now if for every face $F \subset V$ the map $f |_{\partial F}$ is injective (i.e. an embedding in a proper parameterization), then by solving the Plateau problem for each face (cf \cite{Mor}) we obtain an extension $\td{f} : V \to M$ of $f$ which is homotopic to $f_0$ and for which $\area \td{f} + \ell ( \td{f} |_{V^{(1)}} ) = A^{(1)} (f_0)$.
So in this case, the existence of the minimizer is ensured.
In general however, we need take into account the possibility that $f |_{\partial F}$ has self-intersections.
Note that there might be infinitely many such self-intersections and the set of self-intersections might even have positive $1$ dimensional Hausdorff measure.
This adds some technicalities to the following discussion.

\subsection{Results on self-intersections and the Plateau problem}
The following Lemma states that two intersecting curves agree up to order $2$ almost everywhere on their set of intersection.

\begin{Lemma} \label{Lem:speedandcurvatureagreeonintersection}
Let $\gamma : [0, l] \to M$ be a curve of regularity $C^{1,1}$ which is parameterized by arclength.
Then the geodesic curvature along $\gamma$ is defined almost everywhere, i.e. there is a vector field $\kappa : [0,l] \to TM$ along $\gamma$ (i.e. $\kappa (s) \in T_{\gamma(s)} M$ for all $s \in [0,l]$) and a null set $N \subset [0,l]$ such that at each $s \in [0,l] \setminus N$ the curve $\gamma$ is twice differentiable and the geodesic curvature at $s$ equals $\kappa (s)$.

Consider now two such curves $\gamma_1 : [0,l_1] \to M$, $\gamma_2 : [0,l_2] \to M$ with geodesic curvature vector fields $\kappa_1, \kappa_2$.
Assume additionally that $\gamma_1, \gamma_2$ are injective embeddings which are contained in a coordinate chart $(U, (x_1, \ldots, x_n))$ in such a way that there is a vector $v \in \IR^n$ with the property that $\langle \gamma'_i (s), v \rangle \neq 0$ with respect to the euclidean metric for all $s \in [0, l_i]$ and $i = 1,2$.

Let $X_1 = \{ s \in [0,l_1] \;\; : \;\; \gamma_1(s) \in \gamma_2([0,l_2]) \}$ and $X_2 = \{ s \in [0,l_2] \;\; : \;\; \gamma_2(s) \in \gamma_1([0,l_1]) \}$ be the parameter sets of self-intersections.
Then there is a continuously differentiable map $\varphi : [0,l_1] \to \IR$ whose derivative  vanishes nowhere such that $\varphi(X_1) = X_2$ and such that $\gamma_1(s) = \gamma_2 (\varphi(s))$ whenever $s \in X_1$.
Moreover, there are null sets $N_i \subset X_i$ such that $\varphi(N_1) = N_2$ and such that for all $s \in X_1 \setminus N_1$ we have $\varphi'(s) = \pm 1$, $\gamma'_1 (s) = \gamma'_2 (\varphi(s)) \varphi'(s)$ and $\kappa_1 (s) = \kappa_2 (\varphi(s))$.
\end{Lemma}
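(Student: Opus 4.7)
The first assertion is a direct application of Rademacher's theorem: in local coordinates, $\gamma \in C^{1,1}$ means $\gamma'$ is Lipschitz, hence differentiable outside a null set $N$, and at points of $[0,l] \setminus N$ the geodesic curvature is $\kappa(s) = \gamma''(s) + \Gamma(\gamma'(s), \gamma'(s))\big|_{\gamma(s)}$ with $\Gamma$ the Christoffel symbols.

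For the second assertion I would first construct $\varphi$ via the transverse projection. Let $\pi : U \to \IR$, $\pi(x) = \langle x, v \rangle$, be the linear projection onto the $v$-direction in the chart. The assumption implies that $\psi_i := \pi \circ \gamma_i : [0, l_i] \to \IR$ is a $C^{1,1}$ map with nowhere vanishing derivative, hence a strictly monotone $C^{1,1}$-diffeomorphism onto its image $I_i$. By injectivity of $\psi_2$, for each $s \in X_1$ the unique $t \in [0, l_2]$ with $\gamma_2(t) = \gamma_1(s)$ equals $\psi_2^{-1}(\psi_1(s))$. I extend $\psi_2^{-1}$ to a $C^{1,1}$ map $\Psi : \IR \to \IR$ with nowhere vanishing derivative (e.g.\ linearly outside $I_2$) and set $\varphi := \Psi \circ \psi_1$. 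Then $\varphi$ is $C^{1,1}$ on $[0, l_1]$ with $\varphi'$ nowhere zero, the identity $\gamma_1(s) = \gamma_2(\varphi(s))$ holds on $X_1$, and $\varphi(X_1) = X_2$.

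The a.e.\ identities on $X_1$ I would obtain by analyzing the defect $h := \gamma_1 - \gamma_2 \circ \varphi$ (read in the chart), which is $C^{1,1}$ and vanishes on $X_1$. Let $N_1 \subset X_1$ collect the non-density points of $X_1$ (null by the Lebesgue density theorem) together with the points where either $\gamma_1$, $\varphi$, or $\gamma_2 \circ \varphi$ fails to be twice differentiable (null by Rademacher applied to the relevant Lipschitz first derivatives, using that $\varphi$ is bi-Lipschitz on $[0,l_1]$). For $s_0 \in X_1 \setminus N_1$, pick $s_k \in X_1 \setminus \{s_0\}$ with $s_k \to s_0$ and use $h(s_k) = h(s_0) = 0$ in the Taylor expansion
\[ 0 = h'(s_0)(s_k - s_0) + \tfrac12 h''(s_0)(s_k - s_0)^2 + o\big((s_k - s_0)^2\big); \]
dividing successively by $(s_k - s_0)$ and by $(s_k - s_0)^2$ yields $h'(s_0) = 0$ and then $h''(s_0) = 0$. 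The former gives $\gamma_1'(s_0) = \gamma_2'(\varphi(s_0)) \varphi'(s_0)$ and, via $|\gamma_i'| \equiv 1$, the relation $\varphi'(s_0) = \pm 1$.

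It remains to extract $\kappa_1(s_0) = \kappa_2(\varphi(s_0))$ from $h''(s_0) = 0$. Expanding in the chart gives $\gamma_1''(s_0) = \gamma_2''(\varphi(s_0))(\varphi'(s_0))^2 + \gamma_2'(\varphi(s_0)) \varphi''(s_0)$, and the Christoffel terms in the two geodesic curvatures agree (since $\Gamma$ is quadratic in velocity and $\gamma_1'(s_0) = \pm \gamma_2'(\varphi(s_0))$), so the conclusion reduces to showing $\varphi''(s_0) = 0$. This is the main obstacle: $\varphi'$ is only Lipschitz and is not constrained off of $X_1$. I would use that $|\varphi'| = 1$ on the full-measure subset $Y := \{s \in X_1 : |\varphi'(s)| = 1\}$. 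At a density point $s_0$ of $Y$ at which $\varphi''$ exists (still full measure in $X_1$), continuity of $\varphi'$ forces $\varphi' \equiv \varphi'(s_0) \in \{\pm 1\}$ on $Y$ near $s_0$, so that the difference quotient $(\varphi'(s_k) - \varphi'(s_0))/(s_k - s_0)$ taken along $s_k \in Y$ vanishes identically, giving $\varphi''(s_0) = 0$. Enlarging $N_1$ by the non-density points of $Y$ and setting $N_2 := \varphi(N_1)$ (null since $\varphi$ is bi-Lipschitz) finishes the construction.
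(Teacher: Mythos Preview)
Your proof is correct and follows the same overall architecture as the paper's: construct $\varphi$ via the transverse projection $s \mapsto \langle \gamma_i(s), v\rangle$, exclude a null set, then use Taylor expansions along a sequence $s_k \in X_1$ converging to $s_0$ to match first and second derivatives.

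The one genuine difference is in how you handle the second-order step. You insist on proving $\varphi''(s_0)=0$, which forces you to add the density-point argument on $Y=\{|\varphi'|=1\}$: by continuity of $\varphi'$ the sign is locally constant on $Y$, so the difference quotient of $\varphi'$ along $Y$ vanishes, and since $\varphi''(s_0)$ is assumed to exist this pins it to $0$. This is valid. The paper sidesteps the whole issue: it expands $\gamma_1$ and $\gamma_2$ separately (rather than the composite $h=\gamma_1-\gamma_2\circ\varphi$), so only the \emph{first}-order expansion of $\varphi$ enters, and then pairs the resulting second-order identity with an arbitrary vector $v^*$ orthogonal to $\gamma_1'(s_0)$. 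The unwanted term $\gamma_2'(\varphi(s_0))\varphi''(s_0)$ is tangential, so it is killed by the pairing, and since both $\kappa_1(s_0)$ and $\kappa_2(\varphi(s_0))$ are orthogonal to $\gamma_1'(s_0)$ (arclength parameterization), equality on the normal space suffices. The paper therefore never needs $\varphi$ to be twice differentiable at $s_0$, and its exceptional set is just $(N_1'\cup\varphi^{-1}(N_2')\cup\{\text{isolated points of }X_1\})\cap X_1$. Your route is a bit heavier but yields the extra information $\varphi''=0$ a.e.\ on $X_1$, which the paper's argument does not give.
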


\begin{proof}
The first statement follows from the fact that a Lipschitz function is differentiable almost everywhere.
Observe that the geodesic curvature can be computed in terms of the first and second derivative of the curve in a local coordinate system.
So in particular, we only need to consider the case in which $M$ is euclidean space equipped with the cartesian coordinate system $(x_1, \ldots, x_n)$ for the second statement.

Let $\varphi : [0,l_1] \to \IR$ be the composition of the projection $s \mapsto \langle \gamma_1(s), v \rangle$ with the inverse of the projection $s \mapsto \langle \gamma_2(s), v \rangle$.
It is then clear that $\varphi(X_1) = X_2$ and $\gamma_1(s) = \gamma_2 (\varphi(s))$ whenever $s \in X_1$.
Moreover, $\varphi'(s) \neq 0$ for all $s \in [0, l_1]$.

Next, let $N'_i \subset [0,l_i]$ be the null sets from the first part outside of which $\kappa_i$ is equal to the geodesic curvature of $\gamma_i$.
Let moreover, $N^*_1 \subset X_1$ be the set of isolated points of $X_i$.
Clearly $N^*_1$ is a null set.
We now claim that the Lemma holds for $N_1 = X_1 \cap (N'_1 \cup \varphi^{-1} (N'_2) \cup N^*_1 )$ and $N_2 = \varphi(N_1)$.
Obviously, $N_1, N_2$ are null sets.
Let now $s \in X_1 \setminus N_1$.
Observe that for $s'$ close to $s$, we have
\[ \gamma_1 (s') = \gamma_1 (s) + (s'-s) \gamma'_1 (s) + \tfrac12 (s'-s)^2 \kappa_1(s) + o((s'-s)^2). \]
Similarly, for every $s''$ close to $\varphi(s)$
\[ \gamma_2 (s'') = \gamma_1 (s) + (s''- \varphi(s)) \gamma'_2 (\varphi(s)) + \tfrac12 (s''-\varphi(s))^2 \kappa_2(\varphi(s)) + o((s''- \varphi(s))^2). \]
Since $s \notin N^*_1$, there is a sequence of parameters $s'_k \to s$, $s'_k \neq s$ such that with $s''_k = \varphi(s''_k)$ we have $\gamma_1(s'_k) = \gamma_2(s''_k)$.
Due to the fact that $\varphi$ is continuously differentiable,
\[ s''_k - \varphi(s) = \varphi'(s) (s'_k - s) + o(s'_k - s). \]
So we obtain from the expansions for $\gamma_1, \gamma_2$ that
\begin{multline*}
 (s'_k - s) \gamma'_1(s) + o (s'_k - s) = \gamma_1 (s'_k) - \gamma_1 (s) \\
 = \gamma_2 (s''_k) - \gamma_2 (\varphi(s)) = \varphi'(s) (s'_k - s) \gamma'_2 (\varphi(s)) + o (s'_k - s).
\end{multline*}
This implies that $\gamma'_1(s) = \gamma'_2(\varphi(s)) \varphi'(s)$ and $\varphi'(s) = \pm 1$ follows from the fact that $|\gamma'_1(s)| = |\gamma'_2(s)| = 1$.

Next, we take the scalar product of the expansions for $\gamma_1, \gamma_2$ with an arbitrary vector $v^* \in \IR^n$ which is orthogonal to $\gamma'_1(s)$ and hence also to $\gamma'_2(\varphi(s))$.
Then
\begin{multline*}
\tfrac12 (s'_k - s)^2 \big\langle \kappa_1(s), v^* \big\rangle + o ((s'_k -s)^2) = \big\langle \gamma_1(s'_k) - \gamma_1(s), v^* \big\rangle \\
= \big\langle \gamma_2 (s''_k) - \gamma_1 (s), v^* \big\rangle 
= \tfrac12 (s'_k - s)^2 \big\langle \kappa_2(\varphi(s)), v^* \big\rangle + o((s'_k - s)^2).
\end{multline*}
So $\langle \kappa_1(s), v^* \rangle = \langle \kappa_2(\varphi(s)), v^* \rangle$.
Since $\kappa_1(s), \kappa_2(\varphi(s))$ are orthogonal to $\gamma'_1(s)$, we conclude that $\kappa_1(s) = \kappa_2(\varphi(s))$.
\end{proof}

In the remainder of this subsection, we state the solution of the Plateau problem for loops with (possibly infinitely many) self-intersections.
We will hereby always make use of the following terminology.

\begin{Definition}
Let $\gamma : S^1 \to M$ be a continuous and contractible loop.
A continuous map $f : D^2 \to M$ is called a \emph{solution to the Plateau problem for $\gamma$} if $f$ is smooth, harmonic and almost conformal on the interior of $D^2$ and if $\area f = A(\gamma)$ and if there is an orientation preserving homeomorphism $\varphi : S^1 \to S^1$ such that $f |_{S^1} = \gamma \circ \varphi$.
\end{Definition}

We will also need a variation of the Douglas condition.

\begin{Definition}[Douglas-type condition] \label{Def:DouglasCondition}
Let $\gamma : S^1 \to M$ be a piecewise $C^1$ immersion which is contractible in $M$.
We say that $\gamma$ \emph{satisfies the Douglas-type condition} if for any distinct pair of parameters $s, t \in S^1$, $s \neq t$ with $\gamma(s) = \gamma(t)$ the following is true:
Consider the loops $\gamma_1, \gamma_2$ which arise from restricting $\gamma$ to the arcs of $S^1$ between $s$ and $t$.
Then
\[ A(\gamma) < A(\gamma_1) + A(\gamma_2). \]
\end{Definition}

We can now state a slightly more general solution of the Plateau problem.

\begin{Proposition} \label{Prop:PlateauProblem}
Consider a loop $\gamma : S^1 \to M$ which is a piecewise $C^1$-immersion and which is contractible in $M$.
Assume first that $\gamma$ satisfies the Douglas-type condition.
Then the following holds.
\begin{enumerate}[label=(\alph*)]
\item There is a solution $f : D^2 \to M$ to Plateau problem for $\gamma$.
\item If $\gamma$ has regularity $C^{1,1}$ on $U \cap S^1$ for some open subset $U \subset D^2$ then for every $\alpha < 1$ the map $f$ (from assertion (a)) locally has regularity $C^{1, \alpha}$ on $U$.
Moreover, the restriction $f |_{S^1}$ has non-vanishing derivative on $U \cap S^1$ away from finitely many branch points.

Similarly, if $\gamma$ has regularity $C^{m, \alpha}$ for some $m \geq 2$ and $\alpha \in (0,1)$ on $U \cap S^1$, then $f$ locally has regularity $C^{m, \alpha}$ on $U$.
\item Assume that we have a sequence $\gamma_k : S^1 \to M$ of continuous maps which uniformly converge to $\gamma$ and consider solutions of the Plateau problem $f_k : D^2 \to M$ for each such $\gamma_k$.
Then there are conformal maps $\psi_k : D^2 \to D^2$ such that the maps $f_k \circ \psi_k : D^2 \to M$ subconverge uniformly on $D^2$ and smoothly on $\Int D^2$ to a map $f : D^2 \to M$ which solves the Plateau problem for $\gamma$.

Furthermore, if $\gamma$ has regularity $C^{1,1}$ on $U \cap S^1$ for some open subset $U \subset D^2$ and $\gamma_k$ locally converges to $\gamma$ on $U \cap S^1$ in the $C^{1,\alpha}$ sense for some $\alpha \in (0,1)$, then the sequence $f_k$ actually converges to $f$ on $U$ in the $C^{1, \alpha'}$ sense for every $\alpha' < \alpha$.
\end{enumerate}
Next assume that $\gamma$ does not necessarily satisfy the Douglas-type condition and let $p$ be the number of places where $\gamma$ is not differentiable (i.e. where the right and left-derivatives don't agree).
Then there are finitely or countably infinitely many loops $\gamma_1, \gamma_2, \ldots : S^1 \to M$ which are piecewise $C^1$-immersions and contractible in $M$ such that:
\begin{enumerate}[label=(\alph*), start=4]
\item The loops $\gamma_i$ satisfy the Douglas-type condition.
\item Each $\gamma_i$ is composed of finitely many subsegments of $\gamma$ in such a way that each such subsegment of $\gamma$ is used at most once for the entire sequence $\gamma_1, \gamma_2, \ldots$.
\item For each $i$ let $p_i$ be the number of places where $\gamma_i$ is not differentiable.
Then $p_i = 2$ for all but finitely many $i$ and
\[ \sum_i (p_i - 2) \leq  p - 2. \]
\item We have
\[ A(\gamma) = \sum_i A(\gamma_i). \]
\item For any set of solutions $f_1, f_2, \ldots : D^2 \to M$ to the Plateau problems for $\gamma_1, \gamma_2, \ldots$ and every $\delta > 0$ there is a map $f_\delta : D^2 \to M$ and an open subset $D_\delta \subset D^2$ such that the following holds:
$f_\delta |_{S^1} = \gamma$ and $f_\delta$ restricted to each connected component of $D_\delta$ is a diffeomorphic reparameterization of some $f_i$ restricted to an open subset of $D^2$ in such a way that every $i$ is used for at most one component of $D_\delta$.
Moreover
\[ \area f_\delta |_{D^2 \setminus D_\delta} < \delta \qquad \text{and} \qquad \area f_\delta < A(\gamma) + \delta. \]
\end{enumerate}
\end{Proposition}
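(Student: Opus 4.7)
The plan for parts (a)--(c) is to invoke the classical Douglas--Rado--Morrey theory of the Plateau problem together with the boundary regularity results of Hildebrandt, Heinz, Nitsche, and Tomi. For (a), I would minimize the Dirichlet energy $E(f) = \int_{D^2} |df|^2$ over the class of $W^{1,2}$ maps $D^2 \to M$ whose boundary trace is a monotone reparameterization of $\gamma$, normalized by a three-point condition on $S^1$. The Douglas-type condition is precisely what rules out loss of parameterization in a minimizing sequence: a boundary pinch would split $\gamma$ as two subloops $\gamma_1, \gamma_2$ with $A(\gamma) \geq A(\gamma_1) + A(\gamma_2)$, contradicting the hypothesis. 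The limit is harmonic and almost conformal, hence area-minimizing with $\area f = A(\gamma)$. Part (b) follows by reflecting across the $C^{1,1}$ (resp.\ $C^{m,\alpha}$) boundary arc and applying Schauder estimates; the piecewise $C^1$-immersion hypothesis bounds boundary branch points to a finite set. For (c), an upper-semicontinuity bound $\limsup_k A(\gamma_k) \leq A(\gamma)$ is obtained by gluing a small uniform homotopy from $\gamma_k$ to $\gamma$ into a Plateau solution for $\gamma$, as in the proof of Lemma~\ref{Lem:existenceon1skeleton}; combined with lower semicontinuity of area this produces uniform Dirichlet energy bounds, hence weak $W^{1,2}$-subconvergence of the $f_k \circ \psi_k$, smooth interior convergence by elliptic regularity, and $C^{1,\alpha'}$ boundary convergence by the uniform Schauder estimates underlying~(b).

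Parts (d)--(g) will be proved by an iterative splitting construction. If $\gamma$ satisfies the Douglas-type condition, take the trivial decomposition $\gamma_1 = \gamma$; otherwise pick $s \neq t$ with $\gamma(s) = \gamma(t)$ and $A(\gamma) \geq A(\gamma_1^*) + A(\gamma_2^*)$, where $\gamma_1^*, \gamma_2^*$ are the two subloops. The reverse inequality always holds by gluing Plateau solutions at the common point along an arbitrarily thin cap, so $A(\gamma) = A(\gamma_1^*) + A(\gamma_2^*)$; replace $\gamma$ by the pair and iterate on each factor that still fails the Douglas-type condition. At each split, each subloop acquires one identified junction point, contributing at most one new non-differentiable point, while the old non-differentiable points of $\gamma$ are partitioned between the two subloops; this gives $p_1^* + p_2^* \leq p + 2$, equivalently $(p_1^* - 2) + (p_2^* - 2) \leq p - 2$. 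Iterating preserves (e), equation (g), and the corner-excess inequality of (f). Since $p - 2$ is finite, only finitely many splittings can ever increase corner excess; thereafter splittings occur only at the countable set of pre-existing corners. A Zorn's lemma argument on the poset of partial decompositions satisfying (e), (g), and the corner bound yields a maximal decomposition, and maximality forces each surviving $\gamma_i$ to satisfy the Douglas-type condition.

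For (h), I would use (g) to choose finitely many indices $i_1, \ldots, i_N$ with $\sum_{i \notin \{i_1, \ldots, i_N\}} A(\gamma_i) < \delta/2$, then place pairwise disjoint subdisks $D_\delta^{(1)}, \ldots, D_\delta^{(N)} \subset D^2$ whose boundary combinatorics reproduce the way $\gamma_{i_1}, \ldots, \gamma_{i_N}$ are assembled from subarcs of $\gamma$ as in (e), and transplant diffeomorphic reparameterizations of $f_{i_1}, \ldots, f_{i_N}$ into them. The complementary region $D^2 \setminus D_\delta$ is a tree-like union of thin strips running along arcs that map to the finitely many junction points of the decomposition; since each such strip can be mapped arbitrarily close to a single point of $M$, its area contribution can be made less than $\delta/(2N)$, giving $\area f_\delta \leq \sum_j A(\gamma_{i_j}) + \delta/2 < A(\gamma) + \delta$.

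The main obstacle is making the splitting construction rigorous in the presence of self-intersection sets of positive one-dimensional Hausdorff measure (as highlighted in the introduction to this section): one must verify that area-additivity and the corner bound pass to the countable limit, rather than being lost in an uncountable cascade of infinitesimal splittings. This will require combining the two-sided chain $A(\gamma) \leq \sum_i A(\gamma_i) \leq A(\gamma)$, inherited from each finite stage by gluing and by additivity respectively, with the transfinite maximality argument; a secondary technical point is the bookkeeping in (h) that ensures the thin strips in $D^2 \setminus D_\delta$ really realize the combinatorial incidences between the $\gamma_{i_j}$ rather than introducing spurious identifications.
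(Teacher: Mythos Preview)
Your treatment of (a)--(c) is essentially the paper's: invoke Morrey for existence, Heinz--Hildebrandt for boundary regularity, and note that the Douglas-type condition is exactly what prevents boundary degeneration in a minimizing sequence. The paper also sketches how to push the Heinz--Hildebrandt $C^2$ boundary argument down to $C^{1,1}$ via a limiting sequence of coordinate systems, but this is a routine adaptation.

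For (d)--(h) your iterative splitting idea is correct in spirit, but the Zorn's lemma packaging does not resolve the obstacle you yourself identify. The difficulty is precisely to show that a maximal decomposition exists \emph{and} that its pieces satisfy the Douglas-type condition; Zorn requires upper bounds for chains, and producing those upper bounds while preserving area-additivity and the corner bound is the whole problem, not a separate bookkeeping step. Your claim that ``thereafter splittings occur only at the countable set of pre-existing corners'' is also not right: a split at a smooth self-intersection point gives each subloop exactly one new corner, so $(p_1^*-2)+(p_2^*-2)=p-2$ with equality---the corner excess is preserved, not decreased, and there is no reason the process terminates or even restricts to a countable set of split points.

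The paper (following Hass) avoids all of this by making the construction geometric rather than abstract. Instead of splitting $\gamma$ into subloops, one draws straight chords $\sigma_1,\sigma_2,\ldots$ in $D^2$ between pairs $s,t\in S^1$ with $\gamma(s)=\gamma(t)$, never crossing in their interiors, and takes the components $\Omega$ of $\Int D^2\setminus\overline{\bigcup\sigma_k}$ as the domains of the $\gamma_i$. The crucial missing ingredient in your argument is a \emph{selection rule}: at each stage one picks the component $\Omega$ of maximal remaining boundary length that still fails Douglas, and then a chord $\sigma$ realizing at least half the supremum of $\min\{\ell(\gamma|_{S^1\cap\partial\Omega'}),\ell(\gamma|_{S^1\cap\partial\Omega''})\}$ over all admissible splits. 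This half-supremum rule is what forces the limiting components to satisfy Douglas: if some limit $\Omega$ failed Douglas via a chord $\sigma$, one shows by a telescoping area estimate that $\sigma$ would already have been an admissible split of the approximating $\Omega_k$ with comparable min-length, hence would have been chosen at a finite stage---contradiction. Area-additivity in the limit is a straightforward Hausdorff-convergence argument once the chord picture is in place. Assertion (f) then falls out combinatorially: since $\gamma$ is a piecewise immersion, all but finitely many $\Omega$ are bounded by exactly two chords and two arcs, giving $p_i=2$. Assertion (h) is immediate because the $\Omega$ already sit inside $D^2$, so $f_\delta$ is built by reparameterizing each $f_i$ over its own $\Omega$ and mollifying the seams.
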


\begin{proof}
Observe that the Douglas-type condition allows the following conclusion:
Whenever $f_k : D^2 \to M$ with $f_k |_{S^1} = \gamma$ and $\lim_{k \to \infty} \area f_k = A(\gamma)$  is a minimizing sequence and $\sigma_k \subset D^2$ is a sequence of embedded curves for which $\lim_{k \to \infty} \ell ( f_k |_{\sigma_k} ) = 0$, then the distance between the two endpoints of $\sigma_k$ goes to $0$ as $k \to \infty$.

Assertion (a) now follows directly using the methods of \cite{Mor} since the embeddednes of $\gamma$ was only used in this paper for the previous conclusion.
The first part of assertion (c) follows for the same reasons.

The proof of assertion (b) in the case in which $\gamma$ is $C^2$ on $U \cap S^1$ can be found in \cite{HH}.
We remark that in the case, in which $\gamma$ is only $C^{1,1}$ on $U \cap S^1$ and $g$ is locally flat on $U$, assertion (b) is a consequence of \cite{Kin}.
For our purposes, however it is enough to note that the methods of the proof of \cite{HH} carry over to the case in which $\gamma$ is only $C^{1,1}$ on $U \cap S^1$.
We briefly point out how this can be done:
The first step in \cite{HH} consists of the choice of a local coordinate system $(x_1, \ldots, x_n)$ in which $\gamma$ is locally mapped to the $x_n$-axis.
For the subsequent estimates, this coordinate system has to be of class $C^2$.
In the case in which $\gamma$ is only $C^{1,1}$ on $U \cap S^1$, we can choose a sequence of coordinate systems $(x^k_1, \ldots, x^k_n)$, which are uniformly bounded in the $C^2$ sense, and which converge to a coordinate system $(x^\infty_1, \ldots, x^\infty_n)$ of regularity $C^{1,1}$ in every $C^{1,\alpha}$ norm and in this coordinate system $\gamma$ is locally mapped to the $x_n$-axis.
The minimal surface equation in the coordinate system $(x^k_1, \ldots, x^k_n)$ implies an equation of the form $|\triangle y^k| \leq \beta |\nabla y^k|^2$ for $y^k = (x^k_1, \ldots, x^k_{n-1}) \circ f$ where $\beta$ can be chosen independently of $k$.
Moreover, $y^k$ restricted to $U \cap S^1$ converges to $0$ in every $C^{1,\alpha}$ norm as $k \to \infty$.
Let $U''' \Subset U'' \Subset U' \Subset U$ be an arbitrary compactly contained open subsets.
A closer look at the proof of the ``Hilfssatz'' in \cite{Hei} yields that for every $r > 0$ we have the estimate $|y^k| < C r$ on $U' \cap (D^2(1- r) \setminus D^2(1- 2r))$ if $k$ is large depending on $r$.
It follows then that $\Vert y^k \Vert_{C^1 (U'' \cap D^2 (1-r))} < C$ for every $r > 0$ and large $k$.
This implies $\Vert y^\infty \Vert_{C^1 (U'')} < C$ and hence $\Vert y^k \Vert_{C^1(U'')} < 2C$ for large $k$. 
Standard elliptic estimates applied to the equation $|\triangle y^k| < 4\beta C^2$ then yield that $\Vert y^k \Vert_{C^{1, \alpha}(U''')} < C'$ for large $k$.
The regularity of $x^k_n \circ f$ and the fact that branch points are isolated also follow similarly as in \cite{HH}.

The second  part of assertion (c) follows in a similar manner.
We just need to choose the local coordinate systems $(x^k_1, \ldots, x^k_n)$ such that both $(x^k_1, \ldots, x^k_n) \circ \gamma$ and $(x^k_1, \ldots, x^k_n) \circ \gamma_k$ locally converge to the $x_n$-axis in the $C^{1,\alpha}$ sense. 

Now consider the case in which $\gamma$ does not satisfy the Douglas-type condition.
Then the remaining assertions follow from the methods of Hass (\cite{Hass-Plateau}).
For completeness, we briefly recall his proof.

We will inductively construct a (finite or infinite) sequence of straight segments $\sigma_1, \sigma_2, \ldots \subset D^2$ between pairs of points $s,t \in S^1$ with $\gamma(s) = \gamma(t)$, such that any two distinct segments don't intersect in their interior and such that the following holds for all $k \geq 0$:
Consider the (unique) extension $\gamma_k : S^1 \cup \sigma_1 \cup \ldots \cup \sigma_k  \to M$ of the map $\gamma$ which is constant on each $\sigma_i$.
Then we impose the condition that the sum $A (\gamma_k |_{\partial \Omega})$ over all connected components $\Omega \subset \Int D^2 \setminus (\sigma_1 \cup \ldots \cup \sigma_k)$ is equal to $A(\gamma)$.
(Note that every such component is bounded by some of the $\sigma_i$ and some arcs of $S^1$.)

Having constructed segments $\sigma_1, \ldots, \sigma_k$, we will choose $\sigma_{k+1}$ as follows:
Consider all components $\Omega \subset \Int D^2 \setminus (\sigma_1 \cup \ldots \cup \sigma_k)$ such that $\gamma_k |_{\partial \Omega}$ does not satisfy the Douglas-type condition (or to be precise, such that the loop which is composed of the restriction of $\gamma$ to $S^1 \cap \partial \Omega$ does not satisfy the Douglas-type condition).
If there is no such $\Omega$, then we are done.
Otherwise we pick an $\Omega$ for which $\ell (\gamma |_{S^1 \cap \partial\Omega})$ is maximal.
By our assumption, we can find a straight segment $\sigma \subset D^2$ connecting two distinct parameters $s,t \in S^1 \cap \partial \Omega$ such that if we denote by $\Omega', \Omega''$ the two components of $\Omega \setminus \sigma'$, then
\begin{equation} \label{eq:OmegaOmegas}
 A (\gamma_k |_{\partial \Omega}) = A (\gamma_k |_{\partial \Omega'}) + A (\gamma_k |_{\partial \Omega''}).
\end{equation}
It follows that we are allowed to choose $\sigma_{k+1} = \sigma$ for any such $\sigma$.
Now pick $\sigma$ amongst all such straight segments such that $\min \{ \ell (\gamma |_{S^1 \cap \partial\Omega'}), \ell (\gamma |_{S^1 \cap \partial\Omega''}) \}$ is larger than $\frac12$ times the supremum of this quantity over all such $\sigma$ and set $\sigma_{k+1} = \sigma$.

Having constructed the sequence $\sigma_1, \sigma_2, \ldots$, we let $X \subset D^2$ be the closure of $\sigma_1 \cup \sigma_2 \cup \ldots$ and we let $\gamma_X : S^1 \cup X \to M$ be the obvious extension.
Then all components $\Omega \subset \Int D^2 \setminus X$ are bounded by finitely many straight segments and arcs of $S^1$.
We claim that $A(\gamma)$ is equal to the sum of $A(\gamma_X |_{\partial \Omega})$ over all such components:
Let $\Omega_1, \ldots, \Omega_N$ be arbitrary, pairwise distinct components of $\Int D^2 \setminus X$.
Then there is a $k_0$ such that for all $k > k_0$ these components lie in different components $\Omega_{1, k}, \ldots, \Omega_{N, k}$ of $\Int D^2 \setminus (\sigma_1 \cup \ldots \cup \sigma_k)$.
Moreover $\Omega_{j, k} \to \Omega_j$ as $k \to \infty$.
So $\lim_{k \to \infty} A (\gamma_X |_{\partial \Omega_{j, k}} ) = A (\gamma_X |_{\partial\Omega_j})$ for each $j = 1, \ldots, N$.
Since the choice of the $\Omega_j$ was arbitrary, this shows that the sum of $A (\gamma_X |_{\partial \Omega})$ over all connected components $\Omega \subset \Int D^2 \setminus X$ is not larger than $A(\gamma)$.
The other direction is clear.

Next, we show that for each component $\Omega \subset \Int D^2 \setminus X$, the loop $\gamma_X |_{\partial \Omega}$ satisfies the Douglas-type condition.
If not, then we could separate $\Omega$ into two non-empty components $\Omega', \Omega''$ along a straight line $\sigma$ between two parameters $s,t \in S^1$ for which $\gamma(s) = \gamma(t)$ such that (\ref{eq:OmegaOmegas}) holds for $\gamma_X$ instead of $\gamma_k$.
Choose a sequence $\Omega_k \subset \Int D^2 \setminus (\sigma_1 \cup \ldots \cup \sigma_k)$ such that $\Omega_1 \supset \Omega_2 \supset \ldots$ and such that $\Omega_k \to \Omega$ as $k \to \infty$.
Let moreover $\Omega'_k, \Omega''_k$ be the components of $\Omega_k \setminus \sigma$ such that $\Omega'_k \to \Omega'$ and $\Omega''_k \to \Omega''$.
Then $\lim_{ k \to \infty } A (\gamma_k |_{\partial \Omega_k}) = A (\gamma_X |_{\partial \Omega})$ and $\lim_{ k \to \infty } A (\gamma_k |_{\partial \Omega'_k}) = A (\gamma_X |_{\partial \Omega'})$ and $\lim_{ k \to \infty } A (\gamma_k |_{\partial \Omega''_k}) = A (\gamma_X |_{\partial \Omega''})$.
Moreover, for all $k \geq 1$
\begin{multline*}
 A (\gamma_1 |_{\partial \Omega'_1}) + A (\gamma_1 |_{\partial \Omega''_1}) \leq A (\gamma_k |_{\partial \Omega'_k}) + A(\gamma_k |_{\partial (\Omega'_1 \setminus \Omega'_k)}) + A (\gamma_k |_{\partial \Omega''_k}) + A(\gamma_k |_{\partial (\Omega''_1 \setminus \Omega''_k)}) \\
 = A (\gamma_k |_{\partial \Omega'_k}) + A (\gamma_k |_{\partial \Omega''_k}) + A(\gamma_k |_{\partial (\Omega_1 \setminus \Omega_k)}) \\
 = A (\gamma_k |_{\partial \Omega'_k}) + A (\gamma_k |_{\partial \Omega''_k}) + A(\gamma_k |_{\partial \Omega_1 }) - A(\gamma_k |_{\partial \Omega_k}).
\end{multline*}
Letting $k \to \infty$ yields
\[ A (\gamma_1 |_{\partial \Omega'_1}) + A (\gamma_1 |_{\partial \Omega''_1}) \leq A (\gamma_1 |_{\partial \Omega_1}). \]
Since the opposite inequality is trivially true, we must have equality.
This is however a contradiction, because by our construction of the sequence $\sigma_1, \sigma_2, \ldots$ we must have picked $\sigma$ earlier and hence $\sigma_k = \sigma$ for some $k$.

Assertions (d), (e) and (g) follow immediately.
By the fact that $\gamma$ is a piecewise immersion, we can deduce that all but finitely many components of $\Omega \subset \Int D^2 \setminus X$ are bounded by exactly two straight segments and two arcs.
Assertion (f) follows now easily.
Finally, the functions $f_\delta$ from assertion (h) can be constructed by parameterizing the solutions $f_i$ by the corresponding component of $\Int D^2 \setminus X$ and mollifying.
\end{proof}

The following variational property is a direct consequence of assertion (h) and will be used twice in this paper.

\begin{Lemma} \label{Lem:variationofA}
Consider a contractible, piecewise $C^1$-immersion $\gamma : S^1 \to M$, let $\gamma_i$ be the loops from the second part of Proposition \ref{Prop:PlateauProblem} and consider solutions $f_i : D^2 \to M$ to the Plateau problem for each $\gamma_i$.
Let $(g_t)_{t \in [0,\varepsilon)}$ be a smooth family of Riemannian metrics such that $g_0 = g$ (not necessarily the Ricci flow) and denote by $A_t (\gamma)$ the infimum over the areas of all spanning disks with respect to the metric $g_t$.
Then in the barrier sense
\[ \frac{d}{dt^+} \Big|_{t = 0} A_t ( \gamma ) \leq \sum_i \int_{D^2} \frac{d}{dt} \Big|_{t = 0} d{\vol}_{f^*_i (g_t)} \]
Here $d{\vol}_{f^*_i (g_t)}$ denotes the volume form of the pull-back metric $f^*_i (g_t)$.
\end{Lemma}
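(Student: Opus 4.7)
The plan is to use the maps $f_\delta$ from assertion (h) of Proposition \ref{Prop:PlateauProblem} as an approximate upper barrier for $A_t(\gamma)$, and then let $\delta \to 0$. Fix $\delta > 0$ and consider $f_\delta : D^2 \to M$ together with the open set $D_\delta \subset D^2$ provided by (h). Since $f_\delta |_{S^1} = \gamma$, the map $f_\delta$ is admissible for $A_t(\gamma)$, so
\[
A_t(\gamma) \;\leq\; \phi_\delta(t) \;:=\; \area_{g_t}(f_\delta) \;=\; \int_{D^2} d\vol_{f_\delta^*(g_t)} \qquad\text{for all } t \in [0,\varepsilon).
\]
Smoothness of $(g_t)$ together with the pointwise estimate
\[
\Big|\tfrac{d}{dt}\big|_{t=0} d\vol_{f^*(g_t)}\Big| \;=\; \tfrac12\big|\tr_{f^*g_0}(f^*\dot g_0)\big|\, d\vol_{f^*g_0} \;\leq\; C\, d\vol_{f^*g_0},
\]
valid for any piecewise smooth $f$ with $C$ depending only on $\|\dot g_0\|_{g_0}$, guarantees that $\phi_\delta$ is differentiable at $0$ and that the differentiation may be carried under the integral sign.

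Next I split the integral $\phi_\delta'(0) = \int_{D^2} \tfrac{d}{dt}\big|_{t=0} d\vol_{f_\delta^*(g_t)}$ according to the partition $D^2 = D_\delta \sqcup (D^2\setminus D_\delta)$. On each connected component of $D_\delta$, assertion (h) asserts that $f_\delta$ is a diffeomorphic reparameterization of some $f_i$ restricted to an open set $U_i \subset D^2$, with each index $i$ used at most once. Since both the volume form of a pull-back and its time-derivative are invariant under diffeomorphic reparameterization of the domain,
\[
\int_{D_\delta} \tfrac{d}{dt}\big|_{t=0} d\vol_{f_\delta^*(g_t)} \;=\; \sum_i \int_{U_i} \tfrac{d}{dt}\big|_{t=0} d\vol_{f_i^*(g_t)}.
\]
Applying the pointwise bound above then gives
\[
\Big|\!\int_{D^2\setminus D_\delta} \tfrac{d}{dt}\big|_{t=0} d\vol_{f_\delta^*(g_t)}\Big| \leq C\,\area_{g_0}(f_\delta|_{D^2\setminus D_\delta}) < C\delta,
\]
and
\[
\Big|\sum_i \int_{D^2\setminus U_i} \tfrac{d}{dt}\big|_{t=0} d\vol_{f_i^*(g_t)}\Big| \leq C\sum_i \area_{g_0}(f_i|_{D^2\setminus U_i}).
\]

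To control the last sum I use assertions (g) and (h): $A(\gamma) = \sum_i A(\gamma_i) = \sum_i \area_{g_0}(f_i)$ together with $\area_{g_0}(f_\delta) < A(\gamma) + \delta$ yields
\[
\sum_i \area_{g_0}(f_i|_{D^2\setminus U_i}) + \area_{g_0}(f_\delta|_{D^2\setminus D_\delta}) \;<\; \delta,
\]
so both error terms are $<C\delta$. Combining everything,
\[
\phi_\delta'(0) \;\leq\; \sum_i \int_{D^2} \tfrac{d}{dt}\big|_{t=0} d\vol_{f_i^*(g_t)} \;+\; 2C\delta,
\]
together with $\phi_\delta(0) = \area_{g_0}(f_\delta) \leq A_0(\gamma) + \delta$. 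Thus for each $\delta > 0$ the function $\phi_\delta$ is a smooth upper barrier for $A_t(\gamma)$ at $t = 0$, with $\phi_\delta(0)$ and $\phi_\delta'(0)$ differing from $A_0(\gamma)$ and the claimed right hand side by $O(\delta)$; letting $\delta \to 0$ gives the bound in the barrier sense.

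The main technical point to handle carefully is the possible countability of the collection $\{f_i\}$, and relatedly the worry that the areas $\area_{g_0}(f_i|_{D^2\setminus U_i})$ could conspire to be large even though each $f_i$ is only used on a proper subset of $D^2$. This is precisely what the sharp identity $A(\gamma) = \sum_i A(\gamma_i)$ in (g), paired with the area bound in (h), rules out; it allows the summed error to be absorbed into $\delta$ and is the reason why the minor inefficiency of using $f_\delta$ instead of an actual minimizer does not spoil the derivative estimate.
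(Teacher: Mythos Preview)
Your approach is essentially the same as the paper's: both use the approximate disks $f_\delta$ from Proposition~\ref{Prop:PlateauProblem}(h) as upper barriers, bound the first variation via the pointwise estimate $|\tfrac{d}{dt}d\vol_{f^*g_t}| \le C\, d\vol_{f^*g_0}$, and let $\delta\to 0$. Two small points deserve tightening. First, your displayed inequality $\sum_i \area_{g_0}(f_i|_{D^2\setminus U_i}) + \area_{g_0}(f_\delta|_{D^2\setminus D_\delta}) < \delta$ does not follow from (g) and the upper bound in (h) alone; you also need the trivial lower bound $\area_{g_0}(f_\delta)\ge A(\gamma)$ (since $f_\delta$ is itself a competitor), which gives $\sum_i \area_{g_0}(f_i|_{D^2\setminus U_i}) = A(\gamma)-\area_{g_0}(f_\delta|_{D_\delta}) \le \area_{g_0}(f_\delta|_{D^2\setminus D_\delta}) < \delta$, so the sum is $<2\delta$. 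Second, to pass to the limit $\delta\to 0$ in the barrier sense you implicitly need the second-order remainder in $t$ to be uniform in $\delta$; the paper makes this explicit by writing $|\area_t f_\delta - \area_0 f_\delta - t\int_{D^2}\tfrac{d}{dt}|_{t=0}d\vol_{f_\delta^*g_t}|\le C' t^2 \area_0 f_\delta$ with $C'$ independent of $\delta$, and then letting $\delta\to 0$ for each fixed $t$. With these two clarifications your argument is complete and matches the paper's.
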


\begin{proof}
Due to the smoothness of the family $(g_t)$, we can find a constant $C < \infty$ such that for any two vectors $v, w \in T M$ based at the same point and every $t \in [0, \varepsilon/2)$ we have
\[ \big| g_t (v,w) - g_0 (v,w) - t \partial_t  g_0 (v,w) \big| \leq C t^2 |v|_0 |w|_0. \]
Let now $\delta > 0$ be a small constant and consider the map $f_\delta : D^2 \to M$ from Proposition \ref{Prop:PlateauProblem}(h).
It is then not difficult to see that there is a constant $C' < \infty$, which is independent of $\delta$, such that for small $t$
\[ \bigg| \area_t f_\delta - \area_0 f_\delta - t \int_{D^2} \frac{d}{dt} \Big|_{t=0} d {\vol}_{f_\delta^* (g_t)} \bigg| \leq C' t^2 \area_0 f_\delta. \]
So we find that
\[ A_t (  \gamma ) \leq \area_0 f_\delta + t \int_{D^2} \frac{d}{dt} \Big|_{t=0} d {\vol}_{f_\delta^* (g_t)}  + C' t^2 \area_0 f_\delta . \]
By the properties of $f_\delta$ and the fact that the integrand in the previous integral is bounded by a multiple of $d{\vol}_{f^*_\delta (g_t)}$ independently of $\delta$, it follows that for fixed $t$ and for $\delta \to 0$ the right hand side of the previous inequality goes to
\[ A_0 ( \gamma ) + t \sum_i \int_{D^2} \frac{d}{dt} \Big|_{t = 0} d{\vol}_{f^*_i (g_t)} + C' t^2 A_0 ( \gamma ). \]
This yields the desired barrier.
\end{proof}

\subsection{The structure of a minimizer along the 1-skeleton} \label{subsec:1skeletonstruc}
Consider now again the $C^{1,1}$ regular map $f : V^{(1)} \to M$ from subsection \ref{subsec:regularityon1skeleton}.
The goal of this subsection is to derive a variational identity in the spirit of (\ref{eq:simplenuiskappa}).
However, due to possible self-intersections of $f$, this undertaking becomes a quite delicate issue and it will be important to analyze the combinatorics of these self-intersections.
Note that, at least a priori, there could be infinitely many such self-intersections and the set of self-intersections can have positive measure (and possibly empty interior).
Our main result will be Lemma \ref{Lem:almosteverywherealong1skeleton}.
In fact, inequality (\ref{eq:pointwisenukappaOnBoundary}) of this Lemma is the only conclusion that will be needed subsequently.
At this point we recall that by definition $f |_{\partial V} = f_0 |_{\partial V}$ is a smooth embedding.
So no edge at the boundary has a self-intersection and two edges only intersect in their endpoints.

We denote by $F_1, \ldots, F_n$ the faces and by $E_1, \ldots, E_m$ the edges of $V$ in such a way that $E_1, \ldots, E_{m_0}$ are the edges of $\partial V$.
For every $k = 1, \ldots, m$ let $l_k$ be the length of $f |_{E_k}$ and let $\gamma_k : [0, l_k] \to M$ be a parameterization of $f |_{E_k}$ by arclength.
Since the maps $\gamma_k$ have regularity $C^{1,1}$ (see Lemma \ref{Lem:regularityon1skeleton}), we can find for each $k = 1, \ldots, n$ a vector field $\kappa_k : [0, l_k] \to TM$ along $\gamma_k$ (i.e. $\kappa_k(s) \in T_{\gamma_k(s)}M$) which equals the geodesic curvature of $\gamma_k$ almost everywhere (see Lemma \ref{Lem:speedandcurvatureagreeonintersection}).

Next, we apply Proposition \ref{Prop:PlateauProblem} for each loop $f |_{\partial F_j}$ ($j=1, \ldots, n$) and obtain loops $\gamma_{j,1}, \gamma_{j,2}, \ldots$ which satisfy assertions (d)--(h) of this Proposition.
Without loss of generality, we may assume that each $\gamma_{j,i}$ is parameterized by arclength, i.e. that $\gamma_{j,i} : S^1(l_{j,i}) \to M$ where $l_{j,i}$ is the length of $\gamma_{j,i}$.
As before, we choose vector fields $\kappa_{j,i} : S^1(l_{j,i}) \to TM$ along each $\gamma_{j,i}$ which represent the geodesic curvature almost everywhere.
Now, let $f_{j,i} : D^2 \to M$ be an arbitrary solution to the Plateau problem for each loop $\gamma_{j,i}$.
Proposition \ref{Prop:PlateauProblem}(b) yields that $f_{j,i}$ is $C^{1,\alpha}$ up to the boundary except at the finitely many points where $\gamma_{j,i}$ is not differentiable.
So we can choose unit vector fields $\nu_{j,i} : S^1(l_{j,i}) \to TM$ along each $\gamma_{j,i}$ which are outward pointing tangential to $f_{j,i}$ everywhere except at finitely many points.

For each edge $E_k$ and each adjacent face $F_j$ we can consider the collection of subsegments of the $\gamma_{j,i}$ which lie on $E_k$.
These subsegments are pairwise disjoint and are equipped with the vector fields $\nu_{j,i}$.
We can hence construct a vector field along $\gamma_k$ which is equal to each of the $\nu_{j,i}$ on the corresponding subsegment and zero everywhere else.
Doing this for all faces $F_j$ which are adjacent to $E_k$ yields vector fields $\nu^{(1)}_k, \ldots, \nu^{(v_k)}_k : [0, l_k] \to TM$ along $\gamma_k$ where $v_k$ is the valency of $E_k$.
Note that $|\nu^{(u)}_k| \leq 1$ for all $k =1, \ldots, m$ and $u = 1, \ldots, v_k$.

With this notation at hand we can derive the following variation formula.

\begin{Lemma} \label{Lem:variationofV}
For every continuous vector field $X \in C^0(M; TM)$ which vanishes on $f (\partial V \cap V^{(0)})$ we have
\begin{multline*}
\Bigg| \sum_{k = 1}^m  \int_0^{l_k} \Big\langle \sum_{u = 1}^{v_k} \nu_k^{(u)} (s) , X_{\gamma_k(s)} \Big\rangle ds + \sum_{k=m_0+1}^m 
\bigg( - \int_0^{l_k} \big\langle \kappa_k (s), X_{\gamma_k(s)} \big\rangle ds \\
 -\big\langle \gamma'_k (0), X_{\gamma_k (0)} \big\rangle + \big\langle \gamma'_k (l_k), X_{\gamma_k (l_k)} \big\rangle \bigg) \Bigg| 
 \leq \sum_{k=1}^{m_0} \int_0^{l_k} \big| X_{\gamma_k (s)} \big| ds.
\end{multline*}
\end{Lemma}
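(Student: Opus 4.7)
The plan is a first-variation argument: use the minimality of $f$ from Lemma~\ref{Lem:existenceon1skeleton} together with the barrier from Lemma~\ref{Lem:variationofA} to derive an identity for a $\psi_\eta$-weighted version of the claimed left-hand side, then pass to the limit $\eta\to 0$, absorbing the boundary contribution into the right-hand side. Extend $X$ to a smooth, compactly supported vector field on $M$ (still vanishing on $f(\partial V\cap V^{(0)})$) and let $\phi_t$ be its flow. Choose smooth cutoffs $\psi_\eta:V\to[0,1]$ equal to $1$ outside an $\eta$-neighborhood of $\partial V$ and to $0$ on $\partial V$, and set $f_t(x):=\phi_{t\psi_\eta(x)}(f(x))$; then $f_t|_{\partial V}=f_0|_{\partial V}$, so $f_t\simeq f_0$ rel $\partial V$ and, by Lemma~\ref{Lem:existenceon1skeleton}, $h(t):=\sum_j A(f_t|_{\partial F_j})+\ell(f_t)$ attains its minimum at $t=0$.

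The first variation of arclength on each edge yields
\[
\frac{d}{dt}\Big|_{t=0}\ell(f_t|_{E_k})=-\int_0^{l_k}\psi_\eta\langle\kappa_k,X\rangle\,ds+\big[\psi_\eta\langle\gamma'_k,X\rangle\big]_0^{l_k},
\]
which vanishes on boundary edges since $\psi_\eta\equiv 0$ there. For the area term we apply Lemma~\ref{Lem:variationofA} loop-by-loop: for each $\gamma_{j,i}$, extend $\psi_\eta X\circ f|_{\gamma_{j,i}}$ to a vector field on a tubular neighborhood of $\gamma_{j,i}$ in $M$, use the corresponding local flow as the deformation, and exploit the fact that $f_{j,i}$ is a minimal map ($H\equiv 0$) together with the divergence theorem to reduce each contribution to a boundary conormal integral. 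Summing over $j,i$ and regrouping by edge gives
\[
\limsup_{t\to 0^+}\frac{1}{t}\sum_j\big[A(f_t|_{\partial F_j})-A(f|_{\partial F_j})\big]\leq\sum_{k=1}^m\int_0^{l_k}\psi_\eta\Big\langle\sum_{u=1}^{v_k}\nu_k^{(u)},X\Big\rangle\,ds.
\]

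Since $h(t)\geq h(0)$ for small $t\geq 0$, combining the length formula with the area bound above (and running the same argument with $-X$ in place of $X$ to obtain the reverse inequality) produces the identity obtained from the claim by dropping the absolute value and replacing $X$ by $\psi_\eta X$. As $\eta\to 0$, $\psi_\eta\to 1$ on every interior edge, and since $X$ vanishes at the boundary vertices the $\psi_\eta$-weighted boundary and integral terms converge by dominated convergence to their unweighted counterparts. On each boundary edge $E_k$ ($k\leq m_0$) the $\psi_\eta$-weighted area integral tends to $0$, while the corresponding term in the actual LHS is $\int_0^{l_k}\langle\sum_u\nu_k^{(u)},X\rangle\,ds$; boundary edges have valency $v_k=1$ by the definition of $\partial V$, so $|\sum_u\nu_k^{(u)}|=|\nu_k^{(1)}|\leq 1$ and this term is bounded in absolute value by $\int_0^{l_k}|X_{\gamma_k(s)}|\,ds$, summing over $k=1,\ldots,m_0$ to exactly the claimed right-hand side.

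The main technical obstacle is the area-variation step: because $f|_{V^{(1)}}$ may have intricate (and possibly positive-measure) self-intersections, $\psi_\eta X\circ f$ does not descend to a single vector field on $M$, so Lemma~\ref{Lem:variationofA} must be applied loop-by-loop via local tubular extensions near each $\gamma_{j,i}$. The structural fact that boundary edges have valency $1$ is precisely what produces the clean error bound $\sum_{k=1}^{m_0}\int_0^{l_k}|X|\,ds$ without extra multiplicative factors involving valencies.
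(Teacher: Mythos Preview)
Your cutoff-and-limit strategy is a reasonable alternative to the paper's approach, and the endgame (pass $\eta\to 0$, then bound the missing boundary-edge $\nu$-integrals using $v_k=1$) is correct. The genuine gap is the area-variation step. Your deformation $f_t(x)=\phi_{t\psi_\eta(x)}(f(x))$ is \emph{domain-dependent}: two points $x_1\neq x_2\in V^{(1)}$ with $f(x_1)=f(x_2)$ can have $\psi_\eta(x_1)\neq\psi_\eta(x_2)$, so $f_t(x_1)\neq f_t(x_2)$ for $t>0$. This means the self-intersection pattern of $f|_{\partial F_j}$ is destroyed, and hence the varied pieces $\gamma_{j,i,t}$ no longer glue at the junction points of the decomposition from Proposition~\ref{Prop:PlateauProblem}---so varying each Plateau disk $f_{j,i}$ separately does not produce a disk spanning $f_t|_{\partial F_j}$. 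Your proposed fix (``extend $\psi_\eta X\circ f|_{\gamma_{j,i}}$ to a vector field on a tubular neighborhood of $\gamma_{j,i}$ in $M$'') also fails, for the same reason: the loops $\gamma_{j,i}$ may themselves self-intersect, so no such ambient extension exists. Lemma~\ref{Lem:variationofA} is stated for metric variations (equivalently, flows of a \emph{single} vector field on $M$) precisely to avoid this difficulty, and it does not apply to your $f_t$.

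The paper sidesteps all of this by taking $X$ smooth, using the \emph{global} flow $\Phi_t$ of $X$ on $M$, and setting $f'_t=\Phi_t\circ f$ on interior edges while keeping $f'_t=f$ on $\partial V$. Because $\Phi_t$ is a diffeomorphism of $M$, self-intersections are preserved and Lemma~\ref{Lem:variationofA} applies directly to give $\frac{d}{dt^+}\big|_{t=0}A(\Phi_t\circ f|_{\partial F_j})\le\sum_i\int\langle\nu_{j,i},X\rangle$. The mismatch between $\Phi_t\circ f|_{\partial F_j}$ and $f'_t|_{\partial F_j}$ occurs only along boundary edges $E_k$ ($k\le m_0$), where one inserts a small bridge disk $(s,t')\mapsto\Phi_{t'}(\gamma_k(s))$ of area $\le t\int_0^{l_k}|X|\,ds+O(t^2)$; summing these bridges produces exactly the right-hand side of the lemma. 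Running the argument with $-X$ and approximating continuous $X$ by smooth ones finishes the proof. If you want to salvage your route, the cleanest move is to drop the smooth cutoff and use the indicator $\psi=\chi_{V\setminus\partial V}$ (this is legitimate because $X$ vanishes on $\partial V\cap V^{(0)}$, so $\psi X$ is continuous along $V^{(1)}$); but then your variation coincides with the paper's $f'_t$ and you are forced into the same bridge-disk argument rather than a limiting one.
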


\begin{proof}
Let first $X \in C^\infty (M; TM)$ be a smooth vector field and consider the smooth flow $\Phi : M \to \IR \to M$, $\partial_t \Phi_t = X \circ \Phi_t$ of $X$.
Observe that $\Phi_t (x) = x$ for all $x \in f (\partial V \cap V^{(0)})$ and $t \in \IR$.
For each $t \in \IR$ let $f'_t : V^{(1)} \to M$ be the map which is equal to $\Phi_t \circ f |_{V^{(1)} \setminus \partial V}$ on $V^{(1)} \setminus \partial V$ and equal to $f |_{\partial V}$ on $\partial V$.
By Lemma \ref{Lem:existenceon1skeleton} for all $t \in \IR$
\[ A (f'_t |_{\partial F_1} ) + \ldots + A (f'_t |_{\partial F_n} ) + \ell (f'_t) \geq A^{(1)}(f_0) \]
where equality holds for $t = 0$.
So we obtain that in the barrier sense
\begin{equation} \label{eq:variationisnonnegative}
 \frac{d}{dt^+} \Big|_{t = 0} \big( A (f'_t |_{\partial F_1} ) + \ldots + A (f'_t |_{\partial F_n} ) + \ell (f'_t) \big) \geq 0.
\end{equation}

Next we compute the derivative of each term on the left hand side.
First note that for all $k = m_0 + 1, \ldots, m$
\begin{equation} \label{eq:variationoflength}
 \frac{d}{dt^+} \Big|_{t = 0} \ell(\Phi_t \circ \gamma_k ) = - \int_0^{l_k} \big\langle \kappa_k (s), X_{\gamma_k(s)} \big\rangle ds - \big\langle \gamma'_k (0), X_{\gamma_k(0)} \big\rangle + \big\langle \gamma'_k (l_k), X_{\gamma_k(l_k)} \big\rangle.
\end{equation}
Next, we estimate the derivatives of the area terms.
To do this note that for each sufficiently differentiable map $h : D^2 \to M$ the area of $\Phi_t \circ h$ is equal to the area of $h$ with respect to the metric $\Phi^*_t (g)$.
So we can use Lemma \ref{Lem:variationofA} to deduce that for each $j =1, \ldots, n$
\[ \frac{d}{dt^+} \Big|_{t = 0}  A ( \Phi_t \circ f |_{\partial F_j} ) \leq \sum_i \int_{S^1(l_{j,i})} \big\langle \nu_{j,i} (s), X_{\gamma_{j,i} (s)} \big\rangle ds. \]
Now consider for each $k = 1, \ldots, m_0$ the loop which is composed of $\gamma_k$ and $\Phi_t \circ \gamma_k$ (recall that the endpoints of $\gamma_k$ are left invariant by $\Phi_t$).
This loop spans the disk which comes from the map $[0, l_k] \times [0, t] \to M$ with $(s,t') \mapsto \Phi_{t'} (\gamma_k(s))$.
The area of this disk is bounded by $\int_0^{l_k} |X_{\gamma_k(s)} | ds + O(t^2)$.
So
\begin{multline*}
 \frac{d}{dt^+} \Big|_{t = 0} \big( A (f'_t |_{\partial F_1} ) + \ldots + A (f'_t |_{\partial F_n} ) \big) \\ \leq \sum_{j=1}^m \sum_i \int_{S^1(l_{j,i})} \big\langle \nu_{j,i} (s), X_{\gamma_{j,i} (s)} \big\rangle ds  + \sum_{k=1}^{m_0}  \int_0^{l_k} \big| X_{\gamma_k(s)} \big| ds.
\end{multline*}
Together with (\ref{eq:variationisnonnegative}) and (\ref{eq:variationoflength}) this yields
\begin{multline*}
 \sum_{j=1}^m \sum_i \int_{S^1(l_{j,i})} \big\langle \nu_{j,i} (s), X_{\gamma_{j,i} (s)} \big\rangle ds + \sum_{k = m_0 + 1}^m \bigg( - \int_0^{l_k} \big\langle \kappa_k (s), X_{\gamma_k(s)} \big\rangle ds \\
 -\big\langle \gamma'_k (0), X_{\gamma_k (0)} \big\rangle + \big\langle \gamma'_k (l_k), X_{\gamma_k (l_k)} \big\rangle \bigg) + \sum_{k=1}^{m_0}  \int_0^{l_k} \big| X_{\gamma_k(s)} \big| ds \geq 0.
\end{multline*}
Note that by a simple rearrangement
\[  \sum_{j=1}^m \sum_i \int_{S^1(l_{j,i})} \big\langle \nu_{j,i} (s), X_{\gamma_{j,i} (s)} \big\rangle ds = \sum_{k = 1}^m  \int_0^{l_k} \Big\langle \sum_{u = 1}^{v_k} \nu_k^{(u)} (s) , X_{\gamma_k(s)} \Big\rangle ds. \]
So our conclusions applied for $X$ and $-X$ show that the desired inequality holds for all smooth vector fields which vanish on $f ( \partial V \cap V^{(0)} )$.
By continuity it must also hold for all \emph{continuous} vector fields which vanish on $f ( \partial V \cap V^{(0)} )$.
\end{proof}

We can now use this inequality to derive the following identities.

\begin{Lemma} \label{Lem:almosteverywherealong1skeleton}
For every $x \in f(V^{(0)} \setminus \partial V)$ the (normalized) directional derivatives of $f$ at every vertex of $V^{(0)}$, which is mapped to $x$, in the direction of each adjacent edge add up to zero.

Moreover, for every $k = 1, \ldots, m$ and for almost all $s \in [0, l_k]$ the following holds:
If $\gamma_k(s) \notin f(\partial V)$, then
\begin{equation} \label{eq:pointwisenukappa}
 \sum_{k' = 1}^m \sum_{\substack{s' \in E_{k'}  \\ f (s') = f(s)}}  \sum_{u = 1}^{v_{k'}} \nu_{k'}^{(u)} (s') - |f^{-1} (f(s)) | \cdot \kappa_{k} (s) = 0.
\end{equation}
Otherwise
\begin{equation} \label{eq:pointwisenukappaOnBoundary}
 \bigg| \sum_{k' = 1}^m \sum_{\substack{s' \in E_{k'}  \\ f (s') = f(s)}}  \sum_{u = 1}^{v_{k'}} \nu_{k'}^{(u)} (s') - \big( |f^{-1} (f(s)) | -1 \big) \cdot \kappa_{k} (s) \bigg| \leq 1.
\end{equation}
Moreover,
\begin{equation} \label{eq:nutimeskappageq0}
 \sum_{j=1}^n \sum_i \int_{S^1 (l_{j, i})} \big\langle \nu_{j, i}(s) , \kappa_{j,i}(s) \big\rangle ds \geq - \sum_{k = 1}^{m_0} \int_0^{l_k} \big| \kappa_k (s) \big| ds.
\end{equation}
\end{Lemma}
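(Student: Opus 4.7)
The three assertions all follow from Lemma \ref{Lem:variationofV} by plugging in appropriately chosen continuous test vector fields. For the first assertion, fix $p\in V^{(0)}\setminus\partial V$ and set $x=f(p)$. Since $p$ is interior, every edge adjacent to $p$ (or to any other interior vertex mapped to $x$) is interior, and $x\notin f(\partial V\cap V^{(0)})$. For $w\in T_xM$, take a continuous vector field $X_\varepsilon$ supported in $B_\varepsilon(x)$, vanishing on $f(\partial V\cap V^{(0)})$, with $X_\varepsilon(x)=w$. All the arclength integrals in Lemma \ref{Lem:variationofV} as well as the boundary bound on the right shrink linearly in $\varepsilon$ (using $|\nu^{(u)}_k|\le 1$ and arclength parametrization of $\gamma_k$); the only contributions that survive in the limit $\varepsilon\to 0$ are endpoint terms $\mp\langle\gamma'_k(\mathrm{endpt}),w\rangle$ for edges whose endpoint is a vertex $p_\ell$ with $f(p_\ell)=x$. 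Applying the inequality to both $\pm X_\varepsilon$ for every $w$ forces the sum of the inward unit tangents at all such vertices to vanish.

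The pointwise identities are obtained by interpreting the bracketed expression in Lemma \ref{Lem:variationofV} as pairing $X$ against a vector-valued Radon measure $\mu$ on $M$, which then satisfies $|\mu|\le\text{arclength measure on }f(\partial V)$. Away from $f(V^{(0)})$, $\mu$ is supported on $f(V^{(1)})$. The key use of Lemma \ref{Lem:speedandcurvatureagreeonintersection} is that for a.e.\ $s$ all preimages of $x=\gamma_k(s)$ share the same tangent direction (up to sign) and the same curvature vector as $\gamma_k$ at $s$; consequently, the line densities contributed to $\mu$ from every preimage collapse onto a common image curve and add, with total
\[ \sum_{k'}\sum_{s'\in E_{k'}\cap f^{-1}(x)}\sum_u\nu^{(u)}_{k'}(s')\;-\;(\text{number of interior-edge preimages})\cdot\kappa_k(s). \]
Vanishing of $\mu$ whenever $x\notin f(\partial V)$ gives (\ref{eq:pointwisenukappa}); when $x\in f(\partial V)\setminus f(V^{(0)})$ exactly one boundary edge passes through $x$ (since $f|_{\partial V}$ is embedded), so the density is bounded in norm by $1$, which is (\ref{eq:pointwisenukappaOnBoundary}). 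The rigorous localization from the integral bound to pointwise values uses Lebesgue-differentiation via bump test vector fields concentrated near a.e.\ point $s$.

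For (\ref{eq:nutimeskappageq0}) I construct continuous vector fields $X_\eta$ on $M$ that vanish in $\eta$-neighborhoods of all vertices and whose restrictions to each $\gamma_k$ converge in $L^1$ to $\kappa_k$; such $X_\eta$ are obtained by extending $\kappa$ to a bounded measurable field near $f(V^{(1)})$ (well-defined a.e.\ by Lemma \ref{Lem:speedandcurvatureagreeonintersection}), then mollifying and cutting off. Plugging $X_\eta$ into Lemma \ref{Lem:variationofV} kills the endpoint contributions because $\kappa_k\perp\gamma'_k$ and the cutoff vanishes at the endpoints; after using the rearrangement from the proof of Lemma \ref{Lem:variationofV} to rewrite the $\nu$-integral as $\sum_{j,i}\int\langle\nu_{j,i},\kappa_{j,i}\rangle\,ds$, taking $\eta\to 0$ yields
\[ \Big|\sum_{j,i}\int\langle\nu_{j,i},\kappa_{j,i}\rangle\,ds\;-\;\sum_{k>m_0}\int_0^{l_k}|\kappa_k|^2\,ds\Big|\le\sum_{k\le m_0}\int_0^{l_k}|\kappa_k|\,ds, \]
and dropping the nonnegative $|\kappa_k|^2$ terms yields (\ref{eq:nutimeskappageq0}). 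The main obstacle throughout is the bookkeeping of self-intersections of $f|_{V^{(1)}}$, which can be infinite and of positive $1$-dimensional measure; Lemma \ref{Lem:speedandcurvatureagreeonintersection} is the essential device, since a.e.\ matching of tangents and curvatures is what lets the line densities combine unambiguously and makes the pointwise sum over preimages in (\ref{eq:pointwisenukappa}) and the $L^1$ approximation of $\kappa$ above consistent.
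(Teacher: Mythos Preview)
Your approach is essentially correct and follows the same overall logic as the paper---extract everything from the variation inequality of Lemma~\ref{Lem:variationofV}---but the execution differs in two places.

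For the pointwise identities \eqref{eq:pointwisenukappa} and \eqref{eq:pointwisenukappaOnBoundary} you interpret the bracketed functional in Lemma~\ref{Lem:variationofV} as a vector-valued Radon measure on $M$, bound it by $\mathcal{H}^1\lfloor f(\partial V)$, and then read off its density along $f(V^{(1)})$ via the area formula for pushforwards. The paper instead fixes an integer $N$ so that every pair of subsegments $\gamma_k|_{[\frac{e}{N}l_k,\frac{e+1}{N}l_k]}$ satisfies the hypotheses of Lemma~\ref{Lem:speedandcurvatureagreeonintersection}, decomposes each parameter interval into the sets $\mathcal{D}_{k,e,I'}$ recording exactly which subsegments coincide, transports all integrals to a single representative via the diffeomorphisms $\varphi$, and then runs an induction on $|I'|$. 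Your route is shorter to state but hides the same combinatorics inside the density computation; the paper's route makes explicit why positive-measure self-intersections cause no trouble.

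For \eqref{eq:nutimeskappageq0} your idea of substituting $X\approx\kappa$ directly into Lemma~\ref{Lem:variationofV} is a genuine shortcut: the paper instead deduces \eqref{eq:nutimeskappageq0} from the already-established \eqref{eq:pointwisenukappa}, \eqref{eq:pointwisenukappaOnBoundary} together with the $\mathcal{D}_{k,e,I'}$ decomposition. Your argument avoids needing the pointwise identities at all for this step.

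There is one real gap in your construction of $X_\eta$. You write ``extending $\kappa$ to a bounded measurable field near $f(V^{(1)})$ \dots\ then mollifying.'' Since $f(V^{(1)})$ is Lebesgue-null in $M$, any measurable extension followed by convolution converges to zero, not to $\kappa$, when restricted back to the curves. What you need instead is: use Lemma~\ref{Lem:speedandcurvatureagreeonintersection} to see that $\kappa$ is well-defined $\mathcal{H}^1$-a.e.\ on $f(V^{(1)})$, apply Lusin's theorem on $f(V^{(1)})$ to get a closed set $F_\eta$ of nearly full $\mathcal{H}^1$-measure on which $\kappa$ is continuous, then extend by Tietze to a continuous bounded $X_\eta$ on $M$. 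The bounded-multiplicity fact (at most $mN$ subsegments through any point) then gives $X_\eta\circ\gamma_k\to\kappa_k$ in $L^1$ for every $k$ simultaneously. With this fix your argument for \eqref{eq:nutimeskappageq0} goes through.
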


\begin{proof}
Since all $\kappa_k$ and $\nu_{j, i}$ are uniformly bounded it follows immediately from the variation formula in Lemma \ref{Lem:variationofV} that for every (not necessarily continuous) vector field $X$ on $M$ which vanishes on $f(\partial V \cap V^{(0)})$
\[ \sum_{k =1}^m \Big( -\big\langle \gamma'_k (0), X_{\gamma_k (0)} \big\rangle + \big\langle \gamma'_k (l_k), X_{\gamma_k (l_k)} \big\rangle \Big) = 0. \]
This implies the very first part of the claim and simplifies the variation formula:
For every continuous vector field $X \in C^0 (M; TM)$ we have
\begin{multline} \label{eq:variationwithoutvertices}
 \Bigg| \sum_{k = 1}^m  \int_0^{l_k} \Big\langle \sum_{u = 1}^{v_k} \nu_k^{(u)} (s) , X_{\gamma_k(s)} \Big\rangle ds - \sum_{k=m_0+1}^m 
 \int_0^{l_k} \big\langle \kappa_k (s), X_{\gamma_k(s)} \big\rangle ds 
 \Bigg| \\
 \leq \sum_{k=1}^{m_0} \int_0^{l_k} \big| X_{\gamma_k (s)} \big| ds.
\end{multline}

Choose $N < \infty$ large enough such that the following holds:
Each curve $\gamma_k$ restricted to a subinterval of length $\frac1N l_k$ is embedded and whenever two curves $\gamma_{k_1}, \gamma_{k_2}$ restricted to subintervals of length $\frac1{N} l_{k_1}, \frac1{N} l_{k_2}$ intersect, then we are in the situation of Lemma \ref{Lem:speedandcurvatureagreeonintersection}, i.e. we can find a coordinate chart $(U, (x_1, \ldots, x_n))$ which contains these subsegments and in which we can find a vector $v \in \IR^n$ with the property that $\langle \gamma'_{k_1}, v \rangle, \langle \gamma'_{k_2}, v \rangle \neq 0$ on both subsegments with respect to the euclidean metric.
Consider now the index set $I = \{ 1, \ldots, m \} \times \{ 0, \ldots, N-1 \}$ and define for every $(k, e) \in I$ and every subset $I' \subset I$ with $(k, e) \in I'$ the domain
\begin{multline*}
 \mathcal{D}_{k, e, I'} = \big\{ s \in [\tfrac{e}{N} l_k, \tfrac{e+1}{N} l_k] \;\; : \;\; \gamma_k (s) \in \gamma_{k'} \big( [\tfrac{e'}{N} l_{k'}, \tfrac{e'+1}{N} l_{k'}] \big) \\ \text{if and only if $(k',e') \in I'$} \big\}.
\end{multline*}
Clearly, these sets are measurable and for all $(k, e) \in I$
\[ \mathop{\dot\bigcup}_{\substack{I' \subset I \\ (k, e) \in I'}} \mathcal{D}_{k, e, I'} = [\tfrac{e}{N} l_k, \tfrac{e+1}{N} l_k]. \]
Moreover, since $f |_{\partial V} = f_0 |_{\partial V}$ is injective, we find that $\mathcal{D}_{k,e,I'}$ is empty or finite whenever there are two distinct pairs $(k', e'), (k'', e'') \in I'$ for which $k', k'' \leq m_0$.

Consider now two pairs $(k_1, e_1), \linebreak[1] (k_2, e_2)$ and a subset $I' \subset I$ such that $(k_1, e_1), \linebreak[1] (k_2, e_2) \in I'$ and assume that $\mathcal{D}_{k_1, e_1, I'}$ (and hence also $\mathcal{D}_{k_2, e_2, I'}$) is non-empty.
We can now apply the second part of Lemma \ref{Lem:speedandcurvatureagreeonintersection} and obtain a continuously differentiable map $\varphi : [\frac{e_1}N, \frac{e_1+1}N] \to \IR$, whose derivative vanishes nowhere, for which the following holds:
$\varphi (\mathcal{D}_{k_1, e_1, I'}) = \mathcal{D}_{k_2, e_2, I'}$ and $\gamma_{k_1} (s) = \gamma_{k_2} (\varphi(s))$ for all $s \in \mathcal{D}_{k_1, e_1, I'}$.
Moreover, for almost every $s \in \mathcal{D}_{k_1, e_1, I'}$ we have $\varphi'(s) = \pm 1$ and $\kappa_{k_1} (s) = \kappa_{k_2} (\varphi(s))$.
So the following three identities hold for every continuous vector field $X \in C^0(M; TM)$
\begin{alignat}{1} 
 \int_{\mathcal{D}_{k_1, e_1, I'}} \big\langle \kappa_{k_1} (s), X_{\gamma_{k_1} (s)} \big\rangle ds &= \int_{\mathcal{D}_{k_2, e_2, I'}} \big\langle \kappa_{k_2} (s), X_{\gamma_{k_2} (s)} \big\rangle ds, \label{eq:kappaintoverdifferentDD} \displaybreak[2] \\
 \int_{\mathcal{D}_{k_1, e_1, I'}} \Big\langle \sum_{u = 1}^{v_{k_2}} \nu_{k_2}^{(u)} (\varphi(s)), X_{\gamma_{k_1}(s)} \Big\rangle ds &= \int_{\mathcal{D}_{k_2, e_2, I'}} \Big\langle \sum_{u = 1}^{v_{k_2}} \nu_{k_2}^{(u)} (s), X_{\gamma_{k_2}(s)}  \Big\rangle ds, \label{eq:nuintoverdifferentDD} \displaybreak[2] \\ 
 \int_{\mathcal{D}_{k_1, e_1, I'}} \Big\langle \sum_{u=1}^{v_{k_2}} \nu_{k_2}^{(u)} (\varphi(s)), \kappa_{k_1} (s) \Big\rangle ds &= \int_{\mathcal{D}_{k_2, e_2, I'}} \Big\langle \sum_{u=1}^{v_{k_2}} \nu_{k_2}^{(u)} (s), \kappa_{k_2} (s) \Big\rangle ds.  \label{eq:nukappaproductondifferentDD}
\end{alignat}

Next we express both sides of (\ref{eq:variationwithoutvertices}) as sums of integrals over the domains $\mathcal{D}_{k, e, I'}$.
\begin{multline*}
 \Bigg| \sum_{I' \subset I} \bigg( \sum_{(k,e) \in I'} \int_{\mathcal{D}_{k, e, I'}} \Big\langle \sum_{u = 1}^{v_k} \nu_k^{(u)} (s) , X_{\gamma_k(s)} \Big\rangle ds - \sum_{\substack{(k,e) \in I' \\ k > m_0}} \int_{\mathcal{D}_{k, e, I'}} \Big\langle \kappa_k (s), X_{\gamma_k(s)} \Big\rangle ds  \bigg) \Bigg| \\
 \leq \sum_{I' \subset I} \sum_{\substack{(k,e) \in I' \\ k \leq m_0}} \int_{\mathcal{D}_{k,e,I'}} \big| X_{\gamma_k(s)} \big| ds.
\end{multline*}
We will now group integrals whose values are the same.
To do this set $I_0 = \{ 1, \ldots, m_0 \} \times \{ 0, \ldots, N-1 \}$ and for each $\emptyset \neq I' \subset I$ choose a pair $(k_{I'}, e_{I'}) \in I'$ such that $(k_{I'}, e_{I'}) \in I_0$ whenever $I' \cap I_0 \neq \emptyset$.
Using (\ref{eq:kappaintoverdifferentDD}) and (\ref{eq:nuintoverdifferentDD}) we may then express the integrals over the domains $\mathcal{D}_{k, e, I'}$ in the last inequality in terms of integrals over the domains $\mathcal{D}_{k_{I'}, e_{I'}, I'}$.
This yields
\begin{multline} \label{eq:variationsplitupinDD}
\Bigg| \sum_{\emptyset \neq I' \subset I} \int_{\mathcal{D}_{k_{I'} ,e_{I'} , I'}} \Big\langle \sum_{k =1}^m  \sum_{\substack{s' \in E_k  \\ f (s') = f(s)}} \sum_{u = 1}^{v_k} \nu_k^{(u)} (s') - |I' \cap (I \setminus I_0)| \cdot \kappa_{k_{I'}} (s), X_{\gamma_{k_{I'}}(s)} \Big\rangle ds \Bigg|  \\
 \leq \sum_{\substack{\emptyset \neq I' \subset I \\ I' \cap I_0 \neq \emptyset}} \int_{\mathcal{D}_{k_{I'} ,e_{I'} , I'}} \big| X_{\gamma_k(s)} \big| ds.
\end{multline}
Note that all summands involving $\emptyset \neq I' \subset I$ for which $I' \cap I_0$ contains more than one element vanish since those consist of integrals over a finite set.
So for all remaining summands and all $(k,e) \in I' \subset I$ for almost every $s \in \mathcal{D}_{k,e, I'}$ the quantity $|I' \cap (I \setminus I_0)|$ is equal to $|f^{-1} (f (s))|$ if $\gamma_k (s) \notin f(\partial V)$ (or equivalently if $I' \cap I_0 = \emptyset$) or equal to $|f^{-1} (f (s))| - 1$ if $\gamma_k (s) \in f(\partial V)$ (or equivalently if $|I' \cap I_0| = 1$).
So the first factor in the scalar product on the left hand side of (\ref{eq:variationsplitupinDD}) is equal to the left hand side of equation (\ref{eq:pointwisenukappa}) or (\ref{eq:pointwisenukappaOnBoundary}), depending on $I'$.

We will now show by induction on $|I'|$ that for every $\emptyset \neq I' \subset I$ equation (\ref{eq:pointwisenukappa}) or (\ref{eq:pointwisenukappaOnBoundary}) holds for almost every $s \in \mathcal{D}_{k_{I'}, n_{I'}, I'}$.
Using the previous conclusions which related $\mathcal{D}_{k, e, I'}$ to $\mathcal{D}_{k_{I'}, e_{I'}, I'}$ for any other $(k,e) \in I'$ this will then imply the desired statement.
So let $\emptyset \neq I' \subset I$ and assume that for all $\emptyset \neq I'' \subsetneq I'$ equation (\ref{eq:pointwisenukappa}) or (\ref{eq:pointwisenukappaOnBoundary}) holds for almost every $s \in \mathcal{D}_{k_{I''}, n_{I''}, I''}$.
This implies that the terms involving subsets $I''$ in the sums on both sides of the inequality (\ref{eq:variationsplitupinDD}) vanish whenever $\emptyset \neq I'' \subsetneq I$ and $I'' \cap I_0 = \emptyset$.

Consider now some $s_0 \in \mathcal{D}_{k_{I'}, e_{I'}, I'}$.
Then we can find an open neighborhood $U \subset M$ around $\gamma_{k_{I'} } (s_0)$ such that $\gamma_k([\frac{e}{N} l_k, \frac{e+1}{N} l_k]) \cap U \neq \emptyset$ if and only if $(k, e) \in I'$.
So as long as $X \in C^0 ( M ; TM)$ is supported in $U$, the summands in (\ref{eq:variationsplitupinDD}) involving $\emptyset \neq I'' \subset I$ with $\emptyset \neq I'' \not\subset I'$ vanish.
Then the only summands which are not a priori zero are the summand involving the subset $I'$ and all proper subsets $I'' \subsetneq I'$ for which $| I'' \cap I_0 | = 1$.

Consider first the case in which $I' \cap I_0 = \emptyset$.
Then the previous conclusion implies that only the summand involving $I'$ on the left hand side of (\ref{eq:variationsplitupinDD}) is not a priori zero and that the right hand side of this equation is zero.
So
\[ \int_{\mathcal{D}_{k_{I'} ,e_{I'} , l'}} \Big\langle \sum_{k =1}^m  \sum_{\substack{s' \in E_k  \\ f (s') = f(s)}} \sum_{u = 1}^{v_k} \nu_k^{(u)} (s') - |f^{-1} (f(s))| \cdot \kappa_{k_{I'}} (s), X_{\gamma_{k_{I'}}(s)} \Big\rangle ds = 0 \]
for all $X \in C^0(M; TM)$ which are supported in $U$.
Since $\gamma_{k_{I'}(s)}$ restricted to $[\frac{e_{I'}}N l_{k_{I'}}, \frac{e_{I'} + 1}N l_{k_{I'}}]$ is an embedding, this implies that
\[ \int_{\mathcal{D}_{k_{I'} ,e_{I'} , l'}} \Big\langle \sum_{k =1}^m  \sum_{\substack{s' \in E_k  \\ f (s') = f(s)}} \sum_{u = 1}^{v_k} \nu_k^{(u)} (s') - |f^{-1} (f(s))| \cdot \kappa_k (s), X (s) \Big\rangle ds = 0 \]
for every compactly supported continuous vector function $X \in C^0 ( \gamma_{k_{I'}}^{-1} (U) \cap \linebreak[1] [\frac{e_{I'}}N l_{k_{I'}}, \linebreak[1] \frac{e_{I'} + 1}N l_{k_{I'}}])$.
So (\ref{eq:pointwisenukappa}) holds almost everywhere on $\mathcal{D}_{k_{I'} ,e_{I'} , l'} \cap \gamma_{k_{I'}}^{-1} (U) \cap \linebreak[1] [\frac{e_{I'}}N l_{k_{I'}}, \linebreak[1] \frac{e_{I'} + 1}N l_{k_{I'}}]$.
Since $s_0$ was chosen arbitrarily, this shows that (\ref{eq:pointwisenukappa}) holds for almost every $s \in \mathcal{D}_{k_{I'} ,e_{I'} , l'}$ which finishes the induction in the first case.

Next consider the case in which $I' \cap I_0 = \{ (k_{I'}, e_{I'}) \}$.
Then for every non-zero summand in (\ref{eq:variationsplitupinDD}) involving $I''$ we have $(k_{I''}, e_{I''}) = (k_{I'}, e_{I'}) =: (k_0, e_0)$.
Since the union of all domains $\mathcal{D}_{k_0, e_0, I''}$ for which $(k_0, e_0) \in I'$ is equal to the interval $[\frac{e_0}N l_{k_0}, \frac{e_0 + 1}{N} l_{k_0}]$, inequality (\ref{eq:variationsplitupinDD}) implies that
\begin{multline*}
\Bigg| \int_{\frac{e_0}N l_{k_0}}^{\frac{e_0+1}N l_{k_0}}  \Big\langle \sum_{k =1}^m  \sum_{\substack{s' \in E_k  \\ f (s') = f(s)}} \sum_{u = 1}^{v_k} \nu_k^{(u)} (s') - \big( | f^{-1} (f(s)) \big| - 1\big) \cdot \kappa_{k_{I'}} (s), X_{\gamma_{k_{I'}}(s)} \Big\rangle ds \Bigg| \\
 \leq \int_{\frac{e_0}N l_{k_0}}^{\frac{e_0+1}N l_{k_0}} \big| X_{\gamma_k(s)} \big| ds
\end{multline*}
for all $X \in C^0(M; TM)$ which are supported in $U$.
As in the first case, we conclude that
\begin{multline*}
\Bigg| \int_{\frac{e_0}N l_{k_0}}^{\frac{e_0+1}N l_{k_0}}  \Big\langle \sum_{k =1}^m  \sum_{\substack{s' \in E_k  \\ f (s') = f(s)}} \sum_{u = 1}^{v_k} \nu_k^{(u)} (s') - \big( | f^{-1} (f(s)) \big| - 1\big) \cdot \kappa_{k_{I'}} (s), X (s) \Big\rangle ds \Bigg| \\
 \leq \int_{\frac{e_0}N l_{k_0}}^{\frac{e_0+1}N l_{k_0}} \big| X(s) \big| ds
\end{multline*}
for every compactly supported continuous vector function $X \in C^0 ( \gamma_{k_0}^{-1} (U) \cap \linebreak[1] [\frac{e_0}N l_{k_0}, \linebreak[1] \frac{e_0 + 1}N l_{k_0}])$.
This implies that (\ref{eq:pointwisenukappaOnBoundary}) holds for almost all $s \in \mathcal{D}_{k_0, e_0, I'} \subset [\frac{e_0}N l_{k_0}, \frac{e_0 + 1}{N} l_{k_0}]$ and finishes the induction in the second case.

Finally, we prove (\ref{eq:nutimeskappageq0}).
Observe that by a simple rearrangement we have
\[ \sum_{j=1}^n \sum_i \int_{S^1(l_{j, i})} \langle \nu_{j,i} (s), \kappa_{j,i} (s) \rangle ds = \sum_{k=1}^m \sum_{u=1}^{v_k} \int_0^{l_k} \big\langle \nu_k^{(u)} (s), \kappa_k (s) \big\rangle ds. \]
Using (\ref{eq:nukappaproductondifferentDD}) we conclude that
\begin{multline*}
\sum_{j=1}^n \sum_i \int_{S^1 (l_{j, i})} \big\langle \nu_{j, i}(s) , \kappa_{j,i}(s) \big\rangle ds = 
 \sum_{k=1}^m \sum_{u=1}^{v_k} \int_0^{l_k} \big\langle \nu_k^{(u)} (s), \kappa_k (s) \big\rangle ds \\ 
 = \sum_{I' \subset I} \sum_{(k,e) \in I'} \int_{\mathcal{D}_{k, e, I'}} \Big\langle \sum_{u = 1}^{v_k} \nu_k^{(u)} (s) , \kappa_k(s) \Big\rangle ds \\
 = \sum_{\emptyset \neq I' \subset I} \int_{\mathcal{D}_{k_{I'}, e_{I'}, I'}} \Big\langle \sum_{k = 1}^m \sum_{\substack{s' \in E_k  \\ f (s') = f(s)}} \sum_{u = 1}^{v_k} \nu_k^{(u)} (s'), \kappa_{k_{I'}} (s) \Big\rangle ds.
\end{multline*}
We now apply (\ref{eq:pointwisenukappa}) to all summands for which $I' \cap I_0 = \emptyset$ and (\ref{eq:pointwisenukappaOnBoundary}) to all summands for which $I' \cap I_0 \neq \emptyset$.
Then we obtain that the right hand side of the previous equation is bounded from below by
\begin{multline*}
 \sum_{\substack{\emptyset \neq I' \subset I \\ I' \cap I_0 = \emptyset}} \int_{\mathcal{D}_{k_{I'}, n_{I'}, I'}} |I'| \cdot \big\langle \kappa_{k_{I'}} (s), \kappa_{k_{I'}} (s) \big\rangle ds   \\
+ \sum_{\substack{\emptyset \neq I' \subset I \\ I' \cap I_0 \neq \emptyset}} \int_{\mathcal{D}_{k_{I'}, n_{I'}, I'}} \Big( (|I'|-1) \cdot 
\langle \kappa_{k_{I'}} (s), \kappa_{k_{I'}} (s) \big\rangle - \big| \kappa_{k_{I'}} (s) \big| \Big) ds \displaybreak[1] \\
\geq - \sum_{\substack{\emptyset \neq I' \subset I \\ I' \cap I_0 \neq \emptyset}} \int_{\mathcal{D}_{k_{I'}, n_{I'}, I'}} \big| \kappa_{k_{I'}} (s) \big| ds
= - \sum_{k = 1}^{m_0} \int_0^{l_k} \big| \kappa_k (s) \big| ds .
\end{multline*}
This establishes the claim.
\end{proof}

\subsection{Summary}
We conclude this section by summarizing the important results which are needed in section \ref{sec:areaevolutionunderRF}.

\begin{Proposition} \label{Prop:existenceofVminimizer}
Consider a compact Riemannian manifold $(M,g)$ with $\pi_2(M) \linebreak[0] = 0$.
Let $V$ be a finite simplicial complex whose faces are $F_1, \ldots, F_n$ and $f_0 : V \to M$ a continuous map such that $f_0 |_{\partial V}$ is a smooth embedding.
Furthermore, let $\gamma_k : [0, l_k] \to M$, $(k = 1, \ldots, m_0)$ be arclength parameterizations of $f$ restricted to the edges of $\partial V$ and $\kappa_k : [0, l_k] \to TM$ the geodesic curvature of $\gamma_k$

Then the following is true:
\begin{enumerate}[label=(\alph*)]
\item There is a map $f : V^{(1)} \to M$ which restricted to every edge $E \subset V^{(1)}$ is a $C^{1,1}$-immersion such that $f$ is homotopic to $f_0 |_{V^{(1)}}$ relative to $\partial V$ and
\[ A( f|_{\partial F_j} ) + \ldots + A( f|_{\partial F_n} ) + \ell (f)  = A^{(1)} (f_0). \]
\item Consider for each $j = 1, \ldots, n$ the loop $f |_{\partial F_j}$ and apply Proposition \ref{Prop:PlateauProblem} to obtain the loops $\gamma_{j,i} : S^1 (l_{j,i}) \to M$.
Let $p_{j,i}$ be the (finitely many) number of places where $\gamma_{j,i}$ is not differentiable.
Then
\[  \sum_i (p_{j,i} - 2) \leq  1. \]
\item For each loop $\gamma_{j, i}$ the geodesic curvature $\kappa_{j,i} : S^1 (l_{j,i}) \to TM$ is defined almost everywhere.
Let now $f_{j,i} : D^2 \to M$ be arbitrary solutions to the Plateau problem for $\gamma_{j,i}$ and let $\nu_{j,i} : S^1 (l_{j,i}) \to TM$ be unit vector fields along $\gamma_{j,i}$ which are outward pointing tangential to $f_{j,i}$.
Then
\[ \sum_{j=1}^n \sum_i \int_{S^1 (l_{j, i})} \big\langle \nu_{j, i}(s) , \kappa_{j,i}(s) \big\rangle ds \geq - \sum_{k = 1}^{m_0} \int_0^{l_k} \big| \kappa_k (s) \big| ds. \]
\end{enumerate}
\end{Proposition}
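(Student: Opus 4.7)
The plan is to assemble the results already established in subsections \ref{subsec:regularityon1skeleton} and \ref{subsec:1skeletonstruc}; since the proposition is a summary of that work, no new analytic input is required, only careful bookkeeping to align notation with the statement.

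For assertion (a), I would start with a minimizing sequence $f_k : V \to M$ for the perturbed functional $\area(\cdot) + \ell(\cdot|_{V^{(1)}})$. After extracting a subsequence with pointwise convergence on $V^{(0)}$ and reparameterizing each $f_k|_E$ by constant speed (which is permitted by the differentiable parameterizations of edges built into Definition \ref{Def:simplcomplex}), the uniform bound on $\ell(f_k|_{V^{(1)}})$ together with Arzela--Ascoli yield a uniform limit $f : V^{(1)} \to M$ that is Lipschitz on each edge. Lemma \ref{Lem:existenceon1skeleton} identifies this $f$ as a minimizer satisfying $A(f|_{\partial F_1}) + \ldots + A(f|_{\partial F_n}) + \ell(f) = A^{(1)}(f_0)$, and Lemma \ref{Lem:regularityon1skeleton} promotes the regularity on each edge from Lipschitz to $C^{1,1}$. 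Combined with the constant-speed parameterization, this gives the required $C^{1,1}$-immersion statement on every edge along which $f$ is non-constant.

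For assertion (b), the key observation is that $\partial F_j$ is a triangle whose three sides are edges of $V^{(1)}$, meeting at three corner vertices; by part (a), $f$ is $C^{1,1}$ on each side, so $f|_{\partial F_j}$ can fail to be differentiable only at these (at most) three corners. Hence the parameter $p$ from Proposition \ref{Prop:PlateauProblem} satisfies $p \leq 3$, and assertion (f) of that proposition yields $\sum_i (p_{j,i}-2) \leq p-2 \leq 1$. Assertion (c) is nothing other than the inequality \eqref{eq:nutimeskappageq0} from Lemma \ref{Lem:almosteverywherealong1skeleton}, applied to the very loops $\gamma_{j,i}$, Plateau solutions $f_{j,i}$, and outward tangential unit vector fields $\nu_{j,i}$ named in the proposition; since these are precisely the ingredients used to derive \eqref{eq:nutimeskappageq0}, the conclusion transfers verbatim.

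The real obstacles do not lie in the summary itself but in the ingredients being packaged. The $C^{1,1}$-regularity on the $1$-skeleton (Lemma \ref{Lem:regularityon1skeleton}) rested on the delicate chord-length estimate $l - d \leq C_2 l^3$ obtained from a competitor argument against the straight segment; the Plateau problem for loops with possibly uncountably many self-intersections (Proposition \ref{Prop:PlateauProblem}) was handled by the decomposition into Douglas-satisfying sub-loops; and the variational identity of Lemma \ref{Lem:almosteverywherealong1skeleton} demanded a combinatorial matching of subsegments of the $\gamma_{j,i}$ with subsegments of the $\gamma_k$, using Lemma \ref{Lem:speedandcurvatureagreeonintersection} to identify speeds and curvatures almost everywhere on the intersection sets. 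Once those steps are in hand the proposition itself follows by direct extraction, so I expect no new obstacle at this stage.
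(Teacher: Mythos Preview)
Your proposal is correct and matches the paper's own proof essentially line for line: the paper also treats (a) and (c) as restatements of the earlier Lemmas \ref{Lem:existenceon1skeleton}, \ref{Lem:regularityon1skeleton}, and \ref{Lem:almosteverywherealong1skeleton}, and for (b) observes exactly as you do that $f|_{\partial F_j}$ can fail to be differentiable only at the three corners of the triangle $\partial F_j$, so Proposition \ref{Prop:PlateauProblem}(f) gives $\sum_i(p_{j,i}-2)\leq 3-2=1$. Your additional paragraph on where the real work was done is accurate context but not part of the proof proper.
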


\begin{proof}
Note that $f |_{\partial F_j}$ is differentiable everywhere except possibly at its three corners.
So assertion (b) follows from Proposition \ref{Prop:PlateauProblem}(f).
Assertions (a), (c) are just restatements of facts proved earlier.
\end{proof}

\begin{Remark} \label{Rmk:Alambda}
For any $\lambda > 0$ consider the quantity
\[ A^{(\lambda)} (f_0) := \inf \big\{ \area (f') + \lambda \ell( f' |_{V^{(1)}} )  \;\; : \;\; f' \simeq f_0 \;\; \text{relative to $\partial V$} \big\}. \]
Then all assertions of Proposition \ref{Prop:existenceofVminimizer} hold with $A^{(1)}$ replaced by $A^{(\lambda)}$ (in assertion (a) we obviously have to insert the factor $\lambda$ in front of $\ell (f)$).
This follows simply by rescaling the metric $g$ by a factor of $\lambda$.
\end{Remark}

\section{Area evolution under Ricci flow} \label{sec:areaevolutionunderRF}
\subsection{Overview}
Let in this section $M$ be a closed $3$-manifold with $\pi_2 (M) = 0$.
Consider a finite simplicial complex $V$ whose faces are denoted by $F_1, \ldots, F_n$ and a continuous map $f_0 : V \to M$ such that $f_0 |_{\partial V}$ is a smooth embedding.

Consider a Ricci flow $(g_t)_{t \in [T_1, T_2]}$ on $M$ such that $\scal_t \geq - \frac{3}{2t}$ on $M$ for all $t \in [T_1, T_2]$.
The goal of this section is to study the evolution of the time dependent quantity
\[ A_t (f_0) : = \inf \big\{ \area_t f' \;\; : \;\; f' \simeq f_0 \;\; \text{relative to $\partial V$} \big\} \]
as introduced in section \ref{sec:existenceofvminimizers}.
We now explain our strategy in this section.
Assume first that for some time $t_0 \in [T_1, T_2]$ there is an embedded minimizer $f : V \to M$ in the homotopy class of $f_0$ (relative to $\partial V$), i.e. $\area_{t_0} f = A_{t_0} (f_0)$.
Then by a simple variational argument, we conclude that at every edge $E \subset V^{(1)} \setminus \partial V$ the unit vector fields $\nu_E^{(1)}, \ldots, \nu_E^{(v_E)}$ along $f |_E$, which are orthogonal to $f |_E$ and outward pointing tangential to the $v_E$ faces which are adjacent to $E$, satisfy the following identity
\begin{equation} \label{eq:nuadduptozero}
 \nu_E^{(1)} + \ldots + \nu_E^{(v_E)} = 0.
\end{equation}
We can then use Hamilton's method (see also \cite[Lemmas 7.2]{Bamler-longtime-II} and \cite{Ham}) to compute the time derivative of the area of the minimal disk $f |_{F_j}$ for every $j = 1, \ldots, n$
\begin{equation} \label{eq:derivativeonfaceillustration}
 \frac{d}{dt} \Big|_{t = t_0}  \area_t (f |_{F_j} ) \leq \frac3{4t_0} \area_{t_0} (f |_{F_j} ) + \pi  - \int_{\partial F_j} \big\langle \nu_{\partial F_j}, \kappa_{\partial F_j} \big\rangle.
\end{equation}
Here $\nu_{\partial F_j}$ is the unit vector field which is normal to $f |_{\partial F_j}$ and outward pointing tangential to $f |_{F_j}$ and $\kappa_{\partial F_j}$ is the geodesic curvature of $f |_{\partial F_j}$.
Now we add up these inequalities for $j = 1, \ldots, n$.
The sum of the integrals on the right hand side can be rearranged and grouped into integrals over each edge of $\partial V$.
By (\ref{eq:nuadduptozero}) the integrals over each edge $E \subset V^{(1)} \setminus \partial V$ cancel each other out and we are left with the integrals over edges $E \subset \partial V$.
So
\[ \frac{d}{dt} \Big|_{t = t_0}  \area_t f \leq \frac3{4t_0} \area_{t_0} (f |_{F_j} ) + \pi n  + \sum_{E \subset \partial V} \int_E |\kappa_E |. \]
This implies that in the barrier sense
\begin{equation} \label{eq:wantthisbarrierbound}
 \frac{d}{dt^+} \Big|_{t = t_0}  A_t (f_0) \leq \frac3{4t_0} \area_{t_0} (f |_{F_j} ) + \pi n   + \sum_{E \subset \partial V} \int_E |\kappa_{E} |.
\end{equation}

Unfortunately, as mentioned in section \ref{sec:existenceofvminimizers}, an existence theory for such a minimizer $f$ is hard to come by.
We will however be able to establish the bound (\ref{eq:wantthisbarrierbound}) without the knowledge of this existence using the following trick.
For every $\lambda > 0$ consider the quantity
\[ A_t^{(\lambda)} (f_0) := \inf \big\{ \area_t (f') + \lambda \ell_t ( f' |_{V^{(1)}} )  \;\; : \;\; f' \simeq f_0 \;\; \text{relative to $\partial V$} \big\} \]
as introduced in Remark \ref{Rmk:Alambda}.
It is not hard to see that for each $t \in [T_1, T_2]$
\begin{equation} \label{eq:AlambdaandA0}
 A_t^{(\lambda)} (f_0) \geq A_t (f_0) \qquad \text{and} \qquad \lim_{\lambda \to 0} A_t^{(\lambda)} (f_0) = A_t (f_0).
\end{equation}
Now the existence theory for a minimizer of $A_t^{(\lambda)}(f_0)$ becomes far easier and has been carried out in section \ref{sec:existenceofvminimizers}.
Assume for the purpose of clarity that for some time $t_0$ there is an embedded, smooth minimizer $f : V \to M$ for the corresponding minimization problem, i.e. $\area_{t_0} f + \lambda \ell_{t_0} (f |_{V^{(1)}}) = A^{(\lambda)}_{t_0} (f_0)$.
Then identity (\ref{eq:nuadduptozero}) becomes (compare with (\ref{eq:simplenuiskappa}))
\[ \nu^{(1)}_E +  \ldots + \nu^{(v_E)}_E = \lambda \kappa_E. \]
So when adding up inequality (\ref{eq:derivativeonfaceillustration}) for all $j =1, \ldots, n$ and grouping the integrals on the right hand side by edge, we find that luckily the extra term which arises due to this modified identity has the right sign:
\begin{multline*}
 \frac{d}{dt} \Big|_{t = t_0}  \area_t f \leq \frac3{4t_0} \area_{t_0} (f |_{F_j} ) + \pi n  
  + \sum_{E \subset \partial V} \int_E |\kappa_E | 
 - \sum_{E \subset V^{(1)} \setminus \partial V} \int_E \big\langle \lambda \kappa_E, \kappa_E \big\rangle \\
\leq  \frac3{4t_0} \area_{t_0} (f |_{F_j} ) + \pi n 
  + \sum_{E \subset \partial V} \int_E |\kappa_E |.
\end{multline*}
Now choose a function $\lambda : [T_1, T_2] \to (0,1)$ such that $\lambda' (t) < - K_t \lambda(t)$ where $K_t$ is a bound on the Ricci curvature at time $t$.
This is always possible.
Then we can check that
\[ \frac{d}{dt} \Big|_{t = t_0} A^{(\lambda(t))}_t (f_0) \leq \frac3{4t_0} \area_{t_0} (f |_{F_j} ) + \pi n - \sum_{E \subset \partial V} \int_E |\kappa_{\partial F_j} |. \]
Since $\lambda(t)$ can be chosen arbitrarily small, we are able to conclude (\ref{eq:wantthisbarrierbound}) using (\ref{eq:AlambdaandA0}).

Note that this is a simplified picture of the arguments that will be presented in the next subsection.
The main difficulty that needs to be overcome stems from the fact that $f : V \to M$ is in general only defined on the $1$-skeleton and not smooth there and that $f$ might have self-intersections.

\subsection{Main part}
In the following Lemma we deduce an important bound on a curvature integral over a minimal disk with smooth boundary.
The statement and its proof are similar to parts of \cite[Lemmas 7.1, 7.2]{Bamler-longtime-II}.

\begin{Lemma} \label{Lem:integralboundonsmoothmindisk}
Let $f : D^2 \to M$ be a smooth, harmonic, almost conformal map and set $\gamma = f_{\partial D^2}$.
Denote by $\kappa : S^1 = \partial D^2 \to TM$ the geodesic curvature of $\gamma$ and by $\nu : S^1 \to TM$ the unit vector field along $\gamma$ which is orthogonal to $\gamma$ and outward pointing tangential to $f$ away from possible branch points.
Then
\[ \int_{D^2} \sec^M (df) d{\vol}_{f^*(g)} \geq 2\pi + \int_{S^1} \big\langle \nu(s), \kappa(s) \big\rangle \cdot |\gamma'(s)| ds. \]
Here $\sec^M(df)$ denotes the sectional curvature of $M$ in the direction of the image of $df$.
Note that the integrand on the left hand side is well-defined since the volume form vanishes whenever $df$ is not injective.
\end{Lemma}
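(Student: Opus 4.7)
The plan is to apply Gauss--Bonnet to the disk $D^2$ equipped with the pulled-back metric $f^*g$, together with the Gauss equation and a careful bookkeeping of geodesic curvatures. Since $f$ is harmonic and almost conformal, away from a finite set of branch points $f$ is a conformal minimal immersion, and the Gauss equation for such an immersion reads
\[
K^{f^*g} = \sec^M(df) - \tfrac12 |A|^2,
\]
where $A$ is the second fundamental form of $f(D^2) \subset M$. Because $f$ is minimal, $A$ is trace-free, so in particular $K^{f^*g} \leq \sec^M(df)$ pointwise on the regular set, and the volume form $d{\vol}_{f^*g}$ extends by zero across branch points as noted in the statement.

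Next I would apply Gauss--Bonnet to $(D^2, f^*g)$. At each branch point the induced metric has a conical singularity with cone angle $2\pi(n_i+1)$ for some integer $n_i \geq 1$, which contributes a concentrated negative curvature $-2\pi n_i$ on the left-hand side; hence
\[
\int_{D^2} K^{f^*g} d{\vol}_{f^*g} + \int_{\partial D^2} k_g \, ds - 2\pi \sum_i n_i = 2\pi,
\]
where $k_g$ is the intrinsic geodesic curvature of $\partial D^2$ inside the minimal surface. Since each branch contribution $-2\pi n_i$ appears on the good side of the inequality, the branch points do not obstruct the argument --- they only help.

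The remaining step is to identify $k_g$ with the ambient quantity $-\langle \nu, \kappa \rangle$. Writing $\tau$ for the unit tangent to $\gamma$ and decomposing $\kappa = D^M_\tau \tau$ into its component tangent to $f(D^2)$ and its component normal to $f(D^2)$, the tangential component equals the intrinsic acceleration $D^{f^*g}_\tau \tau$. In a $2$-surface this vector is a scalar multiple of the unit conormal $\nu$; comparing signs on the model case of $S^1 \subset \mathbb{R}^2$ with $\nu$ outward shows $k_g = -\langle D^M_\tau \tau, \nu \rangle = -\langle \kappa, \nu \rangle$, so (after reparametrizing by the given $s$)
\[
\int_{\partial D^2} k_g \, ds = - \int_{S^1} \langle \nu(s), \kappa(s) \rangle \, |\gamma'(s)| \, ds.
\]
Combining the three inputs --- $\int \sec^M(df) \geq \int K^{f^*g}$, Gauss--Bonnet with the nonpositive branch contribution, and the boundary identity --- yields the claimed inequality.

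The main technical obstacle is being careful about sign and orientation conventions in the conormal/geodesic-curvature identification, together with justifying Gauss--Bonnet across the (possibly present) branch points; once one observes that the cone-angle contributions have the favorable sign and that almost conformality plus harmonicity gives a genuine branched minimal immersion, the rest is a routine combination of well-established formulas.
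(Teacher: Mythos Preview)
Your approach is correct and follows the classical route: Gauss equation for a branched minimal immersion plus Gauss--Bonnet with conical singularities, then the identification $k_g = -\langle \nu, \kappa\rangle$. The sign bookkeeping and the observation that the cone-angle defects $-2\pi n_i$ only strengthen the inequality are both right.

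The paper takes a different route precisely to sidestep the branch-point analysis you flag as the main technical obstacle. Instead of applying Gauss--Bonnet directly to the singular metric $f^*g$, it considers the graph map $f_\varepsilon = (f,\id) : D^2 \to M \times (D^2,\varepsilon g_{\eucl})$, which is harmonic, conformal, and now a genuine \emph{embedding} with no branch points at all (interior or boundary). One then applies Gauss--Bonnet to the smooth minimal surface $\Sigma_\varepsilon = f_\varepsilon(D^2)$ and lets $\varepsilon \to 0$; the extra $D_\varepsilon$-components of the boundary integral vanish in the limit. What the paper's trick buys is that one never has to discuss cone angles or worry about boundary branch points (which your write-up does not explicitly address, and which would require a separate argument about corner contributions to Gauss--Bonnet). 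What your approach buys is directness: no auxiliary product space, no limit, just the three classical ingredients combined. Both are valid; the paper's version is arguably cleaner if one wants to avoid invoking Gauss--Bonnet for metrics with conical singularities, while yours is more transparent about where the inequality actually comes from.
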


\begin{proof}
In order to avoid issues arising from possible branch points (especially on the boundary of $\Sigma$), we employ the following trick (compare with \cite{PerelmanIII}):
Denote by $g_{\textnormal{eucl}}$ the euclidean metric on $D^2$ and consider for every $\varepsilon > 0$ the Riemannian manifold $(D_\varepsilon = D^2, \varepsilon g_{\textnormal{eucl}})$.
The identity map $h_\varepsilon : D^2 \to (D^2, \varepsilon g_{\textnormal{eucl}})$ is trivially a harmonic and conformal diffeomorphism and hence the map $f_\varepsilon = (f, h_\varepsilon) : D^2 \to M \times D_\varepsilon$ is a harmonic and conformal \emph{embedding}.
Denote its image by $\Sigma_\varepsilon = f_\varepsilon(D^2) \subset M \times D_\varepsilon$.
Since the sectional curvatures on the target manifold are bounded, we have
\[ \lim_{\varepsilon \to 0} \int_{\Sigma_\varepsilon} \sec^{M \times D_\varepsilon} ( T \Sigma_\varepsilon) d {\vol} =  \int_{\Sigma}  \sec^M ( df) d {\vol}_{f^*(g)}, \]
where $d{\vol}$ on the left hand side denotes the induced volume form and the integrand denotes the function on $\Sigma_\varepsilon$ which assigns to each point the (ambient) sectional curvature of $M \times D_\varepsilon$ in the direction of its tangent space.

Since $\Sigma_\varepsilon$ is a minimal surface, its interior sectional curvatures are not larger than the corresponding ambient ones.
So we obtain together with Gau\ss-Bonnet
\[ \int_{\Sigma_\varepsilon} \sec^{M \times D_\varepsilon} ( T \Sigma_\varepsilon) d {\vol} \geq \int_{\Sigma_\varepsilon} \sec^{\Sigma_\varepsilon} d{\vol} = 2\pi + \int_{\partial \Sigma_\varepsilon} \kappa^{\Sigma_\varepsilon}_{\partial \Sigma_\varepsilon} d s. \]
Here $\kappa^{\Sigma_\varepsilon}_{\partial \Sigma_\varepsilon}$ denotes the geodesic curvature of the boundary circle viewed as a curve within $\Sigma_\varepsilon$.
We now estimate the last integral.
Let $\gamma_\varepsilon = (\gamma_\varepsilon^M, \gamma_\varepsilon^{D_\varepsilon}) : S^1(l_\varepsilon) \to M \times D_\varepsilon$ be an arclength parameterization in such a way that $\gamma_\varepsilon^M$ converges to a unit speed parameterization $\gamma_0 : S^1(l) \to M$ of $\gamma$ as $\varepsilon \to 0$.
Furthermore, let $\nu_\varepsilon = (\nu_\varepsilon^M, \nu_\varepsilon^{D_\varepsilon}): S^1(l_\varepsilon) \to T(M \times D_\varepsilon)$ be the unit normal field along $\gamma_\varepsilon$ which is outward pointing tangent to $\Sigma_\varepsilon$.

It is not difficult to see that due to conformality,
\[ \nu_\varepsilon^M (s) = |(\gamma_\varepsilon^M)'(s)| \cdot \nu (\gamma_\varepsilon^{D_\varepsilon}(s))  \qquad \text{and} \qquad |\nu_\varepsilon^{D_\varepsilon} (s)| = |(\gamma_\varepsilon^{D_\varepsilon})'(s)| . \]
We can compute
\begin{multline*}
 \int_{\partial \Sigma_\varepsilon} \kappa_{\partial \Sigma_\varepsilon}^{\Sigma_\varepsilon} d s =   \int_{S^1(l_\varepsilon)} \Big\langle \nu_\varepsilon (s), \frac{D}{ds} \frac{d}{ds}  \gamma_\varepsilon (s) \Big\rangle ds \\
 = \int_{S^1(l_\varepsilon)} \Big\langle \nu^M_\varepsilon(s), \frac{D}{ds}  \frac{d}{ds}  \gamma^M_\varepsilon (s)  \Big\rangle ds 
+ \int_{S^1(l_\varepsilon)} \Big\langle  \nu^{D_\varepsilon}_\varepsilon(s), \frac{D}{ds}  \frac{d}{ds}  \gamma^{D_\varepsilon}_\varepsilon (s) \Big\rangle ds.
\end{multline*}
It is not difficult to see that the first integral on the right hand side is converges to $\int_{S^1} \big\langle \nu(s), \kappa(s) \big\rangle \cdot |\gamma'(s)| ds$ as $\varepsilon \to 0$.
The absolute value of the second integral is bounded by
\[  \int_{S^1(l_\varepsilon)} |(\gamma_\varepsilon^{D_\varepsilon})'(s)| \cdot \Big| \frac{D}{ds}  \frac{d}{ds}  \gamma^{D_\varepsilon}_\varepsilon (s) \Big| ds, \]
which goes to zero as $\varepsilon \to 0$.
This implies the claim.
\end{proof}

Next, we extend the bound of Lemma \ref{Lem:integralboundonsmoothmindisk} to minimal disks which are bounded by not necessarily embedded, piecewise $C^{1,1}$ loops which satisfy the Douglas-type condition.

\begin{Lemma} \label{Lem:integralcurvboundnotsmooth}
Let $\gamma : S^1 \to M$ be a continuous loop which is a piecewise $C^{1,1}$-immersion and let $\theta_1, \ldots, \theta_p$ be the angles between the right and left derivative of $\gamma$ at the points where $\gamma$ is not differentiable.
(Observe that $\theta_i = 0$ means that both derivatives agree).
Assume that $\gamma$ satisfies the Douglas-type condition (see Definition \ref{Def:DouglasCondition}).
Then there is a solution to the Plateau problem $f : D^2 \to M$ for $\gamma$ which has the following property:

The function $f$ is $C^{1,\alpha}$ up to the boundary away from finitely many points.
Let $\nu : S^1 \to TM$ be the unit vector field along $\gamma$ which is orthogonal to $\gamma$ and outward pointing tangential to $f$ away from possibly finitely many points and let $\kappa : S^1 \to TM$ be almost everywhere equal to the the geodesic curvature of $\gamma$.
Then
\[ \int_{D^2} \sec^M (df) d{\vol}_{f^*(g)} \geq 2\pi  - \theta_1 - \ldots - \theta_p + \int_{S^1} \big\langle \nu(s), \kappa(s) \big\rangle \cdot |\gamma'(s)| ds. \]
\end{Lemma}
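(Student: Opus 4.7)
The strategy is to carry out the conformal-embedding argument of Lemma \ref{Lem:integralboundonsmoothmindisk} on a Plateau solution produced by Proposition \ref{Prop:PlateauProblem}, and to apply Gauss-Bonnet \emph{with corners} to the resulting minimal surface in the product $M \times D_\varepsilon$.

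First I would invoke Proposition \ref{Prop:PlateauProblem}(a) to produce a solution $f \colon D^2 \to M$ to the Plateau problem for $\gamma$; this uses the Douglas-type hypothesis in an essential way. Applying Proposition \ref{Prop:PlateauProblem}(b) on each of the open $C^{1,1}$-arcs of $\gamma$ between consecutive corners $p_1, \ldots, p_p \in S^1$ shows that $f$ is $C^{1,\alpha}$ up to the boundary away from the $p_i$ and from finitely many branch points, so that the outward unit conormal $\nu$ of $f$ along $\gamma$ is well defined outside this finite exceptional set and every quantity in the statement makes sense.

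Next I would repeat the trick of the previous lemma: set $f_\varepsilon = (f, h_\varepsilon) \colon D^2 \to M \times D_\varepsilon$, where $h_\varepsilon$ is the identity viewed as a map into $(D^2, \varepsilon g_{\eucl})$; then $f_\varepsilon$ is a harmonic and conformal embedding onto a minimal surface $\Sigma_\varepsilon$ whose boundary $\gamma_\varepsilon$ is piecewise $C^{1,1}$ with corners exactly at the images of the $p_i$. As in Lemma \ref{Lem:integralboundonsmoothmindisk},
\[
  \int_{\Sigma_\varepsilon} \sec^{M \times D_\varepsilon}(T\Sigma_\varepsilon)\, d\vol \;\longrightarrow\; \int_{D^2} \sec^M(df)\, d\vol_{f^*(g)} \qquad (\varepsilon \to 0),
\]
and minimality gives $\sec^{\Sigma_\varepsilon} \leq \sec^{M \times D_\varepsilon}(T\Sigma_\varepsilon)$ along $\Sigma_\varepsilon$. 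Gauss-Bonnet for the topological disk $\Sigma_\varepsilon$ with piecewise smooth boundary reads
\[
  \int_{\Sigma_\varepsilon} \sec^{\Sigma_\varepsilon}\, d\vol + \int_{\partial \Sigma_\varepsilon} \kappa^{\Sigma_\varepsilon}_{\partial \Sigma_\varepsilon}\, ds + \sum_{i=1}^p \theta_i^\varepsilon = 2\pi,
\]
where $\theta_i^\varepsilon$ denotes the exterior angle of $\gamma_\varepsilon$ at the corner above $p_i$.

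Combining the three displays and letting $\varepsilon \to 0$ gives the claim, once one verifies two convergences: the boundary curvature integral tends to $\int_{S^1} \langle \nu(s), \kappa(s) \rangle \cdot |\gamma'(s)|\, ds$ by the computation of Lemma \ref{Lem:integralboundonsmoothmindisk} applied arc by arc (isolated branch points form a null set and the $D_\varepsilon$-component of the conormal has vanishing contribution), and $\theta_i^\varepsilon \to \theta_i$ because the one-sided unit tangents of $\gamma_\varepsilon$ at $p_i$ differ from those of $\gamma$ only by a $D_\varepsilon$-component whose $(M \times D_\varepsilon)$-norm vanishes as $\varepsilon \to 0$. The main obstacle I anticipate is the rigorous justification of Gauss-Bonnet with corners for $\Sigma_\varepsilon$: one must establish that $\Sigma_\varepsilon$ admits honest one-sided tangent planes at each corner $p_i$, which should follow from the $C^{1,\alpha}$ boundary regularity of $f$ on each arc together with the standard Plateau boundary behaviour at a corner of prescribed opening angle. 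A safe fallback, in case this becomes delicate, is to approximate $\gamma$ by smooth loops with rapidly-turning arcs replacing the corners, apply Lemma \ref{Lem:integralboundonsmoothmindisk} and Proposition \ref{Prop:PlateauProblem}(c), and pass to the limit while tracking the localized contribution of the turning arcs, which converges exactly to $\sum_i \theta_i$.
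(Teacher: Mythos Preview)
Your fallback approach---approximating $\gamma$ by smooth embedded loops, solving Plateau for each, invoking Lemma \ref{Lem:integralboundonsmoothmindisk}, and passing to the limit via Proposition \ref{Prop:PlateauProblem}(c)---is exactly the route the paper takes. The paper rounds each corner by a short arc of curvature $\leq \varepsilon^{-1}$ and length $\leq (\theta_i + o(1))\varepsilon$, then mollifies and perturbs to a smooth embedding $\gamma_k$; the key bookkeeping is that $\limsup_k \int |\kappa_k - \kappa|\,|\gamma_k'|\,ds \leq \sum_i \theta_i$, which is precisely what produces the $-\sum_i \theta_i$ term after applying Lemma \ref{Lem:integralboundonsmoothmindisk} to each $f_k$ and letting $k \to \infty$.

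Your primary approach---applying Gau\ss--Bonnet with corners directly to $\Sigma_\varepsilon$---is not what the paper does, and the obstacle you flag is real and not resolved by the tools available in the paper. Proposition \ref{Prop:PlateauProblem}(b) gives $C^{1,\alpha}$ regularity only on open arcs \emph{away} from the corners, so you have no control on $f$ (or $f_\varepsilon$) at the $p_i$: you cannot rule out a boundary branch point there, you do not know that $\Sigma_\varepsilon$ has one-sided tangent planes at the corner, and hence you cannot even define the exterior angle $\theta_i^\varepsilon$ intrinsically, let alone justify Gau\ss--Bonnet with corners. Establishing the required corner regularity for Plateau solutions is a separate (and nontrivial) piece of analysis that the paper deliberately avoids by passing to smooth approximants. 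So commit to the fallback: it is complete, and it is the paper's proof.
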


\begin{proof}
The proof uses an approximation method.

Let $s_1, \ldots, s_p \in S^1$ be the places where $\gamma$ is not differentiable and choose a small constant $\varepsilon > 0$.
It is not difficult to see that there is a function $\phi : (0,1) \to (0,1)$ with $\lim_{t \to 0} \phi(t) = 0$ (which may depend on $(M,g)$ and $\gamma$) such that:
We can replace $\gamma$ in a small neighborhood of each $s_i$ by a small arc of length $\leq ( \theta_i + \phi(\varepsilon)) \varepsilon$ and geodesic curvature bounded by $\varepsilon^{-1}$ such that the resulting curve $\gamma^* : S^1 \to M$ is a $C^1$-immersion.
It then follows that if $\kappa^* : S^1 \to M$ is almost everywhere equal to the geodesic curvature of $\gamma^*$, we have
\[ \int_{S^1} \big| \kappa^*(s) - \kappa (s) \big| \cdot |\gamma^{* \prime} (s)| ds \leq \theta_1 + \ldots + \theta_p + p \phi(\varepsilon) + p C \varepsilon. \]
Here $C$ is a $C^{1,1}$ bound on $\gamma$.
Next, we mollify $\gamma^*$ to obtain a smooth immersion $\gamma^{**} : S^1 \to M$ such that if $\kappa^{**} : S^1 \to M$ is the geodesic curvature of $\gamma^{**}$, we have
\[ \int_{S^1} \big| \kappa^{**}(s) - \kappa (s) \big| \cdot |\gamma^{** \prime} (s)| ds \leq \theta_1 + \ldots + \theta_p + p \phi(\varepsilon) + pC \varepsilon + \varepsilon. \]
Finally, we can perturb $\gamma^{**}$ to a smooth embedding $\gamma^{***} : S^1 \to M$ whose geodesic curvature $\kappa^{***} : S^1 \to M$ satisfies
\[ \int_{S^1} \big| \kappa^{***}(s) - \kappa (s) \big| \cdot |\gamma^{*** \prime} (s)| ds \leq \theta_1 + \ldots + \theta_p + p \phi(\varepsilon) + pC \varepsilon + 2\varepsilon. \]

These constructions have shown that can find a sequence $\gamma_1, \gamma_2, \ldots : S^1 \to M$ of smoothly embedded loops which uniformly converge to $\gamma$ and which locally converge on $S^1 \setminus \{ s_1, \ldots, s_q \}$ to $\gamma$ in the $C^{1,\alpha}$ sense such that the geodesic curvatures $\kappa_k : S^1 \to TM$ satisfy
\begin{equation} \label{eq:limsupboundedbythetas}
 \limsup_{k \to \infty} \int_{S^1} \big| \kappa_k (s) - \kappa (s) \big| \cdot |\gamma'_k(s) | ds \leq \theta_1 + \ldots + \theta_q . \end{equation}
Let now $f_1, f_2, \ldots : D^2 \to M$ be solutions of the Plateau problem for these loops.
By Proposition \ref{Prop:PlateauProblem}(b) the maps $f_k$ are smooth up to the boundary.
Moreover, by Proposition \ref{Prop:PlateauProblem}(c) we conclude that, after passing to a subsequence and a possible conformal reparameterization, the maps $f_k : D^2 \to M$ converge uniformly on $D^2$ and smoothly on $\Int D^2$ to a map $f : D^2 \to M$ which solves the Plateau problem for $\gamma$.
By Proposition \ref{Prop:PlateauProblem}(b) the map $f$ has local regularity $C^{1, \alpha}$ up to the boundary away from finitely many points for all $\alpha < 1$.
So by Proposition \ref{Prop:PlateauProblem}(c), the convergence $f_k \to f$ is locally in $C^{1, \alpha}$ away from finitely many points.

We now conclude first that
\begin{equation} \label{eq:interiorcurvboundconvergence}
 \lim_{k \to \infty} \int_{D^2} \sec^M (df_k) d{\vol}_{f^*_k (g)} = \int_{D^2} \sec^M (df) d{\vol}_{f^*(g)}.
\end{equation}
Moreover, if we denote by $\nu_k : S^1 \to M$ the unit normal vectors to $\gamma_k$ which are outward tangential to $f_k$, we obtain that (recall that $|\gamma'_k(s)|$ and $\kappa (s)$ are uniformly bounded).
\begin{equation} \label{eq:limitwithnukbutkappa}
 \lim_{k \to \infty} \int_{S^1} \big\langle \nu_k (s), \kappa(s) \big\rangle \cdot |\gamma'_k(s)| ds= \int_{S^1} \big\langle \nu (s), \kappa(s) \big\rangle \cdot |\gamma'(s)| ds.
\end{equation}
Now note that
\begin{multline*}
 \Big| \int_{S^1} \big\langle \nu_k (s), \kappa_k (s) \big\rangle \cdot |\gamma'_k(s)| ds - \int_{S^1} \big\langle \nu_k (s), \kappa (s) \big\rangle \cdot |\gamma'_k (s)| ds \Big| \\
 \leq \int_{S^1} \big| \kappa_k (s) - \kappa (s) \big| \cdot |\gamma'_k(s) |.
\end{multline*}
Together with (\ref{eq:limsupboundedbythetas}) and (\ref{eq:limitwithnukbutkappa}) this implies 
\[ \liminf_{k \to \infty} \int_{S^1} \big\langle \nu_k (s), \kappa_k (s) \big\rangle \cdot |\gamma'_k(s)| ds \geq  - \theta_1 - \ldots - \theta_q + \int_{S^1} \big\langle \nu (s), \kappa(s) \big\rangle \cdot |\gamma'(s)| ds. \]

Finally, applying Lemma \ref{Lem:integralboundonsmoothmindisk} for each $f_k$ we obtain together with (\ref{eq:interiorcurvboundconvergence}) and the previous estimate that
\begin{multline*}
 \int_{D^2} \sec^M (df) d{\vol}_{f^*(g)} = \lim_{k \to \infty} \int_{D^2} \sec^M (df_k) d{\vol}_{f^*_k (g)} \\
 \geq 2\pi + \liminf_{k \to \infty} \int_{S^1} \big\langle \nu_k (s), \kappa_k (s) \big\rangle \cdot |\gamma'_k(s)| ds \\
 \geq 2\pi - \theta_1 - \ldots - \theta_p + \int_{S^1} \big\langle \nu (s), \kappa(s) \big\rangle \cdot |\gamma'(s)| ds. \qedhere
\end{multline*}
\end{proof}

We can now apply the previous bound together with the results of Proposition \ref{Prop:existenceofVminimizer} and to control the time derivative of the quantity $A_t^{(\lambda)}$.
We remark that the proof of this Lemma is again similar to parts of \cite[Lemmas 7.1, 7.2]{Bamler-longtime-II}.

\begin{Lemma} \label{Lem:evolutionofAlambdaunderRF}
Let $0 < T_1 < T_2 < \infty$ and $(g_t)_{t \in [T_1,T_2]}$ be a smooth solution of the Ricci flow on $M$ on which $\scal_t \geq - \frac{3}{2t}$ for all $t \in [T_1, T_2]$.
Assume that the Ricci curvature of $g_t$ is bounded by some constant $K < \infty$ for all $t \in [T_1, T_2]$.

Let moreover $V$ be a finite simplicial complex whose faces are denoted by $F_1, \ldots, F_n$ and $f_0 : V \to M$ a continuous map such that $f_0 |_{\partial V}$ is a smooth embedding.
At every time $t \in [T_1, T_2]$ let $\gamma_{k, t} : [0, l_{k, t}] \to M$, $(k = 1, \ldots, m_0$) be time-$t$ arclength parameterizations of $f$ restricted to the edges of $\partial V$ and $\kappa_{k, t} : [0, l_{k, t}] \to TM$ the geodesic curvature of each $\gamma_{k, t}$ at time $t$.

Now let $\lambda : [T_1, T_2] \to (0, \infty)$ be a continuously differentiable function such that $\lambda'(t) \leq - K \lambda(t)$ for all $t \in [T_1, T_2]$.
Then we can bound the evolution of the quantity $A^{(\lambda(t))}_t (f_0)$ as follows.
For every $t \in [T_1, T_2)$ we have in the barrier sense:
\[ \frac{d}{dt^+} A^{(\lambda(t))}_t (f_0) \leq \frac{3}{4t} A^{(\lambda(t))}_t (f_0) +  \pi n + \sum_{k=1}^{m_0} \int_0^{l_{k, t}} \big| \kappa_{k, t} (s) \big|_t ds . \]
\end{Lemma}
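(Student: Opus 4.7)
Fix $t_0 \in [T_1, T_2)$. The plan is to construct a competitor for $A^{(\lambda(t))}_t(f_0)$ at times $t$ near $t_0$ using a minimizing configuration at $t_0$, and then differentiate. First, apply Proposition \ref{Prop:existenceofVminimizer} together with Remark \ref{Rmk:Alambda} (for $A^{(\lambda(t_0))}$) at time $t_0$ to obtain a $C^{1,1}$-map $f : V^{(1)} \to M$ in the homotopy class of $f_0|_{V^{(1)}}$, boundary loops $\gamma_{j,i}$ decomposing each $f|_{\partial F_j}$ as in Proposition \ref{Prop:PlateauProblem}, and Plateau solutions $f_{j,i} : D^2 \to M$ at time $t_0$. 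By assertion (a) and Proposition \ref{Prop:PlateauProblem}(g),
\[ A^{(\lambda(t_0))}_{t_0}(f_0) = \sum_{j,i} \area_{t_0} f_{j,i} + \lambda(t_0)\, \ell_{t_0}(f|_{V^{(1)}}). \]
For $t$ near $t_0$, the same configuration yields a competitor, so $A^{(\lambda(t))}_t(f_0) \leq \sum_{j,i} A_t(\gamma_{j,i}) + \lambda(t)\, \ell_t(f|_{V^{(1)}})$ with equality at $t=t_0$. Taking the forward upper derivative gives
\[ \frac{d}{dt^+}\Big|_{t_0} A^{(\lambda(t))}_t(f_0) \;\leq\; \sum_{j,i} \frac{d}{dt^+}\Big|_{t_0} A_t(\gamma_{j,i}) \;+\; \frac{d}{dt}\Big|_{t_0}\big(\lambda(t)\, \ell_t(f|_{V^{(1)}})\big). \]

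For the area terms, Lemma \ref{Lem:variationofA} yields $\frac{d}{dt^+}|_{t_0} A_t(\gamma_{j,i}) \leq \int_{D^2}\partial_t d{\vol}_{f_{j,i}^*(g_t)}$. Under $\partial_t g_t = -2\Ric_{g_t}$, a direct computation gives $\partial_t d{\vol}_{f_{j,i}^*(g_t)} = -\tr_{T\Sigma_{j,i}} \Ric_t \cdot d{\vol}$. In dimension three one has $\scal = 2 \sec^M(T\Sigma) + 2\Ric(\nu,\nu)$ and $\tr_{T\Sigma}\Ric = \scal - \Ric(\nu,\nu)$, so $\tr_{T\Sigma}\Ric = \tfrac12 \scal + \sec^M(T\Sigma)$. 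Using $\scal \geq -\tfrac{3}{2t}$ to bound $-\tfrac12\scal \leq \tfrac{3}{4t}$ and applying Lemma \ref{Lem:integralcurvboundnotsmooth} to each minimal disk (with $\gamma_{j,i}$ arclength-parameterized, so $|\gamma'_{j,i}| = 1$), we obtain
\[ \frac{d}{dt^+}\Big|_{t_0} A_t(\gamma_{j,i}) \;\leq\; \frac{3}{4t_0} \area_{t_0} f_{j,i} \;-\; 2\pi \;+\; \sum_l \theta_{j,i,l} \;-\; \int_{S^1(l_{j,i})} \langle \nu_{j,i}, \kappa_{j,i}\rangle\, ds. \]
Summing over $i$ for fixed $j$, since every $\theta_{j,i,l} \leq \pi$ and by Proposition \ref{Prop:existenceofVminimizer}(b), $\sum_i (p_{j,i} - 2) \leq 1$, we get $\sum_i(-2\pi + \sum_l \theta_{j,i,l}) \leq \pi \sum_i (p_{j,i} - 2) \leq \pi$, which when summed over $j$ produces the desired constant $\pi n$. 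Proposition \ref{Prop:existenceofVminimizer}(c) controls the remaining boundary integrals: $-\sum_{j,i} \int \langle \nu_{j,i}, \kappa_{j,i}\rangle\, ds \leq \sum_{k=1}^{m_0} \int_0^{l_{k,t_0}} |\kappa_{k,t_0}|\, ds$.

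For the length term, the standard first variation gives $\frac{d}{dt}|_{t_0} \ell_t(f|_{E_k}) = -\int_{E_k} \Ric_{t_0}(\gamma'_{k,t_0}, \gamma'_{k,t_0})\, ds$ when $\gamma_{k,t_0}$ is arclength-parameterized; hence $|\frac{d}{dt}|_{t_0}\ell_t(f|_{V^{(1)}})| \leq K\, \ell_{t_0}(f|_{V^{(1)}})$ by the Ricci bound. Combined with the hypothesis $\lambda'(t_0) \leq -K\lambda(t_0)$,
\[ \frac{d}{dt}\Big|_{t_0}\big(\lambda(t)\ell_t(f|_{V^{(1)}})\big) \;\leq\; \big(\lambda'(t_0) + K\lambda(t_0)\big)\ell_{t_0}(f|_{V^{(1)}}) \;\leq\; 0. \]
Assembling the three estimates and bounding $\sum_{j,i} \area_{t_0} f_{j,i} \leq A^{(\lambda(t_0))}_{t_0}(f_0)$ yields the claimed inequality.

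The main subtlety lies in matching the Ricci flow evolution of $d{\vol}$ on the minimal disks with the geometric inequality of Lemma \ref{Lem:integralcurvboundnotsmooth}: the three-dimensional identity $\tr_{T\Sigma} \Ric = \tfrac{\scal}{2} + \sec^M(T\Sigma)$ is precisely what converts the scalar bound $\scal \geq -\tfrac{3}{2t}$ into the $\tfrac{3}{4t}$ factor on the right-hand side; and the careful constraint $\lambda' \leq -K\lambda$ is exactly what is needed to absorb the potentially positive length variation so that the $\lambda \ell$ contribution is harmless. The remaining combinatorial count relies on the nontrivial bound on non-smooth points from Proposition \ref{Prop:existenceofVminimizer}(b), which substitutes here for the $2\pi\chi$-control one would use in the smooth case.
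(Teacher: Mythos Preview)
Your proof is correct and follows essentially the same approach as the paper's: both fix $t_0$, invoke Proposition \ref{Prop:existenceofVminimizer} with Remark \ref{Rmk:Alambda} to obtain the minimizing $1$-skeleton map and the loops $\gamma_{j,i}$, choose the Plateau solutions $f_{j,i}$ via Lemma \ref{Lem:integralcurvboundnotsmooth}, differentiate the area using Lemma \ref{Lem:variationofA} together with the three-dimensional identity $\tr_{T\Sigma}\Ric = \tfrac12 \scal + \sec^M(T\Sigma)$ and the bound $\scal \ge -\tfrac{3}{2t}$, control the corner contribution via Proposition \ref{Prop:existenceofVminimizer}(b) and the boundary integral via Proposition \ref{Prop:existenceofVminimizer}(c), and finally absorb the length term using $\lambda' \le -K\lambda$. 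The only cosmetic difference is that the paper takes $\sum_j A_t(f|_{\partial F_j})$ as the barrier and applies Lemma \ref{Lem:variationofA} to each $f|_{\partial F_j}$ (yielding the sum over $i$), whereas you use the slightly larger barrier $\sum_{j,i} A_t(\gamma_{j,i})$ directly; since the two agree at $t_0$ this makes no difference.
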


\begin{proof}
Let $t_0 \in [T_1, T_2]$.
We first apply Proposition \ref{Prop:existenceofVminimizer} (see also Remark \ref{Rmk:Alambda}) at time $t_0$ and obtain a $C^{1,1}$-map $f : V^{(1)} \to M$ which is homotopic to $f_0 |_{V^{(1)}}$ relative $\partial V$ and for which
\[  \sum_{j=1}^n A_{t_0} (f |_{\partial F_j} ) + \lambda(t_0) \ell_{t_0} (f) = A^{(\lambda(t_0))}_{t_0} (f_0). \]
Consider for each $j = 1, \ldots, n$ the loop $f|_{\partial F_j}$ and apply Proposition \ref{Prop:PlateauProblem} to obtain the loops $\gamma_{j,i} : S^1 (l_{j,i}) \to M$.
As in Proposition \ref{Prop:existenceofVminimizer}(b) let $p_{j,i}$ be the number of places where $\gamma_{j,i}$ is not differentiable and let $\kappa_{j,i} : S^1(l_{j,i}) \to TM$ be the geodesic curvature along $\gamma_{j,i}$.
Recall that each $\gamma_{j,i}$ satisfies the Douglas-type condition and that for each $j =1, \ldots, n$
\[ \sum_i A_{t_0} (\gamma_{j,i} ) = A_{t_0} (\gamma_j) \qquad \text{and} \qquad \sum_i (p_{j,i} - 2) \leq 1. \]

Next, we apply Lemma \ref{Lem:integralcurvboundnotsmooth} at time $t_0$ to obtain a solution to the Plateau problem $f_{j,i} : D^2 \to M$ for each $\gamma_{j,i}$ such that for the unit normal vector field $\nu_{j,i} : S^1(l_{j,i}) \to TM$ which is outward pointing tangential to $f_{j,i}$ we have
\[ \int_{D^2} \sec_{t_0}^M (df_{j,i}) d{\vol}_{f_{j,i}^*(g_{t_0})} \geq \pi (2 - p_{j,i}) + \int_{S^1(l_{j,i})} \big\langle \nu_{j,i} (s), \kappa_{j,i} (s) \big\rangle_{t_0}  ds. \]

We can now apply Lemma \ref{Lem:variationofA} and conclude that in the barrier sense for all for all $j = 1, \ldots, n$
\begin{multline*}
 \frac{d}{dt^+} \Big|_{t = t_0} A_t ( f |_{\partial F_j} ) \leq \sum_i \int_{D^2} \frac{d}{dt} \Big|_{t = t_0} d{\vol}_{f^*_{j,i} (g_t)} \\
 = - \sum_i \int_{D^2} \tr_{f^*_{j,i} (g_{t_0})} (\Ric_{t_0} (df_{j,i}, df_{j,i})) d{\vol}_{f^*_{j,i} (g_{t_0})} \displaybreak[1] \\
  =  - \sum_i \bigg( \frac12 \int_{D^2} \big( \scal_{t_0} \circ f_{j,i} \big) d{\vol}_{f^*_{j,i} (g_{t_0}) } + \int_{D_2} \sec^M_{t_0} (df_{j,i}) d{\vol}_{f^*_{j,i} (g_{t_0}) } \bigg) \\
 \leq \frac{3}{4t_0} \sum_i A_{t_0} (\gamma_{j,i}) + \sum_i \pi (p_{j,i} - 2) - \sum_i \int_{S^1(l_{j,i})} \big\langle \nu_{j,i} (s), \kappa_{j,i}(s) \big\rangle ds \\
 = \frac{3}{4t_0} A_{t_0} (\gamma_j) + \pi  - \sum_i \int_{S^1(l_{j,i})} \big\langle \nu_{j,i} (s), \kappa_{j,i}(s) \big\rangle ds.
\end{multline*}
Now Proposition \ref{Prop:existenceofVminimizer}(c) implies that if we sum this inequality over all $j = 1, \ldots, n$, then the integral term can be estimated by a boundary integral:
\[ \frac{d}{dt^+} \Big|_{t = t_0} \sum_{j = 1}^n A_t ( f|_{\partial F_j} ) \leq \frac{3}{4t_0} \sum_{j=1}^n A_{t_0} (\gamma_j) + \pi n + \sum_{k = 1}^{m_0} \int_0^{l_{k, t_0}} \big| \kappa_{k, t_0} (s) \big|_{t_0} ds. \]

It remains to estimate the distortion of the length of $f$.
Since the Ricci curvature is bounded by $K$ on $[T_1, T_2]$, we find
\[ \frac{d}{dt}  \Big|_{t = t_0} \big( \lambda(t) \ell_t (f) \big) \leq - K \lambda(t_0) \ell_{t_0} (f) + \lambda(t_0) \cdot K \ell_{t_0} (f) \leq 0. \]
Finally, observe that for all $t \geq t_0$ we have by Lemma \ref{Lem:existenceon1skeleton}
\[ A^{(\lambda(t))}_t (f_0) \leq \sum_{j = 1}^n A_t ( f|_{\partial F_j} ) + \lambda(t) \ell_t (f). \]
The equality is strict for $t = t_0$ and the time derivative of the right hand side is bounded by exactly the desired term in the barrier sense.
This finishes the proof of the Lemma.
\end{proof}

Letting the parameter $\lambda$ go to zero yields the following estimate which does not require a global curvature bound.

\begin{Lemma} \label{Lem:evolofAunderRF}
Let $0 < T_1 < T_2 \leq \infty$ and $(g_t)_{t \in [T_1,T_2)}$ be a smooth solution of the Ricci flow on $M$ on which $\scal_t \geq - \frac{3}{2t}$ for all $t \in [T_1, T_2]$.

Let moreover $V$ be a finite simplicial complex whose faces are denoted by $F_1, \ldots, F_n$ and $f_0 : V \to M$ a continuous map such that $f_0 |_{\partial V}$ is a smooth immersion.
At every time $t \in [T_1, T_2)$ let $\gamma_{k, t} : [0, l_{k, t}] \to M$, $(k = 1, \ldots, m_0$) be time-$t$ arclength parameterizations of $f_0$ restricted to the edges of $\partial V$ and $\kappa_k : [0, l_{k, t}] \to TM$ the geodesic curvature of each $\gamma_{k, t}$ at time $t$.

Then we can bound the evolution of $A_t (f_0)$ as follows in the barrier sense:
\[ \frac{d}{dt^+} A_t (f_0) \leq \frac{3}{4t} A_t (f_0) +  \pi n + \sum_{k=1}^{m_0} \int_0^{l_{k, t}} \big| \kappa_{k, t} (s) \big|_t ds . \]
\end{Lemma}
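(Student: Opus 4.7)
The plan is to deduce the statement from Lemma \ref{Lem:evolutionofAlambdaunderRF} by passing to the limit $\lambda \to 0$, exploiting the two relations in (\ref{eq:AlambdaandA0}): $A_t(f_0) \leq A_t^{(\lambda)}(f_0)$ and $A_t^{(\lambda)}(f_0) \to A_t(f_0)$ as $\lambda \to 0$. The essential obstacle is that Lemma \ref{Lem:evolutionofAlambdaunderRF} requires a uniform Ricci bound $K$ on the whole time interval, whereas here only smoothness of $(g_t)$ on $[T_1, T_2)$ is assumed. This will be handled by localizing in $t$, and the passage to the limit $\varepsilon \to 0$ will be performed on the integrated level rather than directly in the infinitesimal barrier inequality.

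Fix $t_0 \in [T_1, T_2)$ and choose $\tau > 0$ with $[t_0, t_0 + \tau] \subset [T_1, T_2)$. By smoothness of the flow on this compact subinterval, the Ricci curvature is bounded there by some $K = K(\tau) < \infty$. For each $\varepsilon > 0$, set $\lambda_\varepsilon(t) = \varepsilon \, e^{-K(t - t_0)}$ on $[t_0, t_0 + \tau]$; this satisfies $\lambda_\varepsilon'(t) = -K \lambda_\varepsilon(t)$, so Lemma \ref{Lem:evolutionofAlambdaunderRF} applies and yields, at every $t \in [t_0, t_0 + \tau)$, the barrier-sense bound
\[ \frac{d}{dt^+} A_t^{(\lambda_\varepsilon(t))}(f_0) \leq \frac{3}{4t} A_t^{(\lambda_\varepsilon(t))}(f_0) + C(t), \]
where $C(t) := \pi n + \sum_{k=1}^{m_0} \int_0^{l_{k,t}} |\kappa_{k,t}(s)|_t \, ds$ is continuous in $t$ because the boundary curves $\gamma_{k,t}$ evolve smoothly under the flow.

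Next, by the standard comparison principle for upper Dini derivatives, this differential inequality holding at every $t \in [t_0, t_0 + \tau)$ implies $A_t^{(\lambda_\varepsilon(t))}(f_0) \leq y_\varepsilon(t)$ on $[t_0, t_0 + \tau]$, where $y_\varepsilon$ solves the linear ODE
\[ y'(t) = \frac{3}{4t} y(t) + C(t), \qquad y_\varepsilon(t_0) = A_{t_0}^{(\varepsilon)}(f_0). \]
Combining with $A_t(f_0) \leq A_t^{(\lambda_\varepsilon(t))}(f_0)$ from (\ref{eq:AlambdaandA0}), one obtains $A_t(f_0) \leq y_\varepsilon(t)$ on $[t_0, t_0 + \tau]$.

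Finally, let $\varepsilon \to 0$. By (\ref{eq:AlambdaandA0}), $y_\varepsilon(t_0) = A_{t_0}^{(\varepsilon)}(f_0) \to A_{t_0}(f_0)$, and continuous dependence of the linear ODE on initial data gives $y_\varepsilon \to y_0$ uniformly on $[t_0, t_0 + \tau]$, where $y_0$ satisfies the same ODE with $y_0(t_0) = A_{t_0}(f_0)$. Hence $A_t(f_0) \leq y_0(t)$ on $[t_0, t_0 + \tau]$ with equality at $t = t_0$, and taking the right derivative at $t_0$ yields
\[ \frac{d}{dt^+}\Big|_{t = t_0} A_t(f_0) \leq y_0'(t_0) = \frac{3}{4t_0} A_{t_0}(f_0) + C(t_0), \]
which is the desired inequality. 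The delicate point, as anticipated, is that a naive pointwise passage to the limit $\varepsilon \to 0$ inside the Dini-derivative bound would fail: a positive $O(1)$ error of size $A_{t_0}^{(\varepsilon)}(f_0) - A_{t_0}(f_0)$ would remain in the numerator of the difference quotient and blow up when divided by $h$. The detour through ODE comparison on a fixed interval circumvents this by transferring the entire $\varepsilon$-dependence into the initial data, where the limit $\varepsilon \to 0$ is harmless.
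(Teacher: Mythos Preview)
Your proof is correct and follows essentially the same route as the paper's: apply Lemma \ref{Lem:evolutionofAlambdaunderRF} with $\lambda(t) = \varepsilon e^{-Kt}$ on a compact subinterval where the Ricci curvature is bounded, pass to the ODE comparison solution, and let $\varepsilon \to 0$ using (\ref{eq:AlambdaandA0}) so that the initial value converges to $A_{t_0}(f_0)$. The paper's proof is nearly identical in structure (using the notation $F_{t_0,\varepsilon}$ for your $y_\varepsilon$ and $R_t$ for your $C(t)$). One small point you omit: Lemma \ref{Lem:evolutionofAlambdaunderRF} is stated for $f_0|_{\partial V}$ a smooth \emph{embedding}, whereas here it is only assumed to be an immersion; the paper disposes of this in one line by a perturbation argument, and you should note this reduction as well before invoking Lemma \ref{Lem:evolutionofAlambdaunderRF}.
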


\begin{proof}
Note that by a perturbation argument we only need to consider the case in which $f_0 |_{\partial V}$ is an embedding.
Moreover, we can without loss of generality restrict to a time-interval on which the Ricci curvature is bounded by some constant $K < \infty$.
For brevity set
\[ R_t = \pi n + \sum_{k=1}^{m_0} \int_0^{l_{k, t}} \big| \kappa_{k, t} (s) \big|_t ds. \]
Note that $R_t$ is continuous with respect to $t$.
Let $\varepsilon > 0$ be a small constant and apply Lemma \ref{Lem:evolutionofAlambdaunderRF} with $\lambda(t) = \varepsilon \exp(-Kt)$.
We obtain
\[ \frac{d}{dt^+} A^{(\varepsilon \exp(-Kt))}_t (f_0) \leq \frac{3}{4t} A^{(\varepsilon \exp(-Kt))}_t (f_0) + R_t. \]

Let now $t_0 \in [T_1, T_2)$ and consider the solution of the differential equation
\[ \frac{d}{dt} F_{t_0, \varepsilon} (t) = \frac{3}{4t} F_{t_0, \varepsilon} (t) + R_t \qquad \text{and} \qquad F_{t_0, \varepsilon} (t_0) = A_{t_0}^{(\varepsilon \exp (-Kt_0))} (f_0). \]
It follows that
\[  A_t^{(\varepsilon \exp(-K t))} (f_0) \leq F_{t_0, \varepsilon} (t) \qquad \text{for all} \qquad t \geq t_0. \]
Letting $\varepsilon \to 0$ and using the fact that that $\lim_{\lambda \to 0} A^{(\lambda)}_t (f_0) = A_t(f_0)$ yields
\[ A_t (f_0) \leq F_{t_0, 0} (t) \qquad \text{for all} \qquad t \geq t_0 \]
where $F_{t_0, 0}$ satisfies the differential equation
\[ \frac{d}{dt} F_{t_0, 0} (t) = \frac{3}{4t} F_{t_0, 0} (t) + R_t \qquad \text{and} \qquad F_{t_0, 0} (t_0) = A_{t_0} (f_0). \]
So $F_{t_0, 0} (t)$ is a barrier for $A_t (f_0)$ with the required properties.
\end{proof}

We can finally summarize our findings.
Note that the following Proposition is in some way a generalization of \cite[Lemma 7.2]{Bamler-longtime-II} for simplicial complexes.

\begin{Proposition} \label{Prop:areaevolutioninMM}
Let $\MM$ be a Ricci flow with surgery with precise cutoff defined on a time-interval $[T_1, T_2)$ (where $0 < T_1 < T_2 \leq \infty)$ and assume that $\pi_2 (\MM(t)) = 0$ for all $t \in [T_1, T_2)$.
Consider a finite simplicial complex $V$ whose faces are denoted by $F_1, \ldots, F_n$.

Let $f_0 : V \to \MM (T_1)$ be a continuous map such that $f_{0,0} = f_0 |_{\partial V}$ is a smooth immersion.
Consider a smooth family of immersions $f_{0, t} : \partial V \to \MM(t)$ parameterized by time which extend $f_{0,0}$ and which don't meet any surgery points.
Assume moreover that there is a constant $\Gamma < \infty$ such that for each $t \in [T_1, T_2)$ the following is true:
Let $\gamma_{k, t} : [0, l_{k, t}] \to \MM(t)$, $(k = 1, \ldots, m_0$) be time-$t$ arclength parameterizations of $f_{0,t}$ restricted to the edges of $\partial V$ and $\kappa_k : [0, l_{k, t}] \to T\MM(t)$ the geodesic curvature of each $\gamma_{k, t}$ at time $t$.
Then
\[ \sum_{k=1}^{m_0} \int_0^{l_{k, t}} \big( \big| \kappa_{k, t} (s) \big|_t + \big| \partial_t \gamma_{k,t}^\perp (s) \big|_t \big) ds \leq \Gamma. \]
Here $\partial_t \gamma_{k,t}^\perp (s)$ is the component of $\partial_t \gamma_{k,t} (s)$ which is perpendicular to $\gamma_{k,t}$.

For every time $t \in [T_1, T_2)$ denote by $A(t)$ the infimum over the areas of all piecewise smooth maps $f : V \to \MM(t_0)$ such that $f |_{\partial V} = f_{0,t}$ and such that there is a homotopy between $f_0$ and $f$ in space-time which restricts to $f_{0,t'}$ on $\partial V$.

Then the quantity 
\[ t^{1/4} \big( t^{-1} A(t) - 4 \pi n - 4 \Gamma \big) \]
is monotonically non-increasing on $[T_1, T_2)$ and if $T_2 = \infty$, we have
\[ \limsup_{t \to \infty} t^{-1} A(t) \leq  4 \pi n + 4 \Gamma. \]
\end{Proposition}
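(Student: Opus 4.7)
The plan is to reduce this to a differential inequality for $A(t)$ of the form
\[ \frac{d}{dt^+} A(t) \leq \frac{3}{4t} A(t) + \pi n + \Gamma \]
holding in the barrier sense on $[T_1, T_2)$, and then to integrate. The inequality follows from Lemma \ref{Lem:evolofAunderRF} once two extra issues are accommodated: the time-dependent boundary $f_{0,t}$ and the surgery times.

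First I would extend Lemma \ref{Lem:evolofAunderRF} to the moving-boundary case. Fix a smooth time $t_0 \in [T_1, T_2)$ and, given $\varepsilon > 0$, pick a near-minimizer $f^* : V \to \MM(t_0)$ homotopic in space-time to $f_0$, with $f^*|_{\partial V} = f_{0, t_0}$ and $\area_{t_0} f^* < A(t_0) + \varepsilon$. For small $h > 0$ we build a competitor at time $t_0 + h$ by gluing $f^*$ to the thin strip swept out by $f_{0,t}$ on each edge of $\partial V$ between times $t_0$ and $t_0 + h$; the area of this strip is bounded by $h \sum_k \int_0^{l_{k,t_0}} |\partial_t \gamma_{k,t_0}^{\perp}(s)|_{t_0}\,ds + o(h)$. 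Combining this with the argument of Lemma \ref{Lem:evolofAunderRF} applied to the problem with boundary held fixed at $f_{0,t_0}$ (which contributes $\frac{3}{4t} A + \pi n + \sum_k \int |\kappa_{k,t}|$), and using the curvature-plus-velocity bound $\Gamma$ from the hypothesis, yields
\[ \frac{d}{dt^+} A(t) \leq \frac{3}{4t} A(t) + \pi n + \Gamma \]
at every smooth time.

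Next I would handle the surgery times. At a surgery time $T^i$ the boundary $f_{0, T^i}$ lies away from the surgery region (which is a pre-surgery neighborhood of cutoff scale). Given any post-surgery competitor $f : V \to \MM(T^i)$, one can glue on the surgery caps to produce a pre-surgery competitor of essentially the same area; conversely, a pre-surgery near-minimizer can be pushed off the surgery regions at negligible area cost because the cutoff scale is small. Hence $A(t)$ is upper semicontinuous at each $T^i$, and the barrier inequality above extends across surgery times. Here the assumption $\pi_2(\MM(t)) = 0$ keeps the relevant Plateau problems well-posed by ensuring no nontrivial $2$-sphere components appear.

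It then remains to integrate the ODE inequality. Set $B(t) = t^{-1} A(t) - 4\pi n - 4\Gamma$ and compute
\[ \frac{d}{dt^+}\!\bigl(t^{1/4} B(t)\bigr) = \tfrac14 t^{-3/4} B(t) + t^{1/4}\!\left(-t^{-2} A(t) + t^{-1}\tfrac{d}{dt^+} A(t)\right). \]
Plugging in the differential inequality and simplifying gives $\tfrac{d}{dt^+}(t^{1/4} B(t)) \leq 0$ (the terms $\tfrac14 t^{-3/4} B(t)$ and $-\tfrac14 t^{-3/4} B(t)$ cancel exactly), so $t^{1/4} B(t)$ is monotonically non-increasing. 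When $T_2 = \infty$ this monotonicity gives $B(t) \leq T_1^{1/4} B(T_1) \cdot t^{-1/4} \to 0$, which is the claimed $\limsup$ bound. The main obstacle I expect is the careful bookkeeping for the moving boundary and for the surgery modification: making sure the swept-area estimate is uniform, that the barrier sense is preserved under taking infimum over $\varepsilon > 0$, and that the surgery modifications genuinely preserve the homotopy class in space-time required by the definition of $A(t)$.
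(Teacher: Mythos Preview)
Your proposal is correct and follows essentially the same route as the paper: derive the barrier inequality $\frac{d}{dt^+} A(t) \leq \frac{3}{4t} A(t) + \pi n + \Gamma$ from Lemma~\ref{Lem:evolofAunderRF} plus a swept-strip estimate for the moving boundary (the paper phrases this as ``a variational estimate \ldots\ \`a la Lemma~\ref{Lem:variationofV}''), handle surgery times by observing that $A(t)$ cannot increase under surgery, and then integrate. One terminological slip: the non-increase of $A$ at a surgery time makes $A$ \emph{lower} semicontinuous there, not upper semicontinuous; the paper cites \cite[Definition~2.11(5)]{Bamler-longtime-II} for this fact rather than arguing it directly via cap-gluing.
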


\begin{proof}
Note that the property of having precise cutoff implies that the metric $g(t)$ has $t^{-1}$-positive curvature which in turn entails that $\scal_t \geq -\frac{3}{2t}$ (see \cite[Definitions 2.10, 2.11(1)]{Bamler-longtime-II}).
Also, by a mollifier argument the infimum $A(t)$ can be taken over all maps which are only continuous and continuously differentiable when restricted to $V \setminus V^{(1)}$ and $V^{(1)}$ as well as bounded in $W^{1,2}$ on each face of $V$.

So the monotonicity of the desired quantity away from surgery times follows directly from Lemma \ref{Lem:evolofAunderRF} together with a variational estimate due to the fact that $f_{0,t}$ can move in time (\`a la Lemma \ref{Lem:variationofV}).
By \cite[Definition 2.11(5)]{Bamler-longtime-II} the the value of $A(t)$ cannot increase under a surgery, i.e. the function $A(t)$ is lower semi-continuous.
\end{proof}

\section{Construction and analysis of simplicial complexes in $M$} \label{sec:constructanalysispolygonalcomplex}
\subsection{Setup and statement of the results} \label{subsec:CombinatorialSetup}
In this section, we construct the simplicial complex $V$ which we will use in section \ref{sec:mainproof}.
We moreover analyze the intersections of images of $V$ with solid tori in $M$.
The results of this section are rather topological, we will however need to make use of some combinatorial geometric arguments in the proofs.

Let $M$ be a closed, orientable, irreducible $3$-manifold which is not a spherical space form.
Consider a geometric decomposition of $T_1, \ldots, T_m \subset M$ of $M$, i.e. the components of $M \setminus (T_1 \cup \ldots \cup T_m)$ are either hyperbolic or Seifert (see \cite[Definition 1.1]{Bamler-longtime-II} for more details).
We will assume from now on that the decomposition has been chosen such that no two hyperbolic components are adjacent to one another.
This can always be achieved by adding a parallel torus next to a torus between two hyperbolic components and hence adding another Seifert piece $\approx T^2 \times (0,1)$.
Let $M_{\textnormal{hyp}}$ be the union of the closures of all hyperbolic pieces of this decomposition and $M_{\textnormal{Seif}}$ the union of the closures of all Seifert pieces.
Then $M = M_{\textnormal{hyp}} \cup M_{\textnormal{Seif}}$ and $M_{\textnormal{hyp}} \cap M_{\textnormal{Seif}} = \partial M_{\textnormal{hyp}} = \partial M_{\textnormal{Seif}}$ is a disjoint union of embedded, incompressible $2$-tori.

The goal of this section is to establish the following Proposition.
In this Proposition, we need to distinguish the cases in which $M$ is covered by a $T^2$-bundle over a circle (i.e. in which $M$ is the quotient of a $3$-torus, the Heisenberg manifold or the Solvmanifold) and in which it is not.
It is not known to the author whether part (a) of the Proposition actually holds in both cases.

\begin{Proposition} \label{Prop:maincombinatorialresult}
There is a finite simplicial complex $V$ and a constant $C < \infty$ such that the following holds:
\begin{enumerate}[label=(\alph*)]
\item In the case in which $M$ is not covered by a $T^2$-bundle over a circle there is a map
\[ f_0 : V \to M \qquad \text{with} \qquad f_0 (\partial V) \subset \partial M_{\textnormal{Seif}} \]
which is a smooth immersion on $\partial V$ such that the following holds:
Let $S \subset \Int M_{\textnormal{Seif}}$, $S \approx S^1 \times D^2$ be an embedded solid torus whose fundamental group injects into the fundamental group of $M$ (i.e. $S$ is incompressible in $M$).
Let moreover $f : V \to M$ be a piecewise smooth map which is homotopic to $f_0$ relative $\partial V$ and $g$ a Riemannian metric on $M$.
Then we can find a compact, smooth domain $\Sigma \subset \IR^2$ and a smooth map $h : \Sigma \to S$ such that $h(\partial \Sigma) \subset \partial S$ and such that $h$ restricted to the interior boundary circles of $\Sigma$ of  is contractible in $\partial S$ and $h$ restricted to the exterior boundary circle of $\Sigma$ is non-contractible in $\partial S$ and such that
\[ \area  h < C \area f. \]
\item In the case in which $M$ is covered by a $T^2$-bundle over a circle the following holds:
$\partial V = 0$ and there are continuous maps 
\[ f_1, f_2, \ldots : V \to M \]
such that for every $n \geq 1$, every map $f'_n : V \to M$ which is homotopic to $f_n$ and every embedded loop $\sigma \subset M$ with the property that all non-trivial multiples of $\sigma$ are non-contractible in $M$, the map $f'_n$ intersects $\sigma$ at least $n$ times, i.e. ${f'}_n^{-1} (\sigma)$ contains at least $n$ points.
\end{enumerate}
\end{Proposition}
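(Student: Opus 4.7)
The plan is to handle parts (a) and (b) by very different methods.

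For part (b), I would use that when $M$ is covered by a $T^2$-bundle over $S^1$, there exist finite covers $\widehat{M}_n \to M$ for every $n \geq 1$ with $\widehat{M}_n$ diffeomorphic to $M$ (this follows, possibly after first passing to an intermediate finite cover, from the existence of arbitrarily large self-covers of $T^2$-bundles over $S^1$ obtained by combining fiberwise multiplication with a compatible base cover). View these as self-maps $\pi_n : M \to M$. I would take $V$ to be the $2$-skeleton of a smooth triangulation of $M$ (so $\partial V = \emptyset$) and $f_0 : V \hookrightarrow M$ the inclusion, which has the property that every non-contractible loop $\sigma \subset M$ meets $f_0(V)$ at least once since otherwise $\sigma$ would lie in the interior of a $3$-simplex. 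Set $f_n = \pi_n \circ f_0$. The preimage $\pi_n^{-1}(\sigma)$ consists of at least $n$ lifts in the domain copy of $M$, each incompressible and hence meeting $f_0(V)$. Homotopy invariance of (unsigned) intersection number with an incompressible loop then forces any $f'_n$ homotopic to $f_n$ to intersect $\sigma$ in at least $n$ points.

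For part (a), I would build $V$ from a smooth triangulation of $M$ adapted to the geometric decomposition, taking $V$ essentially as the $2$-skeleton restricted to $M_{\textnormal{Seif}}$ with $\partial V$ carried by $\partial M_{\textnormal{Seif}}$, and $f_0$ the inclusion. Given an incompressible solid torus $S \subset \textnormal{Int}\, M_{\textnormal{Seif}}$ and a piecewise-smooth homotope $f$ of $f_0$ rel $\partial V$, I would lift to the universal cover $\td{M}$ to obtain $\td{f} : \td{V} \to \td{M}$; each lift $\td{S} \subset \td{M}$ is an infinite cylinder on which $\pi_1(S) \cong \mathbb{Z}$ acts by axial translation. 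The central construction is a large family of polyhedral $2$-spheres $\{\gamma \cdot P_i\}$ in $\td{V}$, with $i \in \{1, \ldots, r\}$ and $\gamma$ ranging over $\pi_1(S)$, whose $\td{f}$-images separate the two ends of $\td{S}$, so that each intersection $\td{f}(\gamma \cdot P_i) \cap \td{S}$ yields a multiply connected compressing domain for $\td{S}$. Provided each $P_i$ crosses only a uniformly bounded number $N$ of $\pi_1(S)$-fundamental domains of $\td{S}$, summing the areas over any window of $N$ consecutive translates $\gamma$ counts each face of $f(V)$ at most $r$ times; averaging then produces a single translate whose intersection has area at most $C \cdot \textnormal{area}(f)$, and pushing this down to $S$ gives the required $h : \Sigma \to S$, with the prescribed boundary behavior coming from the axial separation property of $P_i$.

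The main obstacle, and the reason the case split is necessary, is this combinatorial estimate: polyhedral spheres $P_i$ in $\td{V}$ can only thread boundedly many fundamental domains of $\td{S}$, uniformly in the choice of $S$. This amounts to a quantitative undistortion statement for the axis of $\pi_1(S)$ in $\td{M}$ relative to a combinatorial distance function on $\td{V}$, and it fails precisely in the $T^2$-bundle setting, where axes of Seifert fibers in Heisenberg or Solv geometries can be arbitrarily distorted. To establish it I would introduce a combinatorial distance on $\td{V}$ tailored to the geometric decomposition and show that, away from the $T^2$-bundle case, it is quasi-isometric to the axial distance in $\td{S}$, so that compact polyhedral $2$-spheres cannot reach far along the axis. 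This quantitative undistortion, which relies on a careful combinatorial refinement of the naive $2$-skeleton as $V$, forms the technical heart of the section and is what distinguishes the present treatment from the simpler case handled in \cite{Bamler-longtime-II}.
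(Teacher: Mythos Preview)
Your overall architecture is right in both parts, and you have correctly located the technical heart of (a). There are, however, two genuine issues.

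\textbf{Part (b).} The reduction to self-covers $\pi_n : M \to M$ and the definition $f_n = \pi_n \circ f_0$ is exactly the paper's strategy. But your justification of the $n=1$ case is too quick: the assertion that an incompressible loop must meet $f_0(V)$ is clear for the inclusion, but ``homotopy invariance of unsigned intersection number'' is not a valid principle for a map from a simplicial complex. The paper instead lifts to the universal cover, takes a properly embedded ray $\td\sigma^+$ inside a lift of $\sigma$, and uses the \emph{signed} intersection number of $\td\sigma^+$ with the boundary of a large combinatorial ball $\partial B_k(Q_0)$ (which is a map from a closed surface). The homotopy between $f_0$ and $f'_1$ has bounded tracks, so for large $k$ it misses the basepoint of the ray and the intersection number is preserved. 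You need this mechanism, not a bare unsigned count.

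\textbf{Part (a).} Your averaging scheme does not work as written. If $P_i \subset \td V$ is a fixed polyhedral sphere and $\gamma \in \pi_1(S)$, then by equivariance $\td f(\gamma \cdot P_i) \cap \td S = \gamma \cdot (\td f(P_i) \cap \td S)$, so every translate has the \emph{same} area of intersection with $\td S$; summing over a window of translates gains nothing. The paper's averaging is different and is the essential point: one constructs a nested family of combinatorial balls $P_{R_0}(Q_0) \subset P_{R_1}(Q_0) \subset \cdots \subset P_{R_e}(Q_0)$ at increasing radii, whose boundaries are \emph{pairwise disjoint} $2$-spheres in $\td V$. The linear growth estimate $\dist'(Q_0,\varphi^{(n)}(Q_0)) \geq c|n|$ (your ``undistortion'') guarantees both that all these spheres separate the ends of $\td S$ inside a fixed union of $2N$ fundamental domains, and that one can fit $e \sim cN$ such disjoint spheres. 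Summing the areas of their $\td f$-images inside $\td S$ is then bounded by $2N \cdot \area f$, and the pigeonhole gives one sphere with area $\lesssim (2N/e)\,\area f \sim \area f$. The disjointness of the nested spheres, not translation, is what makes the count work.

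Finally, the paper's $V$ is not a generic $2$-skeleton but is built from the Seifert data: the decomposition tori $T_i$, a horizontal section $S_j$ in each piece, and vertical annuli $C_j$ cutting each base surface to a disk. This specific structure is what enables the tree-like combinatorics of chambers and columns, the horizontal/vertical distance functions, and ultimately the combinatorial convexity estimate that shows the $P_R(Q_0)$ are honest $3$-balls. You acknowledge a refinement of $V$ is needed, but it is worth knowing that the refinement is dictated by the Seifert fibration rather than by subdivision of a triangulation.
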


We will first establish part (a) of the Proposition in subsections \ref{subsec:reducifnoT2bundle}--\ref{subsec:proofofeasiermaincombinatorialresult} and then part (b) in subsection \ref{subsec:CaseT2bundleoverS1}.

\subsection{Reduction in the case in which $M$ is not covered by a $T^2$-bundle over a circle} \label{subsec:reducifnoT2bundle}
Assume in this subsection that $M$ is not covered by a $T^2$-bundle over a circle.
In order to establish part (a) of Proposition \ref{Prop:maincombinatorialresult}, it suffices to construct a simplicial complex $V$ and a map $f_0 : V \to M$ with the desired properties for every component $M' \subset M_{\textnormal{Seif}}$, i.e. $f_0 (\partial V) \subset \partial M'$ and check that the inequality involving the areas holds for every solid torus $S \subset M'$ and every homotope $f$ of $f_0$ .
We will hence from now on fix a single component $M' \subset M_{\textnormal{Seif}}$.

The next Lemma ensures that we can pass to a finite cover of $M'$ and simplify the structure of $M'$.
This simplification is not really needed in the following analysis, but it makes its presentation more comprehensible.

\begin{Lemma} \label{Lem:ProductstructinFiniteCovering}
Under the assumptions of this subsection there is a finite cover $\widehat\pi' : \widehat{M}' \to M'$ such that the following holds:
There is a Seifert decomposition $\widehat{T}_1, \ldots, \widehat{T}_m \subset \widehat{M}'$ such that the components of $\Int \widehat{M}' \setminus (\widehat{T}_1 \cup \ldots \cup \widehat{T}_m)$ are diffeomorphic to the interiors of manifolds $\widehat{M}_j = \Sigma_j \times S^1$ for $j= 1, \ldots, k$, where each $\Sigma_j$ is a compact orientable surface (possibly with boundary).
The diffeomorphisms can be chosen in such a way that they can be smoothly extended to the boundary tori.

Moreover, we are in one of the following cases:
\begin{enumerate}[label=(\Alph*)]
\item $\widehat{M}'$ is diffeomorphic to $T^2 \times I$ and $m = 0$, $k = 1$.
\item $\widehat{M}'$ is closed and diffeomorphic to an $S^1$-bundle over a closed, orientable surface $\Sigma$ with $\chi(\Sigma) < 0$.
In particular, $m = k = 1$ and the surface $\Sigma$ arises from $\Sigma_1$ by gluing together its two boundary circles.
\item $\Sigma_j$ has at least one boundary component and $\chi(\Sigma_j) < 0$ for all $j = 1, \ldots, k$ and at each torus $T_i$ the fibers coming from the $S^1$-fibration induced from either side are not homotopic to one another.
\end{enumerate}
\end{Lemma}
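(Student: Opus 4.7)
I will prove the lemma by distinguishing three cases that correspond to conclusions (A), (B), (C) of the statement. If $M'$ is one of the parallel $T^2\times I$ Seifert pieces that were inserted between two adjacent hyperbolic components, I take $\widehat\pi'=\id_{M'}$ and $\widehat M_1=T^2\times I$ with $m=0$, $k=1$, landing in case (A). If $M'=M$ is closed, then $M$ itself is a closed Seifert fibered $3$-manifold, and I will produce case (B). In the remaining case, $M'$ has non-empty boundary in $\partial M_{\textnormal{Seif}}$ and is a union of standard JSJ Seifert pieces glued along JSJ tori at which the adjacent $S^1$-fibers are non-homotopic; this will give case (C).

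For case (B), because $M$ is closed, orientable, irreducible, not a spherical space form, and not finitely covered by a $T^2$-bundle over $S^1$, the base orbifold of the Seifert fibration on $M$ must have strictly negative orbifold Euler characteristic (the closed orientable Seifert $3$-manifolds with $\chi^{\textnormal{orb}}\ge 0$ have universal cover $S^3$, $S^2\times\mathbb R$, $\mathbb R^3$, or Nil, all of which are excluded). Using residual finiteness of the orbifold fundamental group, I select a finite cover of $M$ that unfolds all exceptional fibers, producing $\widehat M'$ as an a priori non-trivial $S^1$-bundle over a closed orientable surface $\Sigma$ with $\chi(\Sigma)<0$. To realize $\widehat M'$ as the gluing of a product, I pick a non-separating simple closed curve on $\Sigma$, let $\widehat T_1\subset\widehat M'$ be the fiber torus above it, and cut; the resulting piece is an $S^1$-bundle over the surface $\Sigma_1$ with two boundary circles obtained from $\Sigma$, and since $\Sigma_1$ has boundary this bundle is trivial, hence isomorphic to $\Sigma_1\times S^1$.

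For case (C), each JSJ Seifert piece $M^{(i)}\subset M'$ has non-empty boundary of JSJ tori; its base orbifold $O^{(i)}$ has non-empty boundary and, by standard JSJ considerations (degenerate pieces $D^2\times S^1$ or $T^2\times I$ do not appear in the interior of $M'$ in our setting, and the hypothesis on $M$ rules out the Nil/Sol/Euclidean cases), satisfies $\chi^{\textnormal{orb}}(O^{(i)})<0$. By residual finiteness of $\pi_1^{\textnormal{orb}}(O^{(i)})$, there is a finite cover of $M^{(i)}$ of the form $\Sigma^{(i)}\times S^1$ with $\chi(\Sigma^{(i)})<0$; the $S^1$-bundle is trivial because $\Sigma^{(i)}$ has boundary. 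To combine these local covers into a global finite cover $\widehat\pi':\widehat M'\to M'$, I will exploit the graph-of-groups structure of $\pi_1(M')$ along the JSJ tori together with subgroup separability (LERF) of Seifert fibered $3$-manifold groups: on each JSJ torus $T\subset M'$, the two product covers from the adjacent pieces induce finite-index sublattices of $\pi_1(T)\cong\mathbb Z^2$, and after passing to their intersection, which is again of finite index, I can make the two covers agree on $T$ and glue them. The cutting tori $\widehat T_i$ of $\widehat M'$ are then the components of the preimage of the JSJ tori. The non-homotopy of fibers from either side at each $\widehat T_i$ is automatic, because two non-proportional primitive classes in $\mathbb Z^2$ remain non-proportional when restricted to any finite-index sublattice.

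The main obstacle is the simultaneous splicing in case (C): ensuring that a single finite cover of $M'$ induces the desired product structure on every Seifert piece and matches up compatibly along every JSJ torus. Subgroup separability of Seifert groups, together with the fact that the two fiber directions at each JSJ torus generate a finite-index subgroup of $\pi_1(T^2)=\mathbb Z^2$, is precisely what allows such a simultaneous finite cover to be built one torus at a time by successively passing to deeper finite-index subgroups of $\pi_1(M')$; this combinatorial-group-theoretic step is the technical heart of the argument.
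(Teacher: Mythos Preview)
Your case split is where the argument breaks. You assume that if $M'$ is closed then $M'=M$ is a single Seifert fibered space and you land directly in case~(B). But $M'$ is a component of $M_{\textnormal{Seif}}$, which in general is a \emph{graph manifold}: a union of several Seifert pieces glued along JSJ tori. A closed $M'$ can perfectly well have multiple pieces (for example, two copies of $(\text{once-punctured torus})\times S^1$ glued along their boundary tori by a map exchanging fiber and base directions). Such an $M'$ must be processed as in case~(C) of the lemma, not~(B), and your argument for~(B) says nothing about it. Symmetrically, your case~(C) is stated only for $M'$ with non-empty boundary, so closed multi-piece graph manifolds fall through the cracks entirely. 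Your treatment of~(A) is also too narrow: you only handle $M'\approx T^2\times I$ itself, whereas the lemma allows $M'$ to be any Seifert piece whose base orbifold is a quotient of $\IR^2$ (e.g.\ a disk with two order-$2$ cone points), which only becomes $T^2\times I$ after a non-trivial finite cover.

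The second gap is the gluing in~(C). You correctly identify this as the technical heart, but invoking LERF for Seifert groups and ``intersecting sublattices on each torus'' does not by itself produce a global finite cover: refining the sublattice on one boundary torus of a piece forces a new cover of that piece, which changes the sublattices induced on its other boundary tori, and you have not argued that this process terminates. The paper avoids this regress by a uniform trick (from \cite{Luecke-Wu-93}): for a single large $N$ one finds, over each hyperbolic-base piece, a manifold cover whose restriction to \emph{every} boundary torus is exactly the $N\IZ^2$-sublattice cover, and similarly for the euclidean-base pieces. Since the same sublattice appears on both sides of each JSJ torus, the covers glue after taking the right number of copies. After gluing, the paper performs a cleanup step you also omit: absorbing any $T^2\times I$ pieces and any tori across which the fibers happen to match, which is what produces the trichotomy~(A)/(B)/(C).
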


\begin{proof}
The arguments in this proof are similar to those in \cite{Luecke-Wu-93}.

Let $T_1, \ldots, T_m \subset M'$ be a Seifert decomposition of $M'$, i.e. $T_1$, \ldots, $T_m$ are pairwise disjoint, embedded, incompressible $2$-tori such that the components of $\Int M' \setminus (T_1 \cup \ldots \cup T_m)$ are diffeomorphic to the interiors of compact Seifert spaces $M'_1, \ldots, M'_m$ of $M' \setminus (T_1 \cup \ldots \cup T_m)$ whose quotient spaces are compact orbifolds $O_1, \ldots, O_m$ (possibly with boundary) whose singularities are of cone type.

We first analyze the $2$-orbifolds $O_1, \ldots, O_m$.
By \cite[Lemma 3.8(c)]{Bamler-longtime-II} and the fact that $M$ is aspherical we conclude that each $O_j$ is good, i.e. its interior is diffeomorphic to an isometric quotient of $S^2, \IR^2$ or $\IH^2$ (observe that otherwise we would be able to cover $M$ by two solid tori).
By the same argument and the fact that every orbifold covering of $O_j$ induces a covering of $M_j$, it follows that $O_j$ can also not be a quotient of $S^2$.
So each $O_j$ is an isometric quotient of $\IR^2$ or $\IH^2$.

If $\Int O_j$ is diffeomorphic to an isometric quotient of $\IR^2$, then there is a finite covering $\widehat{O}_j \to O_j$ such that $\widehat{O}_j$ is diffeomorphic to a torus or an annulus.
Let $\widehat{M}'_j \to M'_j$ be the induced covering.
In the first case $m = j = 1$ and $\widehat{M} = \widehat{M}' = \widehat{M}'_j$ carries an $S^1$-fibration over $T^2$.
Since $T^2$ fibers over a circle, this would however imply that $\widehat{M}$ fibers over a circle with $T^2$-fibers, in contradiction to our assumptions.
So $\widehat{O}_j$ is diffeomorphic to an annulus and $\widehat{M}'_j \approx T^2 \times I$.
We will note the following fact which we will use later in the proof:
For every natural number $N \geq 1$, the covering $\widehat{O}_j \to O_j$ can be chosen such that its restriction to every boundary component of $\widehat{O}_j$ is an $N$-fold covering over a circle.
We can moreover pass to a covering $\widehat{M}_j \to \widehat{M}'_j$, $\widehat{M}_j \approx T^2 \times I$ such that the composition $\widehat{M}_j \to \widehat{M}'_j \to M'_j$ over each boundary torus of $M'_j$ is an $N^2$-fold covering of $n_j := 1$ or $n_j := 2$ tori over a torus which is induced by the sublattice $N \IZ^2 \subset \IZ^2$.

If $\Int O_j$ is diffeomorphic to an isometric quotient of $\IH^2$, then by an argument from \cite[Lemma 4.1]{Luecke-Wu-93} for every large enough $N \geq 2$ we can find a finite orbifold covering $\widehat{O}_j \to O_j$ such that $\widehat{O}_j$ is a manifold and such that the covering map restricted to each boundary component of $\widehat{O}_j$ is an $N$-fold covering of the circle.
Consider the induced covering $\widehat{M}'_j \to M'_j$ where $\widehat{M}'_j$ is an $S^1$-bundle over $\Int \widehat{O}_j$.
If $\widehat{O}_j$ is closed, then we are in case (B) of the Lemma, so assume in the following that none of the $\widehat{O}_j$ is closed.
The $S^1$-fibration on each $\widehat{M}'_j$ can then be trivialized, i.e. $\widehat{M}'_j = \widehat{O}_j \times S^1$.
We can hence pass to a further $N$-fold covering $\widehat{M}_j \to \widehat{M}'_j$ using an $N$-fold covering of the $S^1$-factor.
Then for some $n_j \geq 1$ the composition $\widehat{M}_j \to \widehat{M}'_j \to M'_j$ over each boundary torus of $M'_j$ is the disjoint union of $n_j$ many $N^2$-fold coverings over the torus, induced by a sublattice $N \IZ^2 \subset \IZ^2$.

Now choose $N$ large enough such that the construction of the last two paragraphs can be carried out for every $j = 1, \ldots, m$.
Observe that the coverings over every $T_i$ coming from the coverings over the two adjacent $M'_j$ consist of equivalent pieces.
Let $N_0 = n_1 \cdots n_k$ and consider $\frac{N_0}{n_j}$ many disjoint copies of $\widehat{M}_j$ for each $j = 1, \ldots, k$.
It is then not difficult to see that these copies can be glued together along their boundary to obtain a covering $\widehat{M}' \to M'$.
The Seifert decomposition on $M'$ induces a Seifert decomposition $\widehat{T}'_1, \ldots, \widehat{T}'_{m'}$ of $\widehat{M}'$ all of whose pieces are products.

We are now almost done.
As a final step we successively remove tori $\widehat{T}'_i$ which are adjacent to Seifert components $\approx T^2 \times (0,1)$.
Since $\widehat{M}$ cannot be a $T^2$-bundle over a circle, these Seifert components can never be adjacent to such a torus $\widehat{T}'_i$ from both sides.
At the end of this process, we are either left with a single piece $\approx T^2 \times I$ and we are in case (A) of the Lemma or none of the Seifert pieces are diffeomorphic to $T^2 \times I$.
In the latter case we also remove tori $\widehat{T}'_i$ for which the $S^1$-fibers coming from either side are homotopic to one another.
This will either result in two distinct Seifert components getting joined together or in identifying two boundary tori of a single Seifert component.
If at any point in this process the new Seifert component is closed, then we undo the last step and we are in case (B).
Otherwise, we are in case (C).
\end{proof}

We will now show that Proposition \ref{Prop:maincombinatorialresult}(a) is implied by the following Proposition.

\begin{Proposition} \label{Prop:easiermaincombinatorialresultCaseb}
Let $M_0$ be an arbitrary $3$-manifold with $\pi_2 (M_0) = 0$ and $M \subset M_0$ be an embedded, connected, orientable, compact $3$-manifold with incompressible toroidal boundary components such that the fundamental group of $M$ injects into the fundamental group of $M_0$.

Assume that $M$ satisfies one of the following conditions:
\begin{enumerate}[label=(\Alph*)]
\item $M \approx T^2 \times I$.
\item $M$ is the total space of an $S^1$-bundle over a closed, orientable surface $\Sigma$ with $\chi (\Sigma) < 0$.
\item $M$ admits a Seifert decomposition $T_1, \ldots, T_m \subset M$ such that the components of $\Int M \setminus (T_1 \cup \ldots \cup T_m)$ are diffeomorphic to the interiors of $M_j = \Sigma_j \times S^1$ for $j = 1, \ldots, k$, where each $\Sigma_j$ is a compact orientable surface with at least one boundary component and $\chi (\Sigma_j) < 0$.
The diffeomorphisms can be chosen in such a way that they can be smoothly extended to the boundary tori.
Moreover, at each $T_i$ the fibers of the $S^1$-fibrations induced from the manifold $M_j$ on either side are not homotopic to one another.
\end{enumerate}
Then there is a constant $C < \infty$, a simplicial complex $V$ and a continuous map
\[ f_0 : V \to M \qquad \text{with} \qquad f_0 (\partial V) \subset \partial M \]
which is a smooth immersion on $\partial V$ such that the following holds:

Let $S \subset \Int M$, $S \approx S^1 \times D^2$ be an embedded solid torus whose fundamental group injects into the fundamental group of $M$ (i.e. $S$ is incompressible in $M$).
Let moreover $f : V \to M_0$ be a piecewise smooth map which is homotopic to $f_0$ relative $\partial V$ in $M_0$ and $g$ a Riemannian metric on $M_0$.
Then we can find a compact, smooth domain $\Sigma \subset \IR^2$ and a smooth map $h : \Sigma \to S$ such that $h(\partial \Sigma) \subset \partial S$ and such that $h$ restricted to the interior boundary circles of $\Sigma$ of  is contractible in $\partial S$ and $h$ restricted to the exterior boundary circle of $\Sigma$ is non-contractible in $\partial S$ and such that
\[ \area  h < C \area f. \]
\end{Proposition}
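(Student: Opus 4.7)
The plan is to construct $V$ as the simplicial realization of the $2$-skeleton of a cell decomposition of $M$ in which $\partial M$ is a subcomplex and every $3$-cell is an open topological ball, with $f_0 : V \to M \subset M_0$ the inclusion. In case (A) this amounts to prisming a triangulation of $T^2$; in case (B) one pulls a triangulation of the base surface through the $S^1$-bundle and adds meridian disks over each vertex; in case (C) one performs the case-(B) construction inside each trivial Seifert piece $\Sigma_j \times S^1$ and glues across the tori $T_i$, arranging the cell structures on each $T_i$ to respect the two non-homotopic $S^1$-fiberings coming from the adjacent pieces. The resulting cell decomposition has bounded combinatorial complexity depending only on $V$.

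Next, I would lift everything to the universal cover $p : \td{M_0} \to M_0$. Injectivity of $\pi_1(M) \hookrightarrow \pi_1(M_0)$ identifies a component $\td{M} \subset p^{-1}(M)$ with the universal cover of $M$; the lifted cell decomposition has $3$-cells $B$ bounded by polyhedral $2$-spheres $\partial B = \td{f}_0(K_B)$ for explicit finite subcomplexes $K_B \subset \td{V}$. Lifting the homotopy from $f_0$ to $f$ starting at $\td{f}_0 : \td{V} \to \td{M}$ yields $\td{f} : \td{V} \to \td{M_0}$, so each $\td{f}(K_B)$ is a continuous $2$-sphere homotopic in $\td{M_0}$ to $\partial B$. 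A chosen lift $\td{S} \subset \td{M}$ of the solid torus $S$ is a properly embedded cylinder $\td{S} \approx D^2 \times \IR$ (by incompressibility together with $\pi_1(S) \hookrightarrow \pi_1(M_0)$), on which $\pi_1(S) \cong \IZ$ acts by translation along the $\IR$-factor.

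The main step is an averaging argument among the collection $\mathcal{B}$ of $3$-cells $B \subset \td{M}$ whose polyhedral sphere $\partial B$ algebraically links the core line of $\td{S}$. The core of the proof is to verify, using (A)--(C), that the number of $\pi_1(S)$-orbits of $\mathcal{B}$ meeting any fixed fundamental-domain window of $\td{S}$ is bounded by a constant $N_0 = N_0(V)$, uniformly in $S$; this is the main obstacle, requiring (in case (C)) essential use of the non-homotopy of the two $S^1$-fiberings at each $T_i$ to prevent $\td{S}$ from aligning with a Seifert direction and thereby missing the transverse $2$-cells of $V$. Granting this, the total $\td{f}$-area summed over the $K_B$'s for $B$'s in any window of $N$ consecutive fundamental domains of $\td{S}$ is bounded by $C \area f$, since each face of $V$ lifts to only boundedly many faces of $\td{V}$ meeting the corresponding tube in $\td{S}$; pigeonhole then yields a distinguished $B_0 \in \mathcal{B}$ with $\area \td{f}|_{K_{B_0}} \leq C \area f / N$. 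Finally, the sphere $\td{f}(K_{B_0})$, being homotopic in $\td{M_0}$ to $\partial B_0$, separates the two ends of $\td{S}$; after a generic transverse perturbation its intersection with $\td{S}$ is a planar subsurface $\Sigma$ whose outer boundary wraps once in the $\IR$-direction of $\td{S}$ (hence is non-contractible in $\partial S$), while its inner boundaries are null-homotopic in $\partial S$. Setting $h = p \circ \td{f}|_\Sigma : \Sigma \to S$ then produces the required compressing multiply connected domain with $\area h \leq C \area f$.
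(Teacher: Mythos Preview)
Your proposal contains a genuine gap at the final step. You assert that for a single $3$-cell $B_0$ of the lifted decomposition, the sphere $\td f(K_{B_0})$, being homotopic in $\td M_0$ to $\partial B_0$, ``separates the two ends of $\td S$.'' But $B_0$ is a compact ball, so both ends of the properly embedded core line $\td\sigma \approx \IR$ lie in the unbounded component of $\td M \setminus B_0$, and the algebraic intersection number of $\td\sigma$ with $\partial B_0$ is zero. After homotopy the same is true of $\td f(K_{B_0})$, so its transverse intersection with $\td S$ cannot contain a planar piece whose outer boundary wraps nontrivially around the $S^1$-factor of $\partial\td S$. (Your phrase ``wraps once in the $\IR$-direction'' is already a symptom: a closed curve on $S^1\times\IR$ cannot wrap in the noncompact direction.) In particular your collection $\mathcal B$, as defined, is empty; and if you instead take $\mathcal B$ to be the cells meeting $\td S$, their boundary spheres still fail to yield compressing domains.

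The paper does not work with individual cells. Instead it builds, for each radius $R$ in a long arithmetic progression $R_0 < R_1 < \cdots < R_e$, a \emph{large} polyhedral region $P_R(Q_0)\subset\td M$ which is a union of many cells and is proved to be a topological $3$-ball; its boundary sphere contains a fixed cross-sectional disk $D_0$ of $\td S$ in its interior while excluding the far disks $D_\pm$, so the half-rays $\td\sigma_\pm$ genuinely have intersection number $1$ with it. These $e+1$ spheres are pairwise disjoint in $\td V$, and $e$ is comparable to the number $N$ of fundamental domains of $\td S$ under consideration, which is what makes the averaging go through. The substantial work---all of subsections \ref{subsec:combconvexincaseC} and \ref{subsec:combconvexincaseB}---is a combinatorial convexity estimate for a weighted cell-distance, needed to show (Proposition \ref{Prop:distballsinQQareballs}) that each $P_R(Q_0)$ really is a ball and that the displacement of the base cell under the deck transformation of $\sigma$ grows linearly. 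Conditions (B)/(C) enter here through the tree structure of chambers and columns and the non-parallelism of the Seifert fibers across each $T_i$; your ``bounded number of $\pi_1(S)$-orbits'' intuition points in this direction but does not by itself produce the nested family of separating spheres that the argument requires.
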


\begin{proof}[Proof that Proposition \ref{Prop:easiermaincombinatorialresultCaseb} implies Proposition \ref{Prop:maincombinatorialresult}(b)]
Let $M = M_{\textnormal{hyp}} \cup M_{\textnormal{Seif}}$ be a closed, orientable, irreducible manifold as defined in subsection \ref{subsec:CombinatorialSetup} and $M'$ a component of $M_{\textnormal{Seif}}$.
By van Kampen's Theorem the fundamental group of $M'$ injects into that of $M$.
Consider now the finite covering $\widehat\pi' : \widehat{M}' \to M'$ from Lemma \ref{Lem:ProductstructinFiniteCovering}.
Choose $p \in \widehat{M}'$ and consider the push forward $\widehat\pi'_* (\pi_1 (\widehat{M}', p))$ inside $\pi_1 ( M, \widehat\pi'_* (p))$.
This subgroup induces a covering $\widehat\pi : \widehat{M} \to M$ which can be seen as an extension of $\widehat\pi' : \widehat{M}' \to M'$.
Still, the fundamental group of $\widehat{M}'$ injects into that of $\widehat{M}$.

The cases (A)--(C) of Lemma \ref{Lem:ProductstructinFiniteCovering} for $\widehat{M}'$ correspond to the conditions (A)--(C) in Proposition \ref{Prop:easiermaincombinatorialresultCaseb}.
So we can apply Proposition \ref{Prop:easiermaincombinatorialresultCaseb}  for $M \leftarrow \widehat{M}'$, $M_0 \leftarrow \widehat{M}$ and obtain a simplicial complex $V$ and a map $\widehat{f}_0 : V \to \widehat{M}'$ (observe that $\pi_2(\widehat{M}) = \pi_2(M) = 0$ by \cite[Proposition 3.3]{Bamler-longtime-II} and by the fact that $M$ is irreducible).
Set $f_0 = \widehat\pi \circ \widehat{f}_0 : V \to M$.
Then we can lift any homotopy between $f_0$ and a map $f : V \to M$ to a homotopy between $\widehat{f}_0$ and $\widehat{f} : V \to \widehat{M}$ such that $f = \widehat\pi \circ \widehat{f}$.
Consider now an incompressible solid torus $S \subset M'$ and choose a component $\widehat{S} \subset \widehat\pi^{-1} (S) \cap \widehat{M}'$.
It is easy to see that $\widehat{S}$ is a solid torus as well which is incompressible in $\widehat{M}'$.
So Proposition \ref{Prop:easiermaincombinatorialresultCaseb} provides a compact, smooth domain $\Sigma \subset \IR^2$ and a map $\widehat{h} : \Sigma \to \widehat{M}$ such that $\widehat{h}$ restricted to the exterior boundary circle of $\Sigma$ is non-contractible in $\partial \widehat{S}$, but $\widehat{h}$ restricted to the other boundary circles is contractible in $\partial \widehat{S}$.
Moreover, for $h = \widehat\pi \circ \widehat{h}$ we have
\[ \area h  = \area \widehat{h} < C \area \widehat{f} = C \area f. \]
Clearly, $h$ has the desired topological properties.
\end{proof}

In the following four subsections, we will frequently refer to the conditions (A)--(C).
We first finish off the case in which $M$ satisfies condition (A).

\begin{Proposition} \label{Prop:CasAiseasy}
Proposition \ref{Prop:easiermaincombinatorialresultCaseb} holds if $M$ satisfies condition (A).
\end{Proposition}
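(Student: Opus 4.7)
The plan is to take $V$ to be a disjoint union of two transverse vertical annuli in $M \approx T^2 \times I$ and to exploit algebraic intersection numbers. Fix simple closed curves $\alpha, \beta \subset T^2$ generating $\pi_1(T^2) \cong \IZ^2$ and meeting transversely at a single point. Let $V = V_\alpha \sqcup V_\beta$ be the disjoint union of two annuli, and let $f_0 : V \to M$ send $V_\alpha$ onto the vertical annulus $\alpha \times I$ and $V_\beta$ onto $\beta \times I$ via the obvious parameterizations; then $f_0|_{\partial V}$ is a smooth immersion of four circles into $\partial M$ with transverse self-intersections only above the point $\alpha \cap \beta$.

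Given an incompressible solid torus $S \subset \Int M$ with core $\gamma$ representing a nonzero class $(a,b) \in \pi_1(T^2) = \IZ^2$, a direct computation in the universal cover $\widetilde{M} = \IR^2 \times I$ gives $\#_{\mathrm{alg}}(f_0|_{V_\alpha}, \gamma) = \pm b$ and $\#_{\mathrm{alg}}(f_0|_{V_\beta}, \gamma) = \pm a$, so at least one of these is nonzero; WLOG $b \neq 0$ and work with $V_\alpha$. Because $f \simeq f_0$ relative $\partial V$ while $f(\partial V) \subset \partial M$ remains disjoint from $\gamma \subset \Int M$, this algebraic intersection is a homotopy invariant, so $\#_{\mathrm{alg}}(f|_{V_\alpha}, \gamma) = \pm b \neq 0$. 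After perturbing $f$ to be transverse to $\partial S$, the 2-submanifold $f^{-1}(S) \cap V_\alpha \subset \Int V_\alpha$ has some connected component $\Sigma_0$ of nonzero algebraic intersection with $\gamma$, and $\Sigma_0$ is planar since it sits inside the annulus $V_\alpha$. Expressing the intersection number as a sum of meridian winding numbers $\mathrm{lk}(f|_{C_j}, \gamma) \in \IZ$ over the boundary circles $C_j$ of $\Sigma_0$, at least one $C_j$ must map to a loop on $\partial S$ with nonzero meridian winding, hence non-contractible in $\partial S$.

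To produce the compressing multiply-connected domain $h : \Sigma \to S$, apply an innermost-circle argument inside the planar surface $V_\alpha$: among the circles of $f^{-1}(\partial S)$ whose $f$-image is non-contractible in $\partial S$, pick one, call it $C$, bounding a disk $D$ in $V_\alpha$ that contains no other such circle. Checking that $D$ lies entirely in $f^{-1}(S)$ (by tracking the side of $C$ lying in $S$), the restriction $h := f|_D : D \to S$ has outer boundary $C$ mapped to a non-contractible loop on $\partial S$ and every inner boundary circle mapped to a contractible loop on $\partial S$ by the innermost choice. Identifying $D$ with a compact smooth planar domain $\Sigma \subset \IR^2$ provides the required $h$, and $\area h \leq \area f|_{V_\alpha} \leq \area f$ yields the proposition with constant $C = 1$.

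The main obstacle is the combinatorial bookkeeping in the innermost-circle step: the circles of $f^{-1}(\partial S) \cap V_\alpha$ need not all bound disks in the annulus $V_\alpha$ (some may be essential there, isotopic to its core and therefore parallel to $\alpha$), and the ``inside $S$'' side of each such circle must be correctly tracked. This is handled by first cutting $V_\alpha$ along any essential circles into sub-annuli --- each of which is again planar and amenable to the same disk-bounded analysis --- and then carrying out the innermost-disk selection within a single such sub-annulus, with standard transversality and orientation bookkeeping ensuring that the selected disk $D$ actually maps into $S$.
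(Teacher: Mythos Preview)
Your approach is the same as the paper's: take $V$ to be two transverse vertical annuli in $T^2\times I$, select the one whose core has nonzero algebraic intersection with the core of $S$, and run an extremal-circle argument on the preimage of $S$.

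The place where you diverge from the paper is precisely your ``main obstacle'', and here you are making life harder than necessary. The choice of annulus already rules out essential circles in $f^{-1}(\partial S)\cap V_\alpha$. Indeed, if $C\subset V_\alpha$ were essential, then $f|_C$ would be freely homotopic in $M_0$ to $\alpha$; but $f(C)\subset\partial S$, so $[f|_C]$ lies in the image of $\pi_1(\partial S)\to\pi_1(S)\to\pi_1(M_0)$, which is the cyclic subgroup generated by $(a,b)$. Since you chose $V_\alpha$ exactly so that $b\neq 0$, the class $[\alpha]=(1,0)$ is not a multiple of $(a,b)$, a contradiction. The paper states this as ``by the choice of $i$ none of these components $Q_j$ can contain a circle which is non-contractible in $A_i$'', and it dissolves your obstacle entirely: no cutting into sub-annuli is needed.

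Once every component $Q_j$ of $f^{-1}(S)\cap V_\alpha$ sits inside a disk $Q'_j\subset V_\alpha$, the paper picks a $Q_j$ with nonzero intersection with the core $\sigma$ such that $Q'_j$ is minimal with this property, and takes $\Sigma=Q_j$. This is marginally cleaner than your innermost-non-contractible-circle version, which also works but requires one extra check you only gesture at: that for the innermost such $C$ the $S$-side is the \emph{inside} of the disk it bounds. (This does follow: otherwise every component of $f^{-1}(S)$ inside that disk would be bounded by circles of zero meridian winding, forcing the total intersection with $\sigma$ to vanish, contradicting the nonzero meridian winding of $f|_C$.) Finally, note that the transversality perturbation may increase area, so the paper takes $C=2$ rather than $C=1$.
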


\begin{proof}
We argue as in \cite{Bamler-longtime-I}.
Observe that $M \approx T^2 \times I \approx S^1 \times S^1 \times I$.
Denote by $A_1, A_2$ the two embedded annuli of the form
\[ \{ \textnormal{pt} \} \times S^1 \times I, \;\; S^1 \times \{ \textnormal{pt} \} \times I \subset M. \]
Let $V$ be their disjoint union and $f_0 : V \to M$ be the inclusion map.
Then every non-contractible loop $\sigma \subset \Int M$ has non-zero intersection number with one of the maps $f_0 |_{A_1}$ or $f_0 |_{A_2}$.

Consider now the solid torus $S \subset M$ and let $\sigma \subset \Int S$ be a non-contractible curve inside $S$ (and hence also inside $M$).
Choose $i \in \{ 1, 2 \}$ such that $f_0 |_{A_i}$ has non-zero intersection number with $\sigma$.
Then so does $f |_{A_i}$.
Let $f' : A_i \to M$ be a small perturbation of $f |_{A_i}$ which is transversal to $\partial S$ and for which $\area f' < 2 \area f$.
Still, $f'$ has non-zero intersection number with $\sigma$.

Denote the components of $f^{\prime -1} (S)$ by $Q_1, \ldots, Q_p \subset A_i \approx S^1 \times I$.
The sum of the intersection numbers of $f' |_{Q_j}$ with $\sigma$ is non-zero.
Moreover, by the choice of $i$ none of these components $Q_j$ can contain a circle which is non-contractible in $A_i$.
So each $Q_j$ is contained in a closed disk $Q'_j \subset A_i$ with $\partial Q'_j \subset \partial Q_j$ which arises from filling in all its interior boundary circles.
Note that any two such disks, $Q'_{j_1}, Q'_{j_2}$ are either disjoint or one is contained in the other.
By a maximality argument, we can choose $j \in \{ 1, \ldots, p \}$ such that the intersection number of $f' |_{Q_j}$ with $\sigma$ is non-zero, but such that $Q'_j$ does not contain any other $Q_{j'}$ with the same property.
It then follows easily that $f'$ has to have zero intersection number with $\sigma$ on every component of $Q'_j \setminus Q_j$.
Hence, $f'$ restricted to every circle of $\partial Q_j \setminus \partial Q'_j$ is contractible in $\partial S$ and $f'$ restricted to $\partial Q'_j$ is non-contractible.
So if we choose $\Sigma = Q_j \subset Q'_j \approx D^2 \subset \IR^2$ and $h = f'' |_{Q_j}$, then the desired properties are fulfilled and $\area h < \area f' < 2 \area f'$.
\end{proof}

It remains to prove Proposition \ref{Prop:easiermaincombinatorialresultCaseb} in the cases in which $M$ satisfies condition (B) or (C).
Its proof in these two cases will be carried out in subsection \ref{subsec:proofofeasiermaincombinatorialresult}.
The proof makes use of a simplicial complex $V$ which will be constructed and analyzed in the following subsection and relies on a certain combinatorial convexity estimate on $V$ which will be derived in subsection \ref{subsec:combconvexincaseC} for case (C) and in subsection \ref{subsec:combconvexincaseB} for case (B).

\subsection{Combinatorial geometry of $\td{M}$ if $M$ satisfies condition (B) or (C)} \label{subsec:constructionofV}
In this subsection we will set up the proof of Proposition \ref{Prop:easiermaincombinatorialresultCaseb}.
In particular, we will construct the simplicial complex $V$ and introduce the tools that will be needed in the following two subsections.

Assume that $M$ satisfies condition (B) or (C) in Proposition \ref{Prop:easiermaincombinatorialresultCaseb}, i.e. $M$ is a compact, connected, orientable $3$-manifold with incompressible toroidal boundary components.
If $M$ satisfies condition (C), we fix the Seifert decomposition $T_1, \ldots, T_m$ of $M$ as well as the identifications of the components of $\Int M \setminus (T_1 \cup \ldots \cup T_m)$ with the interiors of the products $M_j \approx \Sigma_j \times S^1$ ($j = 1, \ldots, k$).
Here $\Sigma_1, \ldots, \Sigma_m$ are compact surfaces with at least one boundary component and negative Euler characteristic.
If $M$ satisfies condition (B), then we set $m = k = 1$ and we can find a torus $T_1 \subset M$ such that $M \setminus T_1$ is diffeomorphic to the interior to the product $\Sigma_1 \times S^1$, where $\Sigma_1$ is a compact, orientable surface with two boundary circles which can be obtained from $\Sigma$ by cutting along a non-separating, embedded loop.
Moreover, $\chi(\Sigma_1) = \chi(\Sigma) < 0$.
In either case, we assume that the diffeomorphisms which identify the interior of each $M_j$ with the corresponding component of $\Int M \setminus (T_1 \cup \ldots \cup T_m)$ can be continued smoothly up to the boundary tori.
If $M$ satisfies condition (C), then the fibrations coming from either side of each torus $T_i$ are assumed to be non-homotopic to one another and in case (B) we assume that the fibration on $M_1$ has been chosen such that both fibrations agree.

We will mainly be working in the universal covering $\td{M}$ of $M$.
Let $\pi : \td{M} \to M$ be the covering projection.

\begin{Definition}[chambers]
The closures $K \subset \td{M}$ of components of the preimages of components of $M \setminus (T_1 \cup \ldots \cup T_m)$ under $\pi$ are called \emph{chambers} and the set of chambers is denoted by $\mathcal{K}$.
\end{Definition}

\begin{Definition}[walls]
The components $W$ of $\partial \td{M}$ and of the preimages $\pi^{-1} (T_i)$, $i = 1, \ldots, m$ are called \emph{walls} and the set of walls is denoted by $\mathcal{W}$.
We say that two distinct chambers $K_1, K_2 \in \mathcal{K}$ are \emph{adjacent} if they share a common wall.
\end{Definition}

By van Kampen's Theorem every chamber $K \in \mathcal{K}$ can be viewed as the universal cover of $M_{j_K}$ for a unique $j_K \in \{1, \ldots, k\}$.
So $K \approx \td\Sigma_{j_K} \times \IR$.
The boundary of $K$ is a disjoint union of walls which cover exactly the tori $T_i$ and the boundary tori of $M$ which are adjacent to $M_{j_K}$, and these tori are in one-to-one correspondence with the boundary circles of $\Sigma_{j_K}$.
Moreover, every wall is diffeomorphic to $\IR^2$.
For later purposes, we will replace the $j$-index by $K$ and write for example $M_K = M_{j_K}$ and $\Sigma_K = \Sigma_{j_K}$.
Note that $K$ does not intersect any wall in its interior.
So the complement of the union of all walls in $\td{M}$ is equal to the union of the interiors of all chambers.

\begin{Lemma} \label{Lem:KKistree}
Every wall $W \in \mathcal{W}$, $W \not\subset \partial \td{M}$ separates $\td{M}$ into two components.
So every two distinct chambers $K_1, K_2 \in \mathcal{K}$ can only intersect in at most one wall $W = K_1 \cap K_2$ and the adjacency graph of $\mathcal{K}$ is a tree. 
\end{Lemma}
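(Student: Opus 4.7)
The plan is to proceed in three steps. First, I would observe that each interior wall $W \in \mathcal{W}$ is a properly embedded plane in $\td{M}$: since $W$ covers an incompressible torus $T_i$ (or a toroidal boundary component of $M$) and $\pi_1(T_i) \hookrightarrow \pi_1(M)$, the subgroup of $\pi_1(M)$ stabilizing $W$ equals $\pi_1(T_i)$, so $W$ is the universal cover of $T_i$ and hence homeomorphic to $\IR^2$. It is two-sided because $\td{M}$ is orientable. To see that $W$ separates $\td{M}$, I would take a tubular neighborhood $U \cong W \times (-1,1)$ and apply Mayer--Vietoris to $\td{M} = U \cup (\td{M}\setminus W)$: from $H_1(\td{M}) = 0$ (simple connectedness), $H_1(U) = 0$, and $H_1(U \cap (\td{M}\setminus W)) = 0$, the tail of the sequence forces $\operatorname{rk} H_0(\td{M} \setminus W) = 2$, so $\td{M}\setminus W$ has exactly two components.

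Next, I would show that two distinct chambers $K_1, K_2$ share at most one wall. If they shared distinct walls $W, W'$, then picking interior points $p_i \in \Int K_i$, one can build a loop $\gamma$ that crosses $W$ transversely exactly once (passing into $K_2$) and returns through $W'$ without recrossing $W$. Since $W$ separates $\td{M}$, every loop must cross $W$ in geometrically an even number of points (after a transverse perturbation), contradicting a single crossing. For the tree property, suppose for contradiction there is a minimal simple cycle $K_0 - K_1 - \cdots - K_{n-1} - K_0$ with $n \geq 3$ and common walls $W_i = K_i \cap K_{i+1 \bmod n}$; by the previous step these walls are pairwise distinct, and as components of disjoint preimages of the $T_i$ they are pairwise disjoint. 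Let $A_0, B_0$ be the two sides of $W_0$, with $K_0 \subset A_0$ and $K_1 \subset B_0$. I inductively verify that $W_j \subset B_0$ and $K_{j+1} \subset \overline{B_0}$ for $1 \leq j \leq n-1$: since $W_j \subset K_j \subset \overline{B_0}$ and $W_j$ is disjoint from $W_0$, it lies in the open set $B_0$, and the chamber $K_{j+1}$ on the far side of $W_j$ from $K_j$ must then lie in the same component of $\td{M} \setminus W_0$ as $W_j$. But then $K_{n-1} \subset \overline{B_0}$ shares the wall $W_{n-1}$ with $K_0 \subset A_0$, forcing $W_{n-1} \subset \overline{B_0} \cap \overline{A_0} = W_0$ and hence $W_{n-1} = W_0$, a contradiction.

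I expect the main obstacle to be the separation step: establishing that a properly embedded plane in a simply connected 3-manifold separates requires genuinely exploiting simple connectedness, and one must take care that the wall sits in $\td{M}$ nicely enough (being closed in $\td{M}$ as a component of the closed preimage $\pi^{-1}(T_i)$, with a tubular neighborhood available from two-sidedness) to run the Mayer--Vietoris argument. Once separation is in hand, the remaining two conclusions of the lemma are purely combinatorial and follow by tracking on which side of a fixed wall each chamber lies.
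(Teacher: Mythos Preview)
Your proposal is correct, but the paper's proof is considerably shorter and uses a different mechanism for the separation step. The paper argues directly via intersection number: if $W$ did not separate $\td{M}$, one could build a loop $\gamma \subset \td{M}$ meeting $W$ transversally in exactly one point, giving $\gamma \cdot W = 1$; but $\td{M}$ is simply connected, so $\gamma$ is contractible and hence $\gamma \cdot W = 0$, a contradiction. This is essentially the dual formulation of your Mayer--Vietoris computation (both ultimately encode $H_1(\td{M}) = 0$), but it avoids setting up the long exact sequence and is arguably more elementary.

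The paper then treats the remaining two claims (at most one shared wall, tree property of the adjacency graph) as immediate corollaries of separation and does not spell them out; your proposal gives these careful inductive arguments explicitly. What your approach buys is rigor in the deductions the paper leaves to the reader, and the Mayer--Vietoris step makes precise why simple connectedness is the only topological input needed. What the paper's approach buys is brevity and a self-contained argument that does not invoke any homological machinery.
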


\begin{proof}
If $W \in \mathcal{W}$ did not separate $\td{M}$, then we could find a loop $\gamma \subset \td{M}$ which intersects $W$ transversally and exactly once, i.e. its intersection number with $W$ is $1$.
However $\gamma \subset \td{M}$ must be contractible.
\end{proof}

On each torus $T_i$ and boundary torus of $M$ we fix an affine structure and a point $e_i \in T_i$ for the remainder of this subsection.
These affine structures induce an affine structure on all walls $W \in \mathcal{W}$.
We can assume that the product structures on each $M_j \approx \Sigma_j \times S^1$ are chosen such that the circle fibers on each boundary component $M_j$ coming from the $S^1$-factor or the boundary circle of the $\Sigma_j$ are geodesic circles in the corresponding torus $T_i$.

Now, for each $j = 1, \ldots, k$ we choose an embedded section $S_j \subset M_j \approx \Sigma_j \times S^1$ of the form $\Sigma_j \times \{ \text{pt} \}$.
Next, we choose embedded and pairwise disjoint curves inside each $\Sigma_j$, whose endpoints lie in the boundary of $\Sigma_j$ and which cut the interior of $\Sigma_j$ into a topological ball, i.e. a fundamental domain.
Denote their union by $C^*_j \subset \Sigma_j$ and set $C_j = C^*_j \times S^1$.
Let now
\[ V = T_1 \cup \ldots \cup T_m \cup S_1 \cup \ldots \cup S_k \cup C_1 \cup \ldots \cup C_k. \]
By construction $V$ can be seen as an embedded, finite simplicial complex such that $\partial V \subset \partial M$.
Its $1$-skeleton $V^{(1)}$ is the union of $\partial S_j$, $\partial C_j$ and $C_j \cap S_j$ for all $j = 1, \ldots, k$.
All its vertices $V^{(0)}$ lie on $T_1 \cup \ldots \cup T_m \cup \partial M$.
The complement $\Int M \setminus V$ is a disjoint union of $k$ topological balls $\approx (\Sigma_1 \setminus C^*_1) \times (0,1), \ldots, (\Sigma_k \setminus C^*_k) \times (0,1)$.

Consider now the universal covering $\pi : \td{M} \to M$ and set $\td{V} = \pi^{-1} (V) \subset \td{M}$.
Then $\td{V}$ is an infinite simplicial complex with $\partial \td{V} \subset \partial \td{M}$ and the components of $\Int \td{M} \setminus \td{V}$ are topological balls on which $\pi$ is injective.
Their boundary is diffeomorphic to a polyhedral $2$-sphere.

\begin{Definition}[cells]
The closure $Q$ of any component component of $\td{M} \setminus \td{V}$ is called a \emph{cell} and the set of cells is denoted by $\mathcal{Q}$.
Two cells are called \emph{adjacent} if their intersection contains a point of $\td{V} \setminus \td{V}^{(1)}$.
\end{Definition}

So every chamber $K \in \mathcal{K}$ is equal to the union of cells $Q \subset K$.
Identify $K$ with $\td\Sigma_K \times \IR$ as before.
The structure of $\td{V}$ in $K$ can then be understood as follows:
Let $\td{C}^*_K$ be the preimage of $C^*_K$ under the universal covering map $\td\Sigma_K \to \Sigma_K$.
Then $\td{V} \cap K$ is equal to the union of $\pi^{-1} (C_K) \cap K \approx \td{C}^*_K \times \IR$ with $\pi^{-1} (S_K) \cap K$ and $\partial K$.
So the arrangement of the cells $Q \subset K$ is reflected by the following identity
\begin{equation} \label{eq:arrangementQinK}
\bigcup_{Q \in \mathcal{Q}, \; Q \subset K} \Int Q = (\Int \td\Sigma_K \setminus \td{C}^*_K) \times (\IR \setminus \IZ).
\end{equation}
We will always refer to the first factor in this cartesian product as the \emph{horizontal} direction and to the second factor as the \emph{vertical} direction.
In the next definition we group cells which share the same horizontal coordinates.

\begin{Definition}[columns]
Consider a chamber $K \in \mathcal{K}$ and choose the identification $K \cong \td\Sigma_K \times \IR$ as in the last paragraph.
Then the closure $E$ of each component of $(\Int \td\Sigma_K \setminus \td{C}^*_K) \times \IR$ is called a \emph{column}.
The set of columns of $K$ is denoted by $\mathcal{E}_K$.

We say that two columns $E_1, E_2 \in \mathcal{E}_K$ are \emph{adjacent} if they intersect.
An ordered tuple $(E_0, \ldots, E_n)$ of columns for which $E_i$ is adjacent to $E_{i+1}$ is called a \emph{chain between $E_1$ and $E_n$} and $n$ is called its \emph{length}.
It is called \emph{minimal} if its length is minimal amongst all chains between the same columns.
\end{Definition}

So each chamber $K \in \mathcal{K}$ is equal to the union of all its columns $E \in \mathcal{E}_K$ and every such column $E$ consists of cells $Q \subset E$ which are arranged in a linear manner.
Next, we define distance functions with respect to the horizontal and vertical direction in (\ref{eq:arrangementQinK}).

\begin{Definition}[horizontal and vertical distance within a chamber]
Let $K \in \mathcal{K}$ be a chamber and $E_1, E_2 \in \mathcal{E}_K$ two columns.
We define their \emph{horizontal distance  $\dist^H_K (E_1, E_2)$ (within $K$)} to be the minimal length of a chain between $E_1$ and $E_2$.
For two cells $Q_1, Q_2 \subset K$ with $Q_1 \subset E_1$ and $Q_2 \subset E_2$ we define the \emph{horizontal distance $\dist^H_K (Q_1, Q_2) = \dist^H_K (E_1, E_2)$ (within $K$)}.
We say that $Q_1, Q_2$ are \emph{vertically aligned (within $K$)} if $\dist^H_K (Q_1, Q_2) = 0$, i.e. if $Q_1, Q_2$ lie in the same column.

For two cells $Q_1, Q_2 \subset K$ we define the \emph{vertical distance $\dist^V_K (Q_1, Q_2)$ (within $K$)} by the minimal number of times that a curve $\gamma : [0,1] \to K$ with $\gamma(0) \in \Int Q_1$ and $\gamma(1) \in \Int Q_2$ intersects $\pi^{-1} (S_K)$, i.e. the number if integers between the second coordinates in (\ref{eq:arrangementQinK}) of both cells.
We say that $Q_1, Q_2$ are \emph{horizontally aligned (within $K$)} if $\dist^H_K (Q_1, Q_2) = 0$.
\end{Definition}

Obviously, both distance functions satisfy the triangle inequality.
Two cells $Q_1, Q_2 \subset K$ are adjacent if and only if $\dist^H_K(Q_1, Q_2) + \dist^V_K(Q_1, Q_2) = 1$.
And they are disjoint if and only if this sum is $\geq 2$ and not both summands are equal to $1$.

\begin{Lemma} \label{Lem:EEinKisatree}
Assume that $M$ satisfies condition (B) or (C).
Consider a chamber $K \in \mathcal{K}$.
Then the set of columns $\mathcal{E}_K$ together with the adjacency relation describes a tree with constant valency $\geq 4$.
So between every two columns $E_1, E_2 \in \mathcal{E}_K$, there is a unique minimal chain between $E_1, E_2$ and a chain between $E_1, E_2$ is the minimal one if and only if it  contains each column not more than once.
Moreover, for every three columns $E_1, E_2, E_3 \in \mathcal{E}_K$ there is a unique column $E^* \in \mathcal{E}_K$ which lies on all three minimizing chains between every pair of $E_1, E_2, E_3$.

Finally, for every two columns $E_1, E_2 \in \mathcal{E}_K$ with $\dist^H_K (E_1, E_2) \geq 2$ there is at most one wall $W \in \mathcal{W}$ which is adjacent to both $E_1$ and $E_2$.
\end{Lemma}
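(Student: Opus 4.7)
The plan is to translate the entire statement to the simply connected surface with boundary $\td\Sigma_K$ via the product structure $K \cong \td\Sigma_K \times \IR$. Under this identification the columns in $\mathcal{E}_K$ are in bijection with the closures of the components of $\Int\td\Sigma_K \setminus \td C^*_K$ (call these \emph{fundamental disks}); two columns are adjacent iff the corresponding disks share an arc of $\td C^*_K$; and the walls adjacent to $K$ correspond to the components of $\partial\td\Sigma_K$, with a wall $W$ adjacent to a column $E$ iff the corresponding boundary line $w$ meets the corresponding disk $D$ in a nondegenerate arc. Every assertion of the lemma thus becomes a statement about the dual graph of the cell decomposition of $\td\Sigma_K$ cut out by $\td C^*_K$ together with the way the lines of $\partial\td\Sigma_K$ incide with fundamental disks.

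First I would establish the tree property and the valency bound. Each arc of $\td C^*_K$ is a properly embedded arc in the simply connected planar surface $\td\Sigma_K$, hence separates it into two components by Jordan--Schoenflies, so every edge of the dual graph is a bridge and the graph is a tree. Every fundamental disk is a copy of $D^* := \Sigma_K \setminus C^*_K$, and since each arc of $C^*_K$ contributes two arcs to $\partial D^*$, the valency of each vertex equals $2a$ with $a := |C^*_K|$. From $\chi(D^*) = 1$ together with the formula $\chi(D^*) = \chi(\Sigma_K) + a$ (each properly embedded boundary-to-boundary cut raises $\chi$ by one) one obtains $a = 1 - \chi(\Sigma_K) \geq 2$ in cases (B) and (C), so the valency is constant and at least four. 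The uniqueness of the minimal chain between two columns, the characterization of minimal chains as simple chains, and the existence of a unique common branching column of any three columns are then standard tree facts.

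For the final statement I would argue by contradiction. Suppose $w \neq w'$ are distinct components of $\partial\td\Sigma_K$ each meeting both $D_1$ and $D_2$ in an arc, and let $D_1 = D^{(0)}, D^{(1)}, \ldots, D^{(k)} = D_2$ with $k \geq 2$ be the unique minimal chain in the tree, with consecutive disks separated by arcs $\alpha_1, \ldots, \alpha_k \subset \td C^*_K$. Following $w$ from $w \cap \partial D_1$ to $w \cap \partial D_2$ induces a walk in the dual tree from $D_1$ to $D_2$ which can cross an edge $\alpha$ only at an endpoint of $\alpha$ lying on $w$; since every walk in a tree between two vertices must traverse each edge of the unique minimal path between them an odd number of times, each $\alpha_i$ has an endpoint on $w$, and symmetrically on $w'$. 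The disjointness of $w$ and $w'$ then forces each $\alpha_i$ to have exactly one endpoint on $w$ and one on $w'$. Concatenating a sub-arc of $w$ from $D_1$ to $D_2$, an arc inside $D_2$ joining $w$ to $w'$, a sub-arc of $w'$ from $D_2$ to $D_1$, and an arc inside $D_1$ joining $w'$ back to $w$ gives a simple loop in $\td\Sigma_K$ bounding a topological disk $\Omega$ by simple connectedness, and examining which side of each $\alpha_i$ lies in $\Omega$ together with the separation property of the $\alpha_i$ yields the contradiction.

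The main obstacle is this last planar-combinatorial step: one must show that no configuration of pairwise disjoint arcs $\alpha_1, \ldots, \alpha_k$ with one endpoint on $w$ and one on $w'$ can separate $\td\Sigma_K$ so that the induced chain in the dual tree has length $\geq 2$. The cleanest approach is to fix a proper embedding of $\td\Sigma_K$ into a planar region with $w, w'$ appearing as two disjoint boundary lines, exploit the fact that the endpoints of the $\alpha_i$ on $w$ and on $w'$ must appear in compatible linear orders for the arcs to be disjoint, and use this to trace the walk along $w'$ and match it with the walk along $w$ to obtain the required contradiction.
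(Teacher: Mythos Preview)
Your treatment of the tree property and the valency bound is correct and more explicit than the paper's (which establishes the tree property by the same intersection-number/separation argument but does not spell out the Euler-characteristic computation $a = 1 - \chi(\Sigma_K) \geq 2$ giving valency $2a \geq 4$).

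For the final wall-uniqueness statement your route diverges from the paper's and, as you yourself flag, is not finished. Your reduction---following $w$ and $w'$ to conclude that every separating arc $\alpha_i$ has exactly one endpoint on $w$ and one on $w'$---is valid and elegant. But the contradiction does not drop out of the ``compatible linear orders'' of the endpoints: disjoint arcs joining two parallel boundary lines \emph{can} cut a strip into a chain of regions of arbitrary length, so nothing about the planar incidence alone obstructs $k \geq 2$. The obstruction has to come from the valency $\geq 4$ constraint on the intermediate disks $D^{(1)}, \dots, D^{(k-1)}$, and your outline does not yet invoke it.

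The paper's argument is shorter and feeds directly off the two facts you have just proved. In $\td\Sigma_K$, suppose two distinct boundary lines $B_1, B_2 \subset \partial\td\Sigma_K$ each meet two distinct fundamental disks $U_1, U_2$. By simple connectedness the closure of $B_1 \cup B_2 \cup U_1 \cup U_2$ separates $\td\Sigma_K$ into two pieces, one of which, $A_1$, has compact closure; hence $A_1$ contains only finitely many fundamental disks, each adjacent only to the others in $A_1$ or to $U_1, U_2$. But in a tree of constant valency $\geq 4$ a nonempty finite vertex set $S$ with $|S| = n$ has at most $n-1$ internal edges and total degree $\geq 4n$, hence at least $2n+2 \geq 4$ outgoing edges---so it cannot have only two external neighbours. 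This contradiction uses the tree and valency statements as black boxes and avoids all endpoint-tracing.
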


\begin{proof}
By a simple intersection number argument as in the proof of Lemma \ref{Lem:KKistree}, a loop in $\td\Sigma_K$ cannot cross a component of $\td{C}^*_K \subset \td\Sigma_K$ exactly most once.
This establishes the tree property.

Now assume that there are two distinct boundary components $B_1, B_2 \subset \partial \td\Sigma_K$ which are adjacent to two distinct components $U_1, U_2 \subset \td\Sigma_K \setminus \td{C}^*_K$ at the same time.
Since $\td\Sigma_K$ is simply connected, the closure of the set $B_1 \cup B_2 \cup U_1 \cup U_2$ separates $\td\Sigma_K$ into two open components $A_1, A_2$ one of which, say $A_1$, has compact closure.
So $A_1$ only contains finitely many components of $\td\Sigma_K \setminus \td{C}^*_K$ and all these components are only adjacent to each other or to $U_1$ or $U_2$.
This however contradicts the tree property.
\end{proof}

In the following we want to understand the adjacency structure of $\mathcal{Q}$ on $\td{M}$.
As a first step we analyze its structure near walls.

\begin{Lemma} \label{Lem:QQestimatesatW}
There is a constant $C_0 < \infty$ such that the following holds:

Let $W \in \mathcal{W}$, $W \not\subset \partial M$ be a wall and let $K, K' \in \mathcal{K}$ be the chambers which are adjacent to $W$ from either side.
Then the columns $E \in \mathcal{E}_K$, $E' \in \mathcal{E}_{K'}$ intersect $W$ in affine strips $E \cap W$, $E' \cap W$ (i.e. domains bounded by two parallel straight lines).
In case in which $M$ satisfies condition (B), these strips are all parallel and if $M$ satisfies condition (C), each pair of strips coming from $K$ and $K'$ are not parallel to one another; so they intersect in a non-empty compact set.

We furthermore have the the following estimates between the horizontal and vertical distance functions in $K$ and $K'$:
\begin{enumerate}[label=(\alph*)]
\item Assume that $M$ satisfies condition (B) or (C) and let $Q_1, Q_2 \subset K$ be cells which are adjacent to a common cell $Q' \subset K$.
Then
\[ \dist^H_K (Q_1, Q_2), \; \dist^V_K (Q_1, Q_2) < C_0. \]
\item Assume that $M$ satisfies condition (C) and let $Q_1, Q_2 \subset K$, $Q'_1, Q'_2 \subset K'$ be cells such that $Q_1, Q'_1$ and $Q_2, Q'_2$ are adjacent and such that $Q'_1, Q'_2$ are vertically aligned.
Then
\[ \dist^V_K (Q_1, Q_2), \; \dist^V_{K'}(Q'_1, Q'_2) < C_0 \dist^H_K (Q_1, Q_2) + C_0. \]
\item Assume that $M$ satisfies condition (C) and consider four cells $Q_1$, $Q_2$, $Q_3$, $Q_4 \subset K$.
Assume that $Q_1, Q_2$ and $Q_3, Q_4$ are vertically aligned and assume that there are columns $E'_1, E'_2 \in \mathcal{E}_{K'}$ such that $Q_1, Q_4$ are adjacent to some cells in $E'_1$ and $Q_2, Q_3$ are adjacent to some cells in $E'_2$.
Then
\[ \big| \dist^V_K (Q_1, Q_2) - \dist^V_K (Q_3, Q_4) \big| < C_0. \]
\item Assume that $M$ satisfies condition (C) and consider cells $Q_1, Q_2 \subset K$ and $Q'_1, Q'_2 \subset K'$ such that $Q_1, Q'_1$ and $Q_2, Q'_2$ are adjacent and that $\dist^H_K (Q_1, Q_2), \dist^H_{K'} (Q'_1, Q'_2) \leq 3$.
Then
\[ \dist^V_K (Q_1, Q_2), \; \dist^V_{K'} (Q'_1, Q'_2) < C_0. \]
\item Assume that $M$ satisfies condition (B) and consider cells $Q_1, Q_2 \subset K$ and $Q'_1, Q'_2 \subset K'$ such that $Q_1, Q'_1$ and $Q_2, Q'_2$ are adjacent.
Then
\[ \dist^V_K (Q_1, Q_2) < \dist^V_{K'} (Q'_1, Q'_2) + C_0 \dist^H_K (Q_1, Q_2) + C_0. \]
\item Assume that $M$ satisfies condition (B) or (C) and consider cells $Q_1, Q_2 \subset K$ and $Q'_1, Q'_2 \subset K'$ such that $Q_1, Q'_1$ and $Q_2, Q'_2$ are adjacent.
Then
\[ \qquad\quad\; \dist^H_K (Q_1, Q_2), \; \dist^V_K(Q_1, Q_2) < C_0 \dist^H_{K'} (Q'_1, Q'_2) + C_0 \dist^V_{K'} (Q'_1, Q'_2) + C_0. \]
\item Assume that $M$ satisfies condition (B) or (C) and consider cells $Q_1, \linebreak[1] Q_2, \linebreak[1] Q_3, \linebreak[1] Q_4 \subset K$ and $Q'_1, Q'_2, Q'_3, Q'_4 \subset K'$ such that $Q_i$ and $Q'_i$ are adjacent for all $i = 1, \ldots, 4$.
Assume moreover that $\dist^H_K(Q_1, Q_2) = \dist^H_K (Q_3, Q_4)$ and $\dist^V_K(Q_1, Q_2) = \dist^V_K (Q_3, Q_4)$ in an oriented sense, i.e. the cells $Q_1, Q_2, Q_3, Q_4$ form a ``parallelogram'' along $W$.
Then
\[ \qquad\quad \big| \dist^H_{K'} (Q'_1, Q'_2) - \dist^H_{K'} (Q'_3, Q'_4) \big|, \; \big| \dist^V_{K'} (Q'_1, Q'_2) - \dist^V_{K'} (Q'_3, Q'_4) \big| < C_0. \]
\end{enumerate}
\end{Lemma}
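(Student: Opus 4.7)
The plan is to fix a single wall $W$, identify it with $\mathbb{R}^2$, and reduce every assertion to a uniform comparison between two affine grids on $W$: one induced by the column-and-level structure of $K$, the other by that of $K'$. Uniformity of the constant $C_0$ will come from the fact that there are only finitely many $\pi_1(M)$-equivalence classes of adjacent chamber pairs $(K,K')$, so all linear data attached to walls take finitely many values.

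The structural claim will follow from the product identification $K \cong \tilde{\Sigma}_K \times \mathbb{R}$: a column $E = U \times \mathbb{R}$ meets $W = \tilde{B} \times \mathbb{R}$ in $(U \cap \tilde{B}) \times \mathbb{R}$, an affine strip with sides parallel to the $K$-fiber direction $v_K^C$; the same holds for $K'$. In case (C), non-homotopy of the Seifert fibrations on $T_i$ forces $v_K^C$ and $v_{K'}^C$ to be linearly independent and any pair of strips to meet in a compact parallelogram, whereas in case (B) they coincide and all strips are parallel. I then choose coordinates $(s,t)$ on $W$ with $t$ along $v_K^C$ and $s$ along $\tilde{B}$, so that the $K$-cells tile $W$ by rectangles $I_E \times [j,j+1]$ with $\{I_E\}$ a periodic partition of $\mathbb{R}$ and $j \in \mathbb{Z}$. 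The $K'$-grid is the image of the $K$-grid under a fixed affine map $A$ whose linear part is an invertible matrix in case (C) and a shear along $v^C$ in case (B); finiteness of types bounds $A$ and (in case (C)) $A^{-1}$ uniformly. Before drawing consequences I will establish the two-sided comparison $\dist^H_K \asymp |\Delta s|$ and $\dist^V_K \asymp |\Delta t|$ for cells on $W$, with uniform constants. The upper bound is immediate since consecutive columns along $W$ are tree-adjacent in $\mathcal{E}_K$; the lower bound requires Lemma \ref{Lem:EEinKisatree}, whose uniqueness of a common wall for columns with $\dist^H_K \geq 2$ rules out tree short-cuts through the interior of $K$.

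Once these comparisons and their $K'$-analogues are in place, the seven estimates follow from planar geometry on $W$. Part (a) is purely internal to one chamber: each cell has a uniformly bounded number of adjacent neighbors. For (b), the pair $Q'_1, Q'_2$ lies in a single $K'$-strip, which in case (C) is transverse to $v_K^C$, so moving along the strip by $|\Delta s|$ forces a $t$-displacement of at most $C|\Delta s|+C$ and a $K'$-vertical displacement of the same order. Part (c) is the direct computation that two parallel $K'$-strips cut two vertical $K$-strips in segments with identical $t$-gaps, giving cancellation up to cell-width slack. Part (d) applies invertibility of $A$: bounded $K$- and $K'$-horizontal separations are two independent linear functionals of $(\Delta s, \Delta t)$ that are both $O(1)$, forcing $(\Delta s, \Delta t)$ into a bounded region and hence both $\dist^V$'s to be $O(1)$. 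Parts (f) and (g) are immediate from the bound on $A^{\pm 1}$, with (g) using that affine maps send parallelograms to parallelograms, so the four $Q'_i$ approximate a parallelogram in $K'$-coordinates. Finally for (e) in case (B), the strips share $v^C$, so the only new datum is the shear between $K$- and $K'$-level lines in the $v^C$-direction per unit $s$; this shear is bounded by the bundle's Euler number, producing the extra term $C_0 \dist^H_K$.

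The main obstacle I anticipate is justifying the two-sided comparison between the combinatorial quantities $\dist^H_K, \dist^V_K$ and the Euclidean displacements $|\Delta s|, |\Delta t|$ on $W$, uniformly across all walls. The column tree can in principle reach $W$ through long branches coming from distant parts of $K$, and one must rule out that two $W$-distant strips are tree-close through the chamber interior; Lemma \ref{Lem:EEinKisatree}, especially the clause that columns with $\dist^H_K \geq 2$ share at most one wall, is precisely what supplies the needed non-short-cut property and makes the whole argument reduce to finite-type linear algebra on $W \cong \mathbb{R}^2$.
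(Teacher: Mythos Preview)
Your approach is essentially the same as the paper's: introduce Euclidean coordinates on $W$ adapted to the $K$-grid and another set adapted to the $K'$-grid, observe they are related by an invertible linear map $A$ (with $A_{12}\neq 0$ in case (C) and $A_{12}=0$, $A_{22}=1$ in case (B)), and read off (a)--(g) as linear-algebra statements about these two coordinate systems, with uniformity coming from the finitely many wall types. The paper's proof is a terse paragraph asserting exactly this; your version spells out more of the bookkeeping, in particular the comparison $\dist^H_K\asymp|\Delta s|$ along $W$, which the paper leaves implicit (it follows already from the tree property of $\mathcal{E}_K$ in Lemma~\ref{Lem:EEinKisatree}, since the sequence of columns along $W$ is a non-repeating chain and hence minimal).
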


\begin{proof}
The pattern by which the cells of $K$ and $K'$ are arranged along $W$ is doubly periodic.
So we can introduce euclidean coordinates $(x_1, x_2) : W \to \IR^2$ such that for every two cells $Q_1, Q_2 \subset K$ and points $p_1 \in Q_1$, $p_2 \in Q_2$ we have $|\dist^H_K (Q_1, Q_2) - |x_1(p_1) - x_1(p_2)| | < C$ and $|\dist^V_K (Q_1, Q_2) - |x_2(p_1)-x_2(p_2)| | < C$ for some constant $C$.
Similarly, we can find euclidean coordinates $(x'_1, x'_2) : W \to \IR^2$ with the analogous behavior for cells $K'$ such that the origins of $(x_1, x_2)$ and $(x'_1, x'_2)$ agree.
The transformation matrix $A \in \IR^{2 \times 2}$ with $A (x_1, x_2) = (x'_1, x'_2)$ is invertible.
In case (C) we have $A_{12} \neq 0$ and in case (B) we have $A_{12} = 0, A_{11} \neq 0$ and $A_{22} = 1$.
All assertions of the Lemma now follow from the corresponding statements for these two coordinate systems.
\end{proof}

Next, consider a smooth curve $\gamma : [0,1] \to \td{M}$.

\begin{Definition}[general position]
We say that $\gamma$ is in \emph{general position} if its endpoints $\gamma(0), \gamma(1) \not\in \td{V}$ and if $\gamma$ intersects $\td{V}$ transversally and only in $\td{V} \setminus \td{V}^{(1)}$.
If $Q_1, Q_2 \in \mathcal{Q}$ are two cells with $\gamma(0) \in Q_1$ and $\gamma(1) \in Q_2$, then we say that $\gamma$ \emph{connects $Q_1$ with $Q_2$}.
\end{Definition}

Let $\eta, H > 0$ be constants whose value will be determined later in subsection \ref{subsec:proofofeasiermaincombinatorialresult}.
In the course of the following three subsections, we will need to assume that $\eta$ is small enough and $H$ is large enough to make certain arguments work out.

\begin{Definition}[length and distance]
The \emph{(combinatorial) length} $|\gamma|$ of a curve $\gamma : [0,1] \to \td{M}$ in general position is defined as
\begin{multline*}
 |\gamma| = \eta \; \big(\textnormal{number of intersections of $\gamma$ with $\pi^{-1} (S_1 \cup \ldots \cup S_k)$} \big) \\
 + H \; \big(\textnormal{number of intersections of $\gamma$ with $\pi^{-1} (T_1 \cup \ldots \cup T_m)$} \big) \\
  + \big(\textnormal{number of intersections of $\gamma$ with $\pi^{-1} (C_1 \cup \ldots \cup C_k)$} \big) 
\end{multline*}
The \emph{(combinatorial) distance} $\dist (Q_1, Q_2)$ between two cells $Q_1, Q_2 \in \mathcal{Q}$ is the minimal combinatorial length of all curves in general position between $Q_1$ and $Q_2$.
A curve $\gamma : [0,1] \to \td{M}$ in general position is said to be \emph{(combinatorially) minimizing} if its length is equal to the combinatorial distance between the two cells that contain its endpoints.
\end{Definition}

Observe that $(\mathcal{Q}, \dist)$ is a metric space.
On a side note, it is an interesting coincidence that this metric space approximates the conjectured geometric behavior of the Ricci flow metric $t^{-1} g_t$ lifted to the universal cover $\td{M}$.

Our main characterization of combinatorially minimizing curves will be stated in Proposition \ref{Prop:characterizationminimizinggamma} in case (C) and in Proposition \ref{Prop:characteriztionminimizinggammaCaseB} in case (B).
We conclude this subsection by pointing out three basic properties of combinatorially minimizing curves.

\begin{Lemma} \label{Lem:subsegmentofminimizing}
If $\gamma : [0,1] \to \td{M}$ is combinatorially minimizing, then so is every orientation preserving or reversing reparameterization and every subsegment of $\gamma$ whose endpoints don't lie in $\td{V}$.
\end{Lemma}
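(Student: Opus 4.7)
The reparameterization claim is immediate from the definition of combinatorial length: each of the three counts in $|\gamma|$ is the cardinality of a finite set of transversal intersections of $\gamma$ with a smooth submanifold of $\td{M}$, and this cardinality depends only on the image of $\gamma$, not on its parameterization or orientation. So $|\gamma'| = |\gamma|$ whenever $\gamma'$ is an orientation preserving or reversing reparameterization, and minimality is preserved.

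For the subsegment claim, the plan is a standard cut-and-paste argument by contradiction. Suppose $\gamma : [0,1] \to \td{M}$ is combinatorially minimizing connecting cells $Q_0, Q_1 \in \mathcal{Q}$, and let $\gamma|_{[a,b]}$ be a subsegment whose endpoints $\gamma(a), \gamma(b)$ do not lie in $\td V$. Let $Q_a, Q_b \in \mathcal{Q}$ be the (unique) cells containing $\gamma(a), \gamma(b)$. Additivity of the intersection counts over the three subintervals $[0,a]$, $[a,b]$, $[b,1]$ gives
\[
|\gamma| = \big|\gamma|_{[0,a]}\big| + \big|\gamma|_{[a,b]}\big| + \big|\gamma|_{[b,1]}\big|,
\]
where each term on the right is the combinatorial length of a curve in general position (the endpoints $\gamma(a), \gamma(b)$ lie in the interiors of the cells $Q_a, Q_b$ by hypothesis, and the curves inherit transversality and the avoidance of $\td V^{(1)}$ from $\gamma$).

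Now, if $\gamma|_{[a,b]}$ were not minimizing between $Q_a$ and $Q_b$, there would exist a curve $\gamma^* : [0,1] \to \td M$ in general position connecting some points of $\Int Q_a$ and $\Int Q_b$ with $|\gamma^*| < |\gamma|_{[a,b]}|$. The concatenation of $\gamma|_{[0,a]}$, $\gamma^*$, and $\gamma|_{[b,1]}$ is a continuous curve from $Q_0$ to $Q_1$ whose endpoints are $\gamma(0), \gamma(1) \notin \td V$. Its combinatorial length, computed by additivity on the three pieces, would be strictly less than $|\gamma|$. The only subtlety is to replace this concatenation by a genuine smooth curve in general position with the same intersection data: this is done by a small perturbation near the two concatenation points (which lie in the open cells $\Int Q_a, \Int Q_b$, away from $\td V$) and a smoothing of the joins, which neither creates nor destroys intersections with $\td V$. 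This produces a curve in general position from $Q_0$ to $Q_1$ of length strictly less than $|\gamma|$, contradicting the minimality of $\gamma$. Hence $\gamma|_{[a,b]}$ is combinatorially minimizing, which concludes the argument. The only ``hard'' point is the verification that the perturbation and smoothing step preserves the combinatorial length, but this is routine since the modifications take place in the interiors of cells.
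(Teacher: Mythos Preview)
Your proof is correct and is exactly the standard cut-and-paste argument that the paper has in mind; the paper's own proof consists of the single word ``Obvious.'' You have simply spelled out the details underlying that word.
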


\begin{proof}
Obvious.
\end{proof}

\begin{Lemma}
For any cell $Q \in \mathcal{Q}$, the preimage $\gamma^{-1} (Q)$ under a combinatorially minimizing curve $\gamma : [0,1] \to \td{M}$ is a closed interval, i.e. $\gamma$ does not reenter $Q$ after exiting it.
\end{Lemma}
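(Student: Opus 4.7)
The plan is to argue by contradiction. Suppose $\gamma^{-1}(Q)$ is not connected. Then there exist parameters $t_1 < s < t_2$ in $[0,1]$ with $\gamma(t_1), \gamma(t_2) \in Q$ but $\gamma(s) \notin Q$. Because $\gamma$ is in general position, $\gamma^{-1}(\td V)$ is a finite set, hence so is $\gamma^{-1}(\partial Q)$, and by moving $t_1, t_2$ slightly closer to $s$ if necessary I may assume in addition that $\gamma(t_1), \gamma(t_2) \in \Int Q$.

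The text preceding the statement records that every component of $\Int \td M \setminus \td V$ is a topological ball whose boundary is a polyhedral $2$-sphere. In particular $\Int Q$ is an open topological ball, so I can choose a smooth embedded path $\gamma' \colon [t_1, t_2] \to \Int Q$ from $\gamma(t_1)$ to $\gamma(t_2)$. Form $\gamma_{\mathrm{new}}$ by replacing $\gamma|_{[t_1,t_2]}$ with $\gamma'$, smoothing at the two joining points within $\Int Q$ (possible since $\gamma(t_1)$ and $\gamma(t_2)$ lie in the open set $\Int Q$, which is disjoint from $\td V$). Then $\gamma_{\mathrm{new}}$ is smooth with the same endpoints as $\gamma$; outside $[t_1, t_2]$ its intersections with $\td V$ coincide with those of $\gamma$ (transversal, none in $\td V^{(1)}$), and inside $[t_1, t_2]$ it has no intersection with $\td V$ at all. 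Hence $\gamma_{\mathrm{new}}$ is in general position.

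Comparing combinatorial lengths, $|\gamma_{\mathrm{new}}| = |\gamma| - |\gamma|_{[t_1,t_2]}|$. But $\gamma|_{[t_1, t_2]}$ goes from $\Int Q$ to $\td M \setminus Q$ and back into $\Int Q$; since $\partial Q \subset \td V$ separates $\Int Q$ from the exterior of $Q$, the subsegment must cross $\td V$ at least twice, each crossing (by general position) occurring through a face of $\pi^{-1}(S_\bullet)$, $\pi^{-1}(C_\bullet)$, or $\pi^{-1}(T_\bullet)$. Therefore $|\gamma|_{[t_1,t_2]}| \geq 2 \min(\eta, 1, H) > 0$, and so $|\gamma_{\mathrm{new}}| < |\gamma|$, contradicting the combinatorial minimality of $\gamma$.

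The only real obstacle would be producing a cheaper replacement curve; this is handled cleanly because $\Int Q$ is disjoint from $\td V$, so \emph{any} path inside $\Int Q$ contributes zero to the combinatorial length, while the piece of $\gamma$ being removed is forced to have strictly positive length by the two required crossings of $\partial Q$. No use of the combinatorial geometry of $\td M$ developed in Lemma \ref{Lem:QQestimatesatW} is needed here.
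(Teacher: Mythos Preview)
Your argument is correct and is exactly a detailed version of the paper's proof, which reads in full: ``Otherwise we could replace $\gamma$ by a shorter curve.'' You have simply made explicit the replacement path inside the open ball $\Int Q$ (contributing zero combinatorial length) and the reason the deleted subsegment has strictly positive length (at least two forced crossings of $\partial Q \subset \td{V}$).
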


\begin{proof}
Otherwise we could replace $\gamma$ by a shorter curve.
\end{proof}

\begin{Lemma} \label{Lem:gammainsidesingleK}
Assume that $\gamma : [0,1] \to \td{M}$ is combinatorially minimizing and stays within some chamber $K \in \mathcal{K}$.
Let $E_0, \ldots, E_n \in \mathcal{E}_K$ be the columns that $\gamma$ intersects in that order.
Then $(E_0, \ldots, E_n)$ is a minimal chain in $\mathcal{E}_K$.

Moreover, $\gamma$ intersects each component of $\pi^{-1} (S_1 \cup \ldots \cup S_k)$ at most once.
So if the endpoints of $\gamma$ lie in cells $Q_1, Q_2 \in \mathcal{Q}$, then
\[ |\gamma|  = \dist (Q_1, Q_2) = \dist^H_K (Q_1,Q_2) + \eta \dist^V_K(Q_1, Q_2). \]
Finally, for any two cells $Q_1, Q_2 \subset K$ we have $\dist(Q_1, Q_2) \leq \dist^H_K (Q_1, Q_2) + \eta \dist^V_K (Q_1, Q_2)$.
\end{Lemma}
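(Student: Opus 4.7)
The plan is to prove all three assertions by two shortening arguments: if $\gamma$ fails one of the claimed properties, construct a strictly shorter competitor curve with the same endpoints and in general position, contradicting the combinatorial minimality of $\gamma$.

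For the first assertion, recall from Lemma \ref{Lem:EEinKisatree} that $\mathcal{E}_K$ has a tree structure, so a chain is minimal precisely when each column appears at most once in it. Suppose for contradiction that $E_i = E_j$ for some $0 \leq i < j \leq n$; since adjacent columns along the chain are distinct, $j \geq i+2$. Let $0 = t_0 < t_1 < \cdots < t_n < t_{n+1} = 1$ be the times at which $\gamma$ meets $\pi^{-1}(C_1 \cup \cdots \cup C_k)$, and pick $s_1 \in (t_i, t_{i+1})$, $s_2 \in (t_j, t_{j+1})$, so $\gamma(s_1), \gamma(s_2) \in \Int E_i$. Replace $\gamma|_{[s_1, s_2]}$ by a smooth curve $\sigma \subset \Int E_i$ from $\gamma(s_1)$ to $\gamma(s_2)$ achieving the minimum possible number of $\pi^{-1}(S_K)$-crossings. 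Because $\sigma$ stays in one column, it contributes no $\pi^{-1}(C_K)$-crossings, whereas the removed $\gamma|_{[s_1, s_2]}$ contributes $j - i \geq 2$ of them; because vertical distance in $K$ depends only on the gap in the $\IR$-factor of $K \cong \td\Sigma_K \times \IR$, any curve from $\gamma(s_1)$ to $\gamma(s_2)$ in $K$ has at least $\dist^V_K$ many vertical crossings, so $\sigma$ may be chosen with no more $\pi^{-1}(S_K)$-crossings than $\gamma|_{[s_1, s_2]}$ had. A generic small perturbation returns the glued curve to general position, producing a strictly shorter competitor. This contradiction forces $(E_0, \ldots, E_n)$ to be the unique minimal chain from $E_0$ to $E_n$, so $n = \dist^H_K(E_0, E_n) = \dist^H_K(Q_1, Q_2)$.

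For the second assertion, I would work in coordinates $K \cong \td\Sigma_K \times \IR$, in which each component of $\pi^{-1}(S_K) \cap K$ is a level set $\td\Sigma_K \times \{n_0\}$ with $n_0 \in \IZ$ that separates $K$. Write $\gamma(s) = (a(s), b(s))$. If $b$ hit a given level $n_0$ twice, the two consecutive such times $s_1 < s_2$ would be crossings in opposite directions, so for some $s_1' < s_1$, $s_2' > s_2$ slightly outside $[s_1, s_2]$ the points $\gamma(s_1'), \gamma(s_2')$ lie on the same side of $\td\Sigma_K \times \{n_0\}$ and off $\td V$. I modify only the vertical coordinate on $[s_1', s_2']$ by replacing $b$ with a smoothed version of $\min\{b, n_0 - \varepsilon\}$ for a generic small $\varepsilon > 0$ (or the analogous construction from the other side), leaving $a$ untouched. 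Since $b' \leq b$ and $b' = b$ wherever $b \leq n_0 - \varepsilon$, the count of integer crossings of $b'$ is strictly smaller than that of $b$: the crossings at $s_1, s_2$ disappear, and no new ones are introduced because $n_0 - \varepsilon$ is not an integer. Horizontal data, and thus the $\pi^{-1}(C_K)$-crossings, are preserved. This again contradicts minimality. Hence each level is crossed at most once; a level strictly outside the vertical range spanned by $Q_1, Q_2$ cannot be crossed at all (one crossing would leave $\gamma$ on the wrong side to terminate in $Q_2$), so the total $\pi^{-1}(S_K)$-count is exactly $\dist^V_K(Q_1, Q_2)$.

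Combining these two facts, $|\gamma| = \dist^H_K(Q_1, Q_2) + \eta \dist^V_K(Q_1, Q_2) = \dist(Q_1, Q_2)$ directly from the definition of combinatorial length. For the final inequality with arbitrary $Q_1, Q_2 \subset K$, I would exhibit a concrete competitor: traverse the unique minimal chain of columns between the columns of $Q_1$ and $Q_2$ (contributing exactly $\dist^H_K(Q_1, Q_2)$ crossings of $\pi^{-1}(C_K)$) while simultaneously sliding the vertical coordinate monotonically from that of $Q_1$ into that of $Q_2$ (contributing exactly $\dist^V_K(Q_1, Q_2)$ crossings of $\pi^{-1}(S_K)$); after a generic perturbation this is a curve in general position of the required length. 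The main delicate point is the first shortening step, where the structural input is that the vertical coordinate on $K$ is a well-defined global function, so the minimum vertical cost of a path in $K$ between two prescribed cells does not depend on which columns it traverses; this is exactly what permits $\sigma$ to be chosen with the same $\pi^{-1}(S_K)$-budget as the segment it replaces.
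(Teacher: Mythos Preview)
Your argument is correct and is precisely the natural elaboration of what the paper has in mind: the paper's own proof is the single sentence ``This follows easily from the cell structure of $K$'', and your two shortening constructions (eliminating a repeated column via a replacement arc inside $\Int E_i$, and capping the vertical coordinate to remove a double crossing of a horizontal sheet) are exactly how one unpacks that sentence. There is nothing to add or correct.
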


\begin{proof}
This follows easily from the cell structure of $K$.
\end{proof}

\subsection{A combinatorial convexity estimate if $M$ satisfies condition (C)} \label{subsec:combconvexincaseC}
In this subsection we assume that $M$ satisfies condition (C) in Proposition \ref{Prop:easiermaincombinatorialresultCaseb}.
We will analyze the combinatorial distance function on $\mathcal{Q}$ in this case.
The main result in this section will be the combinatorial convexity estimate in Proposition \ref{Prop:combinatorialconvexityseveralSeifert}.

\begin{Lemma} \label{Lem:gammainsidetwoKKs}
There are constants $\eta^* > 0$ and $H^* < \infty$ such that if $\eta \leq \eta^*$ and $H \geq H^*$, the following holds:

Consider two chambers $K, K' \in \mathcal{K}$ which are adjacent to a common wall $W = K \cap K'$ from either side and assume that $\gamma : [0,1] \to K \cup K'$ is combinatorially minimizing.
Then $\gamma$ intersects $W$ at most twice.
\begin{enumerate}[label=(\alph*)]
\item If $\gamma$ intersects $W$ exactly once, then there is a unique column $E \in \mathcal{E}_K$ in $K$ which is both adjacent to $W$ and intersects $\gamma$.
The same is true in $K'$.
\item If $\gamma$ intersects $W$ exactly twice and $\gamma(0), \gamma(1) \in K$, then there is a unique column $E' \in \mathcal{E}_{K'}$ such that $\gamma$ is contained in $K \cup E'$.
Moreover there are exactly two columns $E_1, E_2 \in \mathcal{E}_K$ which are adjacent to $W$ and which intersect $\gamma$ and we have $\dist^H_K (E_1, E_2) > H$.
\item If $\gamma$ does not intersect $W$, but intersects two columns $E_1, E_2 \in \mathcal{E}_K$ which are both adjacent to $W$, then $\dist^H_K (E_1, E_2) < 3H$.
\end{enumerate}
\end{Lemma}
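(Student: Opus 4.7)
The strategy is to choose $\eta^*$ small and $H^*$ large so that wall crossings are prohibitively expensive relative to in-chamber detours. Every assertion is then proved by producing a shorter alternative curve, contradicting the combinatorial minimality of $\gamma$.

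I begin by bounding the number of $W$-crossings by $2$. Suppose $\gamma$ crossed $W$ at three consecutive times $t_1 < t_2 < t_3$. Then, up to swapping $K$ and $K'$, the subsegment $\gamma|_{[t_1, t_3]}$ begins and ends on $W$ from the $K'$-side and makes a $K$-detour in between. By Lemma \ref{Lem:subsegmentofminimizing} this subsegment is minimizing, and by Lemma \ref{Lem:gammainsidesingleK} its $K$-portion has combinatorial length at least $\dist^H_K + \eta \dist^V_K$ between its two $W$-endpoints. Lemma \ref{Lem:QQestimatesatW}(f) then bounds the $K'$-distance between the corresponding $K'$-cells just across $W$ by a constant multiple of the same quantity. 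Hence the competitor that stays entirely in $K'$ between $\gamma(t_1)$ and $\gamma(t_3)$ saves $2H$ for the crossings at $t_2$ and $t_3$ while paying at most $O(C_0)$ times the length of the $K$-detour; this yields a strictly shorter curve as soon as $H^* \gg C_0$.

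With at most two crossings, assertion (a) follows directly: each side of the single crossing is a minimizing curve contained in one chamber, so by Lemma \ref{Lem:gammainsidesingleK} and the uniqueness of minimal chains in $\mathcal{E}_K$ from Lemma \ref{Lem:EEinKisatree} the last column of the chain (the one containing the crossing point) is the unique column adjacent to $W$ that $\gamma$ meets on that side. For assertion (b), I would first show that the middle $K'$-subsegment $\gamma_{K'}$ lies in a single $K'$-column. Writing its minimal chain as $(E'_0, \dots, E'_{n'})$ with $E'_0, E'_{n'}$ adjacent to $W$, I would use the last part of Lemma \ref{Lem:EEinKisatree} (which forbids two horizontally far-apart columns from sharing a wall other than $W$) together with Lemma \ref{Lem:QQestimatesatW}(g) to slide the two $W$-crossings of $\gamma$ into a common $K'$-column at no extra cost, forcing $n' = 0$. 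Once $\gamma_{K'}$ lies in a single column, the two $W$-crossings are vertically aligned in $K'$, and Lemma \ref{Lem:QQestimatesatW}(b) bounds $\dist^V_K(E_1, E_2)$ by $C_0 \dist^H_K(E_1, E_2) + C_0$. A direct $K$-competitor between $E_1$ and $E_2$ then costs at most $\dist^H_K(E_1, E_2) + \eta(C_0 \dist^H_K(E_1, E_2) + C_0)$; if $\dist^H_K(E_1, E_2) \leq H$, this is strictly less than $2H$ for small $\eta^*$, contradicting minimality and giving $\dist^H_K(E_1, E_2) > H$.

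For assertion (c), the $K$-portion of $\gamma$ from $E_1$ to $E_2$ has combinatorial length at least $\dist^H_K(E_1, E_2)$; if this is at least $3H$, I would construct a competitor that crosses into $K'$ from a cell in $E_1$, traverses a short $K'$-chain, and crosses back into a cell in $E_2$. In case (C), the non-parallel $S^1$-fibrations on $W$ make the columns of $\mathcal{E}_{K'}$ adjacent to $W$ transverse on $W$ to those of $\mathcal{E}_K$, so by Lemma \ref{Lem:QQestimatesatW}(f) the two $K'$-cells across $W$ from the chosen cells in $E_1, E_2$ have $K'$-horizontal distance $O(1)$ and $K'$-vertical distance $O(\dist^H_K(E_1, E_2))$. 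The competitor thus costs at most $2H + O(1) + \eta \cdot O(\dist^H_K(E_1, E_2))$, which is strictly less than $3H \leq \dist^H_K(E_1, E_2)$ for $\eta^*$ small enough. The main obstacle throughout is balancing the $H$-weight of wall crossings against the $\eta$-weight of vertical motion uniformly across all configurations of $\gamma$, and calibrating $\eta^*, H^*$ against the universal constant $C_0$ from Lemma \ref{Lem:QQestimatesatW}; the bookkeeping is especially delicate in assertion (b), where the single-column reduction and the $\dist^H_K > H$ estimate must be proved in the right order so that Lemma \ref{Lem:QQestimatesatW}(b) is applicable.
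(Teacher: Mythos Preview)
Your argument for assertion~(a) has a genuine gap. Knowing that the columns visited by the $K$-portion of $\gamma$ form a minimal chain (Lemma~\ref{Lem:gammainsidesingleK}) together with the tree property of $\mathcal{E}_K$ does \emph{not} force only the last column of that chain to be adjacent to $W$: a minimal chain can run along $W$ for many steps before the crossing, so several of its columns can all touch $W$. The paper's proof of~(a) is precisely designed to rule this out by constructing a competitor that exploits the case-(C) geometry: since the strips $E_1\cap W$ and $E'\cap W$ are \emph{non-parallel}, there exist adjacent cells $Q_3\subset E_1$, $Q'_3\subset E'$, and then Lemma~\ref{Lem:QQestimatesatW}(b) bounds the relevant vertical distances by $C_0\dist^H_K(E_1,E)+C_0$. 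This yields a shortcut through a \emph{single} $K'$-column whose cost is $H+\eta\cdot O(\dist^H_K)$, strictly less than $H+\dist^H_K(E_1,E)$ for small $\eta$. Your appeal to the tree structure alone cannot substitute for this competitor.

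This propagates to the rest of your argument, because the paper's logical order is $(a)\Rightarrow(b)\Rightarrow\{\text{at most two crossings}\}\Rightarrow(c)$, not the reverse. More importantly, your repeated use of Lemma~\ref{Lem:QQestimatesatW}(f) is too weak. Part~(f) gives the symmetric bound $\dist^H_{K'},\dist^V_{K'}<C_0(\dist^H_K+\dist^V_K)+C_0$, so a $K'$-competitor built from it costs roughly $C_0\dist^V_K$ horizontally, whereas the original $K$-detour costs only $\eta\dist^V_K$ vertically; when $\dist^V_K$ is large the competitor is \emph{longer}, not shorter, and no choice of $\eta^*,H^*$ fixes this. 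The paper avoids this by never using~(f) here: for the three-crossing case it uses the non-parallel strip intersection to produce adjacent cells in $E_1$ and $E'_4$ and then invokes Lemma~\ref{Lem:QQestimatesatW}(c), obtaining a competitor that crosses $W$ once (not zero times) and saves exactly $2H$ against an error of only $2\eta C_0$. Likewise in~(c) the paper's competitor stays in a single $K'$-column (horizontal cost zero) with vertical cost $\eta\cdot O(\dist^H_K)$ via Lemma~\ref{Lem:QQestimatesatW}(b), which is what makes the inequality close. Your claimed ``$K'$-horizontal distance $O(1)$'' in~(c) is simply false under~(f); it requires the single-column construction that you omitted.
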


\begin{proof}
We first establish assertion (a).
Assume without loss of generality that $\gamma(0) \in K$ and $\gamma(1) \in K'$.
Let $Q \subset K$ be the last cell that $\gamma$ intersects inside $K$ and $Q' \subset K'$ the first cell in $K'$.
So $Q, Q'$ are adjacent.
Let $E \in \mathcal{E}_K$ be the column which contains $Q$ and $E'$ the column which contains $Q'$.
Assume that contrary to the assertion there is another column $E_1 \neq E \in \mathcal{E}_K$ that is adjacent to $W$ and intersects $\gamma$.
Choose a cell $Q_1 \subset E_1$ that intersects $\gamma$.
Then by Lemma \ref{Lem:gammainsidesingleK}
\[ \dist(Q_1, Q') = \dist(Q_1, Q) + H = \dist^H_K (E_1, E) + \eta \dist^V_K (Q_1, Q) + H. \]
Let $Q_2 \subset E_1$ be the cell which is horizontally aligned with $Q$.
It follows by the triangle inequality and by previous equation that
\[ \dist(Q_2, Q') = \dist^H_K (E_1, E) + H. \]
Since $E_1 \cap W$ and $E' \cap W$ are non-parallel strips in $W$, we can find cells $Q_3 \subset E_1$ and $Q'_3 \subset E'$ which are adjacent to each other and by Lemma \ref{Lem:QQestimatesatW}(b) we can estimate
\[ \dist^V_K (Q_2, Q_3), \; \dist^V_{K'} (Q', Q'_3) < C_0 \dist^H_K (E_1, E) + C_0. \]
We then conclude
\begin{multline*}
 \dist^H_K (E_1, E) + H = \dist(Q_2, Q') \leq \dist(Q_2, Q_3) + \dist(Q_3, Q'_3) + \dist(Q'_3, Q') \\
 < 2 \eta \big( C_0 \dist^H_K (E_1, E) + C_0 \big) + H.
\end{multline*}
For $\eta < (4 C_0)^{-1}$ this implies $\dist^H_K (E_1, E) < 1$ and hence $E_1 = E$.

Next, we show assertion (b).
Let $E_1, E_2 \in \mathcal{E}_K$ be the the columns that $\gamma$ intersects right before intersecting $W$ for the first time and right after intersecting $W$ for the second time.
Let $E'_1, E'_2 \in \mathcal{E}_{K'}$ be the first and last columns that $\gamma$ intersects inside $K'$.
Assertion (a) applied to the subsegments of $\gamma$ between $\gamma(0)$ and $E'_2$ and between $E'_1$ and $\gamma(1)$, yields that $E' := E'_1 = E'_2$.
Since the subsegment of $\gamma$ which is contained in $K'$ has both of its endpoints in $E'$, it has to be fully contained in it.
Moreover, assertion (a) implies that there are no other columns than $E_1, E_2$ in $K$ which are adjacent to $W$ and which intersect $\gamma$.

It remains to show the inequality on the horizontal distance between $E_1, E_2$.
We do this by comparing the intrinsic and extrinsic distance between these two columns.
Choose $Q_1 \subset E_1$ and $Q'_1 \subset E'$ such that $\gamma$ crosses $W$ between $Q_1$ and $Q'_1$ for the first time and pick $Q_2 \subset E_2$ and $Q'_2 \subset E'$ accordingly.
So $Q_1, Q'_1$ and $Q_2, Q'_2$ are adjacent.
Lemma \ref{Lem:QQestimatesatW}(b) provides the bound $\dist^V_K (Q_1, Q_2) < C_0 \dist^H_K (Q_1, Q_2) + C_0$.
So
\begin{multline*}
 2H \leq \dist(Q_1, Q_2) \leq \dist^H_K (Q_1, Q_2) + \eta \dist^V_K (Q_1, Q_2) \\
  < (1 + \eta C_0) \dist^H_K (Q_1, Q_2) + \eta C_0 .
\end{multline*}
The desired inequality follows for $\eta < (2C_0)^{-1}$ and $H >2$.

We can now show that $\gamma$ intersects $W$ at most twice.
Assume not.
After passing to a subsegment and possibly reversing the orientation, we may assume that $\gamma$ intersects $W$ exactly three times and that $\gamma(0) \in K$, $\gamma(1) \in K'$.
By assertion (b) applied to subsegments of $\gamma$ which intersects $W$ exactly twice, we find that there are columns $E_1, E_3 \in \mathcal{E}_K$ and $E'_2, E'_4 \in \mathcal{E}_{K'}$ which are all adjacent to $W$ such that $\gamma$ crosses $W$ first between $E_1$ and $E'_2$, then between $E'_2$ and $E_3$ and finally between $E_3$ and $E'_4$.
Choose cells $Q_1 \subset E_1$, $Q'_1, Q'_2 \subset E'_2$, $Q_2, Q_3 \subset E_3$, $Q'_3 \subset E'_4$ such that $\gamma$ crosses $W$ first between $Q_1$ and $Q'_1$, then between $Q'_2$ and $Q_2$ and finally between $Q_3$ and $Q'_3$.
So
\begin{multline*}
 \dist(Q_1, Q'_3) = \dist(Q_1, Q'_1) + \dist(Q'_1, Q'_2) + \dist(Q'_2, Q_2) \\ + \dist(Q_2, Q_3) + \dist(Q_3, Q'_3) = 3H + \eta \dist^V_{K'} (Q'_1, Q'_2) + \eta \dist^V_K (Q_2, Q_3).
\end{multline*}
Since $E_1 \cap W$ and $E'_4 \cap W$ are non-parallel strips in $W$, we can find cells $Q^* \subset E_1$ and $Q^{*\prime} \subset E'_4$ which are adjacent to each other.
By Lemma \ref{Lem:QQestimatesatW}(c)
\begin{multline*}
 \dist^V_K (Q_1, Q^*) < \dist^V_K (Q_2, Q_3) + C_0 \qquad \text{and} \\ \quad
\dist^V_{K'} (Q^{*\prime}, Q'_3) < \dist^V_{K'} (Q'_1, Q'_2) + C_0.
\end{multline*}
So
\begin{multline*}
\dist (Q_1, Q'_3) \leq \dist(Q_1, Q^*) + \dist(Q^*, Q^{* \prime}) + \dist(Q^{* \prime}, Q'_3) \\
< \eta \dist^V_K (Q_2, Q_3) + \eta C_0 + H + \eta \dist^V_{K'} (Q'_1, Q'_2) + \eta C_0 \\
= \dist(Q_1, Q'_3) - 2H + 2 \eta C_0.
\end{multline*}
We obtain a contradiction for $\eta C_0 < H$.

Finally, we show assertion (c).
Assume now that $\gamma$ does not intersect $W$ and choose cells $Q_1 \subset E_1$ and $Q_2 \subset E_2$ which intersect $\gamma$.
Since $\gamma$ stays within $K$ we have
\[ \dist(Q_1, Q_2) = \dist^H_K (E_1, E_2) + \eta \dist^V_K (Q_1, Q_2). \]
Let $Q_3 \subset E_2$ be the cell which is horizontally aligned with $Q_1$.
By the triangle inequality and by previous equation
\[ \dist(Q_1, Q_3) = \dist^H_K (E_1, E_2). \]
Let $Q'_1 \subset K'$ be a cell which is adjacent to $Q_1$ and let $E' \in \mathcal{E}_{K'}$ be the column that contains $Q'_1$.
Since $E' \cap W$ and $E_2 \cap W$ are non-parallel strips, we can find cells $Q'_2 \subset E'$ and $Q''_2 \subset E_2$ which are adjacent.
By Lemma \ref{Lem:QQestimatesatW}(b), we have $\dist^V_K(Q''_2, Q_3), \dist^V_{K'} (Q'_1, Q'_2) < C_0 \dist^H_K (E_1, E_2) + C_0$.
So
\begin{multline*}
 \dist^H_K (E_1, E_2) = \dist(Q_1, Q_3) \leq \dist(Q_1, Q'_1) + \dist(Q'_1, Q'_2) \\+ \dist(Q'_2, Q''_2) + \dist(Q''_2, Q_3) 
 < 2 H + 2 \eta C_0 \dist^H_K (E_1, E_2) + 2 \eta C_0.
\end{multline*}
The desired inequality follows for $2 \eta C_0 < \frac1{10}$ and $H > 1$.
\end{proof}

The next Proposition provides an accurate characterization of the behavior of a minimizing curve.

\begin{Proposition} \label{Prop:characterizationminimizinggamma}
Assume that $M$ satisfies condition (C).
There are constants $\eta^* > 0$ and $H^* < \infty$ such that if $\eta \leq \eta^*$ and $H \geq H^*$, the following holds:

Consider a combinatorially minimizing curve $\gamma : [0,1] \to \td{M}$.
Then
\begin{enumerate}[label=(\alph*)]
\item For every chamber $K \in \mathcal{K}$ and every column $E \in \mathcal{E}_K$, the preimage $\gamma^{-1} (E)$ is a connected interval, i.e. $\gamma$ does not exit and reenter $E$.
\item $\gamma$ intersects every wall $W \in \mathcal{W}$ at most twice.
Assume that $K, K' \in \mathcal{K}$ are two chambers which are adjacent to a wall $W \in \mathcal{W}$ from either side.
Then
\begin{enumerate}[label=(b\arabic*)]
\item If $\gamma$ intersects $W$ exactly once, then there is a unique column $E \in \mathcal{E}_K$ which is both adjacent to $W$ and intersects $\gamma$.
Moreover, for every column $E^* \in \mathcal{E}_K$ which also intersects $\gamma$, the minimal chain between $E$ and $E^*$ intersects $W$ in at most two columns.
\item If $\gamma$ intersects $W$ twice and its endpoints lie on the same side of $W$ as $K$, then within both intersections it stays inside a column $E' \in \mathcal{E}_{K'}$ adjacent to $W$.
Moreover, there are exactly two columns $E_1, E_2 \in \mathcal{E}_K$ which intersect $\gamma$ in this order and which are adjacent to $W$ and we have $\dist^H_K (E_1, E_2) > H$.
The curve $\gamma$ leaves $K$ through $W$ right after $E_1$ and reenters $K$ through $W$ right before $E_2$.
\item If $\gamma$ does not intersect $W$, but intersects two columns $E_1, E_2 \in \mathcal{E}_K$ which are both adjacent to $W$, then $\dist^H_K (E_1, E_2) < 3H$.
\end{enumerate}

\item Consider a chamber $K \in \mathcal{K}$ and let $E_1, \ldots, E_n \in \mathcal{E}_K$ be the columns of $K$ that $\gamma$ intersects in that order.
Then there are columns $E^*_1, \ldots, E^*_k \in \mathcal{E}_K$ such that:
\begin{enumerate}[label=(c\arabic*)]
\item $E^*_1 = E_1$ and $E^*_n = E_n$
\item $\dist^H_K (E^*_i, E_i) \leq 1$ for all $i = 1, \ldots, n$.
\item $E^*_1, \ldots, E^*_n$ are pairwise distinct and lie on the minimal chain between $E_1$ and $E_n$ in that order.
\item If $E^*_i \neq E_i$, then there are two walls $W, W' \subset \partial K$ which both intersect $\gamma$ twice such that $\gamma$ exits $K$ through $W'$ right after $E_{i-1}$, enters $K$ through $W'$ right before $E_i$, exits $K$ through $W$ right after $E_i$ and enters $K$ through $W$ right before $E_{i+1}$.
In particular $E_i$ does not lie on the minimal chain between $E_1, E_n$ and $E_i$ is not adjacent to $E_{i-1}$ or $E_{i+1}$.
\item If $E_i, E_{i+1}$ are adjacent, then $\gamma$ stays within $E_i \cup E_{i+1}$ between $E_i$ and $E_{i+1}$.
\item If $E_i, E_{i+1}$ are not adjacent, then there is a wall $W \subset \partial K$ such that $\gamma$ exits $K$ through $W$ right after $E_i$ and enters $K$ through $W$ right before $E_{i+1}$.
The columns $E_i, E^*_i, E^*_{i+1}, E_{i+1}$ lie in that order (some of these columns might be the same) on a minimal chain which runs along $W$.
\item If $i_1 < i_2$ and $E_{i_1}, E_{i_2}$ are adjacent to a common wall $W \subset \partial K$, then either $(E_{i_1}, \ldots, E_{i_2})$ form a minimal chain or $i_2 = i_1 + 1$ and $\gamma$ intersects $W$ right after $E_{i_1}$ and right before $E_{i_2}$.
\end{enumerate}
\end{enumerate}
\end{Proposition}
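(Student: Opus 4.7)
The plan is to bootstrap from Lemma \ref{Lem:gammainsidetwoKKs}, which already controls minimizing curves confined to the union of two adjacent chambers, using the tree structure of $\mathcal{K}$ (Lemma \ref{Lem:KKistree}) and the fact that every subsegment of a minimizing curve is again minimizing (Lemma \ref{Lem:subsegmentofminimizing}). The central observation is that any excursion of $\gamma$ into a chamber $K''$ beyond $K'$ must, by the tree structure, return through the same wall $W'' = K' \cap K''$, so the chamber sequence of $\gamma$ is a walk on a tree. This drastically limits the possible combinatorial patterns and lets us reduce global statements to the two-chamber case already handled.

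I would first establish (b) by induction on the number of distinct walls that $\gamma$ crosses other than $W$. The base case, in which $\gamma$ stays in $K \cup K'$, is precisely Lemma \ref{Lem:gammainsidetwoKKs}. For the inductive step, take an ``outermost'' excursion of $\gamma$ through a wall $W'' \neq W$ into some chamber $K''$; the subsegment realizing this excursion is minimizing by Lemma \ref{Lem:subsegmentofminimizing} and, by the tree structure, stays entirely on one side of $W''$. Applying Lemma \ref{Lem:gammainsidetwoKKs} to this subsegment together with the chamber on the other side of $W''$, one shows the excursion is confined to a single column of $K''$, so it can be effectively contracted without affecting the count of crossings of $W$. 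After removing all such excursions we are reduced to Lemma \ref{Lem:gammainsidetwoKKs}, yielding the bound of two intersections of $W$ and the detailed statements (b1)--(b3).

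Part (a) is then a consequence of (b). If $\gamma^{-1}(E)$ were disconnected for some column $E \in \mathcal{E}_K$, a minimal subsegment $\gamma|_{[s_1,s_2]}$ with $\gamma(s_1),\gamma(s_2) \in \Int E$ and $\gamma((s_1,s_2)) \cap \Int E = \emptyset$ would either remain inside $K$---contradicting Lemma \ref{Lem:gammainsidesingleK}, since the column sequence of a minimizer inside $K$ forms a minimal chain in the tree $\mathcal{E}_K$ of Lemma \ref{Lem:EEinKisatree} and hence visits each column at most once---or exit $K$ through some wall $W$ and later re-enter through the same wall. In the latter case (b2) forces the excursion to stay inside a single column of the adjacent chamber, which allows us to construct a strictly shorter competitor between the same cells by staying in $E$, contradicting minimality.

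Finally, part (c) is a careful bookkeeping of column transitions within $K$ using (a), (b), and Lemma \ref{Lem:EEinKisatree}. Consecutive columns $E_i, E_{i+1}$ that $\gamma$ visits in $K$ either are joined by a subsegment that stays inside $K$, yielding (c5) and (c7) via Lemma \ref{Lem:gammainsidesingleK}, or are separated by an excursion through a single wall $W \subset \partial K$, which by (b2) gives (c6). The detour case $E^*_i \neq E_i$ in (c4) corresponds to two successive excursions through distinct walls $W, W'$ of $\partial K$, and the estimate $\dist^H_K(E^*_i, E_i) \leq 1$ then follows from the last statement of Lemma \ref{Lem:EEinKisatree} about uniqueness of common walls for sufficiently distant columns, combined with (b3). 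The hard part will be the inductive reduction in (b): one must check that excursions into deep chambers never open up new routes for $\gamma$ to cross $W$ efficiently, which is where the combination of the tree property of $\mathcal{K}$ with the column-confinement conclusion of Lemma \ref{Lem:gammainsidetwoKKs}(b) is essential.
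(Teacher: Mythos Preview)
Your instinct to bootstrap from Lemma \ref{Lem:gammainsidetwoKKs} via the tree structure is correct, and matches the paper in spirit. The structural problem is your attempt to prove (b), then (a), then (c) sequentially. In the paper the three parts are established by a \emph{simultaneous} induction on the combinatorial length $|\gamma|$, and this is forced by genuine interdependencies: the second clause of (b1) (about the minimal chain from $E$ to $E^*$) is proved using assertion (c) for $\gamma$, while the construction of the $E^*_i$ in (c) and the verification of (c4), (c6), (c7) in turn rely on (b1) and (b2) for $\gamma$. One cannot close (b) before (c) is available. Moreover, your proposed induction variable---the number of walls other than $W$ that $\gamma$ crosses---does not decrease when you pass to subsegments of $\gamma$, so it is not clear how your inductive step actually reduces to a simpler case. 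The paper's induction on $|\gamma|$ works precisely because every subsegment with endpoints off $\td V$ is strictly shorter, so the full package (a)--(c) is available for it.

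There is also a concrete gap in your argument for (a). When a subsegment of $\gamma$ starts and ends in $E$ and in between makes an excursion through a wall $W$, you claim one can build a strictly shorter competitor by staying in $E$. But the excursion is part of a minimizing curve precisely because it may be \emph{cheaper} than any route confined to $K$; (b2) only tells you the two columns $E_1, E_2 \in \mathcal{E}_K$ where the excursion leaves and re-enters satisfy $\dist^H_K(E_1,E_2) > H$, not that the excursion was wasteful. The paper does not argue (a) by a competitor; it derives (a) directly from (c7), which says that two columns $E_{i_1}, E_{i_2}$ adjacent to a common wall are either joined by a minimal chain of intermediate $E_i$'s or are consecutive with a wall-crossing between them---either way, no column is revisited.
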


\begin{proof}
The proof uses induction on the combinatorial length $|\gamma|$ of $\gamma$.
The case $|\gamma| = 0$ is obvious, so assume that $|\gamma| > 0$ and that all assertions of the Proposition hold for all combinatorially shorter minimizing curves.

Let $W \in \mathcal{W}$ be a wall and $K, K' \in \mathcal{K}$ the chambers which are adjacent to $W$ from either side.
We first check the first statement of assertion (b).
Assume that $\gamma$ intersects $W$ three times or more.
Then by assertion (b2) of the induction hypothesis applied to every subsegment of $\gamma$ which intersects $W$ exactly twice, we obtain that $\gamma$ stays within $K \cup K'$ between its first and last intersection with $W$.
This however contradicts Lemma \ref{Lem:gammainsidetwoKKs}.

Assertion (b2) follows similarly.
Assume that $\gamma$ intersects $W$ twice and that both endpoints lie on the same side of $W$ as $K$.
By assertion (b1) of the induction hypothesis applied to each subsegment of $\gamma$ which intersects $W$ exactly once, we obtain again that $\gamma$ stays within $K \cup K'$ between its first and second intersection with $W$.
The rest follows with Lemma \ref{Lem:gammainsidetwoKKs}(b).

For assertion (b1), observe that the complete assertion (c) holds for $\gamma$ in the case in which $\gamma$ crosses a wall exactly once, because in this case assertion (c) is only concerned with proper subsegments of $\gamma$.
So consider the columns $E_1, \ldots, E_n, E^*_1, \ldots, E^*_n \in \mathcal{E}_K$.
Without loss of generality, we may assume that $\gamma(1)$ lies on the same side of $W$ as $K$.
This implies that $E_1$ is adjacent to $W$.
If $E_i$ for some $2 \leq i \leq n$ was adjacent to $W$ as well, then by assertion (c7) the curve $\gamma$ must be contained in $K$ in between $E_1$ and $E_i$.
This is however impossible by Lemma \ref{Lem:gammainsidetwoKKs}(a) applied to the subsegment of $\gamma$ between the last column in $K'$ and $E_i$.
So the first part of (b1) holds.
Consider now the minimal chain between $E_1$ and some $E_i$ and assume that three of its columns are adjacent to $W$.
Those columns need to be the first three columns in this chain.
We can assume that $E_i = E_n$, because otherwise we could pass to a subsegment of $\gamma$.
Since by what we have already shown, $\gamma$ cannot intersect any column which is adjacent to $W$ other than $E_1$, it cannot happen that $E_2$ is adjacent to $E_1$ (compare with assertions (c4), (c5)).
So by assertion (c6) there is a wall $W' \subset \partial K$, $W' \neq W$ that is intersected twice by $\gamma$ in between $E_1$ and $E_2$.
So if $H$ is sufficiently large, assertion (b2) implies that the first three columns on the minimal chain between $E_1$ and $E^*_2$, i.e. the first three columns on the minimal chain between $E_1$ and $E_n$ are adjacent to both $W'$ and $W$.
This however contradicts Lemma \ref{Lem:EEinKisatree} and finishes the proof of assertion (b1).

We no show assertions (c1)--(c7), (a) and (b3).
Observe that by the induction hypothesis, it suffices to restrict our attention to the case in which $\gamma(0), \gamma(1) \in K$.
Consider now the columns $E_1, \ldots, E_n$ as defined in the proposition.
If $n \leq 2$, we are done with the help of assertion (b) for $E^*_1 = E_1$ and $E^*_2 = E_2$, assuming $H > 2$.
So assume that $n \geq 3$.
Assertion (c5) and the first part of (c6) follows immediately by passing to the subsegment between $E_i, E_{i+1}$ and using the induction hypothesis.
We will now distinguish the cases on when $E_{n-1}$ lies on the minimal chain between $E_1, E_n$ or not and establish assertions (c1)--(c4) and the second part of assertion (c6) in each case.
Based on these assertions we then conclude assertion (c7) in both cases.

Consider first the case in which $E_{n-1}$ lies on the minimal chain between $E_1$ and $E_n$.
Then we can apply the induction hypothesis to the subsegment of $\gamma$ between $E_1$ and $E_{n-1}$ and obtain the columns $E^*_1, \ldots, E^*_{n-1}$ on the minimal chain between $E_1$ and $E_{n-1}$.
Moreover, we set $E^*_n = E_n$.
Assertions (c1)--(c6) follow immediately.

Next consider the case in which $E_{n-1}$ does not lie on the minimal chain between $E_1$ and $E_n$.
Define $E^*_1, \ldots, E^*_{n-2}$ using the induction hypothesis applied to the subsegment of $\gamma$ between $E_1$ and $E_{n-1}$.

Assume first that $E_n$ and $E_{n-1}$ are adjacent.
Then $E_n$ must lie on the minimal chain between $E_1$ and $E_{n-1}$ (by our assumption and the tree property).
So $E_{n-2}$ cannot be adjacent to $E_{n-1}$, because that would imply by assertion (c4) of the induction hypothesis that $E_{n-2} = E_n$ and it is elementary that $\gamma$ cannot reenter a column without exiting $K$.
This means (by assertion (c6) of the induction hypothesis) that there is a wall $W \subset \partial K$ which intersects $\gamma$ twice and which is adjacent to $E_{n-2}, E^*_{n-2}, E_{n-1}$ and hence also $E_n$.
This contradicts assertion (b2).

So $E_n$ and $E_{n-1}$ are not adjacent and by assertion (b2) both columns are adjacent to a wall $W \subset \partial K$ such that $\gamma$ intersects $W$ right after $E_{n-1}$ and right before $E_n$.
By the tree property of $\mathcal{E}_K$ there is a column $E^* \in \mathcal{E}_K$ which lies on the three minimal chains between $E_{n-1}, E_n$ and $E_1, E_{n-1}$ and $E_1, E_n$.
So $E^*$ is adjacent to $W$ and horizontally lies between $E_{n-1}, E_n$.
By our earlier assumption $E^* \neq E_{n-1}$.
Assertion (b1) applied to a subsegment of $\gamma$ implies that $\dist^H_K (E^*, E_{n-1}) \leq 1$; so $E^*$ is adjacent to $E_{n-2}$.
If $E_{n-2}$ was adjacent to $E_{n-1}$, then $E_{n-2} = E^*$ contradicting assertion (b2).
So by assertion (c6) of the induction hypothesis $E_{n-2}, E^*_{n-2}, E_{n-1}$ are adjacent to a wall $W' \subset \partial K$ such that $\gamma$ intersects $W'$ twice between $E_{n-2}, E_{n-1}$.
This implies $W \neq W'$.
Set $E^*_{n-1} = E^*$ and $E^*_n = E_n$.
Assertions (c1)--(c3) follow immediately.
Assertion (c4) and the second part of (c6) hold with the walls $W, W'$ that we have just defined.

We now establish assertion (c7) in the general case (i.e. independently on whether $E_{n-1}$ lies on the minimal chain between $E_1$ and $E_n$ or not).
Assume that $(E_{i_1}, \ldots, E_{i_2})$ does not form a minimal chain.
Then $\gamma$ has to leave $K$ in between $E_{i_1}$ and $E_{i_2}$, i.e. by assertion (c5) there is a $j \in \{ i_1, \ldots, i_2 -1 \}$ such that $E_j, E_{j+1}$ are not adjacent and hence by assertions (c6) $\gamma$ has to intersect a wall $W' \subset \partial K$ in between $E_j$ and $E_{j+1}$.
The columns on the minimal chain between $E^*_j, E^*_{j+1}$ are adjacent to both $W$ and $W'$ and for $H > 10$ there are at least $3$ such columns.
So $W = W'$ and by assertion (b2) we must have $E_{i_1} = E^*_j$, $E_{i_2} = E^*_{j+1}$.

Finally, assertion (a) is a direct consequence of assertion (c7) and assertion (b3) follows from assertion (c7) and Lemma \ref{Lem:gammainsidetwoKKs}(c).
\end{proof}

Next, we analyze the relative behavior of two combinatorially minimizing curves.

\begin{Lemma} \label{Lem:distg1g2afterW}
There are constants $\eta^* > 0$ and $H^* < \infty$ such that if $\eta \leq \eta^*$ and $H \geq H^*$, then the following holds:

Let $\gamma_1, \gamma_2 : [0,1] \to \td{M}$ be two combinatorially minimizing curves and consider a wall $W \in \mathcal{W}$ which is adjacent to two chambers $K, K' \in \mathcal{K}$ on either side.
Assume that $\gamma_1, \gamma_2$ intersect $W$ exactly once and that $\gamma_1(0), \gamma_2(0)$ lie in a common chamber on the same side of $W$ as $K$.
If that chamber is $K$, we additionally require that the cells which contain these points are vertically aligned.
Similarly, assume that $\gamma_1(1), \gamma_2(1)$ lie in a common chamber on the same side of $W$ as $K'$.
If that chamber is $K'$, we also require that the cells which contain these points are vertically aligned.

Let $Q_1, Q_2 \subset K$ be the cells which $\gamma_1, \gamma_2$ intersect right before crossing $W$ and let $Q'_1, Q'_2 \subset K'$ be the cells which $\gamma_1, \gamma_2$ intersect right after crossing $W$.
Then every pair of the cells $Q_1, Q_2, Q'_1, Q'_2$ has combinatorial distance bounded by $4$ or  $4 + H$ depending on whether they lie on the same side of $W$ or not.
\end{Lemma}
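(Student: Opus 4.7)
My plan is to reduce the six pairwise bounds to the two same-side inequalities $A := \dist(Q_1, Q_2) \leq 4$ and $B := \dist(Q_1', Q_2') \leq 4$, and then to derive these via a convexity-and-swap argument based on the structural description of minimizing curves near $W$ together with the non-parallel strip structure of case (C). First I apply Proposition~\ref{Prop:characterizationminimizinggamma}(b1) to each $\gamma_i$: since $\gamma_i$ intersects $W$ exactly once, $Q_i$ lies in a unique column $E_i \in \mathcal{E}_K$ adjacent to $W$ through which $\gamma_i$ passes, and similarly $Q_i' \in E_i' \in \mathcal{E}_{K'}$. Moreover $Q_i$ and $Q_i'$ are adjacent across $W$ by construction, so $\dist(Q_i, Q_i') = H$. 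The triangle inequality then gives the diagonal bounds $\dist(Q_i, Q_j') \leq H + \min(A, B)$ for $i \neq j$, so the entire lemma reduces to establishing $A, B \leq 4$.

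For the bound $A \leq 4$ (the bound $B \leq 4$ being symmetric), the guiding intuition is that $\gamma_1$ and $\gamma_2$ both optimize a nearly-identical cost function for the choice of crossing point on $W$. By Lemma~\ref{Lem:subsegmentofminimizing}, the prefix and suffix of each $\gamma_i$ are themselves combinatorially minimizing, so $|\gamma_i|$ decomposes as the distance from the starting cell to some cell in $E_i$, plus $H$, plus the distance from the corresponding cell in $E_i'$ to the ending cell. Because the starting cells of $\gamma_1, \gamma_2$ lie in a common chamber with at most a vertical displacement (and similarly for the ending cells), this cost functional for $\gamma_2$ differs from that for $\gamma_1$ only by $O(\eta)$ terms coming from the vertical offsets. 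I construct hybrid curves from $\gamma_i(0)$ to $\gamma_j(1)$ ($i \neq j$) that follow $\gamma_i$ until $Q_i$, detour to $Q_j$ at cost $A$, cross $W$, and continue along $\gamma_j$. Comparing the resulting length with the triangle-inequality bounds on $\dist(\gamma_i(0), \gamma_j(1))$, and invoking the non-parallel transformation between $K$- and $K'$-coordinates on $W$ in case (C) encoded in Lemma~\ref{Lem:QQestimatesatW}(b), (f), I deduce $\dist^H_K(E_1, E_2) \leq 2$ and $\dist^H_{K'}(E_1', E_2') \leq 2$ provided $\eta$ is sufficiently small and $H$ sufficiently large. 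Lemma~\ref{Lem:QQestimatesatW}(d) then yields $\dist^V_K(Q_1, Q_2) < C_0$, and Lemma~\ref{Lem:gammainsidesingleK} gives $A \leq \dist^H_K(E_1, E_2) + \eta \dist^V_K(Q_1, Q_2) \leq 2 + \eta C_0 \leq 4$ for $\eta$ small enough.

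The main obstacle is that neither $\gamma_1$ nor $\gamma_2$ is guaranteed to remain inside $K$ throughout its prefix (or $K'$ throughout its suffix): by Proposition~\ref{Prop:characterizationminimizinggamma}(c), $\gamma_i$ may briefly exit $K$ through a wall other than $W$ and re-enter, introducing auxiliary columns that do not lie on the minimal chain from the starting column to $E_i$. Any such excursion has horizontally bounded range by Lemma~\ref{Lem:gammainsidetwoKKs}(c), but a careful accounting of the horizontal and vertical displacements it contributes to each hybrid curve is required, using Lemma~\ref{Lem:QQestimatesatW}(a, f) to absorb the effect of each auxiliary wall-crossing. Tracking these deviations, and confirming that the vertical offsets at the starting and ending chambers contribute only $O(\eta)$ to the cost comparison so as not to obscure the $O(1)$ horizontal pinning of the optimal crossing column, is the technical core of the argument.
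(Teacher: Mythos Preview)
Your reduction to the two same-side bounds and the final step via Lemma~\ref{Lem:QQestimatesatW}(d) are correct and match the paper. The gap is in how you bound $\dist^H_K(E_1,E_2)$.

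Your swap/cost-functional argument does not go through as stated. First, the vertical offsets at the endpoints are not assumed bounded: the hypothesis only says that $\gamma_1(0),\gamma_2(0)$ are vertically aligned if they lie in $K$, so the vertical displacement $V$ between them may be arbitrary and the ``$O(\eta)$'' perturbation of the cost functional is really $O(\eta V)$, which need not be small. Second, when $\gamma_1(0),\gamma_2(0)$ lie in a common chamber other than $K$, no vertical alignment is assumed at all, so the cost functionals for $\gamma_1$ and $\gamma_2$ can differ substantially; a hybrid/competitor comparison gives no direct control over $\dist(\gamma_i(0),\gamma_j(1))$ in this case. Third, the excursions of $\gamma_i$ out of $K$ that you flag as ``the technical core'' are not a detail to be absorbed but the whole difficulty: once $\gamma_i$ may leave $K$, the prefix cost $\dist(\gamma_i(0),Q)$ is no longer a transparent function of the horizontal coordinate of $Q$ along $W$, and the convexity picture you rely on breaks down.

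The paper bypasses all of this with a purely combinatorial tree argument. Let $E^*_1,E^*_2\in\mathcal{E}_K$ be the \emph{first} columns of $K$ that $\gamma_1,\gamma_2$ meet. The hypothesis guarantees that either $E^*_1=E^*_2$ (starting chamber is $K$) or both $E^*_1,E^*_2$ are adjacent to a common wall $W^*\subset\partial K$, $W^*\neq W$ (starting chamber is not $K$). Now let $E^{**}_i$ be the last column on the minimal chain from $E_i$ to $E^*_i$ that is adjacent to $W$; Proposition~\ref{Prop:characterizationminimizinggamma}(b1) gives $\dist^H_K(E_i,E^{**}_i)\leq 1$. Finally, the tree property of $\mathcal{E}_K$ (Lemma~\ref{Lem:EEinKisatree}) forces $\dist^H_K(E^{**}_1,E^{**}_2)\leq 1$: concatenate the chains $E^{**}_1\to E^*_1\to E^*_2\to E^{**}_2$, note that only the endpoints are adjacent to $W$ (or argue separately when $W,W^*$ share a column), and conclude that the minimal chain between $E^{**}_1,E^{**}_2$ along $W$ has length $\leq 1$. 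This yields $\dist^H_K(E_1,E_2)\leq 3$ with no length comparison whatsoever; the vertical hypothesis is used only to ensure $E^*_1=E^*_2$ in the case the starting chamber is $K$.
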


\begin{proof}
Let $E_1, E_2 \in \mathcal{E}_K$ be the columns that contain $Q_1, Q_2$.
We first show that $\dist^H_K (E_1, E_2) \leq 3$ (in fact, we can show that $\dist^H_K (E_1, E_2) \leq 1$, but we don't need this result here).

Define $E^*_1, E^*_2 \in \mathcal{E}_K$ to be the first columns in $K$ that are intersected by $\gamma_1, \gamma_2$.
In the case $\gamma_1 (0), \gamma_2 (0) \in K$ we have $E^*_1 = E^*_2$.
So in either case, we can find a wall $W^* \subset \partial K$ with $W^* \neq W$ which is adjacent to both $E^*_1, E^*_2$.
Consider the minimal chain between $E_1, E^*_1$ and let $E^{**}_1$ be the last column on that chain that is adjacent to $W$.
Define $E^{**}_2$ accordingly.
By Proposition \ref{Prop:characterizationminimizinggamma}(b1) $\dist^H_K (E_1, E^{**}_1), \dist^H_K(E_2, E^{**}_2) \leq 1$.
We need to show that $\dist^H_K (E^{**}_1, E^{**}_2) \leq 1$.

If $W, W^*$ are adjacent to a common column, then both $E^{**}_1, E^{**}_2$ have to be adjacent to $W^*$ since in that case a minimal chain between $E_1, E^*_1$ first runs along $W$ and then along $W^*$.
Hence in that case $\dist^H_K (E^{**}_1, E^{**}_2) \leq 1$ by Lemma \ref{Lem:EEinKisatree}.
If $W, W^*$ are not adjacent to a common column, we follow the minimal chain between $E^{**}_1, E^*_1$, then the minimal chain between $E^*_1, E^*_2$ (along $W^*$) and finally the minimal chain between $E^*_2, E^{**}_2$, to obtain a chain which connects $E^{**}_1$ with $E^{**}_2$ and which intersects $W$ only in its first and last column.
By the tree property of $\mathcal{E}_K$ this chain covers the minimal chain between $E^{**}_1, E^{**}_2$ and hence it has to include all columns along $W$ between $E_1^{**}, E_2^{**}$.
So $\dist^H_K (E^{**}_1, E^{**}_2) \leq 1$.

It follows that $\dist^H_K (Q_1, Q_2) = \dist^H_K (E_1, E_2) \leq 3$.
Analogously $\dist^H_K (Q'_1, Q'_2) \leq 3$.
It now follows from Lemma \ref{Lem:QQestimatesatW}(d) that $\dist^V_K (Q_1, Q_2), \dist^V_K (Q'_1, Q'_2) < C_0$.
This establishes the claim for $\eta < C_0^{-1}$.
\end{proof}

\begin{Lemma} \label{Lem:interchangeorderinK}
There are constants $\eta^* > 0$ and $H^*, C_1 < \infty$ such that if $\eta \leq \eta^*$ and $H \geq H^*$, the following holds: 

Let $K \in \mathcal{K}$ be a chamber and $Q_1, \ov{Q}_1, Q_2, \ov{Q}_2 \subset K$ be cells such that $Q_1, \ov{Q}_1$ and $Q_2, \ov{Q}_2$ are vertically aligned in $K$.
Assume that the vertical order of $Q_1, \ov{Q}_1$ is opposite to the one of $Q_2, \ov{Q}_2$ (i.e. $Q_1$ is ``above'' $\ov{Q}_1$ and $Q_2$ is ``below'' $\ov{Q}_2$ or the other way round).
Let $\gamma, \ov\gamma : [0,1] \to \td{M}$ be minimizing curves from $Q_1$ to $Q_2$ and from $\ov{Q}_1$ to $\ov{Q}_2$.
Then we can find cells $Q', \ov{Q}' \subset K$ such that $Q'$ intersects $\gamma$, $\ov{Q}'$ intersects $\ov\gamma$ and such that $\dist^H_K (Q', \ov{Q}') < 3H$, $\dist^V_K (Q', \ov{Q}') < C_1 H$ and $\dist ( Q', \ov{Q}' ) < 4H$.
\end{Lemma}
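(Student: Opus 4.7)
The plan is to run a discrete intermediate-value argument along the unique minimal chain in $\mathcal{E}_K$ between the initial and terminal columns of $\gamma$ and $\overline\gamma$, and then to extract a good pair of cells using the structural information from Proposition \ref{Prop:characterizationminimizinggamma} together with the column-level geometry of Lemma \ref{Lem:QQestimatesatW}.

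First I would set up notation. Let $E_1, E_2 \in \mathcal{E}_K$ denote the common columns of $\{Q_1, \overline Q_1\}$ and $\{Q_2, \overline Q_2\}$, and let $E^{(0)} = E_1, E^{(1)}, \ldots, E^{(L)} = E_2$ be the unique minimal chain provided by Lemma \ref{Lem:EEinKisatree}. Without loss of generality assume the vertical coordinate of $Q_1$ exceeds that of $\overline Q_1$ and the vertical coordinate of $Q_2$ is below that of $\overline Q_2$. Applying Proposition \ref{Prop:characterizationminimizinggamma}(c) to $\gamma$ yields the columns $E_{(1)}, \ldots, E_{(n)}$ of $K$ visited by $\gamma$ in order, together with their projections $E^*_{(i)}$ on the chain; I fix cells $Q_{(i)} \subset E_{(i)}$ on $\gamma$ with vertical coordinates $h_i$. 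Analogously, for $\overline\gamma$ I obtain $\overline E_{(j)}, \overline E^*_{(j)}, \overline Q_{(j)}, \overline h_j$. By (c3), the map $\sigma(i) \in \{0, \ldots, L\}$ recording the position of $E^*_{(i)}$ on the chain is strictly increasing with $\sigma(1) = 0$ and $\sigma(n) = L$, and similarly for $\overline\sigma$.

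Next I would run the intermediate-value argument. On the set $S = \sigma(\{1, \ldots, n\}) \cap \overline\sigma(\{1, \ldots, \overline n\})$, which contains $0$ and $L$, the function $F(k) = h_{\sigma^{-1}(k)} - \overline h_{\overline\sigma^{-1}(k)}$ satisfies $F(0) > 0 > F(L)$, so there is a consecutive pair $k_- < k_+$ in $S$ across which its sign flips. When the gap $k_+ - k_-$ is small (both transitions being of type (c5) from Proposition \ref{Prop:characterizationminimizinggamma}), $\gamma$ and $\overline\gamma$ both sit in a common column at $k_-$ and at $k_+$, so $\dist^H_K = 0$ for one of the candidate pairs and $\dist^V_K$ is at most the jump size of $F$, which is bounded by a constant via Lemma \ref{Lem:QQestimatesatW}(a). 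When the gap is large, one of the curves, say $\gamma$, has performed a wall-detour of type (c6) through a wall $W \subset \partial K$; by Proposition \ref{Prop:characterizationminimizinggamma}(b2) its exit and re-entry columns $E_{(i_-)}, E_{(i_+)}$ are adjacent to $W$ at horizontal distance $> H$ from each other, and if $\overline\gamma$ remains in $K$ across this range then Proposition \ref{Prop:characterizationminimizinggamma}(b3) forces the columns it visits that are adjacent to $W$ to lie within horizontal distance $3H$ of $E_{(i_\pm)}$, producing $\dist^H_K(Q', \overline Q') < 3H$ for an appropriate choice of $Q'$ on $\gamma$ and $\overline Q'$ on $\overline\gamma$. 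Lemma \ref{Lem:QQestimatesatW}(b) then bounds $\dist^V_K(Q', \overline Q')$ by $C_0 \cdot 3H + C_0 \leq C_1 H$ for a constant $C_1 = C_1(C_0)$, and $\dist(Q', \overline Q') \leq \dist^H_K + \eta \dist^V_K < 3H + \eta C_1 H < 4H$ follows whenever $\eta C_1 < 1$.

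The main obstacle, requiring the most delicate case analysis, is the situation in which both $\gamma$ and $\overline\gamma$ execute long wall-detours through the same wall $W$ over overlapping ranges of the minimal chain: here $k_-, k_+$ can be far apart and the sign flip of $F$ may be hidden inside the gap, so one must combine the constraint from Proposition \ref{Prop:characterizationminimizinggamma}(b2) applied simultaneously to both curves with the detailed affine structure of cells adjacent to $W$ from Lemma \ref{Lem:QQestimatesatW}(a),(b),(d) to produce a pair $(Q', \overline Q')$ satisfying all three target inequalities at once. The constants $\eta^*, H^*, C_1$ are then chosen in the order $C_0 \to C_1 \to \eta^* \to H^*$ so that the additive constants in the application of Lemmas \ref{Lem:EEinKisatree}, \ref{Lem:QQestimatesatW} and Proposition \ref{Prop:characterizationminimizinggamma} are absorbed into the strict inequalities.
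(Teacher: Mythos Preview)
Your approach shares its core idea with the paper: both arguments apply Proposition~\ref{Prop:characterizationminimizinggamma}(c) to $\gamma$ and $\overline\gamma$, project onto the minimal chain $L$ between the two endpoint columns, and then run a discrete crossing argument. The paper, however, packages the crossing step differently and more robustly: rather than tracking a height-difference function $F$ on the sparse set $S = \sigma(\{1,\ldots,n\}) \cap \overline\sigma(\{1,\ldots,\overline n\})$, it builds \emph{connected} subsets $S', \overline S'$ of the rectangular cell-lattice on $L$ (by adjoining to the cells on $\gamma$ within $K$ those cells of $L\cup E_1\cup\ldots\cup E_n$ adjacent to the off-$K$ portions of $\gamma$, and then projecting horizontally onto $L$). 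The opposite-ordering hypothesis then forces $S'\cap\overline S'\neq\emptyset$, giving a single intersection cell $Q^\circ$ to work from. This sidesteps the extra bookkeeping your formulation incurs at gaps in $S$, where one must track which of the two curves is responsible for each missing index and handle mixed cases (e.g.\ $i_+>i_-+1$ while $j_+=j_-+1$).

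The genuine gap in your sketch is the double-detour case, which you correctly flag as the hard one but do not actually resolve. Two specific points. First, you assume the two detours go through the \emph{same} wall $W$; this is true but needs proof, and the paper does it explicitly by combining Lemma~\ref{Lem:EEinKisatree} with a comparison of the detour lengths $>H$ against the bounded offset between the $E^*_i$ and $\overline E^*_{\overline i}$ at the endpoints of the overlap. Second, once both curves sit in the same adjacent chamber $K'$ across $W$, bounding $\dist^V_K(Q',\overline Q')$ requires Lemma~\ref{Lem:QQestimatesatW}(c), the parallelogram estimate, in addition to (b): one uses (b) to pass from $\overline Q'$ to an auxiliary cell $Q''\subset E_i$ adjacent to the column $\overline E\in\mathcal{E}_{K'}$ containing $\overline Q$, and then (c) to pass from $Q''$ to $Q'$. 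You cite parts (a), (b), (d) but not (c), and without it I do not see how the vertical estimate closes in this case.
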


\begin{proof}
Note that the last inequality follows from the first two inequalities if $\eta^* < C_1^{-1}H^*$.

Let $E_0, E_\omega \in \mathcal{E}_K$ be the columns that contain $Q_1, \ov{Q}_1$ and $Q_2, \ov{Q}_2$.
We first invoke Proposition \ref{Prop:characterizationminimizinggamma}(c) on $\gamma$ to obtain columns $E_1, \ldots, E_n$, $E^*_1, \ldots, E^*_n \in \mathcal{E}_K$ with $E_1 = E^*_1 = E_0$ and $E_n = E^*_n = E_\omega$.
Then $E^*_1, \ldots, E^*_n$ lie on the minimal chain $L$ between $E_0$ and $E_\omega$.
Let $S \subset L \cup E_1 \cup \ldots \cup E_n$ be the union of all cells in $K$ which intersect $\gamma$ and all cells in $L \cup E_1 \cup \ldots \cup E_n$ and which are adjacent to cells outside $K$ that intersect $\gamma$.
Then $Q_1, Q_2 \subset S$ and it is not difficult to see that these two cells lie in the same connected component of $S$.
Based on the set $S$ we construct another set $S' \subset L$ in the following way: $S'$ is the union of $S \cap L$ with all cells in each $E^*_i$ which are horizontally aligned with a cell in $S \cap E_i$.
Then again $Q_1, Q_2 \subset S'$ and both cells lie in the same connected component of $S'$.
Similarly, we can invoke Proposition \ref{Prop:characterizationminimizinggamma}(c) on $\ov\gamma$, obtaining columns $\ov{E}_1, \ldots, \ov{E}_{\ov{n}} \in \mathcal{E}_K$ and $\ov{E}^*_1, \ldots, \ov{E}^*_{\ov{n}}$ on $L$ and we can define $\ov{S}$ and $\ov{S}'$ in the same way.
So $\ov{Q}_1, \ov{Q}_2 \subset \ov{S}'$ and both cells lie in the same connected component of $\ov{S}'$.
Since the cells on $L$ are arranged on a rectangular lattice and the cells $Q_1, \ov{Q}_1$ and $Q_2, \ov{Q}_2$ lie on opposite sides of $L$ and have opposite vertical order, we conclude that the sets $\ov{S}$ and $\ov{S}'$ have to intersect.
Let $Q^\circ \subset S' \cap \ov{S}'$ be a cell in the intersection and $E^\circ \subset L$ the column containing $Q^\circ$.
So we can find cells $Q, \ov{Q} \in \mathcal{Q}$ which intersect $\gamma, \ov\gamma$ such that the following holds:
Either $Q \subset L$ and $Q^\circ = Q$, or $Q \subset (E_1 \cup \ldots \cup E_n) \setminus \Int L$ and $Q$ is adjacent and horizontally aligned with $Q^\circ$, or $Q \not\subset K$ and $Q$ is either adjacent to $Q^\circ$ or $Q^\circ \subset E^*_i$ for some $i \in \{ 1, \ldots, n \}$ for which $E^*_i \neq E_i$ and $Q$ is adjacent to a cell in $E_i$ which is adjacent to $Q^\circ$ and horizontally aligned with it.
In the first two cases we set $Q' := Q$.
In the third case we will define $Q'$ later.
So if $Q \subset K$, then $Q'$ intersects $\gamma$ and $\dist^H_K (Q', Q^\circ) \leq 1$ and $\dist^V_K (Q', Q^\circ) = 0$.
The analogous characterization holds for $\ov{Q}$ and we define $\ov{Q}'$ in the same way if $\ov{Q} \subset K$.

Next, we consider the case in which $\dist^H_K (E^\circ, E^*_i) \leq 1$ for some $i \in \{ 1, \ldots, n \}$, and we establish the existence of a cell $Q' \subset K$ which intersects $\gamma$ and which is within bounded distance from $Q^\circ$.
If $Q \subset K$, then we are done by the previous paragraph.
So assume that $Q \not\subset K$.
Let $K' \in \mathcal{K}$ be the chamber which contains $Q$ and let $W = K \cap K' \in \mathcal{W}$ be the wall between $K$ and $K'$.
So $\gamma$ intersects $W$ twice and $E^\circ$ is adjacent to $W$.
Choose $i' \in \{1, \ldots, n-1 \}$ such that $\gamma$ intersects $W$ between $E_{i'}, E_{i'+1}$.
If $E^\circ$ lies between $E^*_{i'}, E^*_{i'+1}$, then $\dist^H_K (E^\circ, E^*_{i'}) \leq 1$ or $\dist^H_K (E^\circ, E^*_{i'+1}) \leq 1$, by our initial assumption.
If $E^\circ$ lies on $L$ not between $E^*_{i'}, E^*_{i'+1}$, then we can conclude by applying Proposition \ref{Prop:characterizationminimizinggamma}(b1) to subsegments of $\gamma$ that intersect $W$ exactly once, that we also have $\dist^H_K (E^\circ, E^*_{i'}) \leq 1$ or $\dist^H_K (E^\circ, E^*_{i'+1}) \leq 1$.
So we may assume that $i = i'$ or $i = i'+1$.
Let now $Q' \subset E_i$ be the cell that is intersected by $\gamma$ right before or right after $W$.
Then $\dist^H_K (Q^\circ, Q') \leq 2$ and by Lemma \ref{Lem:QQestimatesatW}(b) we get $\dist^V_K (Q^\circ, Q') < 3C_0 + 1$.

Combining the previous conclusion with the analogous conclusion for $\ov\gamma$ and the triangle inequality yields the desired result in the case in which there are indices $i \in \{ 1, \ldots, n \}$ and $\ov{i} \in \{ 1, \ldots, \ov{n} \}$ such that $\dist^H_K (E^\circ, E^*_i) \leq 1$ and $\dist^H_K (E^\circ, \ov{E}^*_{\ov{i}}) \leq 1$.
So, after possibly interchanging the roles of $\gamma$ and $\ov\gamma$, it remains to consider the case in which there is an index $i \in \{ 1, \ldots, n-1 \}$ such that $E^\circ$ lies strictly in between $E^*_i, E^*_{i+1}$ and such that $E^\circ$ is not adjacent to either of these columns.
We will henceforth always assume that.
Let $W \subset \partial K$ be the wall that $\gamma$ intersects between $E_i, E_{i+1}$ and let $K' \in \mathcal{K}$ be the chamber on the other side.
Then $E_i, E^*_i, E^\circ, E^*_{i+1}, E_{i+1}$ are arranged along $W$ in that order and by Lemma \ref{Lem:EEinKisatree} we must have $Q \subset K'$; let $E \in \mathcal{E}_{K'}$ be the column that contains $Q$.
Finally, let $Q' \subset E_i$ be the cell that $\gamma$ intersects right before $W$.

Consider the columns on $L$ between $\ov{E}^*_{\ov{i}}, \ov{E}^*_{\ov{i}+1}$ for each $\ov{i} = 1, \ldots, \ov{n}-1$.
If for some $\ov{i}$ there are at least $3$ such columns which are also in between $E^*_i$ and $E^*_{i+1}$, we must have $\dist^H_K (\ov{E}^*_{\ov{i}}, \ov{E}^*_{\ov{i}+1}) > 1$ and all columns between $\ov{E}^*_{\ov{i}}$ and $\ov{E}^*_{\ov{i}+1}$ have to be adjacent to $W$ by Lemma \ref{Lem:EEinKisatree}.
However, this can only happen for at most one index $\ov{i}$.
So either there is no such $\ov{i}$ and hence all columns which are strictly in between $E^*_i$ and $E^*_{i+1}$ are contained in $\ov{E}^*_1 \cup \ldots \cup \ov{E}^*_{\ov{n}}$ or there is exactly one such $\ov{i}$ and all columns which are strictly in between $E^*_i$ and $E^*_{i+1}$ lie in between $\ov{E}^*_{\ov{i}}$ and $\ov{E}^*_{\ov{i}+1}$.
In the first case, $\ov\gamma$ intersects all columns which are strictly in between $E^*_i$ and $E^*_{i+1}$.
In the second case, we can apply the same argument reversing the roles of $\gamma$ and $\ov\gamma$ to conclude that there is no other index $i' \in \{ 1, \ldots, n-1 \}$, $i' \neq i$ such that there are more than $2$ columns which are between $E^*_{i'}, E^*_{i'+1}$ and $\ov{E}^*_{\ov{i}}, \ov{E}^*_{\ov{i}+1}$.
This implies that $\dist^H_K (E^*_i, \ov{E}^*_{\ov{i}}), \dist^H_K (E^*_{i+1}, \ov{E}^*_{\ov{i}+1}) \leq 1$ in the second case.

In the first case, we use Proposition \ref{Prop:characterizationminimizinggamma}(b3) and (c7) to find that $\ov\gamma$ intersects less than $3H$ columns which are adjacent to $W$.
So $\dist^H_K (\ov{E}^*_{\ov{i}}, \ov{E}^*_{\ov{i}+1}) \leq 3H$.
Since $\ov\gamma$ intersects $E^\circ$ we have $\ov{Q}' := \ov{Q} = Q^\circ$.
Hence $\dist^H_K (Q', \ov{Q}') < 3H$ and Lemma \ref{Lem:QQestimatesatW}(b) yields $\dist^V_K (Q', \ov{Q}') < 3C_0H + C_0$ and we are done.

In the second case, $E^\circ$ lies strictly in between $\ov{E}^*_{\ov{i}}, \ov{E}^*_{\ov{i}+1}$.
So $\ov{Q} \not\subset K$ and $\ov\gamma$ intersects $W$ between $\ov{E}_{\ov{i}}, \ov{E}_{\ov{i}+1}$ (by Lemma \ref{Lem:EEinKisatree}).
Let $\ov{K}' \in \mathcal{K}$ be the chamber that contains $\ov{Q}$ and $W = K \cap \ov{K}' \in \mathcal{W}$ the wall between $K$ and $\ov{K}'$.
We will now show that $\ov{K}' = K'$ and $\ov{W} = W$.
If not, then there must be an index $\ov{i}' \in \{ 1, \ldots, n-1 \}, \ov{i}' \neq \ov{i}$ such that $\ov{W}$ is adjacent to $\ov{E}^*_{\ov{i}'}, \ov{E}^*_{\ov{i}'+1}, E^\circ$.
If $\ov{i}' < \ov{i}$, then $\ov{W}$ is also adjacent to $E^*_i$ (which is then between $\ov{E}^*_{\ov{i}'}$ and $E^\circ$), contradicting Lemma \ref{Lem:EEinKisatree}.
If $\ov{i}' > \ov{i}$, then $\ov{W}$ is also adjacent to $E^*_{i+1}$, contradicting Lemma \ref{Lem:EEinKisatree} as well.
So indeed $Q, \ov{Q} \subset \ov{K}' = K'$; let $\ov{E} \in \mathcal{E}_{K'}$ be the column that contains $\ov{Q}$.
Let now $\ov{Q}' \subset \ov{E}_{\ov{i}}$ be the cell that $\ov\gamma$ intersects right before $W$.

Recall that $Q' \subset E_i$ and $\ov{Q}'\subset \ov{E}_{\ov{i}}$, that $Q'$ is adjacent to $E$, $\ov{Q}'$ is adjacent to $\ov{E}$ and that $E, \ov{E}$ are both adjacent to $Q^\circ$.
Moreover, by our previous conclusions $\dist^H_K (Q', \ov{Q}') \leq 3$.
Let $Q'' \subset E_i$ be a cell that is adjacent to $\ov{E}$.
Then by Lemma \ref{Lem:QQestimatesatW}(b) $\dist^V_K (\ov{Q}', Q'') < 4C_0$ and by Lemma \ref{Lem:QQestimatesatW}(c) $\dist^V_K (Q'', Q') < C_0$.
Hence $\dist^V_K (Q', \ov{Q}') < 5C_0$.
This finishes the proof of the Lemma.
\end{proof}

The next Lemma is a preparation for the combinatorial convexity estimate stated in Proposition \ref{Prop:combinatorialconvexityseveralSeifert}. 

\begin{Lemma} \label{Lem:combconvexseveralSeifertpreparation}
There are constants $\eta^* > 0$ and $H^* < \infty$ such that if $\eta \leq \eta^*$ and $H \geq H^*$, then the following holds:

Let $K \in \mathcal{K}$ be a chamber and $Q_0, Q_1, Q_2 \subset K$ cells such that $Q_1$ and $Q_2$ are vertically aligned.
Assume that $\dist (Q_0, Q_1), \dist(Q_0, Q_2) \leq R$ for some $R \geq 0$.
Then for any cell $Q^* \subset K$ between $Q_1$ and $Q_2$, we have $\dist (Q_0, Q^*) < R + 8H$.
\end{Lemma}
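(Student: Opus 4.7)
The plan is to analyze combinatorially minimizing curves $\gamma_1, \gamma_2 : [0,1] \to \td{M}$ from $Q_0$ to $Q_1$ and from $Q_0$ to $Q_2$, each of length at most $R$, and to construct a competitor curve from $Q_0$ to $Q^*$ of length at most $R + 8H$. Write $E^* \in \mathcal{E}_K$ for the common column of $K$ containing $Q_1, Q_2, Q^*$, and for $i = 1, 2$ set $d_i := \dist^H_K(Q_0, Q_i) + \eta\,\dist^V_K(Q_0, Q_i)$.

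The first step is to dispose of the easy case $d_1, d_2 \leq R$. Since $Q_1, Q_2 \in E^*$, the horizontal distances $\dist^H_K(Q_0, Q_j)$ agree for $j \in \{1, 2, *\}$; since $Q^*$ lies vertically between $Q_1, Q_2$ in the product structure $K \cong \td\Sigma_K \times \IR$, its vertical distance from $Q_0$ is at most $\max(\dist^V_K(Q_0, Q_1), \dist^V_K(Q_0, Q_2))$. Combining these and applying the in-$K$ upper bound of Lemma \ref{Lem:gammainsidesingleK} yields the stronger estimate $\dist(Q_0, Q^*) \leq R$.

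Otherwise at least one of the $\gamma_i$ must exit $K$. By Proposition \ref{Prop:characterizationminimizinggamma}(a), each $\gamma_i$ enters $E^*$ at a unique first cell $Q_i^{\textnormal{col}} \in E^*$ and then moves only vertically to $Q_i$ inside $E^*$; if $Q^*$ lies on the traversed segment of either curve, one again gets $\dist(Q_0, Q^*) \leq R$. Otherwise the natural competitor follows $\gamma_1$ up to $Q_1^{\textnormal{col}}$ and then proceeds vertically in $E^*$ to $Q^*$; a short case analysis on the relative vertical positions of $Q_1^{\textnormal{col}}, Q_1, Q^*$ gives cost at most $R + \eta\,\dist^V_K(Q_1, Q^*)$, and the analogue for $\gamma_2$ gives $R + \eta\,\dist^V_K(Q^*, Q_2)$. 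Taking the minimum and using that $Q^*$ lies between $Q_1, Q_2$ yields the preliminary bound $\dist(Q_0, Q^*) \leq R + (\eta/2)\,\dist^V_K(Q_1, Q_2)$.

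The main obstacle is the subcase where this last bound still exceeds $R + 8H$, i.e.\ when $\eta\,\dist^V_K(Q_1, Q_2)$ is large compared to $H$. The plan is to exploit two structural facts: first, Proposition \ref{Prop:characterizationminimizinggamma}(a) forces any minimizing curve between $Q_1$ and $Q_2$ to stay inside $E^*$, so that $\dist(Q_1, Q_2) = \eta\,\dist^V_K(Q_1, Q_2)$; second, Proposition \ref{Prop:characterizationminimizinggamma}(b2) says every wall excursion of $\gamma_i$ out of $K$ crosses some wall $W' \subset \partial K$ twice and stays in a single column $E' \in \mathcal{E}_{K'}$ of an adjacent chamber $K'$. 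When $\gamma_1, \gamma_2$ share such a wall $W'$ for their last entries into $K$, Lemma \ref{Lem:distg1g2afterW} bounds the combinatorial distance between their crossing cells by $4$ or $4 + H$; when they do not, the tree structure of $\mathcal{E}_K$ (Lemma \ref{Lem:EEinKisatree}) combined with Lemma \ref{Lem:interchangeorderinK}, applied to subsegments of $\gamma_1, \gamma_2$ whose in-$K$ endpoints are vertically aligned inside $E^*$ and of opposite vertical order relative to $Q^*$, produces two cells on the curves that are within combinatorial distance $4H$. In either sub-case the resulting ``crossing cell'' lies combinatorially close to $E^*$ near $Q^*$, and gluing this cell to the $\gamma_i$-subsegment preceding it produces the required competitor from $Q_0$ to $Q^*$ of length at most $R + 8H$. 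The hardest part will be the combinatorial book-keeping of wall excursions, ensuring that the constructed detour respects the non-reentry of columns and exits along the appropriate walls; this is analogous to the case analyses carried out in the proofs of Lemma \ref{Lem:gammainsidetwoKKs} and Lemma \ref{Lem:interchangeorderinK}, and relies crucially on the distance estimates of Lemma \ref{Lem:QQestimatesatW}.
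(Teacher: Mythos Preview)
Your preliminary reductions and the bound $\dist(Q_0,Q^*)\le R+\tfrac{\eta}{2}\dist^V_K(Q_1,Q_2)$ are correct, but the treatment of the ``main obstacle'' is a genuine gap. Neither of the lemmas you plan to invoke applies in the configuration you have. Lemma~\ref{Lem:distg1g2afterW} concerns curves that cross a wall \emph{once} with prescribed alignment of both pairs of endpoints; your excursions cross twice, and the cells on the $K'$-side need not lie in a common column. More seriously, Lemma~\ref{Lem:interchangeorderinK} requires the two starting cells and the two terminal cells to have \emph{opposite} vertical orders. Your curves $\gamma_1,\gamma_2$ share the starting cell $Q_0$, so no such order exists, and restricting to subsegments inside $E^*$ just produces vertical segments that carry no new information. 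There is no evident way to manufacture the crossing hypothesis from $\gamma_1,\gamma_2$ alone.

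The paper closes this gap with an idea you are missing: it uses the deck transformation $\varphi:\IZ\curvearrowright\td M$ that acts as a vertical shift on $K$. Choosing $z$ with $\varphi_z(Q_1)=Q^*$, one compares $\varphi_z\circ\gamma_1$ (from $\varphi_z(Q_0)$ to $Q^*$) with $\gamma_2$ (from $Q_0$ to $Q_2$); now the starting cells $\varphi_z(Q_0),Q_0$ are vertically aligned in the column of $Q_0$, the terminal cells $Q^*,Q_2$ are vertically aligned in $E^*$, and after a harmless WLOG the orders are opposite, so Lemma~\ref{Lem:interchangeorderinK} genuinely applies and produces nearby cells $Q'_1,Q'_2$ on the two curves. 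The resulting inequalities do not immediately give the bound; instead one runs an induction on $R$ (and then on $\dist^V_K(Q_1,Q_2)$): either a direct splice through $Q'_2,Q'_1,\varphi_z(Q_1)=Q^*$ already gives $\dist(Q_0,Q^*)<R+8H$, or the failure of that splice forces $\dist(\varphi_{-z}(Q'_1),Q_1)+\dist(\varphi_{-z}(Q'_1),Q^{**})<R$ for the reflected cell $Q^{**}=\varphi_{-z}(Q_2)$, and the induction hypothesis (applied with base cell $\varphi_{-z}(Q'_1)$ and a strictly smaller radius) finishes the job. Both the shift and the induction are essential; your outline contains neither.
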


\begin{proof}
We prove this Lemma by induction on $R$ (observe that we are only interested in a discrete set of values of $R$) and then on $\dist^V_K(Q_1,Q_2)$.
Consider the action $\varphi : \IZ \curvearrowright \td{M}$ by deck transformations of the universal covering $\td{M} \to M$ which acts as a vertical shift on $K$, leaving $\td{V}$ and hence the cell structure and combinatorial distance function invariant and choose $z \in \IZ$ such that $Q^* = \varphi_z (Q_1)$.
We may assume $z \neq 0$.

Without loss of generality, we can assume that $Q^*$ lies between $Q_1$ and $Q^{**} =\varphi_{-z} (Q_2)$.
Otherwise, we can interchange the roles of $Q_1$ and $Q_2$.
Let $\gamma_1, \gamma_2$ be minimizing curves between $Q_0$ and $Q_1, Q_2$.
We can now apply Lemma \ref{Lem:interchangeorderinK} to $\varphi_z \circ \gamma_1$ and $\gamma_2$ to obtain cells $Q'_1, Q'_2 \subset K$ on $\varphi_z \circ \gamma_1$ and $\gamma_2$ with $\dist (Q'_1, Q'_2) < 4 H$.
Then $\varphi_{-z} (Q'_1)$ lies on $\gamma_1$ and hence
\begin{equation} \label{eq:Q0phiQ1}
 \dist (Q_0, \varphi_{-z} (Q'_1) ) + \dist (\varphi_{-z} (Q'_1), Q_1) = \dist (Q_0, Q_1) \leq R.
\end{equation}
We also have
\begin{equation} \label{eq:Q0Qs2Q2}
 \dist (Q_0, Q'_2) + \dist (Q'_2, Q_2) = \dist (Q_0, Q_2) \leq R.
\end{equation}
If $\dist (Q_0, Q'_2) + \dist (\varphi_{-z} (Q'_1), Q_1) \leq R + 4H$, then
\[ \dist (Q_0, Q^*) \leq \dist(Q_0, Q'_2) + \dist (Q'_2, Q'_1) + \dist (Q'_1, \varphi_z (Q_1) )
< R + 8H, \]
which proves the desired estimate.
On the other hand, assume that $\dist (Q_0, Q'_2) + \dist (\varphi_{-z} (Q'_1), Q_1) > R + 4H$.
Then (\ref{eq:Q0phiQ1}) and (\ref{eq:Q0Qs2Q2}) give us
\[ \dist (Q_0, \varphi_{-z} (Q'_1)) + \dist (Q'_2, Q_2) < R - 4H . \]
It follows that
\begin{multline*}
 \dist (\varphi_{-z} (Q'_1), Q^{**}) = \dist (Q'_1, Q_2) \leq \dist (Q'_1, Q'_2) + \dist(Q'_2, Q_2) \\
 < 4 H + R - 4H - \dist (Q_0, \varphi_{-z} (Q'_1)) = R - \dist (Q_0, \varphi_{-z} (Q'_1)).
 \end{multline*}
Also by (\ref{eq:Q0phiQ1})
\[ \dist (\varphi_{-z} (Q'_1), Q_1) \leq R - \dist (Q_0, \varphi_{-z} (Q'_1)). \]
So by the induction hypothesis, we find that
\[ \dist (\varphi_{-z}  (Q'_1), Q^*) < R - \dist (Q_0, \varphi_{-z} (Q'_1)) + 8H. \]
This implies
\[ \dist (Q_0, Q^*) \leq \dist (Q_0, \varphi_{-z} (Q'_1)) + \dist (\varphi_{-z} (Q'_1), Q^*) < R + 8 H. \qedhere \]
\end{proof}

\begin{Proposition} \label{Prop:combinatorialconvexityseveralSeifert}
Assume that $M$ satisfies condition (C).
There are constants $\eta^* > 0$ and $H^* < \infty$ such that whenever $\eta \leq \eta^*$ and $H \geq H^*$, then the following holds:

Consider a cell $Q_0 \in \mathcal{Q}$, a chamber $K \in \mathcal{K}$ (not necessarily containing $Q_0$) and cells $Q_1, Q_2 \subset K$ which are vertically aligned within $K$.
Assume that $\dist (Q_0, Q_1)$, $\dist(Q_0, Q_2) \leq R$ for some $R \geq 0$.
Then for any cell $Q^* \subset K$ which is vertically aligned with $Q_1, Q_2$ and vertically between $Q_1$ and $Q_2$, we have $\dist (Q_0, Q^*) < R + 10 H$.
\end{Proposition}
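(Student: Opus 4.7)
The plan is to reduce this Proposition, which allows $Q_0$ to live anywhere in $\td M$, to the preceding Lemma \ref{Lem:combconvexseveralSeifertpreparation}, which assumes $Q_0 \subset K$. If $Q_0 \subset K$, then Lemma \ref{Lem:combconvexseveralSeifertpreparation} gives the claim immediately with the bound $R + 8H < R + 10H$. So the work lies in the case $Q_0 \not\subset K$, where I will find a replacement basepoint $P \subset K$ that lies on a minimizing curve from $Q_0$ to each of $Q_1, Q_2$ up to an additive error, and then invoke the Lemma for the triple $P, Q_1, Q_2$.

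I would first pin down the wall through which minimizing curves must enter $K$: since by Lemma \ref{Lem:KKistree} the chamber adjacency graph is a tree, the unique wall $W \in \mathcal{W}$ in $\partial K$ separating $K$ from the chamber containing $Q_0$ is well-defined; let $K'$ be the chamber on the other side of $W$. Take combinatorially minimizing curves $\gamma_1, \gamma_2$ from $Q_0$ to $Q_1, Q_2$. Since $Q_0$ and the $Q_i$ lie on opposite sides of $W$, parity forces each $\gamma_i$ to cross $W$ an odd number of times, and Proposition \ref{Prop:characterizationminimizinggamma}(b) caps this at two; hence each $\gamma_i$ crosses $W$ exactly once. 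Let $P_i \subset K$ be the cell entered by $\gamma_i$ immediately after this crossing. Next I would apply Lemma \ref{Lem:distg1g2afterW} to the reversed curves $\gamma_i^{\mathrm{rev}}$, whose starting cells are the vertically aligned $Q_1, Q_2 \subset K$ and whose common endpoint is $Q_0$; the hypotheses are met (including the vertical-alignment clause, since on the $K$-side it is given by assumption and on the $K'$-side it is trivial as the endpoints coincide), and the Lemma yields $\dist(P_1, P_2) \leq 4$ because $P_1, P_2$ lie on the same side of $W$.

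Setting $P := P_1$ and using that $P_1$ lies on the minimizing $\gamma_1$ gives $\dist(P_1, Q_1) = \dist(Q_0, Q_1) - \dist(Q_0, P_1) \leq R - \dist(Q_0, P_1)$. Combining the triangle inequality $\dist(P_1, Q_2) \leq \dist(P_1, P_2) + \dist(P_2, Q_2)$ with the minimality of $\gamma_2$ and the estimate $|\dist(Q_0, P_1) - \dist(Q_0, P_2)| \leq \dist(P_1, P_2) \leq 4$ yields $\dist(P_1, Q_2) \leq R - \dist(Q_0, P_1) + 8$. Applying Lemma \ref{Lem:combconvexseveralSeifertpreparation} to $P_1, Q_1, Q_2 \subset K$ with parameter $R' = R - \dist(Q_0, P_1) + 8$ gives $\dist(P_1, Q^*) < R' + 8H = R - \dist(Q_0, P_1) + 8 + 8H$, after which the triangle inequality produces $\dist(Q_0, Q^*) < R + 8 + 8H$, which is bounded by $R + 10H$ as soon as $H^* \geq 4$. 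The main obstacle is controlling how the two minimizing curves enter $K$; this is handled by Lemma \ref{Lem:distg1g2afterW}, which crucially exploits the vertical alignment of $Q_1, Q_2$ to force $P_1, P_2$ close together, and the uniqueness of the separating wall $W$, granted by the tree structure of chambers, is what makes this lemma applicable at all.
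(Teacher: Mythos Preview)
Your proof is correct and follows essentially the same route as the paper's: reduce to Lemma~\ref{Lem:combconvexseveralSeifertpreparation} by replacing $Q_0$ with the first cell $P_1 \subset K$ hit by a minimizing curve, and control $\dist(P_1,P_2)$ via Lemma~\ref{Lem:distg1g2afterW}. The only cosmetic differences are that the paper applies Lemma~\ref{Lem:distg1g2afterW} to the forward curves rather than their reversals (equivalent by Lemma~\ref{Lem:subsegmentofminimizing}), and that you give an explicit parity argument for the single crossing of $W$ where the paper simply asserts it.
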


\begin{proof}
If $Q_0 \subset K$, we are done by the previous Lemma.
So assume that $Q_0$ lies outside of $K$ and let $\gamma_1, \gamma_2$ be minimizing curves from $Q_0$ to $Q_1, Q_2$.

Then there is a unique wall $W \subset \partial K$ through which both $\gamma_1$ and $\gamma_2$ enter $K$.
Let $Q'_1, Q'_2 \subset K$ be the first cells in $K$ which are intersected by $\gamma_1, \gamma_2$.
So both cells are adjacent to $W$.
By Lemma \ref{Lem:distg1g2afterW} we know that $\dist(Q'_1, Q'_2) \leq 4$.

So
\[ \dist (Q'_1, Q_1) \leq R - \dist (Q_0, Q'_1) \]
and
\begin{multline*}
 \dist (Q'_1, Q_2) \leq \dist (Q'_1, Q'_2) + \dist (Q'_2, Q_2) 
 \leq 4 + R - \dist (Q_0, Q'_2) \\
 \leq 4 + R - \dist(Q_0, Q'_1) + \dist(Q'_1, Q'_2)  \leq R + 8 - \dist (Q_0, Q'_1) .
\end{multline*}
We can no apply Lemma \ref{Prop:combinatorialconvexityseveralSeifert} to obtain
\[ \dist(Q'_1, Q^*) < R + 8 - \dist(Q_0, Q'_1) + 8H . \]
So $\dist(Q_0, Q^*) < R + 10 H$ for $H > 4$.
\end{proof}

\subsection{A combinatorial convexity estimate if $M$ satisfies condition (B)} \label{subsec:combconvexincaseB}
Assume now that $M$ satisfies condition (B) in Proposition \ref{Prop:easiermaincombinatorialresultCaseb}, i.e. that $M$ is the total space of an $S^1$-bundle over a closed, orientable surface of genus $\geq 2$.
In this setting we will establish the same combinatorial convexity estimate as in Proposition \ref{Prop:combinatorialconvexityseveralSeifert}.
It will be stated in Proposition \ref{Prop:combinatorialconvexityCaseB}.
Its proof will resemble the proof in the previous subsection, except that most Lemmas will be simpler.

We first let $\mathcal{E} = \bigcup_{K \in \mathcal{K}} \mathcal{E}_K$ be the set of all columns of $\td{M}$.
We say that two columns $E_1, E_2 \in \mathcal{E}$ are \emph{adjacent} if they intersect in a point of $\td{V} \setminus \td{V}^{(1)}$.
In other words, $E_1, E_2$ are adjacent if and only if we can find cells $Q_1 \subset E_1$, $Q_2 \subset E_2$ such that $Q_1, Q_2$ are adjacent.
Observe that in the setting of condition (B) every column $E_1 \in \mathcal{E}$ is adjacent to only finitely many columns $E_2 \in \mathcal{E}$ and every wall $W \in \mathcal{W}$ intersects its adjacent columns $E \in \mathcal{E}$ from either side in parallel strips $E \cap W$ (see Lemma \ref{Lem:QQestimatesatW}).

The first Lemma is an analog of Lemma \ref{Lem:gammainsidetwoKKs}.

\begin{Lemma} \label{Lem:gamminKKsCaseB}
There are constants $\eta^* > 0$ and $H^* < \infty$ such that if $\eta \leq \eta^*$ and $H \geq H^*$, the following holds:

Consider two chambers $K, K' \in \mathcal{K}$ which are adjacent to one another across a wall $W = K \cap K' \in \mathcal{W}$.
Assume that $\gamma : [0,1] \to \td{M}$ is combinatorially minimizing and that it is fully contained in $K \cup K'$.
Then $\gamma$ intersects $W$ at most twice and $\gamma$ does not reenter any column, i.e. $\gamma^{-1} (E)$ is an interval for all $E \in \mathcal{E}$.

Consider first the case in which $\gamma$ intersects $W$ exactly once.
Then the columns on $\gamma$ which are adjacent to $W$ form two minimal chains in $K$ and $K'$, moving in the same direction, which are adjacent to one another in a unique pair of columns $E \in \mathcal{E}_K$ and $E' \in \mathcal{E}_{K'}$.

Consider now the case in which $\gamma$ intersects $W$ exactly twice and assume that $\gamma(0), \gamma(1) \in K$.
Let $E_1, E_2 \in \mathcal{E}_K$ be the columns that $\gamma$ intersects right before and after $W$.
Then $\gamma$ does not intersect any column of $K$ which is adjacent to $W$ and which horizontally lies strictly between $E_1$ and $E_2$.
Moreover, $\dist^H_K(E_1, E_2) > H$.
\end{Lemma}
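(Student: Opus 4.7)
The plan is to mirror the argument of Lemma \ref{Lem:gammainsidetwoKKs}, now using parts (e), (f), (g) of Lemma \ref{Lem:QQestimatesatW} in place of parts (b), (c), (d). The structural simplification in case (B) is that all strips $E\cap W$, $E'\cap W$ are parallel, so the two affine coordinate systems on $W$ induced from $K$ and from $K'$ share a common vertical direction while the horizontal coordinate transforms linearly (with $A_{12}=0$); in particular, horizontal distance along $W$ differs on the two sides by a multiplicative constant, whereas vertical distance is essentially preserved up to an additive error controlled linearly by the horizontal separation.

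First I would bound the number of crossings of $W$ by $2$. Suppose $\gamma$ crosses $W$ at least three times; pick a subsegment with exactly three crossings, and (after reversing) assume $\gamma(0)\in K$, $\gamma(1)\in K'$. Let $E_1,E_3\in\mathcal{E}_K$ and $E'_2,E'_4\in\mathcal{E}_{K'}$ be the columns just before/after each crossing, with corresponding cells $Q_1,Q_3\subset K$ and $Q'_2,Q'_4\subset K'$. Because the strips are parallel, I can pick adjacent cells $Q^*\subset E_1$ and $Q^{*\prime}\subset E'_4$ and use Lemma \ref{Lem:QQestimatesatW}(e),(f) to estimate $\dist^V_K(Q_1,Q^*)$ and $\dist^V_{K'}(Q^{*\prime},Q'_4)$ against $\dist^V_K(Q_1,Q_3)$, $\dist^V_{K'}(Q'_2,Q'_4)$ and the horizontal distances. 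The resulting shortcut path from $Q_1$ to a cell of $E'_4$ through $W$ saves at least $2H$ crossings of $W$ while losing only an $O(\eta)$-multiple of the vertical distances, contradicting minimality for $H$ large and $\eta$ small. That $\gamma^{-1}(E)$ is a connected interval for each $E\in\mathcal{E}$ follows because a reentry would produce a loop that can be excised without increasing any of the three counting terms in the combinatorial length.

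For the one-crossing case, the ``at most two crossings'' statement applied to every subsegment of $\gamma$ forces each maximal portion of $\gamma$ lying in $K$ (resp.\ $K'$) to stay in $K$ (resp.\ $K'$). Hence the columns of $K$ that are adjacent to $W$ and intersect $\gamma$ form a sub-chain of $\mathcal{E}_K$; by Lemma \ref{Lem:EEinKisatree} any such chain is automatically minimal, and reentry of a column is excluded by the previous paragraph. The same holds in $K'$, and the two minimal chains meet at the unique pair $E\in\mathcal{E}_K$, $E'\in\mathcal{E}_{K'}$ where $\gamma$ actually crosses $W$. Finally, the ``same direction'' claim is a sign check: under the linear identification of the horizontal coordinates on $W$ from the two sides, any reversal would let us replace the last step in $K$ and the first step in $K'$ by a single adjacency via a cell horizontally between them, shortening $\gamma$.

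For the two-crossing case with $\gamma(0),\gamma(1)\in K$, the single-crossing analysis applied to the two subsegments meeting $K'$ shows that the $K'$-portion of $\gamma$ traces a minimal chain along $W$ in $\mathcal{E}_{K'}$; let $E_1,E_2\in\mathcal{E}_K$ and $E'_1,E'_2\in\mathcal{E}_{K'}$ be the columns flanking the two crossings, with cells $Q_1,Q_2,Q'_1,Q'_2$. By Lemma \ref{Lem:QQestimatesatW}(e),
$$ \dist^V_K(Q_1,Q_2) \le \dist^V_{K'}(Q'_1,Q'_2) + C_0\dist^H_K(Q_1,Q_2) + C_0,$$
so the competitor obtained by staying entirely within $K$ has combinatorial length at most $\dist^H_K(E_1,E_2)+\eta\dist^V_{K'}(Q'_1,Q'_2)+\eta C_0\dist^H_K(E_1,E_2)+\eta C_0$, whereas $\gamma$ itself contributes at least $2H+\eta\dist^V_{K'}(Q'_1,Q'_2)+\dist^H_{K'}(E'_1,E'_2)$. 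Minimality of $\gamma$ then forces $\dist^H_K(E_1,E_2)>H$ once $\eta$ is small and $H$ large. The same substitution argument applied to any further column of $K$ adjacent to $W$ and horizontally strictly between $E_1,E_2$ (which by Lemma \ref{Lem:EEinKisatree} must lie on the minimal chain from $E_1$ to $E_2$) shows that $\gamma$ cannot visit such a column without producing a strictly shorter competitor. The main obstacle I expect is the orientation/sign bookkeeping in the ``same horizontal direction'' claim; the rest is a controlled repetition of the case-(C) template with the case-(B) distance comparisons from Lemma \ref{Lem:QQestimatesatW} substituted in.
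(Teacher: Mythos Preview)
Your three--crossing argument has a genuine gap. You write ``Because the strips are parallel, I can pick adjacent cells $Q^*\subset E_1$ and $Q^{*\prime}\subset E'_4$.'' But in case~(B) this is precisely what can fail: since all the strips $E\cap W$ and $E'\cap W$ are parallel, the strip $E_1\cap W$ only overlaps those strips $E'\cap W$ from $K'$ that are horizontally close to it. If $E_1$ and $E'_4$ are horizontally far apart along $W$ --- which is exactly the situation when the detour through $K'$ is doing real work --- there need not exist any cell in $E_1$ adjacent to a cell in $E'_4$. (This is the opposite of case~(C), where transversality of the strips guarantees such an adjacency always exists; compare the hypothesis in Lemma~\ref{Lem:QQestimatesatW} that in case~(C) non-parallel strips ``intersect in a non-empty compact set''.) So the shortcut you propose may simply not exist, and the estimates from parts~(e),(f) never get off the ground.

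The paper handles the three--crossing case by a different device: it forms a \emph{parallelogram} $Q_1,Q_2,Q_3,Q^*$ of cells in $K$ along $W$, where $Q^*$ is placed so that $\dist^H_K(Q_1,Q^*)=\dist^H_K(Q_2,Q_3)$ and $\dist^V_K(Q_1,Q^*)=\dist^V_K(Q_2,Q_3)$ in an oriented sense, and then picks $Q^{*\prime}\subset K'$ adjacent to $Q^*$. This construction is exactly what Lemma~\ref{Lem:QQestimatesatW}(g) is designed for: it transfers the parallelogram relation across $W$ up to a bounded error, and that is what produces the saving of $2H$ against an $O(1)$ loss. Part~(g) is the case--(B) substitute for the transversality you are implicitly using; your outline never invokes it.

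A secondary point: your one--crossing ``same direction and unique adjacency'' step is too sketchy. The content of the lemma is that \emph{no} pair $E_i\in\mathcal{E}_K$, $E'_{i'}\in\mathcal{E}_{K'}$ on $\gamma$ other than the crossing pair can be adjacent across $W$. The paper proves this by an explicit competitor estimate using Lemma~\ref{Lem:QQestimatesatW}(e); your ``sign check'' description does not make clear what the competitor is or why its length is strictly smaller. Your two--crossing paragraph, by contrast, matches the paper's argument closely and is fine.
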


\begin{proof}
It is easy to see that every subsegment of $\gamma$, which does not intersect $W$ and whose endpoints lie in columns which are adjacent to $W$, stays within columns which are adjacent to $W$ and does not reenter any column.
So we can restrict our attention to the case in which $\gamma$ only intersects columns which are adjacent to $W$.

Assume first that $\gamma$ intersects $W$ exactly once and assume without loss of generality that $\gamma(0) \in K$.
Let $E_1, \ldots, E_n \in \mathcal{E}_K$ and $E'_1, \ldots, E'_{n'} \in \mathcal{E}_{K'}$ be the columns that $\gamma$ intersects in that order.
Then both sequences of columns form minimal chains which move along $W$ and $E_n, E'_1$ are adjacent across $W$.
We now show that $E_i$ can only be adjacent to $E'_{i'}$ if $i = n$ and $i' = 1$.
This will also imply that the directions of both minimal chains agrees.
Assume that this was not the case and assume without loss of generality that $E_i$ is adjacent to $E'_{i'}$ for some $i < n$ and $i' \geq 1$ (otherwise we reverse the orientation of $\gamma$).
Let $Q_1 \subset E_1$ be the cell that contains $\gamma(0)$, $Q_2 \subset E_n$, $Q_3 \subset E'_1$ the cells that $\gamma$ intersects right before and after $W$ and $Q_4 \subset E_{i'}$ a cell that intersects $\gamma$.
Choose moreover a cell $Q^* \subset E_i$ which is adjacent to $Q_4$.
By Lemma \ref{Lem:QQestimatesatW}(e)
\[ \dist^V_K (Q_2, Q^*) < \dist^V_{K'} (Q_3, Q_4) + C_0 \dist^H_K (Q_2, Q^*) + C_0. \]
So
\begin{multline*}
 \dist^V_K (Q_1, Q^*) \leq \dist^V_K (Q_1, Q_2) + \dist^V_K (Q_2, Q^*) \\
  < \dist^V_K (Q_1, Q_2) + \dist^V_{K'} (Q_3, Q_4) + C_0 \dist^H_K (Q_2, Q^*) + C_0.
\end{multline*}
Hence
\begin{multline*}
 \dist(Q_1, Q_4) \leq \dist(Q_1, Q^*) + H < \dist^H_K (Q_1, Q^*) + \eta \dist^V_K (Q_1, Q_2) \\ + \eta \dist^V_{K'} (Q_3, Q_4) + \eta C_0 \dist^H_K (Q_2, Q^*) + \eta C_0 + H.
\end{multline*}
On the other hand, the minimizing property of $\gamma$ yields
\begin{multline*}
 \dist(Q_1, Q_4) = \dist(Q_1, Q_2) + H + \dist (Q_3, Q_4) \\ \geq \dist^H_K (Q_1, Q_2) + \eta \dist^V_K (Q_1, Q_2) + \eta \dist^V_{K'} (Q_3, Q_4) + H
\end{multline*}
Combining both inequalities yields
\[ \dist^H_K (Q_1, Q_2) < \dist^H_K (Q_1, Q^*) + \eta C_0 \dist^H_K (Q_2, Q^*) + \eta C_0. \]
Since $\dist^H_K (Q_1, Q_2) = n-1$, $\dist^H_K (Q_1, Q^*) = i-1$ and $\dist^H_K (Q_2, Q^*) = n-i \geq 1$, we obtain
\[ n -1 < i - 1 + \eta C_0 (n-i) + \eta C_0. \]
This yields a contradiction if $\eta < (2C_0)^{-1}$.

Assume next that $\gamma$ intersects $W$ exactly twice and that $\gamma(0), \gamma(1) \in K$.
Define $E_1, E_2 \in \mathcal{E}_K$ as in the statement of the Lemma.
We now establish the bound $\dist^H_K (E_1, E_2) > H$ (for sufficiently small $\eta$ and large $H$).
It is then easy to see, by the previous conclusion applied to subsegments of $\gamma$, that $\gamma$ cannot intersect any column of $K$ which is adjacent to $W$ and lies strictly between $E_1, E_2$.
Let now $Q_1 \subset E_1$ and $Q_2 \subset E_2$ be the cells that $\gamma$ intersects right before and after $W$ and let $Q'_1, Q'_2 \subset K'$ be the cells that $\gamma$ intersects right after $Q_1$ and right before $Q_2$.
Then
\[ \dist(Q_1, Q_2) = 2H + \dist(Q'_1, Q'_2) = 2H + \dist^H_{K'} (Q'_1, Q'_2) + \eta \dist^V_{K'} (Q'_1, Q'_2). \]
By Lemma \ref{Lem:QQestimatesatW}(e)
\[  \dist^V_K (Q_1, Q_2) < \dist^V_{K'} (Q'_1, Q'_2) + C_0 \dist^H_K (Q_1, Q_2) + C_0. \]
So
\[ \dist(Q_1, Q_2) < \dist^H_K (Q_1, Q_2)  + \eta \dist^V_{K'} (Q'_1, Q'_2) + \eta C_0 \dist^H_K (Q_1, Q_2) + \eta C_0. \]
Hence
\[ 2 H < (1 + \eta C_0) \dist^H_K (Q_1, Q_2) + \eta C_0. \]
The result follows for $H > 10$ and $\eta < (2 C_0)^{-1}$.

We finally show that $\gamma$ cannot intersect $W$ more than twice.
Assume it does.
By passing to a subsegment and possibly interchanging the roles of $K$ and $K'$, we can assume that $\gamma$ intersects $W$ exactly three times and $\gamma(0) \in K$.
Let $Q_1, Q_2, Q_3 \subset K$ be the cells in $K$ that $\gamma$ intersects before the first and third and after the second intersection with $W$ and let $Q'_1, Q'_2, Q'_3 \subset K'$ be the cells of $K'$ that $\gamma$ intersects after the first and third and before the second intersection with $W$.
Then
\begin{multline} \label{eq:distQ1Qs3CaseB}
 \dist(Q_1, Q'_3) = 3H + \dist^H_{K'} (Q'_1, Q'_2) + \eta \dist^V_{K'} (Q'_1, Q'_2) \\
  + \dist^H_K (Q_2, Q_3) + \eta \dist^V_K (Q_2, Q_3).
\end{multline}

Let $Q^* \subset K$ be the cell which is adjacent to $W$ and which is located relatively to $Q_1, Q_2, Q_3$ such that $Q_1, Q_2, Q_3, Q^*$ forms a ``parallelogram'', i.e. 
\begin{multline*}
\dist^H_K (Q_1, Q^*) = \dist^H_K (Q_2, Q_3), \qquad \dist^V_K (Q_1, Q^*) = \dist^V_K (Q_2, Q_3), \\ 
\dist^H_K (Q^*, Q_3) = \dist^H_K (Q_1, Q_2), \qquad \dist^V_K (Q^*, Q_3) = \dist^V_K (Q_1, Q_2)
\end{multline*}
in an oriented sense.
Let moreover $Q^{* \prime} \subset K'$ be a cell which is adjacent to $Q^*$.
Then by Lemma \ref{Lem:QQestimatesatW}(g)
\[ \dist^H_{K'} (Q^{* \prime}, Q'_3 ) < \dist^H_{K'} (Q'_1, Q'_2) + C_0, \quad \dist^V_{K'} (Q^{* \prime}, Q'_3 ) < \dist^V_{K'} (Q'_1, Q'_2) + C_0. \]
So
\begin{multline*}
\dist(Q_1, Q'_3) \leq \dist(Q_1, Q^*) + \dist (Q^*, Q^{* \prime}) + \dist (Q^{* \prime}, Q'_3) \\
\leq \dist^H_K (Q_1, Q^*) + \eta \dist^V_K (Q_1, Q^*) + H + \dist^H_{K'} (Q^{* \prime}, Q'_3) + \eta \dist^V_{K'} (Q^{* \prime}, Q'_3) \\
< H + \dist^H_K (Q_2, Q_3) + \eta \dist^V_K (Q_2, Q_3) + \dist^H_{K'} (Q'_1, Q'_2) + C_0 \\ + \eta \dist^V_{K'} (Q'_1, Q'_2) + \eta C_0.
\end{multline*}
Together with (\ref{eq:distQ1Qs3CaseB}) this yields
\[ 2H < C_0 + \eta C_0 \]
and hence a contradiction for $H > C_0$ and $\eta < 1$.
\end{proof}

The following Proposition and its proof is similar to Proposition \ref{Prop:characterizationminimizinggamma}.

\begin{Proposition} \label{Prop:characteriztionminimizinggammaCaseB}
Assume that $M$ satisfies condition (B).
There are constants $\eta^* > 0$ and $H^* < \infty$ such that if $\eta \leq \eta^*$ and $H \geq H^*$, the following holds:

Consider a combinatorially minimizing curve $\gamma : [0,1] \to \td{M}$.
Then
\begin{enumerate}[label=(\alph*)]
\item For every column $E \in \mathcal{E}$, the preimage $\gamma^{-1} (E)$ is an interval.
\item $\gamma$ intersects every wall $W \in \mathcal{W}$ at most twice.
Assume that $K, K' \in \mathcal{K}$ are two chambers which are adjacent to a wall $W \in \mathcal{W}$ from either side.
Then
\begin{enumerate}[label=(b\arabic*)]
\item If $\gamma$ intersects $W$ exactly once then the following holds:
Assume that $\gamma(0)$ lies on the same side of $W$ as $K$.
Let $E \in \mathcal{E}_{K}$ be the first column which is intersected by $\gamma$ and which is adjacent to $W$.
Then for every column $E^* \in \mathcal{E}_K$ which $\gamma$ intersects before $E$, the minimal chain between $E^*$ and $E$ intersects $W$ in at most two columns.
\item If $\gamma$ intersects $W$ exactly twice and its endpoints lie on the same side of $W$ as $K$, then the columns $E_1, E_2 \in \mathcal{E}_K$ that $\gamma$ intersects right before and after $W$ satisfy $\dist^H_K (E_1, E_2) > H$.
Moreover, $\gamma$ does not intersect any column of $K$ which is adjacent to $W$ and which horizontally lies strictly between $E_1$ and $E_2$.
\item If $\gamma$ intersects two columns $E_1, E_2 \in \mathcal{E}$ which are both adjacent to $W$, then $\gamma$ stays within $K \cup K'$ in between $E_1, E_2$ and only intersects columns which are adjacent to $W$.
\end{enumerate}
\item Consider a chamber $K \in \mathcal{K}$ and let $E_1, \ldots, E_n \in \mathcal{E}_K$ be the columns of $K$ that $\gamma$ intersects in that order.
Then there are columns $E^*_1, \ldots, E^*_n \in \mathcal{E}_K$ such that assertions (c1)--(c6) of Proposition \ref{Prop:characterizationminimizinggamma} hold.
\end{enumerate}
\end{Proposition}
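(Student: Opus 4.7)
The plan is to prove all three assertions simultaneously by induction on the combinatorial length $|\gamma|$, mirroring the proof of Proposition \ref{Prop:characterizationminimizinggamma}. The base case $|\gamma|=0$ is vacuous. For the inductive step, I would establish (b) first, then (c), and finally derive (a) as a direct consequence.

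For (b), fix a wall $W\in\mathcal{W}$ with adjacent chambers $K,K'\in\mathcal{K}$. If $\gamma$ crossed $W$ three or more times, the induction hypothesis (b2) applied to a subsegment crossing $W$ exactly twice would force $\gamma$ to remain in $K\cup K'$ between its first and last crossings, contradicting the upper bound of two crossings in Lemma \ref{Lem:gamminKKsCaseB}. The content of (b2) then follows from Lemma \ref{Lem:gamminKKsCaseB} applied directly to a subsegment of $\gamma$ with both endpoints in $K$. For (b1), I apply the induction hypothesis to the subsegment up to $E$ and use (c6) from the inductive step to produce a second wall $W^{\ast}$ adjacent to any column that would fan out along $W$; if three columns of the minimal chain between $E^\ast$ and $E$ were adjacent to $W$, the tree property of $\mathcal{E}_K$ from Lemma \ref{Lem:EEinKisatree} would then yield a contradiction. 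Assertion (b3) follows from (c6), (c7) of the analogous claim for Proposition \ref{Prop:characterizationminimizinggamma}, but adapted to the parallel-strip geometry here: once two adjacent columns along $W$ lie on $\gamma$, the chain between them stays in $K\cup K'$ and runs along $W$.

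For (c), the induction hypothesis reduces to the case $\gamma(0),\gamma(1)\in K$. I construct $E^\ast_1,\ldots,E^\ast_n$ by induction on $n$, splitting into cases according to whether $E_{n-1}$ lies on the minimal chain between $E_1$ and $E_n$. If it does, apply the induction hypothesis to the subsegment ending in $E_{n-1}$ and set $E^\ast_n=E_n$. If it does not, then $E_{n-1}$ and $E_n$ must both be adjacent to a common wall $W\subset\partial K$ which $\gamma$ crosses twice between them; using the tree property of $\mathcal{E}_K$ I locate the column $E^\ast$ on all three minimal chains among $\{E_1,E_{n-1},E_n\}$, verify via (b1) (applied to a subsegment of $\gamma$ whose only intersection with $W$ is the second one) that $\dist^H_K(E^\ast,E_{n-1})\leq 1$, and set $E^\ast_{n-1}=E^\ast$, $E^\ast_n=E_n$. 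Ruling out the possibility that $E_{n-2}$ is adjacent to $E_{n-1}$ requires (b2) to locate a distinct wall $W'$ that $\gamma$ crosses twice between $E_{n-2}$ and $E_{n-1}$; one then checks $W\neq W'$, since otherwise (b2) would force $E_{n-2}=E^\ast_{n-1}$. The properties (c1)--(c6) are then directly verified. Assertion (a) follows because, by (c5) and (c6), every time $\gamma$ exits a column in $K$ it enters a distinct next column in the minimal chain or leaves $K$ through a wall, so it cannot return to a column it has already left.

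The main obstacle will be the case analysis inside the inductive step of (c) when $E_{n-1}$ is off the minimal chain: establishing that the two walls $W,W'$ producing the detour are distinct, that $E^\ast$ has the claimed position, and that the resulting tuple $E^\ast_1,\ldots,E^\ast_n$ still lies on the minimal chain in the correct order, all require careful use of the tree property together with (b1) and (b2). The simplification relative to case (C) is geometric: columns on opposite sides of a wall are now parallel rather than transversal, so the estimates of Lemma \ref{Lem:QQestimatesatW}(e),(g) — rather than parts (b),(c) — enter the argument, but the combinatorial skeleton is unchanged. Assertion (c7) from Proposition \ref{Prop:characterizationminimizinggamma} is deliberately omitted here because the parallel-strip arrangement allows $\gamma$ to run along a wall through many columns in a chain rather than crossing it in a single minimal jump, so the statement would require a $O(H)$ correction that is not needed for the applications.
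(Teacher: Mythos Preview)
Your approach is essentially the paper's: induction on $|\gamma|$, with (b) and (c) established by the same case split as in Proposition \ref{Prop:characterizationminimizinggamma}, and (a) read off from (c). Two points of divergence are worth flagging.

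First, the placement of (b3). You fold it into the (b) block before (c), citing ``(c6), (c7) of the analogous claim''; but (c7) is not part of this proposition, and in the paper (b3) is proved \emph{last}, after (c) has been established for the current $\gamma$. Concretely: after the induction hypothesis reduces to $E_1,E_2\in\mathcal{E}_K$ with $\gamma$ disjoint from $W$, one applies (c) at the current level to get the sequences $E'_i,E^{\prime *}_i$, observes that all $E^{\prime *}_i$ are adjacent to $W$ since they lie on the minimal chain between $E_1$ and $E_2$, and then rules out any double crossing of a second wall $W'$ because Lemma \ref{Lem:EEinKisatree} forbids two distinct walls from being adjacent to three or more common columns. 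Your sketch of (b3) is too vague to stand on its own before (c).

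Second, in the off-chain branch of (c) you pass directly to ``$E_{n-1}$ and $E_n$ are adjacent to a common wall $W$''. The paper first excludes the possibility that $E_{n-1},E_n$ are adjacent (if they were, then $E_n$ would sit on the minimal chain between $E_1$ and $E_{n-1}$, and the wall crossed twice between $E_{n-2}$ and $E_{n-1}$ would have $E_{n-2},E^*_{n-2},E_n,E_{n-1}$ lined up along it, contradicting (b2)). Also, the bound $\dist^H_K(E^*,E_{n-1})\le 1$ that you obtain via (b1) is replaced in the paper by the sharper $\dist^H_K(E^*,E_{n-1})=1$, obtained directly from Lemma \ref{Lem:EEinKisatree}: both $W$ and $W'$ are adjacent to every column on the shorter of the chains $E_{n-1}\text{--}E^*$ and $E_{n-1}\text{--}E^*_{n-2}$, so that chain has length at most one. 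These are refinements rather than corrections; the skeleton of your argument is right.
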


\begin{proof}
We use again induction on the combinatorial length $|\gamma|$ of $\gamma$.
Assume that $|\gamma| > 0$, since for $|\gamma|=0$ there is nothing to prove.
The first part of assertion (b) follows as in the proof or Proposition \ref{Prop:characterizationminimizinggamma}.

We now establish assertion (b1).
So assume that $\gamma$ intersects $W$ exactly once and that $\gamma(0)$ lies on the same side of $W$ as $K$ and consider the columns $E, E^* \in \mathcal{E}_K$.
Apply assertion (c) of the induction hypothesis to the subsegment of $\gamma$ between $E^*$ and $E$.
We obtain sequences $E_1, \ldots, E_n$ and $E^*_1, \ldots, E^*_n$ with $E_1 = E^*_1 = E^*$ and $E_n = E^*_n = E$.
If $E_{n-1}, E_n$ are adjacent, then $E_{n-1} = E^*_{n-1}$ lies on the minimal chain between $E^*$ and $E$ and by assumption $E_{n-1}$ cannot be adjacent to $W$; so we are done.
If $E_{n-1}, E_n$ are not adjacent, then $\gamma$ intersects a wall $W' \subset \partial K$, $W' \neq W$ twice between $E_{n-1}, E_n$.
All columns on the minimal chain between $E^*_{n-1}$ and $E_n$ are adjacent to $W'$.
By Lemma \ref{Lem:EEinKisatree} at most $2$ of those columns can also be adjacent to $W$.

Assertion (b2) follows immediately from assertion (b3) of the induction hypothesis and Lemma \ref{Lem:gamminKKsCaseB}.

Next, we establish assertions (c) and (a).
It suffices to consider the case in which $\gamma(0), \gamma(1) \in K$.
Let $E_1, \ldots, E_n \in \mathcal{E}_K$ be as defined in the proposition.
If $n \leq 2$, then we are done using assertion (b); so assume $n \geq 3$.
If $E_{n-1}$ lies on the minimal chain between $E_1$ and $E_n$, then we are done as in the proof of Proposition \ref{Prop:characterizationminimizinggamma}.
So assume that $E_{n-1}$ does not lie on the minimal chain between $E_1, E_n$.

We show that $E_{n-1}, E_n$ cannot be adjacent.
Otherwise, as in the proof of Proposition \ref{Prop:characterizationminimizinggamma}, $\gamma$ intersects a wall $W \subset \partial K$ twice between $E_{n-2}$ and $E_{n-1}$ and the columns $E_{n-2}, E^*_{n-2}, E_n, E_{n-1}$ lie along $W$ in that order.
This contradicts assertion (b2).

So there is a wall $W \subset \partial K$ which is adjacent to both $E_{n-1}, E_n$ and $\gamma$ crosses $W$ twice between those two columns.
We now proceed as in the proof of Proposition \ref{Prop:characterizationminimizinggamma}, but we have to be careful whenever we make use of assertion (b).
As in this proof, we can find a column $E^* \in \mathcal{E}_K$ which lies on the three minimizing chains between $E_{n-1}, E_n$ and $E_1, E_{n-1}$ and $E_1, E_n$ and $E^* \neq E_{n-1}$.
We also know that $E_{n-2}$ cannot be adjacent to to $E_{n-1}$, since otherwise it would lie on the minimal chain between $E_{n-1}$ and $E^*$ along $W$, in contradiction to assertion (b2).
So by assertion (c6) of the induction hypothesis $E_{n-2}, E^*_{n-2}, E_{n-1}$ are adjacent to a wall $W' \subset \partial K$ such that $\gamma$ intersects $W'$ twice between $E_{n-2}, E_{n-1}$.
This implies $W' \neq W$.
Now both $W$ and $W'$ are adjacent to all columns on the minimal chain between $E_{n-1}, E^*$ or between $E_{n-1}, E^*_{n-2}$, whichever is shorter.
So by Lemma \ref{Lem:EEinKisatree} we must have $\dist^H_K (E^*, E_{n-1}) = 1$.
Assertion (c1)--(c6) now follow as in the proof of Proposition \ref{Prop:characterizationminimizinggamma}.

Now for assertion (a), we may assume that $\gamma(0), \gamma(1) \in E \in \mathcal{E}_K$ in view of the induction hypothesis.
Assertion (c) now immediately implies that $\gamma$ is fully contained in $E$.

Finally, we establish assertion (b3).
In view of the induction hypothesis it suffices to consider the case in which $E_1, E_2 \in \mathcal{E}_K$ and in which $\gamma$ does not intersect $W$.
Apply now assertion (c) to obtain sequences $E'_1, \ldots, E'_n$ and $E^{\prime *}_1, \ldots, E^{\prime *}_n$ with $E^{\prime *}_1 = E'_1 = E_1$ and $E^{\prime *}_n = E'_n = E_2$.
It follows that all columns $E^*_1, \ldots, E^*_n$ are adjacent to $W$.
If $\gamma$ crossed a wall $W' \subset \partial K$ twice in between some $E'_i, E'_{i+1}$, then all columns between $E^{\prime *}_i, E^{\prime *}_{i+1}$ would be adjacent to $W'$ and $W$.
This is impossible by Lemma \ref{Lem:EEinKisatree}.

Finally, for assertion (a) observe that if $\gamma$ intersects $E \in \mathcal{E}_K$ twice, then by assertion (b3) it has to stay within $K \cup K'$ for every 
\end{proof}

The next Lemma is an analog of Lemma \ref{Lem:interchangeorderinK}.
Note that in the setting of condition (B), we don't need to work inside a single chamber.
This fact will later compensate us for the lack of an analog for Lemma \ref{Lem:distg1g2afterW}.

\begin{Lemma} \label{Lem:interchangeorderinKCaseB}
There are constants $\eta^* > 0$ and $H^* < \infty$ such that if $\eta \leq \eta^*$ and $H \geq H^*$,  the following holds:

Let $E^\circ_1, E^\circ_2 \in \mathcal{E}$ be two columns and $Q_1, \ov{Q}_1 \subset E^\circ_1$, $Q_2, \ov{Q}_2 \subset E^\circ_2$ cells such that the vertical orders of $Q_1, \ov{Q}_1$ and $Q_2, \ov{Q}_2$ are opposite to each other.
Let $\gamma, \ov\gamma : [0,1] \to \td{M}$ be minimizing curves from $Q_1$ to $Q_2$ and from $\ov{Q}_1$ to $\ov{Q}_2$.
Then we can find cells $Q', \ov{Q}' \in \mathcal{Q}$ which intersect $\gamma, \ov\gamma$ and such that $\dist(Q', \ov{Q}') < 3H$.
\end{Lemma}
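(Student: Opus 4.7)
The plan is to extend the strategy of Lemma~\ref{Lem:interchangeorderinK} from a single chamber to all of $\td{M}$, leveraging three structural features of case (B): the $S^1$-fibration on $M$ lifts globally to a product structure $\td{M}\cong\td\Sigma\times\IR$, so a global height function $h:\td{M}\to\IR$ is available and is acted on by a central $\IZ$ of deck transformations; by Lemma~\ref{Lem:QQestimatesatW} all column-wall strips on either side of a wall are parallel, so vertical alignment of cells is preserved across walls up to the bounded drifts controlled by Lemma~\ref{Lem:QQestimatesatW}(e),(g); and Proposition~\ref{Prop:characteriztionminimizinggammaCaseB} supplies, in each chamber visited by a minimizing curve, an ``intended'' minimal chain of columns with only short side-excursions.

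First I would apply Proposition~\ref{Prop:characteriztionminimizinggammaCaseB}(c) separately to $\gamma$ and $\ov\gamma$ in every traversed chamber, obtaining in each such chamber $K$ the column sequences $(E_i),(E_i^*)$ for $\gamma$ and $(\ov{E}_i),(\ov{E}_i^*)$ for $\ov\gamma$. In analogy with the set $S'$ from the proof of Lemma~\ref{Lem:interchangeorderinK}, I would then assemble chamberwise bands $\mathcal{B}$ around $\gamma$ and $\ov{\mathcal{B}}$ around $\ov\gamma$; inside each $K$ the band $\mathcal{B}\cap K$ consists of those cells of the $E_i^*$ that are vertically aligned with cells of $\gamma\cap E_i$, together with the cells adjacent to $\gamma$ at every wall crossing. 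By construction $Q_1,Q_2$ lie in a single connected component of $\mathcal{B}$ and $\ov{Q}_1,\ov{Q}_2$ in a single component of $\ov{\mathcal{B}}$, and each band projects under $p:\td{M}\to\td\Sigma$ to a connected combinatorial path joining $p(E^\circ_1)$ to $p(E^\circ_2)$ in the cell complex of $\td\Sigma$. Since $\td\Sigma$ is simply connected and the signs of $h(\ov{Q}_1)-h(Q_1)$ and $h(\ov{Q}_2)-h(Q_2)$ are opposite by hypothesis, a planar Jordan-type argument in $\td\Sigma$ combined with the matching-strip property of case (B) should force a common cell $Q^\circ\in\mathcal{B}\cap\ov{\mathcal{B}}$. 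From such a $Q^\circ$ the cells $Q'$ on $\gamma$ and $\ov{Q}'$ on $\ov\gamma$ are then extracted exactly as at the end of the proof of Lemma~\ref{Lem:interchangeorderinK}: in the worst case $Q^\circ$ lies in some $E^*$ reached from $\gamma$ across a doubly-crossed wall $W$, whose horizontal separation is at most $3H$ by Proposition~\ref{Prop:characteriztionminimizinggammaCaseB}(b2), and Lemma~\ref{Lem:QQestimatesatW}(b),(e) then bound the vertical drift by $O(H)$, giving $\dist(Q',\ov{Q}')<3H$.

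The hard part will be upgrading the two-dimensional Jordan crossing in $\td\Sigma$ to an honest three-dimensional intersection of bands. Because the heights of cells in $\mathcal{B}$ and $\ov{\mathcal{B}}$ can shift as the bands cross a sequence of walls, one must use the parallel-strip property and the ``parallelogram'' bound of Lemma~\ref{Lem:QQestimatesatW}(g) to ensure that the horizontal and vertical offsets of the two bands evolve coherently along a common chain of walls, so that the crossing forced by the opposite vertical order does lift to an actual intersection of cells in $\td{M}$. This is precisely the place where case (B) must be handled differently from case (C), in which the non-parallelism of adjacent strips was dealt with by the combinatorial convexity developed in subsection~\ref{subsec:combconvexincaseC}.
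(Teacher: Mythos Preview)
Your proposal is a sketch rather than a proof, and the part you yourself flag as ``hard'' is precisely where the paper's argument diverges from yours and where your outline has a genuine gap.

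The paper does \emph{not} attempt a global Jordan-type argument in $\td\Sigma$ with bands spanning many chambers. Instead it first proves a synchronization lemma at every wall that is crossed exactly once: if $W$ is such a wall and $E,\ov E\in\mathcal E_K$ are the first columns on $\gamma,\ov\gamma$ adjacent to $W$ (on the $K$-side), then $\dist^H_K(E,\ov E)\le 3$. This is obtained by the same $E^*,E^{**}$ argument as in Lemma~\ref{Lem:distg1g2afterW}, using Proposition~\ref{Prop:characteriztionminimizinggammaCaseB}(b1) and the tree property of $\mathcal E_K$. Once this is known, the paper lists the once-crossed walls $W_1,\dots,W_h$ in order, records cells $Q'_i,\ov Q'_i$ and $Q''_i,\ov Q''_i$ in these nearby columns, and tracks their vertical order. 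Either the order flips \emph{along} some $W_i$ (between $Q'_i,\ov Q'_i$ and $Q''_i,\ov Q''_i$), in which case both subsegments stay in $K\cup K'$ by Proposition~\ref{Prop:characteriztionminimizinggammaCaseB}(b3) and one compares the cell-strips along $W_i$ directly using Lemma~\ref{Lem:gamminKKsCaseB}; or the flip occurs \emph{between} $W_{i-1}$ and $W_i$, in which case one is inside a single chamber $K$ and can run the $S,S'$ construction of Lemma~\ref{Lem:interchangeorderinK} there. The minimal chains $L,\ov L$ in $K$ need not coincide, but by the tree property and the $\le 3$ endpoint bound they overlap in a chain $L^\circ$ with all remaining columns within horizontal distance $3$; one then finds $Q^\circ\subset S'$, $\ov Q^\circ\subset\ov S'$ with $\dist^H_K\le 3$ and $\dist^V_K=0$, and reads off $Q',\ov Q'$ with $\dist(Q',\ov Q')\le 5+2H<3H$.

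Your plan misses this synchronization step entirely, and without it there is no reason your multi-chamber bands $\mathcal B,\ov{\mathcal B}$ should project to paths in $\td\Sigma$ that stay close enough for a planar crossing to lift. Moreover, your invocation of Proposition~\ref{Prop:characteriztionminimizinggammaCaseB}(b2) to bound horizontal separation by $3H$ is backwards: (b2) gives $\dist^H_K(E_1,E_2)>H$ as a \emph{lower} bound across a doubly-crossed wall, not an upper bound; there is no case-(B) analog of the $<3H$ upper bound of Proposition~\ref{Prop:characterizationminimizinggamma}(b3). The paper's localization-to-one-chamber strategy is exactly what sidesteps the difficulty you identified in your last paragraph.
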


\begin{proof}
Consider first a wall $W \in \mathcal{W}$ that intersects $\gamma$ (and hence also $\ov\gamma$) exactly once.
Let $K, K' \in \mathcal{K}$ be the cambers which are adjacent to $W$ from either side in such a way that $\gamma(0)$ and $\ov\gamma(0)$ lie on the same side of $W$ as $K$.
Let $E \in \mathcal{E}_K$ be the first column on $\gamma$ which is adjacent to $W$ and choose $\ov{E} \in \mathcal{E}_K$ analogously.
We argue similarly as in the proof of Lemma \ref{Lem:distg1g2afterW} that $\dist^H_K (E, \ov{E}) \leq 3$.
Let $E^* \in \mathcal{E}_K$ be the first column on $\gamma$ inside $K$ and define $\ov{E}^*$ analogously.
Then either $E^* = \ov{E}^* = E^\circ_1$ or $E^\circ_1 \not\in \mathcal{E}_K$.
In both cases there is a wall $W^* \subset \partial K$, $W^* \neq W$ which is adjacent to both $E^*$ and $\ov{E}^*$.
Let $E^{**} \in \mathcal{E}_K$ the last column on the minimal chain between $E$ and $E^*$ and define $\ov{E}^{**} \in \mathcal{E}_K$ analogously.
By Proposition \ref{Prop:characteriztionminimizinggammaCaseB}(b1) we have $\dist^H_K (E, E^{**}), \dist^H_K (\ov{E}, \ov{E}^{**}) \leq 1$.
It now follows as in the proof of Lemma \ref{Lem:distg1g2afterW} that $\dist^H_K (E^{**}, \ov{E}^{**}) \leq 1$ and hence $\dist^H_K (E, \ov{E}) \leq 3$ (observe that this part of the proof only makes use of the tree property of $\mathcal{E}_K$ from Lemma \ref{Lem:EEinKisatree}).

Let now $W_1, \ldots, W_h$ be all the walls that $\gamma$ intersects exactly once in this order.
Then also $\ov\gamma$ intersects each of these walls exactly once in this order.
For each $i = 1, \ldots, h$ let $E'_i \in \mathcal{E}_K$ be the first and $E''_i$ the last column on $\gamma$ which is adjacent to $W_i$.
Define $\ov{E}'_i$ and $\ov{E}''_i$ accordingly.
By the last paragraph, we obtain that $E'_i, \ov{E}'_i$ and $E''_i, \ov{E}''_i$ have horizontal distance $\leq 3$ in the chamber in which they are contained (the bound on the horizontal distance between $E''_i$ and $\ov{E}''_i$ can be obtained by reversing the orientation of $\gamma$ and $\ov\gamma$).
Choose cells $Q'_i \subset E'_i$, $Q''_i \subset E''_i$ resp. $\ov{Q}'_i \subset \ov{E}'_i$, $\ov{Q}''_i \subset \ov{E}''_i$ which intersect $\gamma$ resp. $\ov\gamma$.

We first the case in which there is some $i \in \{ 1, \ldots, h \}$ such that the vertical orders of $Q'_i, \ov{Q}'_i$ and $Q''_i, \ov{Q}''_i$ are different.
Observe that by Proposition \ref{Prop:characteriztionminimizinggammaCaseB}(b3) the curve $\gamma$ only intersects cells adjacent to $W_i$ between $Q'_i$ and $Q''_i$; the same is true for $\ov\gamma$.
Let $S \subset \td{M}$ be the union of all cells which $\gamma$ intersects between $Q'_i, Q''_i$ and define $\ov{S}$ accordingly.
It is not difficult to see, using Lemma \ref{Lem:gamminKKsCaseB}, that either $S \cap W_i$ and $\ov{S} \cap W_i$ intersect or there is a cell $Q' \in \mathcal{Q}$ on $\gamma$ with $\dist(Q', \ov{Q}'_i) \leq 3 + H$ or $\dist(Q', \ov{Q}''_i) \leq 3 + H$ or there is a cell $\ov{Q}' \in \mathcal{Q}$ on $\ov\gamma$ with $\dist(\ov{Q}', Q'_i) \leq 3+ H$ or $\dist(\ov{Q}', Q''_i) \leq 3+H$.
In all these cases we are done.

So assume from now on that the vertical orders of $Q'_i, \ov{Q}'_i$ and $Q''_i, \ov{Q}''_i$ are the same for all $i = 1, \ldots, h$.
Choose now $i \in \{ 1, \ldots, h \}$ minimal such that the vertical order of $Q'_i, \ov{Q}'_i$ differs from that of $Q_1, \ov{Q}_1$.
If there is no such $i$, then the vertical orders of $Q'_h, \ov{Q}'_h$ and $Q_2, \ov{Q}_2$ are opposite and we can get rid of this case by reversing the orientations of $\gamma$ and $\ov\gamma$.
Let $K \in \mathcal{K}$ be the chamber which contains $Q'_i, \ov{Q}'_i$.
If $i > 1$, the choice of $i$ implies that the vertical order $Q''_{i-1}, \ov{Q}''_{i-1}$ is different from that of $Q'_i, \ov{Q}'_i \subset K$.
If $i = 1$, then the vertical order of $Q_1, \ov{Q}_1 \subset K$ is different from that of $Q'_1, \ov{Q}'_1$.
Apply Proposition \ref{Prop:characteriztionminimizinggammaCaseB}(c) to the subsegment of $\gamma$ between $Q'_{i-1}$ or $Q_1$ and $Q''_i$ to obtain columns $E_1, \ldots, E_n$ and $E^*_1, \ldots, E^*_n \in \mathcal{E}_K$.
Similarly we obtain the columns $\ov{E}_1, \ldots, \ov{E}_{\ov{n}}$ and $\ov{E}^*_1, \ldots, \ov{E}^*_{\ov{n}} \in \mathcal{E}_K$ for the corresponding subsegment of $\ov\gamma$.
Note that $\dist^H_K (E_1, \ov{E}_1), \dist^H_K (E_n, \ov{E}_{\ov{n}}) \leq 3$.

If $\dist^H_K (E_1, E_n), \dist^H_K (\ov{E}_1, \ov{E}_{\ov{n}}) \leq 6$, then by Proposition \ref{Prop:characteriztionminimizinggammaCaseB}(c), all columns $E_i$ and $\ov{E}_i$ have distance $\leq 17$ from one another and hence we can just pick cells $Q', \ov{Q}'$ which are horizontally aligned to show the Lemma.
So assume from now on that this is not the case and let $L$ and $\ov{L}$ be the minimal chains between $E_1, E_n$ and $\ov{E}_1, \ov{E}_{\ov{n}}$.
By the tree property as explained in Lemma \ref{Lem:EEinKisatree}, $L$ and $\ov{L}$ intersect in a minimal chain $L^\circ$ such that every column on $(L \cup \ov{L}) \setminus L^\circ$ has horizontal distance $\leq 3$ from $L^\circ$.

As in the proof of Lemma \ref{Lem:interchangeorderinK} define the sets $S \subset L \cup E_1 \cup \ldots \cup E_n$, $S' \subset L$ and $\ov{S} \subset \ov{L} \cup \ov{E}_1 \cup \ldots \cup \ov{E}_{\ov{n}}$, $\ov{S}' \subset \ov{L}$ .
Observe that $S', \ov{S}'$ lie in different sets and might not intersect as before.
However, we can still find cells $Q^\circ \subset S'$, $\ov{Q}^\circ \subset \ov{S}'$ such that 
\[ \dist^H_K (Q^\circ, \ov{Q}^\circ) \leq 3 \qquad \text{and} \qquad \dist^V_K (Q^\circ, \ov{Q}^\circ) = 0. \]
We will work with these cells now instead of $Q^\circ$ alone.
By the definition of $S'$ there is a cell $Q^{\circ\circ} \subset S$ which is either equal to $Q^\circ$ or adjacent to $Q^\circ$ and horizontally aligned with it, i.e. $\dist^H_K (Q^{\circ\circ}, Q^\circ) \leq 1$ and $\dist^V_K (Q^{\circ\circ}, Q^\circ) = 0$.
Again, by the definition of $S$, there is a cell $Q' \in \mathcal{Q}$ on $\gamma$ which is either equal to $Q^{\circ\circ}$ or adjacent to it across a wall, i.e. $\dist(Q', Q^{\circ\circ}) \leq H$.
Altogether this implies that $\dist (Q', Q^\circ) \leq 1 + H$.
By an analogous argument, we can find a cell $\ov{Q}'$ on $\ov\gamma$ with $\dist(\ov{Q}', \ov{Q}^\circ) \leq 1 + H$.
Hence $\dist(Q', \ov{Q'}) \leq 5 + 2H < 3H$ for large enough $H$.
\end{proof}

\begin{Proposition} \label{Prop:combinatorialconvexityCaseB}
Proposition \ref{Prop:combinatorialconvexityseveralSeifert} also holds in the case in which $M$ satisfies condition (B).
\end{Proposition}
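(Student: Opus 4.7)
My plan is to closely imitate the proof of Lemma~\ref{Lem:combconvexseveralSeifertpreparation}, but to exploit a special feature of case (B) which lets me dispense with the separate ``$Q_0$ outside $K$'' reduction that in case (C) required Lemma~\ref{Lem:distg1g2afterW}. The key observation is that in case (B), $M$ is a genuine $S^1$-bundle over a closed surface, so the fiber class $\varphi \in \pi_1(M)$ is central. Moreover, under our construction, the $S^1$-fibrations from either side of $T_1$ agree, so the corresponding deck transformation on $\td{M}$ preserves each chamber $K \in \mathcal{K}$ and acts on it as a uniform vertical shift. In particular, the action of $\varphi^z$ preserves $\td V$, the cell structure, and the combinatorial distance function, and sends each cell $Q \in \mathcal{Q}$ to a cell $\varphi_z(Q)$ in the \emph{same column} as $Q$. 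This is false in case (C), and is what forces the two-stage approach there.

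With this tool in hand, I would induct on $R$ (over the discrete set of values it attains) and, within a fixed $R$, on $\dist^V_K(Q_1, Q_2)$. Pick $z \in \IZ$ with $Q^* = \varphi_z(Q_1)$; by swapping $Q_1$ and $Q_2$ if needed, I may assume $Q^*$ lies vertically between $Q_1$ and $Q^{**} := \varphi_{-z}(Q_2) \subset K$. Let $\gamma_1, \gamma_2$ be combinatorially minimizing curves from $Q_0$ to $Q_1$ and $Q_2$. Then $\varphi_z \circ \gamma_1$ is a minimizing curve from $\varphi_z(Q_0)$ to $Q^*$, and $\gamma_2$ is a minimizing curve from $Q_0$ to $Q_2$. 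By the ``same column'' property of $\varphi$, the endpoints $\varphi_z(Q_0), Q_0$ lie in a common column, and $Q^*, Q_2 \subset K$ lie in a common column, with opposite vertical orders; hence the hypothesis of Lemma~\ref{Lem:interchangeorderinKCaseB} is satisfied.

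That lemma produces cells $Q'$ on $\varphi_z \circ \gamma_1$ and $\ov Q'$ on $\gamma_2$ with $\dist(Q', \ov Q') < 3H$. Setting $Q_1' := \varphi_{-z}(Q')$ (which lies on $\gamma_1$ and satisfies $\dist(Q', Q^*) = \dist(Q_1', Q_1)$ by $\varphi$-invariance), the minimizing property of $\gamma_1, \gamma_2$ gives $\dist(Q_0, Q_1') + \dist(Q_1', Q_1) \leq R$ and $\dist(Q_0, \ov Q') + \dist(\ov Q', Q_2) \leq R$. If $\dist(Q_0, \ov Q') + \dist(Q_1', Q_1) \leq R + 3H$, the triangle inequality through $\ov Q'$ and $Q'$ yields $\dist(Q_0, Q^*) < R + 6H < R + 10H$. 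Otherwise, adding the two equalities gives $\dist(Q_0, Q_1') + \dist(\ov Q', Q_2) < R - 3H$, from which one checks that both $\dist(Q_1', Q_1)$ and $\dist(Q_1', Q^{**}) \leq R - \dist(Q_0, Q_1')$. Since $Q_1, Q^{**} \subset K$ are vertically aligned with $Q^*$ between them, and since either $\dist(Q_0, Q_1') > 0$ (so $R$ strictly decreases) or $\dist^V_K(Q_1, Q^{**}) < \dist^V_K(Q_1, Q_2)$ (as $z \neq 0$), the induction hypothesis applies and gives $\dist(Q_1', Q^*) < R - \dist(Q_0, Q_1') + 10H$; adding $\dist(Q_0, Q_1')$ yields $\dist(Q_0, Q^*) < R + 10H$.

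The main conceptual obstacle is the verification that $\varphi$ acts uniformly as a vertical shift on all chambers; this is precisely what encodes the $S^1$-bundle assumption (B) and is exactly the property that case (C) lacks. Once this is in place, the rest is a straightforward adaptation of the case (C) arguments, with Lemma~\ref{Lem:interchangeorderinKCaseB} playing the role of Lemma~\ref{Lem:interchangeorderinK}. A minor technical point to track is that the interchange lemma in case (B) returns cells in possibly different chambers, but since $\varphi$-invariance of $\dist$ and the identity $\dist(Q', Q^*) = \dist(Q_1', Q_1)$ are all we need, this causes no difficulty.
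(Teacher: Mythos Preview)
Your proof is correct and takes essentially the same approach as the paper. The paper's proof is a terse two-sentence sketch: it notes that in case (B) the action $\varphi : \IZ \curvearrowright \td{M}$ acts as a vertical shift on \emph{every} column of $\td{M}$ (exactly your centrality observation), then says to follow the induction of Lemma~\ref{Lem:combconvexseveralSeifertpreparation} verbatim, replacing Lemma~\ref{Lem:interchangeorderinK} by Lemma~\ref{Lem:interchangeorderinKCaseB} and using $\dist(Q'_1,Q'_2) < 3H < 4H$ so that the numerics of the earlier proof go through unchanged; you simply work with the sharper $3H$ directly and correctly note that the induction hypothesis now applies even when $Q'_1 \not\subset K$, since in case (B) one is proving the unrestricted statement throughout.
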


\begin{proof}
We follow the proof of Lemma \ref{Lem:combconvexseveralSeifertpreparation}.
Observe that since $M$ satisfies condition (B), the action $\varphi : \IZ \curvearrowright \td{M}$ acts as a vertical shift on each column of $\td{M}$.
So we do not need to restrict to the case in which the cells $Q_0, Q_1, Q_2$ lie in the same chamber.
Instead of applying Lemma \ref{Lem:interchangeorderinK}, we now make use of Lemma \ref{Lem:interchangeorderinKCaseB} to obtain cells $Q'_1, Q'_2 \subset K$ on $\varphi_z \circ \gamma_1$ and $\gamma_2$ with $\dist(Q'_1, Q'_2) < 3H < 4H$.
The rest of the proof is exactly the same as that of Lemma \ref{Lem:combconvexseveralSeifertpreparation}
\end{proof}

\subsection{Proof of Proposition \ref{Prop:easiermaincombinatorialresultCaseb} if $M$ satisfies condition (B) or (C)} \label{subsec:proofofeasiermaincombinatorialresult}
We will now apply the combinatorial convexity estimates from Propositions \ref{Prop:combinatorialconvexityseveralSeifert} and \ref{Prop:characteriztionminimizinggammaCaseB} to construct large polyhedral balls in $\td{M}$ which consist of cells.
In the following we will always assume that $M$ satisfies condition (B) or (C) and that $\eta$ resp. $H$ have been chosen smaller resp. larger than than all constants $\eta^*$ resp. $H^*$ which appeared the Lemmas and Propositions of subsections \ref{subsec:combconvexincaseC} and \ref{subsec:combconvexincaseB}.

\begin{Lemma} \label{Lem:Sinchambersimplyconnected}
Let $K \in \mathcal{K}$ be a chamber of $\td{M}$ and consider a finite union of cells $S \subset K$ whose interior is connected.
Assume that $S$ has the property that for any two cells $Q_1, Q_2 \subset S$ which are vertically aligned, $S$ also contains all cells which are vertically between $Q_1$ and $Q_2$.
Then $S$ is homeomorphic to a closed $3$-disk and the intersection of $S$ with every wall $W \subset \partial K$ has connected interior in $W$.
More precisely, there is a continuous, injective map $b : D^3 \to \td{M}$ with $b(D^3) = S$ which is an embedding on $B^3 \cup (S^2 \setminus b^{-1} (\td{V}^{(1)}))$ and for all walls $W \subset \partial K$ the preimage $b^{-1} (W)$ is either empty or a (connected) topological disk which is the union of rectangles.
\end{Lemma}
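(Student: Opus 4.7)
The plan is to proceed by induction on the number of cells in $S$, building the homeomorphism cell by cell. First, I would identify the chamber $K$ with $\td\Sigma_K \times \IR$ as in (\ref{eq:arrangementQinK}), so that each cell $Q \subset K$ becomes a product $\ov{U}_Q \times [n, n+1]$, where $\ov{U}_Q$ is the closure of a component of $\Int\td\Sigma_K \setminus \td{C}^*_K$ and $n \in \IZ$. Because $C^*_K$ was chosen in subsection \ref{subsec:constructionofV} so that $\Sigma_K \setminus C^*_K$ is a topological ball, each $\ov{U}_Q$ is a closed topological disk, and consequently every cell is homeomorphic to $D^2 \times I \approx D^3$. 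Within a single column $E$, the hypothesis on vertical alignment implies that $S \cap E = \ov{U}_E \times I_E$ for some closed interval $I_E \subset \IR$, so $S \cap E$ is already a closed $3$-ball; this is the base case of the induction.

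For the inductive step, let $T \subset \mathcal{E}_K$ be the set of columns meeting $S$. Since $\Int S$ is connected and columns are glued along $2$-dimensional faces only when adjacent, $T$ is a connected finite subtree of the tree $\mathcal{E}_K$ (Lemma \ref{Lem:EEinKisatree}). If $|T| \geq 2$, I would pick a leaf $E_0$ of $T$ with unique neighbor $E_1 \in T$, and set $S' = \ov{S \setminus (S \cap E_0)}$. The induction hypothesis provides a map $b' : D^3 \to S'$ with all the claimed properties. The piece $S \cap E_0 = \ov{U}_{E_0} \times I_0$ is a $3$-ball. Their overlap sits inside $E_0 \cap E_1 = \alpha \times \IR$, where $\alpha = \ov{U}_{E_0} \cap \ov{U}_{E_1}$ is the closed arc of $\td{C}^*_K$ separating the two components; it is precisely $\alpha \times (I_0 \cap I_1^\sharp)$, where $I_1^\sharp$ is the union of unit subintervals of $I_{E_1}$ indexing cells of $S \cap E_1$ whose face along $\alpha$ meets $S \cap E_0$. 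Connectedness of $\Int S$ forces $I_0 \cap I_1^\sharp$ to be a nondegenerate closed interval, so this overlap is a closed $2$-disk lying in the boundary of both $S'$ and $S \cap E_0$. The standard fact that the union of two closed $3$-balls glued along a $2$-disk in each of their boundaries is again a closed $3$-ball then yields the desired map $b : D^3 \to S$; injectivity on $B^3 \cup (S^2 \setminus b^{-1}(\td{V}^{(1)}))$ is preserved because the identifications occur only along a face in $\td{V} \setminus \td{V}^{(1)}$.

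For the wall statement, I would run a parallel induction. Each wall $W \subset \partial K$ has a product structure $W = B \times \IR$ for a boundary line $B$ of $\td\Sigma_K$, and
\[
S \cap W \;=\; \bigcup_{E \in T,\ \ov{U}_E \cap B \neq \emptyset} (\ov{U}_E \cap B) \times I_E,
\]
a union of axis-aligned rectangles in $W \cong \IR^2$. When attaching the leaf column $E_0$ to $S'$, either $\ov{U}_{E_0} \cap B = \emptyset$ and nothing changes, or we add a single rectangle that glues to the previous union along a common sub-rectangle, preserving the property of being a disk composed of rectangles. Combined with the preceding construction of $b$, I can choose the parameterization so that $b^{-1}(W)$ has exactly this rectangular-disk form.

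The main obstacle I anticipate is controlling the intersection $\ov{U}_E \cap B$: a priori a single closure $\ov{U}_E$ might meet a single boundary line $B$ of $\td\Sigma_K$ in several disjoint arcs, which would ruin the clean rectangular/connected picture for $S \cap W$. Handling this cleanly will require a careful check, using the specific choice of $C^*_K$ in subsection \ref{subsec:constructionofV} as a disjoint system of arcs cutting $\Sigma_K$ into a single fundamental ball, that for every boundary component $B$ the order in which columns of $T$ appear along $B$ is compatible with the subtree structure of $T$, so the induction step really does glue rectangles along a common edge rather than in disconnected pieces. If needed, $C^*_K$ can be refined at the outset to force each $\ov{U}_E \cap B$ to be a single arc, at the cost of enlarging $V$ by a bounded amount.
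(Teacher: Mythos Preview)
Your argument for the $3$-disk part is essentially the paper's: both induct by cutting $S$ along a component of $\td{C}^*_K \times \IR$ and then invoke the fact that two $3$-balls glued along a boundary $2$-disk form a $3$-ball. Your leaf-column removal is simply the special case where the cut separates off a single column; the paper allows an arbitrary separating component $X \subset \pi^{-1}(C_K)$, but the mechanism is identical.

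For the wall statement the paper takes a more direct route that sidesteps the concern you flagged. Given two cells $Q, Q' \subset S$ adjacent to $W$, it observes that the minimal chain in $\mathcal{E}_K$ between their columns lies inside the subtree $T$ (because $T$ is a subtree containing both endpoints) and consists entirely of columns adjacent to $W$; together with the vertical-interval property this gives connectedness and the disk structure of $S \cap W$ at once. The implicit fact being used --- that the columns adjacent to a fixed wall $W$ form a connected (in fact linear) subtree of $\mathcal{E}_K$ --- follows from Lemma~\ref{Lem:EEinKisatree}: walking along the boundary line $B \subset \partial\td\Sigma_K$ yields a chain of adjacent columns which, by the tree property, cannot revisit any column. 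This same fact resolves your worry about $\ov{U}_E \cap B$ having several arcs (it cannot), and it is also what makes your parallel induction work: when you attach a leaf $E_0$ adjacent to $W$ while $S' \cap W \neq \emptyset$, the neighbor $E_1$ is forced to be adjacent to $W$ as well, so the new rectangle really does glue along an edge. Proving this subtree fact is the correct fix; refining $C^*_K$ is not an option, since $V$ is fixed long before this lemma and the constants in Proposition~\ref{Prop:maincombinatorialresult} depend on it.
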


\begin{proof}
The Lemma is obviously true if $S$ only consists of cells which are vertically aligned.
Observe that the columns of $K$ are bounded by subsets of $\partial K$ and subsets of components of $\pi^{-1} (C_K)$.
Those components correspond to curves of $\td{C}^*_K \subset \td\Sigma_K$, are diffeomorphic to $I \times \IR$ and every two adjacent columns intersect in exactly one such component.
Moreover, each such component separates $K$ into two components.

Consider now such a component $X \subset \pi^{-1} (C_K)$ with the property that not all cells of $S$ lie on one side of $X$.
This is always possible if not all cells of $S$ are vertically aligned.
Let $S_1, S_2 \subset K$ be the closures of the two components of $S \setminus X$.
Then $S_1 \cap S_2$ is a connected rectangle and so the interiors of $S_1, S_2$ must be connected and hence $S_1, S_2$ are homeomorphic to $3$-disks.
Since the interior of $S_1 \cap S_2$ in $X$ is a (connected) disk, we find that $S = S_1 \cup S_2$ is a topological $3$-disk as well.

Next, let $W \subset \partial K$ be a wall and assume that two cells $Q, Q' \subset S$ are adjacent to $W$.
Let $E, E' \in \mathcal{E}_K$ be the columns that contain $Q, Q'$.
Since $S$ is connected, we can find a chain $(E_0, \ldots, E_n)$ between $E, E'$ such that $E_i$ contains a cell of $S$ for all $i = 0, \ldots, n$.
We may assume that we have picked the chain such that $n$ is minimal.
It is then easy to see that this chain cannot contain any column twice.
Hence it is minimal and so all its columns are adjacent to $W$.
It follows easily that we can connect $Q$ with $Q'$ through cells in $K$ which are adjacent to $W$ and hence $S \cap W$ is connected.
By the property of $S$, this intersection can only be a topological $2$-disk.

The existence of the map $b$ follows along the lines of this proof.
\end{proof}

Let $Q_0 \in \mathcal{Q}$ be an arbitrary cell and $R > 0$ a positive number.
Then we define
\[ B_R (Q_0) = \bigcup \big\{ Q \in \mathcal{Q} \;\; : \;\; \dist (Q, Q_0) < R \big\}. \]
Next, consider the distance function $\dist^{\mathcal{K}} : \mathcal{K} \times \mathcal{K} \to [0, \infty)$ which assigns to every pair of chambers $K_1, K_2$ the length of the minimal chain between $K_1, K_2$.
This length is equal to the minimal number of intersections of a curve between $K_1, K_2$ with the walls of $\td{M}$.
For two cells $Q_1 \subset K_1, Q_2 \subset K_2$ we set $\dist^{\mathcal{K}} (Q_1, Q_2) = \dist^{\mathcal{K}} (K_1, K_2)$.
Observe that
\[ \dist (Q_1, Q_2) \geq H \dist^{\mathcal{K}} (K_1, K_2). \]
Let $J > 0$ be a large constant whose value we will determine later.
We define a new distance function $\dist' (\cdot, \cdot)$ on $\mathcal{Q}$ as follows
\[ \dist' (Q_1, Q_2) := \dist(Q_1, Q_2) + J \dist^{\mathcal{K}} (Q_1, Q_2). \]
Obviously, $(\mathcal{Q}, \dist')$ is a metric space.
Set moreover
\[ B'_R (Q_0) = \bigcup \big\{ Q \in \mathcal{Q} \;\; : \;\; \dist'(Q, Q_0)  < R \big\}. \]
Finally, we define
\[ P_R (Q_0) = \bigcup \Bigg\{ Q \in \mathcal{Q} \;\; : \;\; \begin{array}{l} Q \subset K \in \mathcal{K} \; \text{and there are cells $Q_1, Q_2 \subset B'_R (Q_0)$} \\ \text{in $K$ such that $Q_1, Q, Q_2$ are vertically aligned} \\ \text{and $Q$ lies vertically in between $Q_1, Q_2$} \end{array} \Bigg\}. \]

\begin{Proposition} \label{Prop:distballsinQQareballs}
Assume that $M$ satisfies condition (B) or (C).
Then there are choices for $\eta, H, J$ and a constant $C_2 < \infty$ such that the following holds:

For all $Q_0 \in \mathcal{Q}$ and all $R > 0$ we have
\[ B'_R (Q_0) \subset P_R (Q_0) \subset \Int B'_{R + C_2} (Q_0) \cup \partial \td{M}. \]
Moreover, there is a continuous map $b_{R, Q_0} : D^3 \to \td{M}$ such that $b_{R, Q_0} (D^3) = P_R(Q_0)$ and $b_{R, Q_0} (S^2) = \partial P_R(Q_0)$ and such that $b_{R, Q_0}$ is an injective embedding on $B^3 \cup (S^2 \setminus b_{R, Q_0}^{-1} (\td{V}^{(1)}))$.

Finally, let $K_0 \in \mathcal{K}$ be the chamber that contains $Q_0$.
Then for all cells $Q \subset B'_R(Q_0) \cap K_0$ we have $\dist^H_{K_0} (Q, Q_0), \dist^V_{K_0} (Q, Q_0) < C_2 R$.
\end{Proposition}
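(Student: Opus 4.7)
The plan is to establish the proposition in four stages: fix constants; prove the bound on intrinsic distances in $K_0$; establish the two inclusions; and finally construct the homeomorphism $b_{R,Q_0}$. First I would fix $\eta$ small enough and $H$ large enough that every Lemma and Proposition of subsections \ref{subsec:combconvexincaseC} and \ref{subsec:combconvexincaseB} applies, then take $J$ large (its value will be fixed at the end), and finally choose $C_2$ large enough to absorb all error terms of the form $10H$, $3H C_0$, $J + H$ that appear below.

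Second, I would prove the intrinsic distance bound. For $Q\subset B'_R(Q_0)\cap K_0$ we have $\dist^{\mathcal{K}}(Q,Q_0)=0$, so a combinatorial minimizer $\gamma$ from $Q_0$ to $Q$ has length $<R$. Apply Proposition \ref{Prop:characterizationminimizinggamma}(c) (case (C)) or Proposition \ref{Prop:characteriztionminimizinggammaCaseB}(c) (case (B)) to decompose $\gamma$ inside $K_0$ into columns $E_1,\ldots,E_n$ with associated $E^*_1,\ldots,E^*_n$ on the minimal chain in $\mathcal{E}_{K_0}$. Counting contributions: $N_C$ crossings of $\pi^{-1}(C_{K_0})$ cost $1$ each, $N_S$ crossings of $\pi^{-1}(S_{K_0})$ cost $\eta$ each, and each of the $N_W$ wall-excursions costs at least $2H$. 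Each direct horizontal transition adds $1$ to $\dist^H_{K_0}$, while an excursion adds at most $3H$ by Proposition \ref{Prop:characterizationminimizinggamma}(b3); each direct vertical transition adds $1$ to $\dist^V_{K_0}$, while an excursion adds at most $3HC_0+C_0$ by Lemma \ref{Lem:QQestimatesatW}(b). Summing gives $\dist^H_{K_0}(Q,Q_0)\leq N_C+3HN_W\leq R+\tfrac{3H}{2H}R$ and $\dist^V_{K_0}(Q,Q_0)\leq N_S+(3HC_0+C_0)N_W\leq \eta^{-1}R+O(R)$, both bounded by $C_2R$ for $C_2$ sufficiently large.

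Third, the inclusions. The left inclusion $B'_R\subset P_R$ is immediate by taking $Q_1=Q_2=Q$. For the right one, if $Q\subset P_R(Q_0)\cap K$ with sandwich cells $Q_1,Q_2\in B'_R(Q_0)\cap K$, then Proposition \ref{Prop:combinatorialconvexityseveralSeifert} (case (C)) or Proposition \ref{Prop:combinatorialconvexityCaseB} (case (B)) yields $\dist(Q,Q_0)<R+10H$. Since $Q$ and $Q_1$ share a chamber, $\dist^{\mathcal{K}}(Q,Q_0)=\dist^{\mathcal{K}}(Q_1,Q_0)$, so $\dist'(Q,Q_0)<R+10H$. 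To push from mere membership in $B'_{R+10H}$ to the \emph{topological interior} of $B'_{R+C_2}$, I need every cell sharing a face with such a $Q$ (and not lying in $\partial\td M$) to satisfy $\dist'(\,\cdot\,,Q_0)<R+C_2$; the worst adjacency crosses a wall and adds at most $H+J$ to $\dist'$, so choosing $C_2\geq 10H+H+J+1$ suffices.

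Fourth, the main obstacle: constructing $b_{R,Q_0}\colon D^3\to\td M$ realizing $P_R(Q_0)$ as a topological closed $3$-disk. The strategy has four steps. (i) Show that the set of chambers meeting $P_R(Q_0)$ forms a finite sub-tree of the chamber tree from Lemma \ref{Lem:KKistree}: any chamber $K''$ on the chamber-tree path from $K_0$ to a chamber $K$ meeting $P_R$ must itself meet $P_R$, because a minimizing curve to $K$ crosses every wall on that path and the cells entered are in $B'_R$ after discounting $J\dist^{\mathcal{K}}$. (ii) Verify for each such chamber $K$ that $P_R(Q_0)\cap K$ satisfies the hypotheses of Lemma \ref{Lem:Sinchambersimplyconnected}: vertical completeness holds by construction, and connectedness of the interior follows by joining any two cells of $P_R\cap K$ via the combinatorial minimizers connecting them to $Q_0$ (or to the sandwich cells in $B'_R$), invoking Lemmas \ref{Lem:interchangeorderinK} or \ref{Lem:interchangeorderinKCaseB} together with the convexity Propositions to fill the horizontal and vertical gaps within $P_R\cap K$. (iii) By Lemma \ref{Lem:Sinchambersimplyconnected}, $P_R\cap K$ is then a topological closed $3$-disk and its intersection with each wall connecting to an adjacent chamber in the sub-tree is a topological closed $2$-disk. (iv) Proceed inductively along the chamber sub-tree starting from $K_0$, at each step attaching the new chamber's $3$-disk to the existing one along their shared boundary $2$-disk; the result is again a closed $3$-disk, giving the map $b_{R,Q_0}$ with the required embedding properties. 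The hardest sub-step is (ii): a priori, excursions through neighboring chambers could produce multiple disconnected ``islands'' of $B'_R$ inside a peripheral chamber $K$. The role of the large constant $J$ is precisely to suppress such excursions so that $B'_R\cap K$ is reached essentially through a single wall with entry cells clustered together (Lemma \ref{Lem:distg1g2afterW}), after which the vertical filling to form $P_R\cap K$ restores connectedness via the combinatorial convexity estimate.
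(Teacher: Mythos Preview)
Stages 1 and 3 match the paper. In stage 2 your citation of Proposition \ref{Prop:characterizationminimizinggamma}(b3) is wrong (that assertion concerns curves that do \emph{not} cross $W$), and the claim that a wall-excursion adds at most $3H$ to $\dist^H_{K_0}$ is unjustified; the paper instead invokes Lemma \ref{Lem:QQestimatesatW}(f) to bound the horizontal and vertical jump across each excursion by $C_0\eta^{-1}$ times the length spent in the neighboring chamber, and then sums. This is repairable.

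The genuine gap is in stage 4. Your step (ii) asserts that $P_R(Q_0)\cap K$ has connected interior for \emph{every} chamber $K$ in the sub-tree, but Lemma \ref{Lem:distg1g2afterW} only clusters the entry cells when the two target cells are vertically aligned, so your justification fails for targets in different columns of $K$; the paper does not prove and does not need this per-chamber connectedness. What the paper establishes instead is that the interior of $B'_R(Q_0)\cap(K_0\cup\ldots\cup K_n)$ is connected for each $n$, by following a minimizing curve from $Q_0$ to $Q$ backwards cell by cell and checking that each predecessor---or, during an outward excursion into some $K_j$ further from $K_0$, the cells of $K_i$ adjacent to the excursion---still lies in $B'_R\cap(K_0\cup\ldots\cup K_n)$. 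In the inductive gluing the paper then allows $\overline{P_R(Q_0)\cap\Int K_n}$ to be a union of \emph{several} $3$-disks, each meeting the wall $W$ towards the parent chamber $K_i$ in a single $2$-disk by Lemma \ref{Lem:Sinchambersimplyconnected}. The decisive ingredient, absent from your (iv), is the wall-face containment $\overline{P_R(Q_0)\cap\Int K_n}\cap W\subset b_{n-1}(D^3)\cap W$: for $Q\subset P_R\cap K_n$ adjacent to $W$ and any neighbor $Q'\subset K_i$, one has $\dist'(Q',Q_0)\le\dist'(Q,Q_0)+H-J<R+10H+H-J$, which forces $Q'\in B'_R(Q_0)$ exactly when $J\ge 11H$. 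This containment---not ``suppressing excursions''---is the true role of $J$; once it holds, attaching each of the (possibly several) $3$-disks of $P_R\cap K_n$ to $b_{n-1}(D^3)$ along its $2$-disk face yields again a $3$-disk, and the connectedness of the union guarantees nothing is left floating.
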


\begin{proof}
We will see that the proposition holds for $J = 11H$.

We first show that
\begin{equation} \label{eq:easyinclusionofcellballs}
 B'_R (Q_0) \subset P_R (Q_0) \subset B'_{R + 10H} (Q_0).
\end{equation}
The first inclusion property is trivial.
For the second inclusion property consider a cell $Q \subset P_R(Q_0)$.
Let $K \in \mathcal{K}$ be the chamber that contains $Q$ and choose cells $Q_1, Q_2 \subset B'_R (Q_0) \cap K$ such that $Q_1, Q, Q_2$ are vertically aligned and $Q$ lies vertically in between $Q_1, Q_2$.
Then $\dist (Q_1, Q_0) = \dist' (Q_1, Q_0) - J \dist^{\mathcal{K}} (K, K_0) < R - J \dist^{\mathcal{K}} (K_0, K)$ and similarly $\dist (Q_2, Q_0) < R - J \dist^{\mathcal{K}} (K, K_0)$.
It follows from Proposition \ref{Prop:combinatorialconvexityseveralSeifert} in case (C) and Proposition \ref{Prop:combinatorialconvexityCaseB} in case (B) that $\dist (Q, Q_0) < R + 10H - J \dist^{\mathcal{K}} (K, K_0)$.
So $\dist'(Q, Q_0) < R + 10H$ and (\ref{eq:easyinclusionofcellballs}) follows.
In order to establish the inclusion property of this proposition, it hence suffices to choose $C_2$ larger than $10H + J$ plus the maximal number of cells that can intersect in one point.

Next, choose a sequence $K_1, K_2, \ldots \in \mathcal{K}$ such that $\mathcal{K} = \{ K_0, K_1, K_2, \ldots \}$ and such that $\dist^{\mathcal{K}} (K_n, K_0)$ is non-decreasing in $n$.
We will first show that the interior of $B'_R(Q_0) \cap (K_0 \cup \ldots \cup K_n)$ is connected for each $n \geq 0$:
Fix $n$, choose a cell $Q \subset B'_R(Q_0) \cap (K_0 \cup \ldots \cup K_n)$, $Q \neq Q_0$, let $K_i$ be the chamber that contains $Q$ and consider a combinatorially minimizing curve $\gamma : [0,1] \to \td{M}$ from $Q_0$ to $Q$.
We show by induction on the number of cells that intersect $\gamma$ that $\Int Q$ lies in the same connected component of $\Int (B'_R(Q_0) \cap (K_0 \cup \ldots \cup K_n))$ as $\Int Q_0$.
Let $Q' \in \mathcal{Q}$ be the cell that $\gamma$ intersects prior to $Q$.
If $Q' \subset K_i$, then we are done by the induction hypothesis since then $\dist'(Q', Q_0) < \dist'(Q, Q_0)$ and hence $Q' \subset B'_R(Q_0) \cap (K_0 \cup \ldots \cup K_n)$.
Assume next that $Q' \subset K_j \in \mathcal{K}$ for $j \neq i$ and that $\gamma$ crosses a wall $W = K_i \cap K_j \in \mathcal{W}$ in between $Q'$ and $Q$.
Then $\dist (Q', Q_0) = \dist (Q, Q_0) - H$ and $\dist^{\mathcal{K}} (K_j, K_0) = \dist^{\mathcal{K}} (K_i, K_0) \pm 1$.
It suffices to consider the case in which $\dist^{\mathcal{K}} (K_j, K_0) = \dist^{\mathcal{K}} (K_i, K_0) + 1$ since otherwise we are again done by the induction hypothesis.
In this case $\gamma$ must cross $W$ twice and there is a cell $Q'' \subset K_i$ that $\gamma$ intersects right before intersecting $W$ for the first time.
By Proposition \ref{Prop:characterizationminimizinggamma}(b2) in case (C) or Proposition \ref{Prop:characteriztionminimizinggammaCaseB}(b3) in case (B), the curve $\gamma$ only intersects cells which lie in $K_j$ and which are adjacent to $W$ between $Q''$ and $Q'$.
Consider now all cells $Q^* \subset K_i$ which are adjacent to a cell $Q^{**} \subset K_j$ which intersects $\gamma$.
For each such $Q^*$ we have
\[ \dist(Q^*, Q_0) = H + \dist(Q^{**}, Q_0) \leq H + \dist(Q', Q_0) = \dist(Q, Q_0) \]
and thus $\dist'(Q^*, Q_0) \leq \dist' (Q, Q_0)$ and $Q^* \subset B'_R(Q_0) \cap K_i$.
It is then easy to conclude that $Q'$ and $Q''$ lie in the same connected component of $B'_R (Q_0) \cap K_i$.
This finishes the induction argument.

So also the interior of $P_R (Q_0) \cap (K_0 \cup \ldots \cup K_n)$ is connected for all $n \geq 0$.
We will now show by induction on $n$ that there is a continuous map $b_n : D^3 \to \td{M}$ whose image is equal to the closure of this interior and which is an injective embedding when restricted to $B^3 \cup (S^2 \setminus b^{-1}_n (\td{V}^{(1)}))$.
For $n = 0$ this statement follows immediately from Lemma \ref{Lem:Sinchambersimplyconnected} and the fact that the interior of $P_R(Q_0) \cap K_0$ is connected.
Assume now that $n \geq 1$.
There is a unique $i \in \{ 1, \ldots, n-1 \}$ such that $K_i$ is adjacent to $K_n$.
So $\dist^{\mathcal{K}} (K_i, K_0) = \dist^{\mathcal{K}} (K_n, K_0) - 1$.
Let $W = K_i \cap K_n \in \mathcal{W}$ be the wall between $K_i$ and $K_n$.
Observe that for every cell $Q \subset P_R(Q_0) \cap K_n$ which is adjacent to $W$ and every cell $Q' \subset K_i$ which is adjacent to $Q$ we have $\dist(Q', Q_0) \leq \dist(Q, Q_0) + H$.
So by (\ref{eq:easyinclusionofcellballs})
\[ \dist' (Q', Q_0) \leq \dist' (Q, Q_0) + H - J < R+ 11H - J = R. \]
Hence $Q' \subset B'_R (Q_0) \subset P_R (Q_0)$.
This implies
\[ \ov{P_R (Q_0) \cap \Int K_n} \cap W \subset P_R (Q_0) \cap W = \ov{P_R (Q_0) \cap \Int K_i} \cap W = b_{n-1} (D^3) \cap W . \]
By Lemma \ref{Lem:Sinchambersimplyconnected} the set $\ov{P_R (Q_0) \cap \Int K_n}$ is the union of the images of maps $b' : D^3 \to \td{M}$ with the appropriate regularity properties and any two such images intersect in at most an edge of $\td{V}$.
Moreover, the preimage of $W$ under every such map $b'$ is a (connected) topological disk which is contained in $b_{n-1}(D^3) \cap W$.
It is now easy to see that we can combine $b_{n-1}$ with the maps $b'$ to obtain a map whose image is equal to the closure of the interior of $P_R (Q_0) \cap (K_0 \cup \ldots \cup K_n)$.
Smoothing this map in the interior of $D^3$ yields $b_n$.
This finishes the induction and proves the second assertion of the Proposition for large $n$.

Finally, we show the last statement.
Let $Q \subset B'_R(Q_0) \cap K_0$.
Then $\dist (Q, Q_0) < R$.
Consider a minimizing curve $\gamma : [0,1] \to \td{M}$ between $Q_0, Q$.
By Proposition \ref{Prop:characterizationminimizinggamma} or Proposition \ref{Prop:characteriztionminimizinggammaCaseB} the curve $\gamma$ stays within the union of $K_0$ with the chambers which are adjacent to $K_0$.
Let $Q_0, Q_1, \ldots, Q_n = Q' \subset K_0$ be the cells of $K_0$ that $\gamma$ intersects in that order.
Then for all $i = 0, \ldots, n-1$ either $\dist^H_{K_0} (Q_i, Q_{i+1}) + \dist^V_{K_0} (Q_i, Q_{i+1}) = 1 \leq \eta^{-1} \dist(Q_i, Q_{i+1})$ or $\gamma$ intersects a wall $W \subset \partial K$ right after $Q_i$ and right before $Q_{i+1}$.
In this case let $K' \in \mathcal{K}$ be the chamber on the other side of $W$ and let $Q'_i, Q'_{i+1} \subset K'$ be the cells that $\gamma$ intersects right after $Q_i$ and right before $Q_{i+1}$.
By Lemma \ref{Lem:QQestimatesatW}(f) we have
\[ \dist^H_{K_0} (Q_i, Q_{i+1}),\; \dist^V_{K_0} (Q_i, Q_{i+1}) < C_0 \eta^{-1} \dist(Q'_i, Q'_{i+1}) + C_0 . \]
If $H > 1$, then the right hand side is bounded by $C_0 \eta^{-1} \dist(Q_i, Q_{i+1})$.
The rest follows from the triangle inequality for $\dist^H_{K_0}$ and $\dist^V_{K_0}$ with $C_2 > C_0 \eta^{-1}$.
\end{proof}

We can finally establish Proposition \ref{Prop:easiermaincombinatorialresultCaseb} and hence Proposition \ref{Prop:maincombinatorialresult}(a) (see subsection \ref{subsec:reducifnoT2bundle}).

\begin{proof}[Proof of Proposition \ref{Prop:easiermaincombinatorialresultCaseb}]
By Proposition \ref{Prop:CasAiseasy}, we may assume that $M$ satisfies condition (B) or (C).

Observe first that the universal covering $\pi : \td{M} \to M$ can be seen as the restriction of the universal covering $\pi : \td{M}_0 \to M_0$ to a component of $\pi^{-1} (M)$.
Consider the simplicial complex $V \subset M$ as defined in subsection \ref{subsec:constructionofV} and let $f_0 : V \to M$ be the inclusion map.
Recall that $f_0$ lifts to the inclusion map $\td{f}_0 : \td{V} \to \td{M}$ in the universal covering $\pi : \td{M} \to M$.
Consider the Riemannian metric $g$ on $M_0$ and the map $f : V \to M$ from the assumptions of the Proposition.
Let $H : V \times [0,1] \to M_0$ be the homotopy between $f_0$ and $f$ and let $L$ be a strict upper bound on the length of the curves $t \mapsto H(x, t)$ (note that $V$ is compact).
Since this homotopy leaves $\partial V$ invariant and embedded in $\partial M$, we can extend $H$ to a homotopy $H^* : (V \cup \partial M) \times [0,1] \to M_0$ between the inclusion map $f_0^* : V \cup \partial M \to M$ and the extension $f^* : V \cup \partial M \to M_0$ of $f$ such that $H^* (\cdot , t)$ restricted to $\partial M$ is the identity for all $t \in [0,1]$.
Here we view $V \cup \partial M$ as a connected simplicial complex.
The homotopy $H^*$ can be lifted to a homotopy $\td{H}^* : (\td{V} \cup \partial \td{M}) \times [0,1] \to \td{M}_0$ between the inclusion map $\td{f}_0 : \td{V} \cup \td{M} \to \td{M}$ and a lift $\td{f}^* : \td{V} \cup \partial \td{M} \to \td{M}_0$ of $f^*$, i.e. $f^* \circ \pi |_{\td{V} \cup \partial \td{M}} = \pi \circ \td{f}^*$.
Note that $\pi (\td{H}^* (x,t)) = H^* (\pi(x),t)$ for all $(x,t) \in (\td{V} \cup \partial \td{M}) \times [0,1]$.
Still, the lengths of the curves $t \mapsto \td{H}^*(x, t)$ are bounded by $L$.

Consider the solid torus $S \subset \Int M$ and pick a component $\td{S} \subset \pi^{-1} (S) \subset \Int \td{M}$.
Then by our assumptions $\td{S} \approx D^2 \times \IR$ and $\pi |_{\td{S}} : \td{S} \to S$ is a universal covering map of $S$.
Let $F \subset \td{S}$ be a fundamental domain of the solid torus which arises from cutting $S \approx S^1 \times D^2$ along an embedded disk $\approx \{ \text{pt} \} \times D^2$.
The central loop $\sigma \approx S^1 \times \{ 0 \} \subset S \approx S^1 \times D^2$ induces a deck transformation $\varphi : \td{M}_0 \to \td{M}_0$ which is an isometry and $\td{S}$ is covered by domains of the form $\varphi^{(n)} (F)$ where $n \in \IZ$.
Observe also that $\td\sigma = \pi^{-1} (\sigma) \cap \td{S}$ is a properly embedded, infinite line which is invariant under $\varphi$.

Choose a chamber $K_0 \in \mathcal{K}$ for which the displacement $\dist^{\mathcal{K}} (K_0, \varphi(K_0))$ is minimal.
Next, if $\varphi(K_0) = K_0$ choose a column $E_0 \in \mathcal{E}_{K_0}$ for which the displacement $\dist^H_{K_0} (E_0, \varphi(E_0))$ is minimal.
If $\varphi(K_0) \neq K_0$, the column $E_0 \in \mathcal{E}_{K_0}$ can be chosen arbitrarily.
Finally, choose an arbitrary cell $Q_0 \subset E_0$.
We will now show that there is a universal constant $c > 0$ which only depends on the structure of $V$ (and not on $S$!) such that for all $n \in \IZ$
\begin{equation} \label{eq:distgrowslinearlyinQQ}
 \dist'(Q_0, \varphi^{(n)} (Q_0)) \geq c |n|.
\end{equation}
If $\varphi (K_0) \neq K_0$, then we argue as follows.
Consider the minimal chain between $K_0$ and $\varphi(K_0)$ in the adjacency graph of $\mathcal{K}$.
The images of this minimal chain under the deck transformations $\varphi^{(0)}, \ldots, \varphi^{(n-1)}$ are each minimal and can be concatenated along $\varphi^{(1)} (K_0), \ldots, \varphi^{(n-1)} (K_0)$ to a chain between $K_0$ and $\varphi^{(n)} (K_0)$.
We now claim that this chain is minimal.
Otherwise, there are elements in this chain which occur at least twice.
Since the adjacency graph of $\mathcal{K}$ is a tree (see Lemma \ref{Lem:KKistree}), there must then be even two consecutive elements in this chain which are equal.
These two elements can only come from two distinct images of the minimal chain between $K_0$ and $\varphi(K_0)$.
So if $K'_0, K''_0 \in \mathcal{K}$ are the second and second last elements on this minimal chain then we must have $\varphi^{(i+1)} (K'_0) = \varphi^{(i)}(K''_0)$ for some $i \in \{ 0, \ldots, n-1 \}$.
But this would imply that $\varphi(K'_0) = K''_0$ and hence $\dist^{\mathcal{K}} ( K'_0, \varphi(K'_0)) = \dist^{\mathcal{K}} ( K'_0, K''_0)  = \dist^{\mathcal{K}} (K_0, \varphi(K_0)) - 2$; contradicting the minimal choice of $K_0$.
So we conclude that the chain in question is minimal and hence $\dist^{\mathcal{K}} (K_0, \varphi^{(n)}(K_0) ) \geq |n|$ for all $n \in \IZ$ which establishes (\ref{eq:distgrowslinearlyinQQ}) for $c < H+J$.

If $\varphi(K_0) = K_0$ but $\varphi(E_0) \neq E_0$, then we can draw the same conclusions for $\mathcal{E}_{K_0}$ instead of $\mathcal{K}$ and obtain $\dist^H_{K_0} (Q_0, \varphi^{(n)}(Q_0)) \geq |n|$ for all $n \in \IZ$.
If $\varphi(E_0) = E_0$, then $\dist^V_{K_0} (Q_0, \varphi^{(n)}(Q_0)) \geq |n|$ for all $n \in \IZ$.
So in the latter two cases (\ref{eq:distgrowslinearlyinQQ}) follows by the last assertion of Proposition \ref{Prop:distballsinQQareballs}.

Let $N \geq 1$ be some large natural number whose value we will determine at the end of the proof.
The sets $\td{S}_+ = F \cup \varphi (F) \cup \ldots \varphi^{(N-1)} (F)$ and $\td{S}_- = \varphi^{(-1)} (F) \cup \ldots \varphi^{(-N)} (F)$ are each diffeomorphic to a solid cylinder $\approx I \times D^2$ and are bounded by annuli inside $\partial \td{S}$ as well as disks $D_0, D_+$ and $D_0, D_-$ where $D_+ = \varphi^{(N)} (D_0)$ and $D_- = \varphi^{(-N)} (D_0)$.
Let $\td\sigma_+, \td\sigma_-$ be the subsegments of $\td\sigma$ which connect $D_0$ with $D_+$ and $D_0$ with $D_-$, i.e. $\td\sigma_+ = \td\sigma \cap \td{S}_+$ and $\td\sigma_- = \td\sigma \cap \td{S}_-$.

Choose $R_0 > 0$ large enough such that $B'_{R_0} (Q_0)$ contains all points of $\td{M}$ which have distance at most $L$ from $F$.
Then for all $n \in \IZ$ the set $B'_{R_0} ( \varphi^{(n)}(Q_0))$ contains all points of $\td{M}$ which have distance at most $L$ from $\varphi^{(n)} (F)$.
Consider for the moment some number $R$ such that
\[ R_0 \leq R \leq c N - C_2 - R_0. \]
($C_2$ is the constant from Proposition \ref{Prop:distballsinQQareballs}.)
Then we have $B'_{R_0} (Q_0) \subset B'_R (Q_0) \subset P_R (Q_0)$, so every point of $\td{M}$ which has distance at most $L$ from $D_0 \subset F$ is contained in $P_R(Q_0)$.
We now claim that for all $n \in \IZ$ with $|n| \geq N$ the set $B'_{R_0} (\varphi^{(n)} (Q_0))$ is disjoint from the interior of $B'_{R + C_2} (Q_0)$.
Assume not and let $Q' \subset B'_{R_0} (\varphi^{(n)} (Q_0)) \cap B'_{R + C_2} (Q_0)$ be a cell in the intersection.
Then we obtain the following contraction using (\ref{eq:distgrowslinearlyinQQ}):
\[ c|n| \leq \dist'(Q_0, \varphi^{(n)} (Q_0)) \leq \dist'(Q_0, Q') + \dist' (Q', \varphi^{(n)} (Q_0)) < R_0 + R + C_2 \leq cN. \]
So $B'_{R_0} (\varphi^{(n)} (Q_0))$ is disjoint from $\Int B'_{R + C_2} (Q_0)$ and hence also from $P_R (Q_0) \subset \Int B'_{R + C_2} (Q_0) \cup \partial \td{M}$.
So $P_R(Q_0)$ does not contain any point of $\td{M}$ which has distance at most $L$ from $\td{S} \setminus (\td{S}_+ \cup \td{S}_-)$ and thus also from $D_+, D_-$.
This implies in particular that the curves $\td\sigma_+$ and $\td\sigma_-$ have intersection number $1$ with the restriction $b_{R, Q_0} |_{S^2} : S^2 \to \td{M}$, whose image is $\partial P_R(Q_0)$ (see Proposition \ref{Prop:distballsinQQareballs}).

Our conclusions imply that the homotopy $\td{H}^*$ restricted to $\partial P_R(Q_0)$ does not intersect $D_0 \cup D_+ \cup D_-$ or, more generally, that it stays away from $\td{S} \setminus (\td{S}_- \cup \td{S}_+)$.
If we view $b_{R, Q_0} |_{S^2}$ as a map from $S^2$ to $\td{V} \cup \partial \td{M}$, then $(x,t) \mapsto \td{H}^* (b_{R, Q_0} |_{S^2}(x), t)$ is a homotopy between $b_{R, Q_0} |_{S^2} = \td{f}^*_0 \circ b_{R, Q_0} |_{S^2} : S^2 \to \td{M}$ and $s_{R, Q_0} = \td{f}^* \circ b_{R, Q_0}|_{S^2} : S^2 \to \td{M}_0$ whose image is disjoint from $D_0 \cup D_+ \cup D_-$  and $\td{S} \setminus (\td{S}_- \cup \td{S}_+)$.
So $s_{R, Q_0}$ has intersection number $1$ with $\td\sigma_+$ and $\td\sigma_-$.
Choose a small perturbation $s'_{R, Q_0} : S^2 \to \td{M}_0$ of $s_{R, Q_0}$ which intersects $\partial \td{S}$ transversally, which still stays away from $D_0, D_+, D_-$ and and $\td{S} \setminus (\td{S}_- \cup \td{S}_+)$ and which satisfies
\begin{equation} \label{eq:sRQ0ssRQ0}
 \area s'_{R, Q_0} \big|_{{s'}^{-1}_{R, Q_0} (\td{S}_+ \cup \td{S}_-)} < 2 \area s_{R, Q_0} \big|_{s^{-1}_{R, Q_0}(\td{S}_+ \cup \td{S}_-)}.
\end{equation}
(This can always be achieved by perturbing the composition of $s_{R, Q_0}$ with a diffeomorphism of $\td{M}_0$ which slightly expands $\td{S}$.)
Set
\[ X = {s'}_{R, Q_0}^{-1} (\td{S}), \qquad  X_+ = {s'}_{R, Q_0}^{-1} (\td{S}_+), \qquad X_- = {s'}_{R, Q_0}^{-1} (\td{S}_-). \]
Then $X, X_+, X_-$ are compact smooth domains of $S^2$ and we have $X = X_+ \dotcup X_- $, $s'( \partial X) \subset \partial \td{S}$ and $s'_{R, Q_0}$ restricted to $X_+$ and $X_-$ has non-zero intersection number with $\td{\sigma}$.
Let $X'_+ \subset X_+$ be the union of all components of $X_+$ on which $s'_{R, Q_0}$ has non-zero intersection number with $\td\sigma$, define $X'_- \subset X_-$ analogously and set $X' = X'_+ \cup X'_-$.
Then $X'_+, X'_- \neq \emptyset$ and $X'_+, X'_- \neq S^2$ and every component $Y \subset X'$ is bounded by at least one circle $Z \subset \partial Y$ such that $s'_{R, Q_0} |_Z : Z \to \partial \td{S}$ is non-contractible in $\partial \td{S}$.
Each such circle $Z$ bounds two disks $E_1, E_2 \subset S^2$ on either side (one of these disks contains $Y$ and the other one doesn't).
Consider now the set of all such disks $E \subset S^2$ coming from all components $Y$ of $X'$ and all boundary circles $Z \subset \partial Y$ for which $s'_{R, Q_0} |_Z : Z \to \partial \td{S}$ is non-contractible in $\partial \td{S}$.
Any two such disks are either disjoint or one is contained in the other.
We can hence choose a component $Y \subset X'$, a boundary circle $Z \subset \partial X'$ with the aforementioned property and a disk $E \subset S^2$ bounded by $Z$ such that $E$ is minimal with respect to inclusion.
We argue that $s'_{R, Q_0}$ restricted to every other boundary circle $Z' \subset \partial Y$ is contractible in $\td{S}$:
If this was not the case, then $Y$ must be disjoint from the interior of $E$, since otherwise $Z' \subset Y \subset E$ bounds a disk $E' \subsetneq E$.
By the same argument, $E$ cannot contain any other component $Y'$ of $X'$, because otherwise we would find a boundary circle $Z'' \subset \partial Y' \subset \Int E$ such that $s'_{R, Q_0} |_{Z''}$ is non-contractible in $\partial \td{S}$.
So $\Int E$ must be be disjoint from $X'$ and hence $s'_{R, Q_0} |_{E}$ describes a nullhomotopy of a non-contractible curve in $\partial \td{S}$ which does not intersect $\td\sigma$.
Since $\pi_2 (\td{M}_0) = \pi_2 (M_0) = 0$, this nullhomotopy can be homotoped relative boundary to a nullhomotopy which has non-zero intersection number with $\td\sigma$.
This is however impossible and we obtain a contradiction.
So $s'_{R, Q_0}$ restricted to all other boundary components of $Y$ is non-contractible in $\td{S}$ and hence we have shown that $\Sigma = Y$ and $h = \pi \circ s'_{R, Q_0} |_Y$ satisfy all the claims of the Proposition except for the area bound.

In view of (\ref{eq:sRQ0ssRQ0}) it remains to choose $R$ and $N$ such that $\area s_{R, Q_0} |_{s^{-1}_{R, Q_0}(\td{S}_+ \cup \td{S}_-)}$ can be bounded in terms of $\area f$.
To do this choose radii $R_i = R_0 + C_2 i$ where $i = 0, \ldots, e$ with $e = \lfloor C_2^{-1} (cN - C_2 - 2 R_0) \rfloor$.
Then
\[ R_0 < R_1 < \ldots < R_e \leq c N - C_2 - R_0. \]
By Proposition \ref{Prop:distballsinQQareballs} we know that $\partial P_{R_0} (Q_0) \setminus \partial \td{M}, \ldots, \partial P_{R_e} (Q_0) \setminus \partial \td{M} \subset \td{V} \subset \td{M}$ are pairwise disjoint.
So since $b_{R_i, Q_0} (S^2) = \partial P_{R_i}(Q_0)$ and $s_{R_i, Q_0} = \td{f}^* \circ b_{R_i, Q_0} |_{S^2}$ and $\td{f}^* (\partial \td{M}) = \partial \td{M}$ we have
\[ \area s_{R_0, Q_0} \big|_{s^{-1}_{R_0, Q_0} (\td{S}_+ \cup \td{S}_-)} + \ldots + \area s_{R_e, Q_0} \big|_{s^{-1}_{R_e, Q_0} (\td{S}_+ \cup \td{S}_-)} \leq \area \td{f}^* \big|_{\td{f}_0^{* -1} (\td{S}_+ \cup \td{S}_-)}. \]
Since $\td{S}_+ \cup \td{S}_- = \varphi^{(-N)} (F') \dotcup \ldots \dotcup \varphi^{(N-1)} (F') \cup D_-$ for the half-open set $F' = F \setminus D_0$, we further have
\[ \area \td{f}^* \big|_{\td{f}^{* -1} (\td{S}_+ \cup \td{S}_-)} = \area \td{f}^* \big|_{\td{f}^{* -1} (\varphi^{(-N)} (F'))} + \ldots + \area \td{f}^* \big|_{\td{f}^{* -1} (\varphi^{(N-1)} (F'))}. \]
Observe now that whenever $\td{f}^* (x) = \td{f}^* (y)$ and $\pi(x) = \pi(y)$ for $x, y \in \td{V}$, then $x = y$, since the curves $t \mapsto \td{H}^* (x, t), \td{H}^*(y,t)$ have the same endpoint and project to the same curve $t \mapsto H^* (\pi(x), t)$ under $\pi$.
So for all $n \in \IZ$ the projection $\pi$ restricted to $\td{f}^{* -1}(\varphi^{(n)}(F'))$ is injective.
Since $\pi ( \td{f}^{* -1} (\varphi^{(n)}(F'))) \subset f^{*-1} (S)$, we conclude $\area \td{f}^* \big|_{\td{f}^{* -1} (\varphi^{(n)} (F'))} \leq \area f^* |_{f^{* -1} (S)} < \area f$.
Putting all this together yields
\[ \area s_{R_0, Q_0} \big|_{s^{-1}_{R_0, Q_0} (\td{S}_+ \cup \td{S}_-)} + \ldots + \area s_{R_e, Q_0} \big|_{s^{-1}_{R_e, Q_0} (\td{S}_+ \cup \td{S}_-)} < 2N \area f. \]
So we can find an index $i \in \{ 0, \ldots, e \}$ such that
\[ \area s_{R_i, Q_0} \big|_{s^{-1}_{R_i, Q_0} (\td{S}_+ \cup \td{S}_-)} < \frac{2N}{e+1} \area f \leq \frac{2 C_2 N}{cN - C_2 - 2 R_0} \area f. \]
Choosing $N > 2 c^{-1} (C_2 + 2R_0)$ yields
\[ \area s_{R_i, Q_0} \big|_{s^{-1}_{R_i, Q_0} (\td{S}_+ \cup \td{S}_-)} < 4 c^{-1} C_2 \area f. \]
This finishes the proof of Proposition \ref{Prop:easiermaincombinatorialresultCaseb}
\end{proof}

\subsection{The case in which $M$ is covered by a $T^2$-bundle over a circle} \label{subsec:CaseT2bundleoverS1}
We finally present the proof of Proposition \ref{Prop:maincombinatorialresult}(b).

\begin{Lemma} \label{Lem:geomseriesinSL2Z}
Let $A \in SL(2, \IZ)$ be an invertible $2 \times 2$-matrix with integral entries.
Then for every $k \geq 1$ there is a number $1 \leq d \leq 6^k$ such that
\[ I + A + A^2 + \ldots + A^{d-1} \equiv 0 \quad \mod 3^k. \]
Here $I$ is the identity matrix.
\end{Lemma}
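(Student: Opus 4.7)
The plan is to proceed by induction on $k$, using the multiplicative identity
\[ S_{mn}(A) = S_m(A^n) \cdot S_n(A), \qquad S_d(X) := I + X + \cdots + X^{d-1}, \]
which follows from regrouping $\sum_{l=0}^{mn-1} A^l = \sum_{i=0}^{m-1} A^{in} \sum_{j=0}^{n-1} A^j$. The strategy is to establish a base case with $d_1 \le 6$, and then to show in the inductive step that $d_{k+1} := 3 d_k$ works at level $k+1$. Since $d_1 \leq 6$ and $d_{k+1} = 3 d_k$, we obtain $d_k \leq 6 \cdot 3^{k-1} = 2 \cdot 3^k \leq 6^k$, as required.

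For the base case $k=1$, I would use Cayley--Hamilton: since $\det A = 1$, we have $A^2 = (\tr A) A - I$. Set $a := \tr A \bmod 3$ and split into three cases. If $a \equiv 0$, then $A^2 \equiv -I \pmod 3$, so $A^4 \equiv I$ and $S_4(A) = (I + A)(I + A^2) \equiv 0 \pmod 3$. If $a \equiv -1$, then $A^3 = A \cdot A^2 \equiv A(-A - I) \equiv I \pmod 3$, and a direct computation yields $S_3(A) \equiv 0 \pmod 3$. If $a \equiv 1$, then $A^2 \equiv A - I$ and $A^3 \equiv -I$, so $A^6 \equiv I$ and one checks $S_6(A) = I + A + (A-I) + (-I) + (-A) + (I-A) \equiv 0 \pmod 3$. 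Thus $d_1 \in \{3,4,6\}$ always works.

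For the inductive step, suppose $S_{d_k}(A) \equiv 0 \pmod{3^k}$. The identity $(A - I) S_{d_k}(A) = A^{d_k} - I$ then forces $A^{d_k} \equiv I \pmod{3^k}$, so I may write $B := A^{d_k} = I + 3^k D$ for some integer matrix $D$. Expanding,
\[ S_3(B) = I + B + B^2 = 3I + 3^{k+1} D + 3^{2k} D^2 \equiv 3I \pmod{3^{k+1}}, \]
since $2k \geq k+1$ for $k \geq 1$. Combining this with the divisibility $S_{d_k}(A) \in 3^k M_2(\IZ)$ via the multiplicative identity gives
\[ S_{3 d_k}(A) = S_3(B) \cdot S_{d_k}(A) \equiv 3I \cdot S_{d_k}(A) \equiv 0 \pmod{3^{k+1}}, \]
and $d_{k+1} = 3 d_k \leq 3 \cdot 6^k \leq 6^{k+1}$.

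The only subtlety is keeping the base-case bound sharp: a crude bound like $d_1 \leq |SL_2(\IF_3)| = 24$ would blow up the induction (multiplying by $24$ per step instead of $3$), so the case analysis based on $\tr A \bmod 3$ is essential. Once the base case gives $d_1 \leq 6$, the induction is mechanical and the bound $d_k \leq 2 \cdot 3^k \leq 6^k$ follows immediately.
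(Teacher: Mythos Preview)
Your argument is correct. Both proofs share the same base case (Cayley--Hamilton plus a case split on $\tr A \bmod 3$, yielding $d_1 \in \{3,4,6\}$) and the same multiplicative identity $S_{mn}(A) = S_m(A^n)\,S_n(A)$, but the inductive steps differ. The paper reapplies the base case to $A$ and the full inductive hypothesis to $A^{d_1}$, so each level costs a factor of $6$ and the bound is exactly $6^k$. You instead observe that $S_{d_k}(A) \equiv 0 \pmod{3^k}$ forces $A^{d_k} \equiv I \pmod{3^k}$ via $(A-I)S_{d_k}(A) = A^{d_k}-I$, and then a direct binomial computation shows $S_3(A^{d_k}) \equiv 3I \pmod{3^{k+1}}$, so tripling $d_k$ suffices. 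This is a bit sharper --- you actually get $d_k \le 2\cdot 3^k$ rather than $6^k$ --- and avoids invoking the induction hypothesis on a new matrix. The paper's version is more uniform (it does not need the side observation that $A^{d_k} \equiv I$), while yours extracts slightly more from the structure; either is perfectly adequate for the application.
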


\begin{proof}
We first show the claim for $k = 1$.
Since $\det A = 1$, the Theorem of Cayley-Hamilton yields that $I - (\tr A) A + A^2 = 0$.
So we are done for $\tr A \equiv 2 \mod 3$.
If $\tr A \equiv 0 \mod 3$, then $I + A + A^2 + A^3 = (I + A^2)(I + A) \equiv 0 \mod 3$ and if $\tr A \equiv 1 \mod 3$, then $I + \ldots + A^5 = (I - A + A^2) (I + 2 A + 2 A^2 + A^3) \equiv 0 \mod 3$.

We now apply induction.
Assume that the statement is true for all numbers up to $k \geq 1$.
We will show that it then also holds for $k+1$.
Choose $1\leq d_1 \leq 6$ such that $I + A + \ldots + A^{d_1 - 1}  \equiv 0 \mod 3$.
By the induction hypothesis applied to $A^{d_1} \in SL(2, \IZ)$, there is a number $1 \leq d_2 \leq 6^k$ such that $I + A^{d_1} + A^{2d_1} + \ldots + A^{(d_2-1)d_1} \equiv 0 \mod 3^k$.
So
\begin{multline*}
 I + A  + \ldots + A^{d_1 d_2 - 1} \\
 = (I + A + \ldots + A^{d_1 - 1}) (I + A^{d_1} + \ldots + A^{(d_2-1) d_1}) \equiv 0 \quad \mod 3^{k+1},
\end{multline*}
and $1 \leq d_1 d_2 \leq 6^{k+1}$.
\end{proof}

\begin{Lemma} \label{Lem:Misexpandable}
Assume that $M$ is the total space of a $T^2$-bundle over a circle.
Then for every $n \geq 1$ there is a finite covering map $\pi_n : M \to M$ with the same domain and range such that for every embedded loop $\sigma \subset M$ the preimage $\pi_n^{-1} (\sigma)$ consists of at least $n$ loops.
\end{Lemma}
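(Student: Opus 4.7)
The plan is to realise $M$ explicitly as $\IR^3/\Gamma$, where $\Gamma=\IZ^2\rtimes_A\IZ=\langle E_1,E_2,T\rangle$ with $TE_iT^{-1}=E^{Ae_i}$ and $A\in SL(2,\IZ)$ the monodromy of the $T^2$-bundle, and to take $\pi_n$ to be the self-covering induced by the fibrewise scaling map $(x,s)\mapsto(Nx,s)$ on $\IR^3$, for $N=3^r$ with $r$ chosen at the end. This descends to a degree-$N^2$ cover $M\to M$ since $E^w$ is sent to $E^{Nw}\in\Gamma$ and $T$ is preserved.

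Now let $\sigma\subset M$ be an embedded loop with homotopy class $[\sigma]=E^v T^c\in\Gamma$. First I would identify the $N^2$ preimages of the basepoint of $\sigma$ at a fixed height with $(1/N)\IZ^2/\IZ^2$, and compute the first-return map along the lifts of $\sigma$. A direct calculation—lift $\sigma$ in the universal cover using the equivariance of the $\Gamma$-action, pull back through $(x,s)\mapsto(Nx,s)$, and then return to the original height via $T^{-c}$—shows that one trip around $\sigma$ produces the affine map
\[
\Phi(y)\;=\;A^{-c}y+A^{-c}v/N\qquad\text{on}\qquad(1/N)\IZ^2/\IZ^2,
\]
and the number of components of $\pi_n^{-1}(\sigma)$ equals the number of $\Phi$-orbits. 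The main conceptual obstacle is getting the linear part $A^{-c}$ correct rather than mistaking $\Phi$ for a pure translation by $v/N$; it is exactly this twist which forces Lemma~\ref{Lem:geomseriesinSL2Z} into the argument.

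Iterating gives $\Phi^d(y)=A^{-dc}y+B_d(A^{-c})\cdot A^{-c}v/N$ where $B_d(M)=I+M+\cdots+M^{d-1}$. In the fibre case $c=0$ the map $\Phi$ is the translation $y\mapsto y+v/N$, whose orbit count on $(\IZ/N)^2$ is $N\cdot\gcd(v_1,v_2,N)\ge N$. In the twisted case $c\ne0$ I would apply Lemma~\ref{Lem:geomseriesinSL2Z} to $A^{-c}\in SL(2,\IZ)$ with exponent $r$, obtaining an integer $1\le d\le 6^r$ with
\[
I+A^{-c}+(A^{-c})^2+\cdots+(A^{-c})^{d-1}\;\equiv\;0\pmod{3^r}.
\]
Multiplying by $A^{-c}-I$ yields $A^{-dc}\equiv I\pmod{N}$ as well, so both the linear part $A^{-dc}-I$ and the constant part $B_d(A^{-c})A^{-c}v/N$ of $\Phi^d$ vanish modulo $\IZ^2$ on the set $(1/N)\IZ^2/\IZ^2$. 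Hence $\Phi^d$ acts as the identity on this set, every $\Phi$-orbit has size at most $d\le 6^r$, and the number of orbits is at least
\[
\frac{N^2}{d}\;\ge\;\frac{(3^r)^2}{6^r}\;=\;(3/2)^r.
\]

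Finally, choosing $r=\lceil\log_{3/2}n\rceil$ gives $(3/2)^r\ge n$ in the case $c\ne 0$ and simultaneously $N=3^r\ge n$ in the case $c=0$, so the self-covering $\pi_n:M\to M$ defined by $(x,s)\mapsto(3^r x,s)$ has the required property that $\pi_n^{-1}(\sigma)$ consists of at least $n$ loops for every embedded $\sigma\subset M$. The only nontrivial step is the first-return computation that identifies $\Phi$ as an affine map with linear part $A^{-c}$; once that is in hand, Lemma~\ref{Lem:geomseriesinSL2Z} does all the remaining work.
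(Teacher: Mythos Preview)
Your proof is correct and follows essentially the same route as the paper: both take the cover corresponding to the subgroup $N\IZ^2\rtimes\IZ\le\IZ^2\rtimes_A\IZ$ with $N=3^k$, reduce the component count to bounding the minimal $d$ with $(I+A^{c}+\cdots+A^{(d-1)c})v\equiv 0\pmod N$, and invoke Lemma~\ref{Lem:geomseriesinSL2Z} to get $d\le 6^k$ and hence at least $(3/2)^k$ components. The only difference is cosmetic---you phrase the orbit count via the first-return affine map $\Phi$ on the fibre $(\IZ/N)^2$, while the paper computes powers $g^d$ directly in $\pi_1(M)$ and asks when they land in the subgroup; these are the same computation.
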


\begin{proof}
The manifold $M$ is diffeomorphic to a mapping torus of an orientation-preserving diffeomorphism $\phi : T^2 \to T^2$.
The diffeomorphism $\phi$ acts on $\pi_1 (T^2) \cong \IZ^2$ by an element in $A \in SL(2, \IZ)$.
The fundamental group $\pi_1 (M)$ is isomorphic to a semidirect product of $\IZ$ with $\IZ^2$ coming from the action of $A$ on $\IZ^2$.
So $\pi_1(M)$ can be identified with $\IZ^2 \times \IZ$ with the following multiplication
\[ \left( \begin{pmatrix} x_1\\ y_1 \end{pmatrix}, z_1 \right) \cdot \left( \begin{pmatrix} x_2 \\ y_2 \end{pmatrix}, z_2 \right) = \left( \begin{pmatrix} x_1\\ y_1 \end{pmatrix} + A^{z_1} \begin{pmatrix} x_2 \\ y_2 \end{pmatrix}, z_1 + z_2 \right). \]
Since for any $m \geq 1$, the lattice $m \IZ^2 \subset \IZ^2$ is preserved by the action of $A$, the subset
\[ U_m = \left\{ \left( \begin{pmatrix}  m x \\  m y \end{pmatrix}, z \right) \;\; : \;\; x, y, z \in \IZ \right\} \subset \pi_1(M) \]
is a subgroup of $\pi_1(M)$ of index $m^2$.
It is not difficult to see that $U_m$ is isomorphic to $\pi_1(M)$ and that if we consider the corresponding $m^2$-fold covering $\pi'_m : \widehat{M}_m \to M$, then $\widehat{M}_m$ is diffeomorphic to $M$.

It remains to compute the number of components of ${\pi'}_m^{-1} (\sigma)$ and to show that this number can be made arbitrarily large for the right choice of $m$.
Set $m = 3^k$ for a number $k \geq 1$ which we will determine later.
Let $\widehat{\sigma} \subset  \pi^{' -1}_m (\sigma)$ be an arbitrary loop in the preimage of $\sigma$.
Then we can find an element $g = ((x, y), z) \in \pi_1(M)$ in the conjugacy class of $[\sigma]$ such that $\sigma$ represents $g$ in $M$ and such that $\widehat{\sigma}$ represents a multiple of $g$ which is contained in $U_m = \pi_1 (\widehat{M}_m)$ in $\widehat{M}_m$.
Then the restriction $\pi'_m |_{\widehat{\sigma}} : \widehat{\sigma} \to \sigma$ is a covering of a circle and its degree equal to the first exponent $d \geq 1$ for which $g^d \in U_m$.
We will show that $d \leq 6^k$.
To do this observe that for all $i \geq 1$
\[ g_0^i =  \left( \begin{pmatrix} x\\ y \end{pmatrix} + A^{z} \begin{pmatrix} x \\ y \end{pmatrix} + \ldots + A^{(i-1) z} \begin{pmatrix} x \\ y \end{pmatrix}, i z \right). \]
By Lemma \ref{Lem:geomseriesinSL2Z}, there is a number $1 \leq d \leq 6^k$ such that the first two entries of $g^d$ are divisible by $m = 3^k$ and hence $g^d \in U_m$.
This implies the desired bound.

Since the choice of $\widehat{\sigma}$ was arbitrary, we conclude that every loop in ${\pi'}_m^{-1} (\sigma)$ covers $\sigma$ at most $6^k$ times and hence the number of such loops is at least
\[ \frac{m^2}{6^k} = \frac{3^{2k}}{6^k} = \Big( \frac32 \Big)^k. \]
So choosing $k$ such that $( \frac32 )^k > n$ yields the desired result.
\end{proof}

\begin{proof}[Proof of Proposition \ref{Prop:maincombinatorialresult}(b)]
It is easy to see that we only have to consider the case in which $M$ is a $T^2$-bundle over a circle, since for any finite cover $\widehat\pi : \widehat{M} \to M$ we can compose the maps $\widehat{f}_1, \widehat{f}_2, \ldots : V \to \widehat{M}$ obtained for $\widehat{M}$ with $\widehat\pi$ to obtain the maps $f_1 = \widehat\pi \circ \widehat{f}_1, f_2 = \widehat\pi \circ \widehat{f}_2, \ldots$.

We first establish the assertion for the case $n=1$.
Consider the universal covering $\pi : \td{M} \to M$ and choose a fundamental domain $F \subset \td{M}$.
The projection of its boundary, $V = \pi (\partial F) \subset M$, can be seen as an embedded simplicial complex in $M$.
Let $f_1 : V \to M$ be the inclusion map of $V$.
Denote furthermore by $\td{V} = \td\pi^{-1} (V) \subset \td{M}$ the preimage of $V$ and by $\td{f}_1 : \td{V} \to \td{M}$ its inclusion map.
The set $\td{V}$ can be interpreted as an infinite simplicial complex whose $1$-skeleton $\td{V}^{(1)}$ is the preimage of $V^{(1)}$ under $\pi$.
The complement of $\td{V}$ in $\td{M}$ consists of open sets whose closures $Q \subset \td{M}$ are finite polyhedra and which we call \emph{cells}.
The set of cells is again denoted by $\mathcal{Q}$.
Observe that every cell is the image of $F$ under a deck transformation of $\pi : \td{M} \to M$.
We say that two cells $Q_1, Q_2 \in \mathcal{Q}$ are adjacent if they meet in a point of $\td{V} \setminus \td{V}^{(1)}$.
Choose a cell $Q_0 \in \mathcal{Q}$ and consider for each $k \geq 0$ the union $B_k (Q_0)$ of all cells which have distance $\leq k$ in the adjacency graph of $\mathcal{Q}$.
It is not difficult to see that $S_k = \partial B_k(Q_0) \subset \td{V}$ is the image of a continuous map $s_k : \Sigma_k \to \td{V}$ where $\Sigma_k$ is an orientable surface such that $s_k$ is an injective embedding on $\Sigma_k \setminus s_k^{-1} (\td{V}^{(1)})$.

Choose a component $\td\sigma \subset \pi^{-1} (\sigma)$ which intersects $Q_0$.
Then $\td\sigma \subset \td{M}$ is a non-compact, properly embedded line and there is a non-compact ray $\td\sigma^+ \subset \td\sigma$ which starts in $Q_0$.
This implies that $\td\sigma^+$ has non-zero intersection number with the map $\td{f}_1 \circ s_k : \Sigma_k \to \td{M}$ for each $k \geq 1$.

Consider now the continuous map $f'_1 : V \to M$ which is homotopic to $f_1 : V \to M$ via a homotopy $H : V \times [0,1] \to M$ and use this homotopy to construct a lift $\td{f}'_1 : \td{V} \to \td{M}$.
Let $N$ be a bound on the number of cells that each curve of the form $t \mapsto H (\cdot, t)$ intersects.
Then $H$ induces a homotopy from $\td{f}_1 \circ s_N : \Sigma_N \to \td{M}$ to $\td{f}'_1 \circ s_N : \Sigma_N \to \td{M}$ which is disjoint from $Q_0$.
So both maps have the same, non-zero, intersection number with $\td\sigma^+$.
We conclude that $\td{f}'_1( s_N ( \Sigma_N )) \subset \td{f}'_1 ( \td{V})$ intersects $\td\sigma$.
Hence, $f'_1(V)$ intersects $\sigma$.

We finally show the assertion for all remaining $n \geq 2$.
Fix $n$, consider the covering map $\pi_n : M \to M$ from Lemma \ref{Lem:Misexpandable} and set
\[ f_n = \pi_n \circ f_1 : V \to M. \]
Moreover, the preimage $\sigma_n= \pi_n^{-1} (\sigma)$ is the union of at least $n$ loops which all have the property that all its non-trivial multiples are non-contractible in $M$.

Consider a map $f'_n : V \to M$ and a homotopy between $f_n$ and $f'_n$.
This homotopy can be lifted via $\pi_n : M \to M$ to a homotopy between $f_1$ and a map $f'_1 : V \to M$ such that $f'_n = \pi_n \circ f'_1$.
We now have
\[ {f'}_n^{-1} (\sigma) ={f'}^{-1}_1 ( \pi_n^{-1} (\sigma)) =  {f'}^{-1}_1 ( \sigma_n ) = \bigcup_{\sigma' \subset \sigma_n} {f'}^{-1}_n (\sigma'), \]
where the last union is to be understood as the union over all loops $\sigma'$ of $\sigma_n$.
By our previous conclusion, ${f'}^{-1}_1 (\sigma') \neq \emptyset$ for all such $\sigma'$ and all such sets are pairwise disjoint.
This proves the desired result.
\end{proof}

\section{Proof of the main Theorem} \label{sec:mainproof}
\subsection{Existence of short loops and compressing disks of bounded area}
We first establish an analogue of Lemma 7.12 in \cite{Bamler-longtime-II}.
To to do this, we need the following Lemma which is similar to Lemma 7.11 in \cite{Bamler-longtime-II}.

\begin{Lemma} \label{Lem:separtingloopsinmultannulus}
Let $\Sigma \subset \IR^2$ be a compact smooth domain whose boundary circles are denoted by $C_1, \ldots, C_{m}, C'_1, \ldots, C'_{m'}$ with $m, m' \geq 1$.
Moreover, let $g$ be a symmetric non-negative $2$-form on $\Sigma$ (i.e. a degenerate Riemannian metric).

Choose constants $a, b > 0$ and assume we have $\area \Sigma \leq ab$ and $\dist (C_i, C'_{i'}) > a$ for any $i = 1, \ldots, m$ and $i' = 1, \ldots, m'$ (both times with respect to $g$).
Then we can find a collection of pairwise disjoint smoothly embedded loops $\gamma_1, \ldots, \gamma_n \subset \Int \Sigma$ with the property that $\gamma_1 \cup \ldots \cup \gamma_n$ separates $C_1 \cup \ldots \cup C_m$ from $C'_1 \cup \ldots \cup C'_{m'}$ and
\[ \ell(\gamma_1) + \ldots + \ell (\gamma_n) < b. \]
\end{Lemma}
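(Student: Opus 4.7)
The plan is to apply a coarea-plus-Sard argument to the distance-to-inner-boundary function. Define $u : \Sigma \to [0,\infty)$ by $u(x) = \dist_g(x, C_1 \cup \ldots \cup C_m)$, using the (possibly degenerate) metric $g$. By hypothesis $u = 0$ on $C_1 \cup \ldots \cup C_m$ and $u > a$ on $C'_1 \cup \ldots \cup C'_{m'}$. Moreover $u$ is $1$-Lipschitz with respect to $g$, in the sense that on any piecewise smooth curve $\sigma$ one has $|u(\sigma(1)) - u(\sigma(0))| \leq \ell_g(\sigma)$.

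Because $g$ may be degenerate (which makes ``smooth level set'' subtle), I first regularize. Let $g_0$ denote the Euclidean metric on $\Sigma$ and set $g_\varepsilon = g + \varepsilon g_0$, a genuine Riemannian metric. The induced distance $u_\varepsilon$ satisfies $u_\varepsilon \geq u$ (so $u_\varepsilon > a$ on the primed circles) and $u_\varepsilon \to u$ uniformly as $\varepsilon \to 0$. A standard mollification replaces $u_\varepsilon$ by a smooth function $\td{u}_\varepsilon$ with $|\nabla \td{u}_\varepsilon|_{g_\varepsilon} \leq 1 + \varepsilon$ everywhere, and with $\td{u}_\varepsilon < \varepsilon$ on $C_1 \cup \ldots \cup C_m$ and $\td{u}_\varepsilon > a - \varepsilon$ on $C'_1 \cup \ldots \cup C'_{m'}$ (after possibly slightly shrinking $a$ by an $\varepsilon$-amount).

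Now I apply the coarea formula for $\td{u}_\varepsilon$ with respect to $g_\varepsilon$:
\[
 \int_{-\infty}^\infty \ell_{g_\varepsilon}\big(\{\td{u}_\varepsilon = t\}\big)\, dt \;=\; \int_\Sigma |\nabla \td{u}_\varepsilon|_{g_\varepsilon}\, d{\area}_{g_\varepsilon} \;\leq\; (1+\varepsilon)\, \area_{g_\varepsilon}\Sigma.
\]
Since $\area_{g_\varepsilon} \Sigma \to \area_g \Sigma \leq ab$ as $\varepsilon \to 0$, for small enough $\varepsilon$ the integral on the left over $t \in (\varepsilon, a-\varepsilon)$ is strictly less than $a b$. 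Hence by the mean value theorem there is some $t_* \in (\varepsilon, a - \varepsilon)$ with $\ell_{g_\varepsilon}(\{\td{u}_\varepsilon = t_*\}) < b$; by Sard's theorem we may in addition take $t_*$ to be a regular value of $\td{u}_\varepsilon$. Then $\{\td{u}_\varepsilon = t_*\}$ is a smooth, properly embedded $1$-submanifold of $\Sigma$, and, because $\td{u}_\varepsilon < \varepsilon < t_* < a - \varepsilon < \td{u}_\varepsilon$ on the respective boundary components, it lies entirely in $\Int \Sigma$. So it consists of finitely many pairwise disjoint smoothly embedded loops $\gamma_1, \ldots, \gamma_n$ which separate $C_1 \cup \ldots \cup C_m$ from $C'_1 \cup \ldots \cup C'_{m'}$, and the bound $g \leq g_\varepsilon$ gives $\ell_g(\gamma_1) + \ldots + \ell_g(\gamma_n) \leq \ell_{g_\varepsilon}(\gamma_1) + \ldots + \ell_{g_\varepsilon}(\gamma_n) < b$.

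The only delicate point is the regularization step: one must arrange that the smoothing loses only a multiplicative $(1+o(1))$ factor both in the Lipschitz constant and in the area, and that $\td{u}_\varepsilon$ stays uniformly close to $u$ on the boundary circles so that the window $(\varepsilon, a-\varepsilon)$ is non-empty. This is routine mollification in the Euclidean background metric $g_0$, combined with the inequality $\area_{g_\varepsilon} \Sigma \leq \area_g \Sigma + C \varepsilon \area_{g_0} \Sigma$ coming from expanding $\sqrt{\det(g + \varepsilon g_0)}$ to first order in $\varepsilon$. Once this is in place, coarea and Sard finish the proof.
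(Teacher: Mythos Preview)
Your coarea approach works and is considerably more direct than the paper's argument, but one step in the writeup is not quite right. From $\int_\varepsilon^{a-\varepsilon} \ell_{g_\varepsilon}(\{\td u_\varepsilon = t\})\,dt < ab$ you invoke the mean value theorem to get a level set of $g_\varepsilon$-length $< b$; but the interval of integration has length only $a - 2\varepsilon$, so the average is merely $< ab/(a-2\varepsilon)$, which may exceed $b$. The repair is to actually use the \emph{strict} hypothesis $\dist_g(C_i, C'_{i'}) > a$: since each $C'_{i'}$ is compact and $u$ is continuous, one has $u \geq a + \delta$ on $C'_1 \cup \ldots \cup C'_{m'}$ for some $\delta > 0$, hence also $u_\varepsilon \geq a + \delta$ there, and after mollification $\td u_\varepsilon > a + \delta - \varepsilon$. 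Integrating instead over $(\varepsilon,\, a + \delta - \varepsilon)$, the interval has length $> a$ for small $\varepsilon$ while the integral is still $\leq (1+\varepsilon)\,\area_{g_\varepsilon}\Sigma = ab + o(1)$, and now the mean value step legitimately produces a regular level set of $g_\varepsilon$-length $< b$.

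By contrast, the paper argues by induction on $m + m'$. The base case $m = m' = 1$ (an annulus) is quoted from Lemma~7.10 of \cite{Bamler-longtime-II} and is itself essentially a coarea estimate. For the inductive step the paper locates a short arc $\sigma$ that either joins two of the $C_i$ or joins one $C_i$ to itself in a separating way, and uses it to split $\Sigma$ into disjoint half-open annular collars $\Sigma'_{1,1}, \ldots, \Sigma'_{1,m}$ around the $C_i$ and residual pieces $\Sigma'_{2,1}, \ldots, \Sigma'_{2,m''}$ near the $C'_{i'}$. If the collars carry little area, the annulus case applied to each collar gives one short loop per $C_i$; otherwise the residual pieces carry little area and, because $\sigma$ forces each of them to have at most $m + m' - 1$ ends, the inductive hypothesis applies to them. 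Your single application of coarea to $\dist_g(\,\cdot\,, C_1 \cup \ldots \cup C_m)$ bypasses this entire induction; the only price is the regularization needed to handle the possibly degenerate $g$. The paper's route places each loop inside a definite collar, which is not needed here, so for the lemma as stated your argument is the cleaner one.
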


\begin{proof}
We will proceed by induction on $m + m'$.
For $m + m' = 2$, we are done by Lemma 7.10 in \cite{Bamler-longtime-II}.
So assume without loss of generality that $m' \geq 2$.

Let $a_1$ be the infimum of all $a' \geq 0$ such that there is an embedded curve $\sigma \subset \Sigma$ of length $2a'$ which either connects two distinct circles $C_{i_1}, C_{i_2}$ or whose endpoints both lie in the same boundary circle $C_i$ and not all boundary circles of $\Sigma$ lie in the same component of $\Sigma \setminus (\sigma \cup C_i)$.
Pick $\varepsilon > 0$ small such that still $\dist(C_i, C'_{i'}) > a + 2 \varepsilon$ for all $i = 1, \ldots, m$ and $i' = 1, \ldots, m'$ and $\varepsilon < a_1$ if $a_1 > 0$ and choose such a curve $\sigma \subset \Sigma$ of length $2a'$ with $a' < a_1 + \varepsilon$.

Next consider the subsets
\begin{alignat*}{1}
\Sigma_1 &= \{ x \in \Sigma \;\; : \;\; \dist (x, C_1 \cup \ldots \cup C_m) < a_1 - \varepsilon \}, \\
\Sigma_2 &= \{ x \in \Sigma \;\; : \;\; \dist (x, C'_1 \cup \ldots \cup C'_{m'}) < a - a_1 + \varepsilon \}.
\end{alignat*}
Then $\Sigma_1 \cap \Sigma_2 = \emptyset$ and $\sigma \cap \Sigma_2 = \emptyset$.
By definition of $a_1$, the set $\Sigma_1$ is either empty (for $a_1 = 0$) or a disjoint union of half-open domains $\Sigma_{1,1}, \ldots, \Sigma_{1,m}$ such that $\Sigma_{1,i}$ is bounded by $C_i$ for all $i = 1, \ldots, m$.
Moreover, each component $\Sigma_{1,i}$ separates $\Sigma$ into components of which exactly one  contains all boundary circles of $\Sigma$ except $C_i$.
Hence each $\Sigma_{1,i}$ can be completed to a half-open annulus $\Sigma'_{1,i} \subset \Sigma$ and the subsets $\Sigma'_{1,1}, \ldots, \Sigma'_{1,m}, \Sigma_2$ are pairwise disjoint.
Next we let $\Sigma_{2,1}, \ldots, \Sigma_{2,m''}$ be the components of $\Sigma_2$.
We can again extend each $\Sigma_{2,i''}$ to a half-open domain $\Sigma'_{2,i''}$ by filling in all its ``holes'' which don't contain any boundary components of $\partial \Sigma$.
It is then easy to see that the subsets $\Sigma'_{1,1}, \ldots, \Sigma'_{1,m}, \Sigma'_{2,1}, \ldots, \Sigma'_{2,m''}$ are pairwise disjoint and hence
\begin{equation} \label{eq:areasSigma1plusareasSigma2}
 \area \Sigma'_{1,1} + \ldots + \area \Sigma'_{1,m} + \area \Sigma'_{2,1} + \ldots + \area \Sigma'_{2,m''} < \area \Sigma \leq ab.
\end{equation}

Consider first the case in which $a_1 > 0$ and $\area \Sigma'_{1,1} + \ldots + \area \Sigma'_{1,m} < (a_1 - \varepsilon) b$.
Since the two boundary circles of each annulus $\Sigma'_{1,i}$ are $a_1 - \varepsilon$ apart from one another, we can use Lemma 7.10 in \cite{Bamler-longtime-II} to find a geodesic loop $\gamma_i \subset \Sigma'_{1,i}$ which separates both of its boundary circles such that
\[ \ell (\gamma_i) \leq \frac{\area \Sigma'_{1,i}}{a_1 - \varepsilon} \qquad \text{for all} \qquad i = 1, \ldots, m. \]
Then $\ell(\gamma_1) + \ldots + \ell(\gamma_m) < b$ and $\gamma_1 \cup \ldots \cup \gamma_m$ separates $C_1 \cup \ldots \cup C_m$ from $C'_1 \cup \ldots \cup C'_{m'}$.
So in this case we are done.

Now consider the opposite case.
Then by (\ref{eq:areasSigma1plusareasSigma2}) we must have
\[ \area \Sigma'_{2,1} + \ldots + \area \Sigma'_{2,m''} < (a - a_1 + \varepsilon) b. \]
We claim that each $\Sigma'_{2,i''}$ has at most $m + m' -1$ boundary circles (and open ends):
Note that every such open end encloses a boundary circle of $\Sigma$ on the other side.
So we just need to show that there is an open end of $\Sigma_{2, i''}$ and hence of $\Sigma'_{2,i''}$ which encloses at least two boundary circles of $\Sigma$ on the other side.
Recall that $\sigma$ is disjoint from $\Sigma_2$.
Consider the open end of $\Sigma_{2, i''}$ which encloses $\sigma$.
By the choice of $\sigma$ this open end either encloses the two boundary circles of $C_{i_1}, C_{i_2}$ which are connected by $\sigma$ or it encloses the boundary circle $C_i$ and at least one boundary circle which is enclosed by $C_i \cup \sigma$ on the side opposite of $\Sigma_{2, i''}$.
This shows the claim and enables us to use the induction hypothesis on each $\Sigma'_{2,i''}$ to construct smoothly embedded, pairwise disjoint loops $\gamma_1, \ldots, \gamma_n \subset \Sigma'_{2,1} \cup \ldots \cup \Sigma'_{2,m''}$ such that $(\gamma_1 \cup \ldots \cup \gamma_n) \cap \Sigma'_{2,i''}$ separates $(C'_1 \cup \ldots \cup C'_{m'}) \cap \Sigma'_{2, i''}$ from the open ends of $\Sigma'_{2,i''}$ for each $i'' = 1, \ldots, m''$ and such that
\[ \ell (\gamma_1) + \ldots + \ell(\gamma_n) < \frac{\area \Sigma'_{2,1}}{a - a_1 + \varepsilon} + \ldots +  \frac{\area \Sigma'_{2,m''}}{a - a_1 + \varepsilon} < b. \]
It is clear that $\gamma_1, \ldots, \gamma_n$ have the desired topological properties.
\end{proof}

The next Lemma provides a compressing disk of bounded area in a solid torus given a ``compressing multi-annulus'' of bounded area in a larger solid torus.

\begin{Lemma} \label{Lem:makediskfrommultiannulus}
For every $A, K < \infty$ there is an $\td{h}_0 = \td{h}_0 (A, K) < \infty$ such that the following holds:

Consider a Riemannian manifold $(M, g)$, a smoothly embedded solid torus $S \subset M$, $S \approx S^1 \times D^2$ and a collar neighborhood $P \subset S$, $\partial S \subset \partial P$, $P \approx T^2 \times I$ of $\partial S$ which is a $\td{h}_0$-precise torus structure at scale $1$ (cf \cite[Definition 7.3]{Bamler-longtime-II}).
Note that $S' = \Int S \setminus \Int P$ is a solid torus.
Assume also that $|{\Rm}|, |{\nabla\Rm}| < K$ on the $1$-neighborhood around $P$.

Let now $\Sigma \subset \IR^2$ be a compact smooth domain and $f : \Sigma \to S$ a smooth map of $\area f < A$ such that $f (\partial \Sigma) \subset \partial S$ and such that $f$ restricted to only the exterior circle of $\Sigma$ is non-contractible in $\partial S$.

Then there is a smooth map $f' : D^2 \to M$ such that $f' (\partial D^2) \subset \partial S'$, such that $f' |_{\partial D^2}$ is non-contractible in $\partial S'$ and such that $\area f' < \area f + 1$.
\end{Lemma}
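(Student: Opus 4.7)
My strategy is to exploit the long, thin product structure of the collar $P$ to reduce the compressing multi-annulus of $S$ to a genuine compressing disk of $S'$, via a combination of the coarea inequality and an innermost-disk argument. I first reduce to $\Sigma$ connected (with exterior boundary $C$ and inner boundary circles $C_1,\ldots,C_{m-1}$), so that $f|_{C_i}$ is contractible in $\partial S$. A $\pi_1$-argument using $\pi_1(S)=\mathbb Z$ (together with the fact that $[f|_C]$ is a product of conjugates of the $[f|_{C_i}]$ in $\pi_1(S)$) forces $[f|_C]$ to be a non-zero multiple $k\mu$ of the meridian $\mu$ of $\partial S$ in $\pi_1(\partial S)$. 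Since $P$ is a $\tilde h_0$-precise torus structure at scale $1$, it admits a diffeomorphism $P\approx T^2\times[0,L]$ with $\partial S=T^2\times\{0\}$, $\partial S'=T^2\times\{L\}$, $L\ge c/\tilde h_0$, and metric close to a product metric on fibers of diameter $\le 1$. After a small $C^\infty$ perturbation (with area cost absorbed into the additive constant $1$), I arrange that $f$ is transverse to every fiber $T^2\times\{t\}$ for almost every $t$.

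\textbf{Short separating loops and topology.} By the coarea inequality applied to $\pi\circ f:\Sigma\to[0,L]$, where $\pi:P\to[0,L]$ is the product projection, I pick a regular value $t_0\in[L/2,L]$ so that the components $\gamma_1,\ldots,\gamma_n$ of $\Gamma:=f^{-1}(T^2\times\{t_0\})$ satisfy $\ell_0:=\sum_i\ell(f|_{\gamma_i})\le 8A/L$. By choosing $\tilde h_0(A,K)$ sufficiently small, $\ell_0$ can be made smaller than any preassigned constant. Write $\hat S':=S'\cup(T^2\times[t_0,L])$ (an enlarged solid torus with $\partial\hat S'=T^2\times\{t_0\}$) and $\Sigma^\pm:=f^{-1}$ of the corresponding sides. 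A homology computation in $H_1(T^2\times[0,t_0])\cong\mathbb Z^2$ applied to $\Sigma^-$, using $\partial\Sigma^-=\partial\Sigma\cup\Gamma$ and the identities $[f|_{C_i}]=0$ and $[f|_C]=k\mu$, yields
\[
\sum_{i=1}^n [f|_{\gamma_i}] = -k\mu \qquad \text{in } H_1(\partial\hat S').
\]
In particular this sum is non-zero, so at least one $f|_{\gamma_i}$ is non-contractible in $\partial\hat S'$; using the bounded geometry of the fiber (coming from the curvature bound $K$ and the $\tilde h_0$-precision), short non-contractible loops on $\partial\hat S'$ must represent meridian classes of $\hat S'$.

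\textbf{Innermost disk, construction, and area budget.} Decompose $\Sigma^+$ into its planar components $V_1,\ldots,V_r$; each $V_j$ is a compact planar region bounded by a subset of $\{\gamma_i\}$ with one distinguished outer boundary $\gamma_i^{(j)}$ (in the $\mathbb R^2$-sense). Tracing the nested tree of the $\gamma_i$ together with the non-vanishing meridian sum above, I find an innermost component $V_{j^*}$ whose outer boundary $\gamma_{i^*}$ has $[f|_{\gamma_{i^*}}]$ a non-zero meridian multiple of $\hat S'$, and whose inner $\gamma_i$-boundaries are each null-homotopic in $\partial\hat S'$. I construct $f'$ by (i) filling the inner holes of $V_{j^*}$ with isoperimetric cap-off disks on $\partial\hat S'$, each of area $\le C\ell(\gamma_i)^2$, contributing total area at most $C\ell_0^2$; and (ii) pushing the outer boundary from $T^2\times\{t_0\}$ to $T^2\times\{L\}=\partial S'$ through the product cylinder. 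The result is a disk $f':D^2\to M$ with $\partial D^2$ mapping non-contractibly (as a meridian multiple) to $\partial S'$.

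\textbf{Main obstacle.} The principal technical difficulty is the area accounting: while the cap-off contribution $C\ell_0^2=O(A^2/L^2)$ is negligible, the push cost $\ell(f|_{\gamma_{i^*}})\cdot(L-t_0)$ need not be small directly because $L-t_0$ can be of order $L$. I resolve this by applying coarea not on all of $[0,L]$ but on a short interval near $L$, and by selecting $\gamma_{i^*}$ among the finest short loops near $\partial S'$, exploiting that averaging on a thin slice near $L$ gives a level whose \emph{specific} chosen component $\gamma_{i^*}$ (rather than total length) is short enough to absorb the push into the budget $1$. Balancing these two contributions fixes the required dependence $\tilde h_0=\tilde h_0(A,K)$; the resulting $f'$ satisfies $\area f'\le\area f|_{V_{j^*}}+o_{\tilde h_0\to 0}(1)<\area f+1$.
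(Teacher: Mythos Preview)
Your overall architecture (coarea to find short level curves, homology/intersection-number bookkeeping, innermost selection, isoperimetric cap-off) is close in spirit to the paper's proof, but the step you flag as the ``main obstacle'' is a real gap that your proposed resolution does not close. Concretely: if you apply coarea on an interval $[L-\delta,L]$, you get a level $t_0$ with total length $\ell_0\le A/\delta$, and then the push cylinder from $T^2\times\{t_0\}$ to $\partial S'$ costs $\ell(f|_{\gamma_{i^*}})\cdot(L-t_0)\le (A/\delta)\cdot\delta=A$, which is independent of $\delta$ and of $\tilde h_0$. No balancing of $\delta$ makes this $o(1)$, and there is no mechanism (coarea only controls the \emph{total} length) that singles out the one component $\gamma_{i^*}$ you need and forces it to be shorter than $A/\delta$. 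So the area budget $\area f+1$ cannot be met this way. (The side remark that ``short non-contractible loops on $\partial\hat S'$ must represent meridian classes'' is also not justified and not needed; the innermost argument should be run directly on the intersection number with a core curve, as you effectively do via the homology sum.)

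The paper avoids the push entirely by working at the actual level $\partial S'$: it perturbs $f$ to be transverse to $\partial S'$, looks at the planar domain $\Sigma^*=f^{-1}(P)$, and picks (via an innermost intersection-number argument) a boundary circle $C_l\subset\partial\Sigma^*$ with $f(C_l)\subset\partial S'$ non-contractible there. The point is that $C_l$ already sits on $\partial S'$, so no push is required. The remaining task is to cap the interior holes of the disk $D_l$ bounded by $C_l$, and here the paper does \emph{not} try to make all boundary circles of $\Sigma^*$ short. Instead it applies the separating-loops Lemma~\ref{Lem:separtingloopsinmultannulus} inside each component $Q_j$ of $\Sigma^*$: the two groups of boundary circles of $Q_j$ (those on $\partial S$ versus those on $\partial S'$) are at $f^*g$-distance $\ge \tilde h_0^{-1}$, so one obtains separating loops of \emph{total} length $<A\tilde h_0$. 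These loops are shown to be contractible in $P$ and are capped by the local isoperimetric inequality (via Cheeger--Fukaya--Gromov), contributing area $\le C A^2\tilde h_0^2\le\tfrac12$. The choice $\tilde h_0\le\min\{\varepsilon/A,\varepsilon,(2\sqrt{C}A)^{-1}\}$ makes everything fit into the $+1$ budget. If you want to repair your argument, replace the coarea step and the push by this direct use of $\partial S'$ together with Lemma~\ref{Lem:separtingloopsinmultannulus}.
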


\begin{proof}
We first show that there are constants $\varepsilon = \varepsilon(K) > 0$ and $C < \infty$ such that the following isoperimetric inequality holds:
Assume that $\td{h}_0 \leq \varepsilon$.
Then for any smooth loop $\gamma : S^1 \to P$ of length $\ell(\gamma) < \varepsilon$ which is contractible in $P$ there is map $h : D^2 \to M$ with $h |_{S^1} = \gamma$
\begin{equation} \label{eq:isoperimetricintorusstruc}
 \area h \leq C \ell(\gamma)^2.
\end{equation}
By a local version of the results of Cheeger, Fukaya and Gromov \cite{CFG} there are universal constants $\rho = \rho(K) > 0$, $K' = K'(K) < \infty$ such that for every $p \in P$ we can find an open neighborhood $B(p, \rho) \subset V \subset M$ and a metric $g'$ on $V$ with $0.9 g < g' < 1.1 g$ whose curvature is bounded by $K'$ such that the injectivity radius in the universal cover $(\td{V}, \td{g}')$ of $(V, g')$ is larger than $\rho$ at every lift $\td{p} \in \td{V}$ of $p$.
Let $\pi : \td{V} \to V$ be the covering projection.

Now assume that $\varepsilon (K) \leq \min \{ \frac1{10} \rho(K), \frac1{10} K^{\prime -1/2} (K) \}$ and pick $p$ on the image of $\gamma$.
Using the fact that $\varepsilon \leq \frac1{10}\rho$, it is not difficult to see that we can find a chunk $P'$ of $P \approx T^2 \times I$ (i.e. $P' \subset P$ corresponds to a subset of the form $I' \times T^2$ for some subinterval $I' \subset I$), such that $P'$ contains the image of $\gamma$ and such that $P' \subset B(p, \rho) \subset V$.
Then $\gamma$ is also contractible in $P'$ and hence we can lift it to a loop $\td\gamma : S^1 \to \td{V}$ based at a lift $\td{p} \in \td{V}$ of $p$.
Using the exponential map at $\td{p}$ with respect to the metric $g'$, we can then construct a map $\td{h} : D^2 \to \td{V}$ with $\td{h} |_{\partial D^2} = \td\gamma$ and $\area_{g'} \td{h} \leq \frac12 C \ell_g' (\td\gamma)^2$, where $C < \infty$ is a universal constant (note that since $\varepsilon \leq \frac1{10} K^{\prime -1/2}$, we have upper and lower bounds on the Jacobian of this exponential map).
Hence $\area_g \td{h} \leq C \ell_g (\td\gamma)^2$.
Now $h = \pi \circ \td{h}$ satisfies the desired properties.

We now present the proof of the Lemma.
To do this, we choose
\[ \td{h}_0 (A, K) = \min \Big\{ \frac{\varepsilon(K)}{A}, \; \varepsilon (K), \; \frac1 {2\sqrt{C} A} \Big\}. \]
Let $\sigma \subset S \setminus P$ be a loop which generates the fundamental group $\pi_1(S) \cong \IZ$.
Note that by our assumptions, $f$ has non-zero intersection number with $\sigma$.

We first perturb $f$ slightly to make it transversal to $\partial S'$.
This can be done such that we still have $\area f < A$ and that $\area f$ increases by less than $\frac12$.
So it suffices to assume that $f$ is transversal to $\partial S'$ if we can construct $f'$ such that $\area f' < \area f + \frac12$.

So $\Sigma^* = f^{-1} (P) \subset \Sigma$ is a (possibly disconnected) compact smooth subdomain of $\Sigma$ which contains $\partial \Sigma$.
Note that $f(\partial \Sigma^* \setminus \partial \Sigma) \subset \partial S'$.
Denote the components of $\Sigma^*$ by $Q_1, \ldots, Q_p$.
Let $C_0, \ldots, C_q \subset \Sigma^* \subset \Sigma$ be the boundary circles of $\Sigma^*$ such that $C_0$ is the outer boundary circle of $\Sigma$.
Each such circle $C_l \subset \Sigma \subset \IR^2$ bounds a disk $D_l \subset \IR^2$.
Set $\Sigma'_l = D_l \cap \Sigma$.
Any two disks $D_{l_1}, D_{l_2}$ are either disjoint or one disk is contained in the other.
The same is true for the domains $\Sigma'_l$.
We can hence pick $\Sigma'_l$ minimal with respect to inclusion such that $f |_{\Sigma'_l}$ has non-zero intersection number with $\sigma$.
Such a $\Sigma'_l$ always exists, since $\Sigma'_0 = \Sigma$.
Let $j \in \{1, \ldots, p \}$ be the index for which $C_l \subset \partial Q_j$.
We claim that $C_l$ is an interior boundary circle of $Q_j$.
Assume the converse.
Observe that the intersection number of $f |_{Q_j}$ with $\sigma$ is zero, so if $C_l$ is an exterior boundary circle of $Q_j$, then the intersection number of $f$ restricted to the closure of $\Sigma'_l \setminus Q_j$ is non-zero.
This closure however is the disjoint union of other sets $\Sigma'_{l'}$ and hence we can pick a $\Sigma'_{l'} \subsetneq \Sigma'_l$ on which $f$ has non-zero intersection number with $\sigma$.
This contradicts the minimal choice of $\Sigma'_l$.
A direct consequence of the fact that $C_l$ is an interior boundary circle of $Q_j$ is that $C_l \not\subset \partial \Sigma$ and hence $f(C_l) \subset \partial S'$.
We fix $l$ for the rest of the proof.

Next we show that for any circle $C_{l'} \subset \Sigma'_l \setminus C_l$ the restriction $f|_{C_{l'}}$ is contractible in $P$.
Note that for any such index $l'$ we have $\Sigma'_{l'} \subsetneq \Sigma'_l$ and hence $f|_{\Sigma'_{l'}}$ has intersection number zero with $\sigma$.
Moreover, every interior boundary circle of $\Sigma'_{l'}$ is also an interior boundary circle of $\Sigma$.
So $f$ restricted to any interior boundary circle of $\Sigma'_{l'}$ is contractible in $P$.
This shows the desired fact.

Recall that every interior boundary circle of $\Sigma'_l$ is also an interior boundary circle of $\Sigma$ and that $f( C_l ) \subset \partial S'$ and $f( \partial\Sigma'_l \setminus C_l ) \subset \partial S$.
So $f^{-1}(P) = Q_1 \cup \ldots \cup Q_p$ separates $C_l$ from $\partial\Sigma'_l \setminus C_l$.
Hence, after possibly rearranging the $Q_j$ we can find a $p' \in \{ 0, \ldots, p  \}$ such that $Q_1, \ldots, Q_{p'} \subset \Sigma'_l$, such that $Q_1 \cup \ldots \cup Q_{p'}$ is a neighborhood of $\partial \Sigma'_l \setminus C_l$ and such that each $Q_j$ contains at least one boundary circle of $\Sigma'_l$.
We now apply Lemma \ref{Lem:separtingloopsinmultannulus} to each $Q_j$ equipped with the pull-back metric $f^* (g)$ where we group the boundary circles of $Q_j$ into those which are contained in $\partial \Sigma'_l \setminus C_l$ and those which are not.
Doing this, we obtain pairwise disjoint, embedded loops $\gamma_1, \ldots, \gamma_n \subset Q_1 \cup \ldots \cup Q_{p'} \subset \Sigma'_l$ whose union separates $\partial \Sigma'_l \setminus C_l$ from $C_l$ and for which
\[ \ell(f|_{\gamma_1}) + \ldots + \ell(f|_{\gamma_n}) < \frac{\area f|_{Q_1}}{\td{h}_0^{-1}} + \ldots + \frac{\area f|_{Q_1}}{\td{h}_0^{-1}} \leq A \td{h}_0 . \]

For each $k = 1, \ldots, n$ let $D'_k \subset \IR^2$ be the disk which is bounded by $\gamma_k$.
It follows that $\Sigma'_l \cup D'_1 \cup \ldots \cup D'_n$ is equal to the disk $D_l$, i.e. the disks $D'_k$ cover the ``holes'' of $\Sigma'_l$.
Any two disks $D'_{k_1}, D'_{k_2}$ are either disjoint or one is contained in the other.
So after possibly rearranging these disks, we can find an $n' \in \{ 0, \ldots, n \}$ such that the disks $D'_1, \ldots, D'_{n'}$ are pairwise disjoint and such that still $\Sigma'_l \cup D'_1 \cup \ldots \cup D'_{n'} = D_l$.
For each $k = 1, \ldots, n'$, the consider the intersection of disk $D'_k$ with the domain $Q_{j'}$ which contains $\gamma_k$ ($j' \in \{ 1, \ldots, n' \}$).
This intersection $D'_k \cap Q_{j'}$ is a domain whose interior boundary circles are boundary circles of $Q_{j'}$ and hence they are contained in $\Sigma'_l \setminus C_l$.
So $f$ restricted to the interior boundary circles of $D'_k \cap Q_{j'}$ is contractible in $P$.
Since $f(D'_k \cap Q_{j'}) \subset P$, it follows that $f |_{\gamma_k}$ is contractible.

Observe that $\ell (f|_{\gamma_k}) < A \td{h}_0 \leq \varepsilon$.
So we can use the isoperimetric inequality (\ref{eq:isoperimetricintorusstruc}) from the beginning of this proof to construct a map $f'_k : D_k \to M$ with $\area f'_k \leq C \ell( f |_{\gamma_k} )^2$ and $f'_k |_{\gamma_k} = f |_{\gamma_k}$.
Let now $f' : D_l \to M$ be the map which is equal to $f$ on $\Sigma'_l \setminus (D'_1 \cup \ldots \cup D'_n)$ and equal to $f'_k$ on each $D'_k$.
Then
\begin{multline*}
 \area f' < \area f + C \big( \ell(f |_{\gamma_1})^2 + \ldots + \ell ( f |_{\gamma_n})^2 \big) \\
 < \area f + C \big( \ell(f |_{\gamma_1}) + \ldots + \ell ( f |_{\gamma_n}) \big)^2  \leq \area f + C A^2 \td{h}_0^2 \leq \area f + \tfrac12.
\end{multline*}
This proves the desired result (after smoothing $f'$).
\end{proof}

The following Lemma is the main result of this subsection.
It will be used instead of Lemma 7.12 in \cite{Bamler-longtime-II} to find short loops in the proofs of Propositions \ref{Prop:better3rdstep} and \ref{Prop:better4thstep}.
In the proof of Proposition \ref{Prop:better4thstep} it will also be used to ensure that these loops bound compressing disks of bounded area .

\begin{Lemma} \label{Lem:shortloopingeneralcase}
For every $\alpha > 0$ and every $A, K < \infty$ there are constants $\td{L}_0 = \td{L}_0 (\alpha, A) < \infty$ and $\td\alpha_0 = \td\alpha_0 (A, K) > 0$, $\td\Gamma = \td\Gamma (K) < \infty$ such that:

Let $(M, g)$ be a Riemannian manifold and $S \subset M$, $S \approx S^1 \times D^2$ a smoothly embedded solid torus.
Let $P \subset S$ be a torus structure of width $\leq 1$ and length $L \geq \td{L}_0$ with $\partial S \subset \partial P$ (i.e. the pair $(S, \Int S \setminus \Int P)$ is diffeomorphic to $(S^1 \times D^2(1), S^1 \times D^2 (\frac12))$).

Consider a compact smooth domain $\Sigma \subset \IR^2$ and a smooth map $f : \Sigma \to S$ with $f(\partial \Sigma) \subset \partial S$ such that $f$ restricted to the outer boundary circle of $\Sigma$ is non-contractible in $\partial S$ and $f$ restricted to all other boundery circles of $\Sigma$ is contractible in $\partial S$.
Moreover, assume that $\area f < A$.

\begin{enumerate}[label=(\alph*)]
\item Then there is a closed loop $\gamma : S^1 \to P$ which is non-contractible in $P$, but contractible in $S$ which has length $\ell(\gamma) < \alpha$ and distance of at least $\frac13 L - 2$ from $\partial P$.
\item Assume that additionally $\alpha \leq \td\alpha_0$, that $|{\Rm}|, |{\nabla\Rm}| < K$ on the $1$-neighborhood around $P$, that $P$ has width $\leq \td{L}_0^{-1}$ and that $\pi_2(M) = 0$.
Then $\gamma$ can be chosen as in part (a) and such that its geodesic curvatures are bounded by $\td\Gamma$ and such that there is a map $h : D^2 \to M$ with $h |_{S^1} = \gamma$ of $\area h < \area f + 1$.
\end{enumerate}

\end{Lemma}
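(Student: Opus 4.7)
The plan is to combine a coarea argument (exploiting $L$ large relative to $A/\alpha$) with a planarity-based homology argument that forces every slice loop to have pure meridian class. Parametrize $P \approx T^2 \times [0,L]$ so that $\partial S = T^2 \times \{0\}$, write $T_t := T^2 \times \{t\}$, and let $m$, $\ell$ denote the meridian and longitude generators of $H_1(T^2) = H_1(T_t)$, where $\IZ m$ is the kernel of $H_1(\partial S) \to H_1(S)$.

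For Part (a), the coarea formula applied to $f$ restricted to $f^{-1}(P)$ gives a universal constant $C_1$ (depending only on the definition of a torus structure of width $\leq 1$) with
\[
\int_0^L \ell(f^{-1}(T_t))\,dt \;\leq\; C_1\,\area f \;<\; C_1 A.
\]
After a small transversality perturbation, some regular value $t_0 \in [L/3, 2L/3]$ of the height function satisfies $\ell(f^{-1}(T_{t_0})) \leq 3C_1 A/L < \alpha$ as soon as $\td{L}_0 > 3C_1 A/\alpha$. Then $f^{-1}(T_{t_0})$ is a disjoint union of simple closed curves $\beta_1, \ldots, \beta_n \subset \Int \Sigma$ of total length less than $\alpha$. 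The crucial topological observation is that every $[f|_{\beta_i}] \in H_1(T_{t_0})$ lies in $\IZ m$. Indeed, since $\Sigma \subset \IR^2$ is planar, each $\beta_i$ bounds a disk $D_{\beta_i} \subset \IR^2$, which cannot contain the outer boundary $C_0$. The region $D_{\beta_i} \cap \Sigma$ has outer boundary $\beta_i$ and inner boundaries consisting of some $C_j$ ($j \geq 1$) together with the $\beta_k$ nested inside $D_{\beta_i}$. Because $f$ extends smoothly over $D_{\beta_i} \cap \Sigma$, in $H_1(S) = \IZ\ell$ the class $[f|_{\beta_i}]$ equals the signed sum of the inner-boundary classes; each $[f|_{C_j}]$ vanishes (as $f|_{C_j}$ is contractible in $\partial S$), and each inner $[f|_{\beta_k}]$ vanishes by induction on nesting depth, so $[f|_{\beta_i}] = 0$ in $H_1(S)$, i.e.\ $[f|_{\beta_i}] \in \IZ m$. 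The same argument applied to $\Sigma$ itself shows $[f|_{C_0}] = a m$ in $H_1(\partial S)$ with $a \neq 0$. Let $Q$ be the component of $f^{-1}(T^2 \times [0,t_0])$ containing $C_0$; summing classes over $\partial Q$ in $H_1(T_{t_0})$ yields
\[
\sum_{\beta_i \subset \partial Q} \pm [f|_{\beta_i}] \;=\; -a m,
\]
forcing some $[f|_{\beta_i}] = a_i m$ with $a_i \neq 0$. The loop $\gamma := f|_{\beta_i}$ has length less than $\alpha$, represents the pure meridian class $a_i m$ (hence is non-contractible in $P$ and contractible in $S$), and lies in $T_{t_0}$ which is at distance at least $L/3 \geq L/3 - 2$ from $\partial P$.

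For Part (b), first replace $\gamma$ by a length-minimizer in its free homotopy class within $T_{t_0}$; the precise torus structure of width $\leq \td{L}_0^{-1}$ combined with $|\Rm|, |\nabla \Rm| < K$ forces such a minimizer to be close to a geodesic in the ambient metric, yielding a geodesic-curvature bound $\td\Gamma = \td\Gamma(K)$. To produce a bounding disk $h \colon D^2 \to M$ with $\area h < \area f + 1$, start from $f|_{D_{\beta_i} \cap \Sigma}$ (of area at most $\area f$) and cap off each inner boundary circle. An inner $\beta_k$ (pure meridian class $a_k m$) bounds in the core $S \setminus P$ via a ``parallel transport plus meridian-disk'' filling of area at most $\ell(f|_{\beta_k}) \cdot \td{L}_0^{-1}$, using the near-product structure of $P$. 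An inner $C_j$, for which $f|_{C_j}$ is contractible in $\partial S$, is filled using the local isoperimetric inequality from the proof of Lemma \ref{Lem:makediskfrommultiannulus}: once $\ell(f|_{C_j}) \leq \varepsilon(K)$, the filling has area at most $C\,\ell(f|_{C_j})^2$. Since the $\beta_k$'s have total length at most $\alpha \leq \td\alpha_0$ and the number of relevant circles is controlled by $A$, choosing $\td\alpha_0(A,K)$ small forces the total filling area below $1$.

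The main obstacle is the area bookkeeping in Part (b): one must show the intricate nested collection of $C_j$'s and $\beta_k$'s inside $D_{\beta_i}$ can be filled with total area $< 1$. The delicate piece is the $C_j$ fillings, since a priori $\ell(f|_{C_j})$ is not small; this is handled by a preliminary homotopy of $f$ that shortens $f|_{\partial \Sigma}$ (at negligible cost to $\area f$) before invoking the quadratic isoperimetric bound. Properly quantifying $\td\alpha_0(A,K)$ so everything closes up uniformly over all nesting combinatorics is the central technical difficulty.
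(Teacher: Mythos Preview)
Your approach to part~(a) is different from the paper's but appears correct. You use a direct coarea slice at a single regular level $T_{t_0}$ together with a clean homology argument in $H_1(S)$ to force every slice loop into the meridian subgroup; the paper instead divides $P$ into three chunks $P_1,P_2,P_3$, analyzes the preimage of the middle solid torus $S_2$, and applies Lemma~7.11 of \cite{Bamler-longtime-II} (a weighted separating-loop lemma) to locate the short loop. Your route is arguably more transparent here. One minor correction: the boundary of $D_{\beta_i}\cap\Sigma$ consists only of $\beta_i$ together with the $C_j$ lying inside $D_{\beta_i}$; the nested $\beta_k$ lie in the \emph{interior} of this region, so no induction on nesting depth is needed---you get $[f|_{\beta_i}]=0$ in $H_1(S)$ directly.

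Your approach to part~(b), however, has two genuine gaps. First, a length-minimizer of $\gamma$ in the slice $T_{t_0}$ is a geodesic for the \emph{induced} metric on $T_{t_0}$; its ambient geodesic curvature is governed by the second fundamental form of $T_{t_0}$ in $M$, which is not controlled by $|{\Rm}|,|\nabla\Rm|<K$ together with a width bound alone. Second, and more seriously, the filling bookkeeping does not close: a meridian disk in $S$ has no a priori area bound in terms of $A$ and $K$, so your ``parallel transport plus meridian-disk'' fillings for the inner $\beta_k$ are uncontrolled; and the $C_j$ cannot be shortened by a ``preliminary homotopy of $f$'' without paying area that is itself uncontrolled. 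The paper resolves both issues at once by reversing the order of operations. It \emph{first} applies Lemma~\ref{Lem:makediskfrommultiannulus} (using the $\td{L}_0^{-1}$-width hypothesis) to replace the multiply-connected $f:\Sigma\to S$ by a disk map $f':D^2\to M$ with $\area f'<\area f+1$, takes $f'$ to be area-minimizing (hence a stable minimal immersion on the interior by \cite{Gul}), and \emph{then} runs the short-loop search inside this disk. Now the loop $\gamma'\subset D^2$ automatically bounds a sub-disk $D''\subset D^2$, giving $h=f_0|_{D''}$ with the required area bound for free; and Schoen's curvature estimates \cite{Sch} for stable minimal surfaces, combined with Lemma~7.9 of \cite{Bamler-longtime-II}, yield the geodesic-curvature bound $\td\Gamma(K)$ after straightening $\gamma'$ to $\gamma''$ within the minimal disk.
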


\begin{proof}
The proof is very similar to that of Lemma 7.12 in \cite{Bamler-longtime-II}.
The main difference comes from the existence of the map $f : \Sigma \to S$.
In part (a), this fact simplifies some arguments.
However, in part (b) the map $h$ cannot be obtained anymore by restricting $f$ to a disk.
So in this case, we have to make use of Lemma \ref{Lem:makediskfrommultiannulus}.

We first explain the general setup.
Assume that without loss of generality $\alpha < 0.1$ and set
\begin{equation} \label{eq:constantsinshortlooplem} 
\td{L}_0 (\alpha, A) = \max \Big\{ 12 \frac{A+1}{\alpha} + 3, \; \frac3{\alpha} + 12 \Big \}.
\end{equation}
We divide $P$ into three torus structures $P_1, P_2, P_3$ of width $\leq 1$ and length $> \frac13 L - 1$ in such a way that: $\partial S \subset \partial P_1$ and $P_i$ shares a boundary with $P_{i+1}$.
Then any point in $P_2$ has distance of at least $\frac13 L - 1$ from $\partial P$.
For later use, we define the solid tori
\[ S_1 = S, \qquad S_2 = \ov{S \setminus P_1}, \qquad S_3 =  \ov{S \setminus  (P_1 \cup P_2)}. \]
Moreover, let $P' \subset P_1$ be a torus structure of length $> \frac13 L - 4$ and width $\leq 1$ if we are in the setting of part (a) or of width $\leq \td{L}^{-1}_0$ if we are in the setting of part (b) such that $\partial S \subset \partial P$ and such that $\dist (P', S_2) > 2$.
Finally, choose an embedded loop $\sigma \subset S \setminus P$ which generates $\pi_1(S) \cong \IZ$.

Next, we explain the strategy of the proof.
In the setting of part (b), we obtain by (\ref{eq:constantsinshortlooplem}) that, assuming $\td\alpha_0 \leq \td{h}_0 (A, K)$, the torus structure $P'$ is $\td{h}_0 (A, K)$-precise.
Hence Lemma \ref{Lem:makediskfrommultiannulus} immediately yields a map $f' : D^2 \to M$ of $\area f' < \area f +1$ such that $f' |_{\partial D^2}$ parameterizes a non-contractible loop in $P'$.
Without loss of generality, we may assume that $f'$ is in fact an area minimizing map.
We then set $\Sigma_0 = D^2$ and $f_0 = f' : \Sigma_0 \to M$.
If we are in the setting of part (a), then we simply set $\Sigma_0 = \Sigma$ and $f_0 = f$.
So in either setting, $f_0 (\partial \Sigma_0) \subset P' \subset P_1$ and $f_0$ restricted to only the outer circle of $\Sigma_0$ is non-contractible in $P_1$.
Moreover, $\area f_0 < \area f +1$ and $f_0$ has non-zero intersection number with $\sigma$ (since $\pi_2 (M) = 0$ in part (b)).
In the following, we will construct the loop $\gamma$ from the map $f_0 : \Sigma_0 \to M$ for part (a) and (b) at the same time.
It will then only require a short argument that the additional assertions of part (b) hold.

Let $\varepsilon > 0$ be a small constant that we will determine later ($\varepsilon$ may depend on $M$ and $g$).
We can find a small homotopic perturbation $f_1 : \Sigma_0 \to M$ of $f_0 : \Sigma_0 \to M$ with $f_1 |_{\partial \Sigma_0} = f_0 |_{\partial \Sigma_0}$ which is not more than $\varepsilon$ away from $f_0$ such that the following holds: $f_1$ is transverse to $\partial P_2$ on the interior of $\Sigma_0$ and its area is still bounded: $\area f_1 < A +1$.
Note that $f_1$ still has non-zero intersection number with $\sigma$.

Consider all components $Q_1, \ldots, Q_p$ of $f_1^{-1} (S_2) \subset \Sigma_0 \subset \IR^2$.
Note that $p > 0$.
Each $Q_j$ can be extended to a disk $D_j \subset \IR^2$ by filling in its inner circles.
Let $Q'_j = D_j \cap \Sigma_0$ for each $j = 1, \ldots, p$.
Then any two disks, $D_{j_1}, D_{j_2}$ are either disjoint or one disk is contained in the other.
The same statement holds for the sets $Q'_j$.
It is not difficult to see that, after possibly changing the order of these disks, we can find a $p' \in \{ 1, \ldots, p \}$ such that the subsets $Q_1, \ldots, Q_{p'}$ are pairwise disjoint and such that $Q'_1 \cup \ldots \cup Q'_p \subset Q'_1 \cup \ldots \cup Q'_{p'}$.
It follows that $f_1$ restricted to $Q'_1 \cup \ldots \cup Q'_{p'}$ has the same non-zero intersection number with $\sigma$ as $f_1$.
So there are indices $j \in \{ 1, \ldots, p \}$ such that $f_1 |_{Q'_j}$ has non-zero intersection number with $\sigma$.
We can then choose an index $j \in \{ 1, \ldots, p \}$ with that property such that $Q'_j$ is minimal, i.e. $f_1 |_{Q'_j}$ has non-zero intersection number with $\sigma$, but $f_1 |_{Q'_{j'}}$ has intersection number zero with $\sigma$ whenever $Q'_{j'} \subsetneq Q'_j$.

Let $C_0 = \partial Q'_j$ and observe that $f_1(C_0) \subset \partial S_2$.
Consider the domain $Q'' = Q'_j \setminus f_1^{-1} (\Int S_3)$.
Its outer boundary circle is still $C_0$.
Denote by $C_1, \ldots, C_q \subset \partial Q''$ all its other boundary circles.
These circles are either inner boundary circles of $\Sigma_0$ and are mapped into $\partial S_1$ under $f_1$ (in part (a)) or they belong to $f_1^{-1} (\partial S_3)$ and hence are mapped into $\partial S_3$ under $f_1$.
So for each $l = 1, \ldots, q$ the image $f(C_l) \subset \partial S_1 \cup \partial S_3$ has distance of at least $\frac13 L - 1$ from $f(C_0) \subset \partial S_2$.
For every $l = 1, \ldots, q$ define the weight $w_l$ of $C_l$ to be the intersection number of $f_1$ restricted to the intersection of the disk, which is bounded by $C_l$ in $\IR^2$, and $\Sigma_0$, with the loop $\sigma$.
In particular, this means that $w_l = 0$ if $f_1 (C_l) \subset \partial \Sigma_0$.
It follows easily that the intersection number of $f_1 |_{Q'_j}$ with $\sigma$ is equal to $w_1 + \ldots + w_q \neq 0$.
We now apply Lemma 7.11 from \cite{Bamler-longtime-II} to $Q''$ with the pullback metric $f_1^* (g)$ and find an embedded loop $\gamma' \subset Q''$ of length
\[ \ell (f_1 |_{\gamma'}) < \frac{A + 1}{\frac13 L - 1} \leq \frac{A + 1}{\frac13 \td{L}_0 - 1} \leq \tfrac14 \alpha \]
which encloses boundary circles $C_l$ whose weights $w_l$ don't add up to zero.
Denote by $D' \subset D_j$ the disk which is bounded by $\gamma'$.
Then $f_1$ restricted to $D' \cap \Sigma_0$ has non-zero intersection number with $\sigma$.

We now argue that $\gamma'$ has a point in common with $f_1^{-1} (S_2) = Q_1 \cup \ldots \cup Q_p$.
If not, then $D' \cap f_1^{-1} (S_2) \subset Q'_j$ is the disjoint union of some of the $Q_{j'}$.
By removing some of these $Q_{j'}$ from the list and passing the the primed domains, it is then easy to see that $D' \cap f_1^{-1} (S_2)$ is contained in the disjoint union of some $Q'_{j'}$ which are contained in $Q'_j$.
By the minimality property of $Q'_j$, we conclude that the intersection number of $f_1$ with $\sigma$ is zero on all these $Q'_{j'}$.
This implies that $f_1$ restricted to $D' \cap f_1^{-1}(S_2)$ has intersection number zero with $\sigma$, contradicting the fact that $f_1$ restricted to $D' \cap \Sigma_0$ does not.
Hence $\gamma'$ has to intersect $f_1^{-1} (S_2)$ and thus $f_1(\gamma') \cap S_2 \neq \emptyset$.
Since $\gamma' \subset Q'' = Q'_j \setminus f_1^{-1} (\Int S_3)$, we have $f_1(\gamma') \cap \Int S_3 = \emptyset$.
It follows that $f_1(\gamma') \cap P_2 \neq \emptyset$.
This implies that all points of $f_1(\gamma')$ have distance of at least $\frac13 L - 1 - \alpha > \frac13 L - 2$ from $\partial P$ and that $f_1(\gamma') \subset P$.
Since the intersection number of $f_1 |_{D' \cap \Sigma_0}$ with $\sigma$ is non-zero, $f_1 |_{\gamma'}$ has to be non-contractible in $P$, but contractible in $S$.
This establishes part (a) of the Lemma with $\gamma = f_1 |_{\gamma'}$.

Assume now for the rest of the proof that we are in the setting of part (b).
Then $Q'_j = D_j$ is a disk and $D' \subset \Sigma_0$.
Moreover, $f_1$ is an $\varepsilon$-perturbation of the (stable) area minimizing map $f_0 : \Sigma_0 \approx D^2 \to M$.
By \cite{Gul}, $f_0$ is an immersion on $\Int \Sigma_0$.
So we can additionally assume that the perturbation $f_1$ is a graph over $f_0$.

Consider the following regions: Let $B(P_2, 1)$ and $B(P_2,2) \subset P \setminus P'$ be the (open) $1$ and $2$-tubular neighborhoods of $P_2$ and let $\Sigma_1$ and $\Sigma_2$ be the components of $f^{-1}_0 (B(P_2,1))$ and $f_0^{-1}(B(P_2, 2))$ which contain $\gamma'$, i.e. $\gamma' \subset \Sigma_1 \subset \Sigma_2 \subset \Int \Sigma_0$.
By the results of \cite{Sch}, we obtain a bound on the second fundamental form of $f_0 (\Sigma_1)$ which only depends on $K$: $|{II_{f_0(\Sigma_1)}}| < K'(K)$.
Moreover, this bound and the bound on the curvature on $B(P_2, 1)$ yields a curvature bound $K'' = K''(K) < \infty$ of the metric $f^*_0 (g)$ on $\Sigma_1$ which only depends on $K$.
Since $f_1$ was assumed to be a graph over $f_0 (\Sigma)$, we conclude that
\[ \ell (f_0 |_{\gamma'}) \leq 2 \ell (f_1 |_{\gamma'}) < \tfrac12 \alpha \]
if $\varepsilon$ is small enough depending on these bounds.
The loop $\gamma'$ is non-contractible in $\Sigma_1$, because otherwise $f_1 |_{\gamma'}$ would be contractible in $P$.
So we can apply Lemma 7.9 from \cite{Bamler-longtime-II} to conclude that if $\td{\alpha}_0 < \td\varepsilon_2 (K'')$, then there is an embedded loop $\gamma'' \subset \Sigma_1$ which intersects $\gamma'$, is homotopic to $\gamma'$ in $\Sigma_1$ and which has the following properties:
$\ell( f_0 |_{\gamma''} ) \leq 2 \ell( f_1 |_{\gamma'} ) < \alpha$ and the geodesic curvature on $\gamma''$ in $(\Sigma_1, f^*_0 (g))$ is bounded by $\Gamma'(K'')$.
Obviously, $\gamma''$ bounds a disk $D'' \subset \Sigma_0$ whose area under $f_0$ is bounded by $\area f +1$.
Let now $\gamma = f_0 |_{\gamma''}$.
Then the geodesic curvature on $\gamma$ in $(M, g)$ is bounded by some constant $\td\Gamma = \td\Gamma ( \Gamma'(K''(K)), K'(K)) = \td\Gamma (K) < \infty$.
This establishes assertion (b).
\end{proof}

\subsection{The main argument}
We will now go through section 8 of \cite{Bamler-longtime-II} and point out the necessary changes which are necessary to remove assumption $\TT_2$.
First observe that Lemma 8.1 (referred to as ``first step'') and Proposition 8.2 (referred to as ``second step'') do not make use of assumption $\TT_2$, or to be precise of a filling surface, and hence we will keep them unchanged.
The same is true for Proposition 8.5.
It then remains to adapt Proposition 8.3 (``third step''), Proposition 8.4 (``geometry on late and long time-intervals'') and the proof of the main theorem to our setting.
Note that most of these modifications are straight forward given the results from the previous sections, except for the proof of the main Theorem \ref{Thm:MainTheorem-III} in the case in which a component of $\MM$ is covered by a $T^2$-bundle over a circle.
In this case we in fact need to make use of a new idea.

\begin{Proposition} \label{Prop:better3rdstep}
Proposition 8.3 in \cite{Bamler-longtime-II} is still valid if assumption (iv) is replaced by the following assumption:
\begin{enumerate}[label=($iv'$)]
\item for every smoothly embedded, solid torus $S \subset \Int \MM_{\textnormal{thin}}(t_0)$, $S \approx S^1 \times D^2$, which is incompressible in $\MM(t_0)$, there is a compact smooth domain $\Sigma \subset \IR^2$ and a smooth map $f : \Sigma \to S$ with $f(\partial \Sigma) \subset \partial S$ such that $f$ restricted to the outer boundary circle of $\Sigma$ is non-contractible in $\partial S$ and $f$ restricted to all other boundery circles of $\Sigma$ is contractible in $\partial S$.
Moreover, $\area_{t_0} f < A t_0$.
\end{enumerate}
\end{Proposition}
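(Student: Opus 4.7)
The plan is to follow the proof of Proposition 8.3 in \cite{Bamler-longtime-II} essentially verbatim, with the only substantive change occurring at the single point where the original argument invoked hypothesis (iv) to produce a short, curvature-controlled loop bounding a disk of small area inside each incompressible solid torus $S \subset \MM_{\textnormal{thin}}(t_0)$. In the old proof this was done by feeding the compressing disk from (iv) into (the analog of) Lemma 7.12 of \cite{Bamler-longtime-II}. My proposal is to replace that call with an application of Lemma \ref{Lem:shortloopingeneralcase}(b) of the present paper, fed with the compressing multiply connected domain supplied by (iv').

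First I would check that the geometric hypotheses of Lemma \ref{Lem:shortloopingeneralcase}(b) are met at the appropriate scale. After rescaling by $t_0^{-1/2}$, the second step (Proposition 8.2 of \cite{Bamler-longtime-II}) furnishes inside each such $S$ a collar torus structure $P \subset S$ of $\partial S$ of width $\leq 1$ and arbitrarily large length $L$, equipped with uniform bounds $|{\Rm}|, |{\nabla\Rm}| < K$ on its $1$-neighborhood for a constant $K$ depending only on the surgery parameters. Asphericity of the thin part (and in particular $\pi_2(\MM(t_0))=0$) is available by the standard topological arguments, cf.\ \cite[Proposition 3.3]{Bamler-longtime-II}. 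Rescaling the map $f:\Sigma \to S$ from (iv') gives $\area f < A$ in rescaled units, so Lemma \ref{Lem:shortloopingeneralcase}(b) applies with $\alpha = \td\alpha_0(A,K)$ and $L \geq \td{L}_0$, provided $t_0$ is large enough (which is the setting of Proposition 8.3 anyway).

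This application produces a loop $\gamma \subset P$, non-contractible in $P$ but contractible in $S$, of length $< \alpha$ and distance $> \tfrac13 L - 2$ from $\partial P$, whose geodesic curvature is bounded by $\td\Gamma(K)$, together with a spanning disk $h : D^2 \to \MM(t_0)$ of area $< \area f + 1$. After undoing the rescaling, this is exactly the kind of short compressing loop with bounded-area compressing disk that the original proof of Proposition 8.3 extracts from (iv) before running the Hamilton-type area evolution estimate to conclude the thinness of the collar neighborhoods. The rest of the proof—using such a loop together with the evolution estimate to force the $T^2$-fibers to become uniformly thin as the diameter of $S$ grows—is not affected and is quoted directly from \cite{Bamler-longtime-II}.

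The main obstacle in this modification has essentially already been overcome: it is Lemma \ref{Lem:shortloopingeneralcase}(b), whose proof relies on the isoperimetric inequality inside precise torus structures and on Lemma \ref{Lem:makediskfrommultiannulus} to turn a compressing multi-annulus into an honest compressing disk with a controlled area increase. Once that Lemma is available, the only bookkeeping to be done is to verify that the constants $\td{L}_0$, $\td\alpha_0$, $\td\Gamma$ fit into the quantitative framework in which the original Proposition 8.3 is stated, which is a routine matter of tracking dependencies.
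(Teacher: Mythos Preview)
Your overall plan is right: the paper's proof does follow Proposition~8.3 of \cite{Bamler-longtime-II} verbatim and only replaces the single call to Lemma~7.12 by a call to Lemma~\ref{Lem:shortloopingeneralcase}, fed with the map $f:\Sigma\to S$ from~(iv$'$). However, the paper invokes part~(a) of that lemma, not part~(b). Proposition~8.3 only requires a short loop $\gamma_i\subset P'_i$ which is non-contractible in $P'_i$ but contractible in $S_i(t_0)$, of controlled length and controlled distance from $\partial P'_i$; it does \emph{not} need the geodesic-curvature bound on $\gamma_i$ nor the bounded-area spanning disk. Those extra outputs---and the extra hypotheses $\pi_2=0$, $|{\Rm}|,|\nabla\Rm|<K$ on a $1$-neighborhood, and width $\leq\td L_0^{-1}$ required to secure them---only enter in the modification of Proposition~8.4 (Proposition~\ref{Prop:better4thstep} here), where one genuinely needs a compressing disk of bounded area for assertion~(d$'$) and the curvature bound for assertion~(e).

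Invoking part~(b) here is not strictly wrong, but it forces you to verify the stronger precision of the torus structure $P'_i$ (width $\leq\td L_0^{-1}$ rather than $\leq 1$) and the curvature bounds on its neighborhood at this earlier stage, which is unnecessary bookkeeping. Your remark about ``running the Hamilton-type area evolution estimate to conclude the thinness'' also suggests a conflation with the role of Proposition~8.4: the thinness conclusion of Proposition~8.3 is drawn from the existence of the short loop alone, not from evolving the area of a spanning disk.
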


\begin{proof}
The proof is the same as that of Proposition 8.3 in \cite{Bamler-longtime-II} except for the following changes:
We use the constant $\td{L}_0$ from Lemma \ref{Lem:shortloopingeneralcase} instead of the analogous constant from Lemma 7.12.
Then instead of invoking Lemma 7.12 in the last paragraph, we apply assumption (iv$'$) for $S \leftarrow S_i(t_0)$ to obtain $\Sigma$ and $f : \Sigma \to S_i (t_0)$ and apply Lemma \ref{Lem:shortloopingeneralcase}(a) with $M \leftarrow \MM(t_0)$, $S \leftarrow S_i (t_0)$, $P \leftarrow P'_i$ and $f \leftarrow f$ to obtain a loop $\gamma_i \subset P'_i$ which is non-contractible in $P'_i$, but contractible in $S_i(t_0)$, which has length
\[ \ell_{t_0} (\gamma_i) < \min \big\{ \tfrac1{10} \td\nu (K_2, (h'_i)^{-1}, h'_i), \td\varepsilon_1 (K_2) \big\} r_0 \]
and which has time-$t_0$ distance of at least $\frac13 L_i - 2$ from $\partial P'_i$.

The rest of the proof remains exactly the same.
\end{proof}

Similarly, we obtain:

\begin{Proposition} \label{Prop:better4thstep}
Proposition 8.4 in \cite{Bamler-longtime-II} is still valid if assumption (iv) is replaced by
\begin{enumerate}[label=($iv'$)]
\item for every time $t \in [t_0, t_\omega]$ and every smoothly embedded, solid torus $S \subset \Int \MM_{\textnormal{thin}}(t)$, $S \approx S^1 \times D^2$, which is incompressible in $\MM(t)$, there is a compact smooth domain $\Sigma \subset \IR^2$ and a smooth map $f : \Sigma \to S$ with $f(\partial \Sigma) \subset \partial S$ such that $f$ restricted to the outer boundary circle of $\Sigma$ is non-contractible in $\partial S$ and $f$ restricted to all other boundery circles of $\Sigma$ is contractible in $\partial S$.
Moreover, $\area_t f < A t$.
\end{enumerate}
and if assertion (d) is replaced by
\begin{enumerate}[label=($d'$)]
\item $\area_{t_0} D_i < (A+1) t_0$ for all $i = 1, \ldots, m$.
\end{enumerate}
\end{Proposition}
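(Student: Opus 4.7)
The plan is to follow the proof of Proposition 8.4 in \cite{Bamler-longtime-II} essentially verbatim, with two substitutions dictated by the weakening of hypothesis (iv) to (iv$'$) and the corresponding relaxation of the area bound in (d) to (d$'$).

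First, wherever the original proof invokes Proposition 8.3, we replace the invocation with Proposition \ref{Prop:better3rdstep}. This is legitimate because (iv$'$) is exactly the hypothesis required by Proposition \ref{Prop:better3rdstep}, and the subsequent dynamical arguments of Proposition 8.4 in \cite{Bamler-longtime-II}---the persistence of the solid tori $S_i$ under backward time shifts, the fact that their collar neighborhoods stay thin, and the curvature control on $\MM(t)\setminus\bigcup_i S_i(t)$---depend only on the statement of the third step, never on its internal construction of the filling map.

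Second, wherever the original proof uses the filling surface to extract a compressing disk $D_i\subset S_i$ of area less than $A t_0$, we now use (iv$'$) in two stages. We first apply (iv$'$) (at the appropriate time) to produce a compressing multiply connected domain $f\colon\Sigma\to S_i$ with $\area_{t_0} f<At_0$, and then apply Lemma \ref{Lem:shortloopingeneralcase}(b) to the collar neighborhood $P_i\subset S_i$ supplied by Proposition \ref{Prop:better3rdstep} to convert this domain into a smooth map $D_i\colon D^2\to\MM(t_0)$ whose boundary is non-contractible in $\partial S'_i$ and whose area satisfies $\area_{t_0} D_i<\area_{t_0} f+1<At_0+1\leq(A+1)t_0$ (the last inequality using $t_0\geq 1$, which we may assume by restricting to sufficiently late times, as in \cite{Bamler-longtime-II}). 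This yields assertion (d$'$). The remainder of the argument---using these $D_i$ as test disks in the area evolution estimate from section \ref{sec:areaevolutionunderRF} to derive the required diameter bounds---then goes through unchanged.

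The main obstacle is verifying that all the hypotheses of Lemma \ref{Lem:shortloopingeneralcase}(b) are actually in force at the moment we invoke it. Concretely we need: a $\td h_0(A,K)$-precise torus structure $P_i\subset S_i$ of length $L\geq\td L_0$ and width $\leq\td L_0^{-1}$; two-sided curvature bounds $|\Rm|,|\nabla\Rm|<K$ on the one-neighborhood of $P_i$; the smallness $\alpha\leq\td\alpha_0(A,K)$; and $\pi_2(\MM(t_0))=0$. The torus structure and the curvature bounds are precisely what Proposition \ref{Prop:better3rdstep} produces (once the parameters $L_i,h'_i$ there are chosen large resp. small enough, which amounts to taking $t_0$ large and reusing the freedom already built into the third step); the smallness of $\alpha$ is the same smallness condition that already appears in Proposition \ref{Prop:better3rdstep}'s use of Lemma \ref{Lem:shortloopingeneralcase}(a); and $\pi_2=0$ holds for every time-slice by the standing assumptions on $\MM$ (no spherical space form components, as guaranteed by the surgery process and finite-time extinction). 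With these ingredients in place, the substitution is mechanical and no new analytic input is required beyond what has already been developed in Sections \ref{sec:areaevolutionunderRF} and \ref{sec:constructanalysispolygonalcomplex}.
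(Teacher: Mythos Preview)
Your proposal is correct and follows essentially the same approach as the paper: replace Proposition 8.3 by Proposition \ref{Prop:better3rdstep} throughout, and at the end invoke Lemma \ref{Lem:shortloopingeneralcase}(b) (fed by the map from (iv$'$)) in place of Lemma 7.12 of \cite{Bamler-longtime-II} to obtain both the short loop $\gamma_i$ with controlled geodesic curvature and the compressing disk $D_i$. One small correction: Lemma \ref{Lem:shortloopingeneralcase} is stated at unit scale, so when applied at scale $\sqrt{t_0}$ the additive ``$+1$'' in the area bound becomes ``$+t_0$'', giving $\area_{t_0} D_i < A t_0 + t_0 = (A+1)t_0$ directly, without any appeal to $t_0 \geq 1$; relatedly, note that the constant $\td\alpha_0$ from Lemma \ref{Lem:shortloopingeneralcase} now depends on $A$ as well as on $K$, and the paper also shrinks $P^*_i$ slightly (distance $2\sqrt{t_0}$ from $\partial P_i$ rather than $\sqrt{t_0}$) so that the required $|\Rm|,|\nabla\Rm|$ bounds hold on its full $\sqrt{t_0}$-neighborhood.
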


\begin{proof}
The proof is again almost the same as that of Proposition 8.4 in \cite{Bamler-longtime-II}.
We point out the changes that we have to make.
Obviously, we can use Proposition \ref{Prop:better3rdstep} instead of Proposition 8.3 of \cite{Bamler-longtime-II} everywhere in the argument.
Apart from this, the proofs of Claims 1--3 remain unchanged.

Next, look at the main part of this proof.
Given that $h^{(N)}_i < \eta^*_2$, Claim 3 is used in the first paragraph to construct a subset $P^{(k^*_i)}_{i^*_i, 1} \subset \MM (t_{k^*_i})$, which is non-singular on the time-interval $[t_{-1}, t_\omega]$, such that for all $t \in [t_0, t_\omega]$ we have $|{\Rm}| < K t^{-1}$ on $P^{(k^*_i)}_{i^*_i, 1}$ and $|{\nabla \Rm}| < K_1 t^{-3/2}$ at all points of $P^{(k^*_i)}_{i^*_i, 1}$ which have a time-$t$ distance of at least $\sqrt{t}$ from its boundary ($K < K_1$).
Moreover, $P^{(k^*_i)}_{i^*_i, 1}$ is found to be a $\varphi''(h^{(k^*_i)}_{i^*_i})$-precise torus structure at scale $\sqrt{t}$ and time $t$ and at all times $t \in [t_{-1}, t_\omega]$.
Here $\varphi'' : (0, \infty) \to (0, \infty)$ is a function with $\varphi'' (h) \to 0$ as $h \to 0$ which depends only on $L$.
Due to this fact, the bound on $h^{(k^*_i)}_{i^*}$ from Claim 3 implies that for sufficiently small $\eta^*_4 = \eta^*_4 (L, A, \alpha)$ the following holds:
Whenever $h^{(N)}_i < \eta^*_4$ and $\eta^\circ$, then this torus structure is $\min \{ \alpha, \frac1{10} \}$-precise at scale $\sqrt{t}$ and time $t$ for all $t \in [ t_0, t_\omega ]$ and $( \td{L}_0 ( \min \{ e^{-LK} \alpha, \td\alpha_0 (K_1) \}, A) + 10)^{-1}$-precise at scale $\sqrt{t_0}$ and time $t_0$.
Here $\td{L}_0$ and $\td\alpha_0$ were the constants from Lemma 7.12 in \cite{Bamler-longtime-II}.
In this paragraph we only change the interpretation that $\td{L}_0$ and $\td\alpha_0$ are the constants from Lemma \ref{Lem:shortloopingeneralcase}.
Note that now $\td\alpha_0 = \td\alpha_0 (A, K_1)$ also depends on $A$.

The next paragraph in the proof of Proposition 8.4 in \cite{Bamler-longtime-II} remains unchanged (apart from the reference to Proposition 8.3).
So now look at the last paragraph of this proof.
Observe that the subset $P_i$ is a $( \td{L}_0 ( \min \{ e^{-LK} \alpha, \linebreak[1] \td\alpha_0 (A, \linebreak[2] K_1) \}, \linebreak[2] A) \linebreak[2] + 10)^{-1}$-precise torus structure at scale $\sqrt{t_0}$ and time $t_0$.
We can choose the torus structure $P^*_i \subset P_i$ to be $( \td{L}_0 ( \min \{ e^{-LK} \alpha$, $\td\alpha_0 (K_1) \}, A))^{-1}$-precise at scale $\sqrt{t_0}$ and at time $t_0$ and such that moreover every point of $P^*_i$ has time-$t_0$ distance of at least $2 \sqrt{t_0}$ from the boundary of $P_i$ (instead of $\sqrt{t_0}$).
Then we have $|{\Rm_{t_0}}| < K_1 t_0^{-1}$ and $|{\nabla\Rm_{t_0}}| < K_1 t_0^{-3/2}$ even on the $\sqrt{t_0}$-neighborhood of $P^*_i$.
So instead of applying Lemma 7.12 from \cite{Bamler-longtime-II}, we can now invoke Lemma \ref{Lem:shortloopingeneralcase}(a), (b) with $\alpha \leftarrow \min \{ e^{-LK} \alpha, \td\alpha_0 (A, K_1) \}$, $A \leftarrow A$, $K \leftarrow K_1$, $P \leftarrow P^*_i$ and $S$ being the union of $P^*_i$ with the component of $\MM(t_0) \setminus P^*_i$ which contains $U_i(t_0)$.
The map $f : \Sigma \to S$ needed for this Lemma is obtained from assumption (iv$'$)
Observe moreover that $\pi_2$ of the component of $\MM(t_0)$ which contains $P^*_i$ vanishes due to assumption (iii) and \cite[Proposition 3.3]{Bamler-longtime-II}.
We hence obtain a loop $\gamma_i \subset P^*_i \subset P_i$ of length $\ell_{t_0} (\gamma_i) < e^{-LK} \alpha$ which is non-contractible in $P^*_i$, which spans a disk $D_i \subset \MM(t_0)$ of time-$t_0$ area $\area_{t_0} D_i < (A+1) t_0$ and whose geodesic curvatures on $\gamma_i$ are bounded by $\td\Gamma (K_1) t_0^{-1/2}$ at time $t_0$.
So assertions (d$'$) and (e) hold and the remaining assertions follow as in \cite{Bamler-longtime-II}.
\end{proof}

We can finally establish the main Theorem.

\begin{proof}[Proof of Theorem \ref{Thm:MainTheorem-III}]
In the first part of the proof, we follow closely the proof in \cite{Bamler-longtime-II}.
Let the function $\delta(t)$ the minimum of the functions given in Proposition \ref{Prop:better4thstep} (see Proposition 8.4 of \cite{Bamler-longtime-II} for more details), \cite[Proposition 4.15]{Bamler-longtime-II}, \cite[Corollary 4.3]{Bamler-longtime-II}, and \cite[Proposition 6.4]{Bamler-longtime-II}.

Consider the constant $T_1 < \infty$ and the sub-Ricci flow with surgery $\MM' \subset \MM$ from \cite[Proposition 8.5]{Bamler-longtime-II} defined on the time-interval $[0, \infty)$.
Recall that all components of all time-slices of $\MM'$ at or after time $T_1$ are irreducible and not diffeomorphic to spherical space forms and that all surgeries of $\MM'$ at or after time $T_1$ are trivial.
So at or after time $T_1$ the time-slices of $\MM$ are irreducible and not diffeomorphic to spherical space forms and the surgeries are trivial.
In particular, the topology of the manifold $\MM(t)$ is the same for all $t \geq T_1$.
By the last statement of \cite[Proposition 8.5]{Bamler-longtime-II}, it suffices to establish the desired curvature bound and the finiteness of the surgeries on $\MM'$.
Choose now a sub-Ricci flow with surgery $\MM^* \subset \MM$ defined on the time-interval $[T_1, \infty)$ whose time-slices $\MM^*(t)$ are all connected, closed components of $\MM(t)$.
Since the choice of $\MM^*$ was arbitrary, it suffices to establish the curvature bound and the finiteness of the surgeries on $\MM^*$ instead of $\MM$.

Next, we apply \cite[Proposition 4.15]{Bamler-longtime-II} to $\MM$ and consider the time $T_0 < \infty$, the function $w : [T_0, \infty) \to (0, \infty)$ as well as the decomposition $\MM (t) = \MM_{\textnormal{thick}} (t) \cup \MM_{\textnormal{thin}}(t)$ for all $t \in [T_0, \infty)$.
Set $T_2 = \max \{ T_0, T_1 \}$.

Let now $M = \MM^* (T_2)$ and $M_{\textnormal{hyp}} = \MM_{\textnormal{thick}}(T_2) \cap \MM^*(T_2)$, $M_{\textnormal{Seif}} = \MM_{\textnormal{thin}} (T_2) \cap \MM^*(T_2)$.
So $M = M_{\textnormal{hyp}} \cup M_{\textnormal{Seif}}$ and we can apply Proposition \ref{Prop:maincombinatorialresult} to obtain a simplicial complex $V$ and either a continuous map $f_0 : V \to M$ with $f_0 (\partial V) \subset \partial M_{\textnormal{Seif}} = \partial M_{\textnormal{hyp}}$ which is a smooth immersion on $\partial V$ (if $M$ is not covered by a $T^2$-bundle over a circle) or a sequence of continuous maps $f_1, f_2, \ldots : V \to M$ (if $M$ is covered by a $T^2$-bundle over a circle).
Next, we can apply Proposition \ref{Prop:areaevolutioninMM} to obtain a constant $A_0 < \infty$ and (not necessarily continuous) families of piecewise smooth maps $f_{0, t} : V \to \MM^*(t)$ or $f_{1,t}, f_{2,t}, \ldots : V \to \MM^*(t)$ for all $t \in [T_2, \infty)$ with $f_{0,T_2} |_{\partial V} = f_0 |_{\partial V}$ such that $f_{0, t} |_{\partial V}$ moves by the ambient isotopies of \cite[Proposition 4.15]{Bamler-longtime-II}, $f_{n, t}$ is homotopic to $f_n$ in space-time---restricting to said isotopies on $\partial V$ if $n = 0$---and such that for $n = 0$ or all $n \geq 1$
\begin{equation} \label{eq:areaoffntA0}
 \limsup_{t \to \infty} t^{-1} \area f_{n,t} < A_0.
\end{equation}
We now distinguish the cases in which $M$ is or is not covered by a $T^2$-bundle over a circle.

\textbf{Case 1: $M$ is not covered by a $T^2$-bundle over a circle} \quad
Choose $T_3 > T_2$ such that $\area f_{0,t} < (A_0 + 1) t$ for all $t \geq T_3$.
It now follows from Proposition \ref{Prop:maincombinatorialresult} that for every $t \geq T_3$ and every smoothly embedded solid torus $S \subset \Int\MM_{\textnormal{thin}}(t) \cap \MM^*(t)$, which is incompressible in $\MM^*(t)$, there is a compact smooth domain $\Sigma \subset \IR^2$ and a smooth map $h : \Sigma \to S$ such that $h( \partial \Sigma) \subset \partial S$ and such that $h$ restricted to only the exterior boundary circle of $\Sigma$ is non-contractible in $\partial S$ and such that
\[ \area h < C \area f_{0, t} < C (A_0 + 1) t. \]
Here, the constant $C$ only depends on the topology of the manifold $M$.

Next set
\[ A = C(A_0 + 1), \qquad L = \Big( 1 + \frac{A + 1}{ 4 \pi } \Big)^4 \]
and consider the constant $\Gamma_4 = \Gamma_4(L, A)$ from Proposition \ref{Prop:better4thstep} (see \cite[Proposition 8.4]{Bamler-longtime-II}).
Set
\[ \alpha = \frac{\pi}{\Gamma_4} \]
and choose $T_4 = T_4(L, A, \alpha)$ and $w_4 (L, A, \alpha)$ according to this Proposition.
Choose now $T^* > \max \{ 4 T_3, T_4 \}$ such that $w(t) < w_4$ for all $t \in [ \frac14 T^*, \infty)$ and consider times $t_\omega > L T^*$ and $t_0 = L^{-1} t_\omega$.
Observe that $\MM^*$ is defined on the whole time-interval $[\frac14 t_0, t_\omega]$ and that condition (iv$'$) of Proposition \ref{Prop:better4thstep} holds assuming additionally that $S \subset \MM^*(t)$.
It is then not difficult to see that we can apply Proposition \ref{Prop:better4thstep} to the sub-Ricci flow with surgery $\MM^*$ with the parameters $L, A, \alpha$ (note that this is not strictly the statement of Proposition \ref{Prop:better4thstep}, but it is easy to check that the constructions in the proofs of Propositions \ref{Prop:better3rdstep} and \ref{Prop:better4thstep} can be carried out separately on every component of $\MM$).
We then obtain sub-Ricci flows with surgery $U_1, \ldots, U_m \subset \MM^*$, outside of which we have a curvature bound, and disks $D_1, \ldots, D_m \subset \MM^*(t_0)$ with $\area_{t_0} D_i < (A+1) t_0$ whose boundary loops have length $< \alpha \sqrt{t}$ and geodesic curvature bounded by $\Gamma_4 t^{-1}$ for all $t \in [t_0, t_\omega]$.
Assume that $m \geq 1$.
Since $\alpha \Gamma_4 = \pi < 2\pi$, we obtain a contradiction by \cite[Lemma 7.2(a)]{Bamler-longtime-II}:
\[ t_\omega < \Big( 1 + \frac{A+1}{4(2\pi - \alpha \Gamma_4)} \Big)^4 t_0 = L t_0 = t_\omega. \]
So $m = 0$ an thus we have $|{\Rm_{t_\omega}}| < K_4 t_\omega^{-1}$ on $\MM(t_\omega)$.

We have shown that if $M$ is not covered by a $T^2$-bundle over a circle, then $|{\Rm_t}| < K_4 t^{-1}$ on $\MM^*(t)$ for all $t \geq L T^*$.
So in particular, $\MM^*$ does not develop any singularities past time $L T^*$.

\textbf{Case 2: $M$ is covered by a $T^2$-bundle over a circle} \quad
In this case consider the families of piecewise smooth maps $f_{1,t}, f_{2, t}, \ldots$ and observe that the constant $A_0$ in (\ref{eq:areaoffntA0}) is independent of $n$.
Note also that in the present case $\MM^* (t) \subset \MM_{\textnormal{thin}} (t)$ for all $t \geq T_2$.

Let now, $\ov{r}$, $K_2$ be the functions from \cite[Corollary 4.3]{Bamler-longtime-II} and $\mu_1$ the constant from \cite[Lemma 5.2]{Bamler-longtime-II}.
Set $\mu = \min \{ \mu_1, \frac1{10} \}$ and consider the constants $w_0 = w_0 (\mu, \ov{r}(\cdot, 1), K_2(\cdot, 1))$, $0 < s_2 = s_2 (\mu, \ov{r} (\cdot, 1), K_2(\cdot, 1)) \linebreak[1] < s_1 = s_1 (\mu, \ov{r} (\cdot, 1), K_2 (\cdot, 1)) \linebreak[1] < \frac1{10}$ from \cite[Proposition 5.1]{Bamler-longtime-II}.
Choose $T_3 > T_2$ such that $w(t) < w_0$ for all $t \geq T_3$.
Fix such a time $t$.
We can hence apply \cite[Proposition 5.1]{Bamler-longtime-II} to $\MM^*(t)$ and conclude that either $\diam_t \MM^*(t) < \mu \rho_{\sqrt{t}} (x,t)$ for all $x \in \MM^*(t)$ and $\MM^*(t)$ is diffeomorphic to an infra-nilmanifold or a manifold that carries a metric of non-negative sectional curvature, or we obtain a decomposition $\MM^* (t) = V_1 \cup V_2 \cup V'_2$ satisfying assertions (a)--(c) of this Proposition.
Note that in the first case $\MM^*(t)$ is diffeomorphic to a quotient of the $3$-torus or the nilmanifold.

We now analyze this decomposition (of the second case) further using the tools of \cite[sec 5.3]{Bamler-longtime-II} (observe that we are in case A of this subsection).
As in \cite[Definition 5.3]{Bamler-longtime-II} let $\mathcal{G} \subset \MM^*(t)$ be the union of all components of $V_2$ whose generic $S^1$-fiber is incompressible in $\MM^*(t)$ and all components of $V_1$ or $V'_2$ whose generic fibers are incompressible tori.
Then by \cite[Lemma 5.4]{Bamler-longtime-II} we have $\partial \mathcal{G} \subset V_2 \cap \mathcal{G}$.
Moreover, by \cite[Lemma 5.5]{Bamler-longtime-II} there is a disjoint union of finitely many embedded solid tori $\mathcal{S} \subset \MM^*(t)$ such that $\MM^*(t) = \mathcal{G} \cup \mathcal{S}$.
So we can make the following conclusion:
Either $\mathcal{G} = \MM^* (t)$ or there is a component $\CC \subset V_2$ such that the $S^1$-fibers on $\CC \cap V_{2, \textnormal{reg}}$ are incompressible in $\MM^*(t)$.

Let $x \in \mathcal{G}$ be an arbitrary point and recall the notation (see \cite[Definition 4.1]{Bamler-longtime-II})
\[ \rho_{r_0}(x,t) = \sup \{ r \leq r_0 \;\; : \;\; \sec_t \geq -r^{-2} \; \text{on} \; B(x,t,r) \}. \]
Consider the universal cover $\td\MM^* (t)$ of $M^*(t)$ and choose a lift $\td{x} \in \td\MM^* (t)$ of $x$.
Then by \cite[Lemma 5.2]{Bamler-longtime-II} there is a constant $w_1 = w_1(\mu) > 0$ such that
\[ \vol B(\td{x}, \rho_{\sqrt{t}} (x,t)) > w_1 \rho_{\sqrt{t}}^3(x,t). \]
(This also holds in the case in which $\MM^*(t)$ has small diameter.)
In other words, $x$ is $w_1$-good at scale $\sqrt{t}$ (compare with \cite[Definition 6.1]{Bamler-longtime-II}).
Consider now the constants $T_4 = T(w_1, 1)$, $K = K(w_1)$ and $\ov\rho = \ov\rho(w_1)$ from \cite[Proposition 6.4]{Bamler-longtime-II}.
Assuming that we have picked $t$ such that $t > T_4$, we conclude that
\[ |{\Rm}|(x,t) < K t^{-1} \qquad \text{and} \qquad \rho_{\sqrt{t}} (x,t) > \ov\rho \sqrt{t} \qquad \text{for all} \qquad x \in \mathcal{G}. \]

Consider first the case in which $\mathcal{G} \neq \MM^*(t)$ and hence there a component $\CC \subset V_2 \cap \mathcal{G}$ such that the $S^1$-fibers on $\CC \cap V_{2, \textnormal{reg}}$ are incompressible in $\MM^*(t)$.
Pick $x \in \CC \cap V_{2, \textnormal{reg}}$.
By \cite[Proposition 5.1(c3)]{Bamler-longtime-II} there are an open subset $U \subset \MM^*(t)$ with
\[ B(x, t, \tfrac12 s_2 \rho_{\sqrt{t}} (x)) \subset U \subset B(x, t, s_2 \rho_{\sqrt{t}} (x)) \]
which is diffeomorphic to $B^2 \times S^1$, vector fields $X_1, X_2$ on $U$ and a smooth map $p : U \to \IR^2$ such that:
We have $dp (X_i) = \frac{\partial}{\partial x_i}$ and $|\langle X_i, X_j \rangle - \delta_{ij} | < \frac1{10}$ for all $i,j = 1,2$.
Moreover, $p : U \approx B^2 \times S^1 \to p(U)$ corresponds to the projection to the first factor and the $S^1$-fibers coming from the second factor are isotopic to the $S^1$-fibers in $\CC \cap V_{2, \textnormal{reg}}$ and hence incompressible in $\MM^*(t)$.
It then follows easily that $p$ is $2$-Lipschitz and that $B_0 = B( p(x), \frac14 s_2 \rho_{\sqrt{t}} (x)) \subset p(U)$.

Now recall the maps $f_{1, t}, f_{2, t}, \ldots : V \to \MM^*(t)$ from the beginning of the proof.
By Proposition \ref{Prop:maincombinatorialresult}, we know that for each $n \geq 1$, the map $f_{n, t}$ intersects each $S^1$-fiber on $U$ at least $n$ times.
In other words, $f^{-1}_{n,t} (p^{-1}(y))$ contains at least $n$ elements for each $y \in B_0 \subset p(U)$.
Since $p$ is $2$-Lipschitz, we find that
\[ \area f_{n, t} \geq \frac{n}{4} \area B_0 = \frac{n \pi s_2^2 \rho^2_{\sqrt{t}} (x,t)}{16 \cdot 4} > n \cdot \frac{s_2^2 \ov\rho^2}{100} \cdot t. \]
So it follows that for
\[ n > \frac{100}{s_2^2 \ov\rho^2} (A_0 + 1) \]
we have $\area f_{n, t} > (A_0 + 1) t$.
This however contradicts (\ref{eq:areaoffntA0}) for large $t$.

So there is some constant $T_5 < \infty$ such that whenever $t > T_5$, then $\mathcal{G} = \MM^*(t)$ and hence $|{\Rm_t}| < K t^{-1}$ on $\MM^*(t)$.
As before, this implies that there are no surgeries on $\MM^*$ past time $T_5$.
\end{proof}

\end{document}